\newtheorem{theorem}{Theorem}[section]
\newtheorem{proposition}{Proposition}[section]
\newtheorem*{theorem*}{Theorem}
\newtheorem{lemma}{Lemma}[section]
\newtheorem{corollary}{Corollary}[section]
\theoremstyle{definition}
\newtheorem{definition}{\sc Definition}[section]
\newtheorem*{definition*}{\sc Definition}
\newtheorem{example}{\bf Example}[section]
\newtheorem{examples}{\bf Examples}[section]
\newtheorem{remark}{\bf Remark}[section]
\newtheorem*{remark*}{\bf Remark}
\newtheorem*{remarks}{\bf Remarks}
\newtheorem*{example*}{\bf Example}
\newcommand{\loc}{{\rm loc}}
\newcommand{\Real}{{\rm Re}\,}
\newcommand{\Imag}{{\rm Im}\,}
\newcommand{\sprt}{{\rm sprt\,}}
\numberwithin{equation}{section}
\begin{document}

\title[SDEs with singular drift]{SDEs with critical general distributional drifts: sharp solvability and blow-ups}

\begin{abstract}
We establish weak well-posedness for SDEs having  discontinuous diffusion coefficients and general  distributional drifts that may introduce local blow-up effects. Our drifts satisfy minimal assumptions, i.e.\,we assume only that the Cauchy problem for the Kolmogorov backward equation is well-posed in the standard Hilbert triple $W^{1,2} \hookrightarrow L^2 \hookrightarrow W^{-1,2}$. By a result of Mazya and Verbitsky, these assumptions are precisely those drifts that can be represented as the sum of a form-bounded component (encompassing, for example, Morrey or Chang-Wilson-Wolff drifts) and a divergence-free distributional component in the ${\rm BMO}^{-1}$ space of Koch and Tataru.

We apply our results to finite particle systems with strong attracting interactions immersed in a turbulent flow. This includes particle systems of Keller-Segel type. Crucially, in dimensions $d \geq 3$, we cover almost the entire admissible range of attraction strengths, reaching nearly to the blow-up threshold.

As a further application of our results for SDEs and of the theory of Bessel processes, we obtain an improved upper bound on the constant in the many-particle Hardy inequality.
Consequently, the lower bound previously derived by Hoffmann-Ostenhof, Hoffmann-Ostenhof, Laptev, and Tidblom is shown to be close to optimal.
\end{abstract}

\dedicatory{To the memory of Yu.\,A.\,Sem\"{e}nov}

\author{D.\,Kinzebulatov}

\address{Universit\'{e} Laval, D\'{e}partement de math\'{e}matiques et de statistique, Qu\'{e}bec, QC, Canada}

\email{damir.kinzebulatov@mat.ulaval.ca}

\author{R.\,Vafadar}

\address{Universit\'{e} Laval, D\'{e}partement de math\'{e}matiques et de statistique, Qu\'{e}bec, QC, Canada}

\curraddr{McMaster University, Department of Mathematics and Statistics, Hamilton, ON, Canada}

\email{vafadar@mcmaster.ca}

\thanks{The research of D.K. is supported by  NSERC grant (RGPIN-2024-04236)}

\keywords{Stochastic differential equations, singular and distributional drifts, divergence-free drifts, form-boundedness, ${\rm BMO}^{-1}$, De Giorgi's method, particle systems, Keller-Segel model, local blow-ups}

\subjclass[2020]{60H10, 47D07 (primary), 35J75 (secondary)}

\fontsize{10.35pt}{4.2mm}\selectfont

\maketitle

\contentsline {section}{\tocsection {}{1}{Introduction}}{2}{section.2}%
\contentsline {section}{\tocsection {}{2}{Notations and auxiliary results}}{10}{section.38}%
\contentsline {section}{\tocsection {}{3}{Classes of singular drifts}}{13}{section.46}%
\contentsline {section}{\tocsection {}{4}{Preliminary discussion: blow-up thresholds for Brownian particles}}{18}{section.67}%
\contentsline {section}{\tocsection {}{5}{General drifts}}{20}{section.74}%
\contentsline {section}{\tocsection {}{6}{Critical divergence and the constant in many-particle Hardy inequality}}{28}{section.111}%
\contentsline {section}{\tocsection {}{7}{Diffusion coefficients with form-bounded $\nabla a$}}{34}{section.142}%
\contentsline {section}{\tocsection {}{8-17}{Proofs}}{37}{section.149}%
\contentsline {section}{\tocsection {Appendix}{A}{Weakly form-bounded drifts and Keller-Segel finite particles}}{72}{appendix.312}%
\contentsline {section}{\tocsection {Appendix}{B}{Critical divergence, super-critical drift}}{77}{appendix.332}%
\contentsline {section}{\tocsection {Appendix}{C}{D.R.\,Adams' estimates}}{79}{appendix.339}%
\contentsline {section}{\tocsection {Appendix}{D}{Multiplicative form-boundedness and Morrey class $M_1$}}{79}{appendix.342}%
\contentsline {section}{\tocsection {Appendix}{E}{Vanishing of stream matrix at infinity}}{80}{appendix.354}%
\contentsline {section}{\tocsection {Appendix}{}{References}}{81}{section*.357}%




\section{Introduction}

\label{intro_sect}

The subject of this paper is the stochastic differential equation (SDE)
\begin{equation}
\label{sde1}
X_t=x-\int_0^t c(X_s)ds + \sqrt{2}\int_0^t \sigma (X_s)dB_s, \quad x \in \mathbb R^d,
\end{equation}
with critical general distributional drift $c$ and discontinuous diffusion coefficients $\sigma$ (see Section \ref{gen_sect} and Section \ref{diff_coeff_sect} for our precise setting). Here $\{B_t\}_{t \geq 0}$ denotes a $d$-dimensional Brownian motion. 
SDEs with singular drifts arise in various physical models, for instance, the passive-tracer model, where a time-dependent drift 
$c$ is the velocity field obtained from the Navier–Stokes equations \cite{MK}, or finite-particle approximations of the Keller–Segel model of chemotaxis \cite{CP,FJ}. In the latter case one observes local blow-ups, i.e.\,when even the weak existence for SDE \eqref{sde1} fails once one replaces $c$ by $(1+\varepsilon)c$, $\varepsilon>0$, in which case all particles collide a.s.\,in finite time and can stay ``stuck'' indefinitely. Both of these models are within the scope of the present work.

We postpone a survey of the recent literature on SDEs with singular drifts  until after we have stated and discussed our assumptions on $c$.

Our main focus in this paper is on general drifts, i.e.\,not satisfying any special structural conditions such as control of the sign of ${\rm div\,}c$. So, on the one hand, our condition \eqref{c_cond}-\eqref{delta_cond} (or, equivalently, \eqref{fbd_cond_gen}) on $c$  includes the drifts 
with sufficiently small Morrey norm
\begin{equation}
\tag{$M_{2+\varepsilon}$}
\|c\|_{M_{2+\varepsilon}}=\sup_{r>0, x \in \mathbb R^d} r\biggl(\frac{1}{|B_r|}\int_{B_r(x)}|c|^{2+\varepsilon}dx \biggr)^{\frac{1}{2+\varepsilon}},
\end{equation}
or, more generally, the drifts
in the Chang-Wilson-Wolff class: for some $\alpha>0$,
\begin{equation}
\label{cww0}
\sup_{r>0, x \in \mathbb R^d} \frac{1}{|B_r|}\int_{B_r(x)} |c|^2\, r^2 \bigl(1+(\log^+|c|^2\,r^2)^{1+\alpha} \big) \text{ is sufficiently small}
\end{equation}
(actually, the definition is more general, see \eqref{cww}).
This, for example, provides us with flexible means to construct interaction kernels in particle systems.
Crucially, we almost reach the blow-up threshold for 
$c$, i.e.\,multiplying $c$ by $1+\varepsilon$ takes us out of the weak existence regime.  On the other hand, the same condition \eqref{c_cond}-\eqref{delta_cond} $\Leftrightarrow$ \eqref{fbd_cond_gen} includes  drifts arising in some of the physical models mentioned above, notably of the form
\begin{equation}
\tag{$\mathbf{BMO}^{-1}$}
c=\nabla C, \quad \text{$C=-C^{\top}$ is an anti-symmetric matrix field with entries in ${\rm BMO}(\mathbb R^d)$},
\end{equation}
where $\nabla$ is the row-divergence operator (see notations in Section \ref{notations_sect}).
The class $\mathbf{BMO}^{-1}$ consists of divergence-free vector fields. It was identified by Koch-Tataru \cite{KT} as a large class of initial conditions for which one can prove the existence and uniqueness of mild solution to 3D Navier-Stokes equations, and which provides natural scale and translation invariant version of $L^2$ boundedness of this solution. This class contains divergence-free drifts with entries in Besov space of distributions: $${\rm div\,}c=0 \quad \text{ and }\quad  c^i \in B^{-1+d/p}_{p,\infty}$$ for some $p>d$, or Borel measurable divergence-free drifts with entries in the largest scaling-invariant Morrey class $M_1$:$${\rm div\,}c=0 \quad \text{ and } \quad  \langle |c|\mathbf{1}_{B_r(x)}\rangle \leq C r^{d-1}$$
for constant $C$ independent of $r$ or $x \in \mathbb R^d$. One particular example is
\begin{equation}
\label{q_}
c(x)=\biggl(\frac{x_2}{x_1^2+x_2^2}, \frac{-x_1}{x_1^2+x_2^2},0,\dots,0 \biggr), \quad x \in \mathbb R^d,
\end{equation}
that, evidently, does not belong $[L^2_{\loc}]^d$.

The diffusion coefficients $\sigma$ in Theorem \ref{thm2_a} can
have critical discontinuities. This in principle allows us to handle some systems of particles that interact via diffusion coefficients (Section \ref{diff_coeff_sect}).

\begin{example}[Brownian particles in a turbulent flow]
\label{ex1_multi} We test our results for general SDE \eqref{sde1} against some interacting particle systems.
That is, we aim at describing the dynamics of $N$ Brownian particles in $\mathbb R^d$, $d \geq 3$, 
subject to strong pair-wise attraction, while being advected by a turbulent flow whose divergence-free distributional velocity field belongs to
$\mathbf{BMO}^{-1}(\mathbb R^d)$. To this end, we
work in $\mathbb R^{dN}$ where $N$ is large, and consider SDE
\begin{equation}
\label{sde2_part}
X_t=x_0-\int_0^t c(X_s)ds + \sqrt{2}B_t, \quad x_0=(x_0^1,\dots,x_0^N) \in \mathbb R^{dN}, 
\end{equation}
where
$
X_t=(X_t^1,\dots,X_t^N)$,  $X_t^i$ is the position of the $i$-th particle at time $t$, $B_t=(B_t^1,\dots,B_t^N)$,
$\{B_t^i\}_{t \geq 0}$, $(i=1,\dots,N)$ are independent Brownian motions in $\mathbb R^d$.
We further take
$$
c=b+q
$$
where:
\begin{enumerate}
\item[--] The first drift $b:\mathbb R^{dN} \rightarrow \mathbb R^{dN}$ is given component-wise by
\begin{equation}
\label{drift_b}
b^i(x^1,\dots,x^N):=\frac{1}{N}\sum_{j=1, j \neq i}^N \sqrt{\kappa}\frac{d-2}{2}e_{i,j}(x)\frac{x^i-x^j}{|x^i-x^j|^2}, \quad x^i,x^j \in \mathbb R^d,
\end{equation}
where $e_{ij} \in L^\infty(\mathbb R^{dN})$, $\|e_{ij}\|_\infty \leq 1$.
Setting $e_{ij}=1$ introduces attraction between the particles arising, for example, in the finite particle approximation of the Keller-Segel model of chemotaxis\footnote{In this example we assume $d \geq 3$, but will  be able to include the case $d=2$ as well, albeit, at the moment, with additional conditions on the strength of attraction between the particles $\kappa>0$, see Appendix \ref{wfb_sect}.}. 

\smallskip

\item[--]
The second drift $q$ is described by external divergence-free velocity field,
$$
q(x^1,\dots,x^N)=\big(q_0(x^1),\dots,q_0(x^N)\big), \quad q_0 \in \mathbf{BMO}^{-1}(\mathbb R^d),
$$ 
so, it is easily seen, $q \in \mathbf{BMO}^{-1}(\mathbb R^{dN})$. 
\end{enumerate}

\noindent Either of our main results, Theorem \ref{thm2} or Theorem \ref{thm_div} for $e_{ij}=1$, applies to $c=b+q$ and  provide, in particular, weak existence and approximation uniqueness for particle system \eqref{sde2_part}. 
Both theorems impose dimension-independent conditions, and so the resulting constraint on  $\kappa$ does not degenerate as the number of particles $N$ goes to infinity. See Examples \ref{ex2_multi}, \ref{ex3_multi} where we detail the particle system \eqref{sde2_part}. In fact, in Example \ref{ex3_multi} we show that when all $e_{ij}=1$ Theorem \ref{thm_div} allows us to handle all $\kappa<16$ for all $N$, which is close to the blow-up threshold for \eqref{sde2_part}, i.e.\,if $\kappa$ is greater than a constant that is slightly larger than 16, then all particles collide in finite time and stay glued to each other. 

Section \ref{particle_sect} compares our results (in the case $q_0=0$ and $e_{ij}=1$) with those of Cattiaux-P\'{e}d\`{e}ches \cite{CP}, Fournier-Jourdain \cite{FJ}, Fournier-Tardy \cite{FT} and Tardy \cite{T} whose methods exploit the special structure of the drift \eqref{drift_b} to obtain sharp, detailed results.
\end{example}

The search for the maximal admissible value of the strength of attraction $\kappa$ before a blow-up regime can be re-stated as the search for the minimal level of thermal excitation  that prevents sticky collisions between the particles. The fact that the noise can turn local in time solutions into global ones can be viewed as another instance of regularization by noise (regarding the latter, see \cite{Fl,FGP,FR}). 

\subsection{Result \#1}
In Theorem \ref{thm2} and also in Theorem \ref{thm2_a},  we establish weak well-posedness of SDE \eqref{sde1} for the following class of  $\mathbb R^d \rightarrow \mathbb R^d$  drifts:
\begin{equation}
\label{c_cond}
\tag{$A_1$}
c=b+q,
\end{equation}
where $q$ is in general distribution-valued,
\begin{equation}
\label{q_cond}
\tag{$A_2$}
q \in \mathbf{BMO}^{-1} \qquad (\Rightarrow\;\;{\rm div\,}q=0),
\end{equation}
and $b$ is a Borel-measurable drift satisfying
\begin{equation}
\label{b_cond}
\tag{$A_3$}
b \in \mathbf{F}_\delta, \quad \;\text{ i.e.}\; \quad |b| \in L^2_{\loc} \text{ and } \|b\varphi\|_2^2 \leq \delta \|\nabla \varphi\|_2^2 + c_\delta\|\varphi\|_2^2 \quad \forall\,\varphi \in W^{1,2},
\end{equation}
for some finite constants $\delta$ (important) and $c_\delta$ (only its finiteness is important). (Here and below, $L^2_{\loc}=L^2_{\loc}(\mathbb R^d)$ and $\|\cdot\|_2$ is the $L^2=L^2(\mathbb R^d)$ norm, $W^{1,2}=W^{1,2}(\mathbb R^d)$ is the Sobolev space of functions that are square integrable together with their first-order derivatives.) That is, $b$ is \textit{form-bounded}. The last condition covers several important cases. It includes the Morrey class $M_{2+\varepsilon}$ (with form-bound $\delta$ depending on the Morrey norm $\|b\|_{M_{2+\varepsilon}}$), the larger Chang-Wilson-Wolff class recalled  in \eqref{cww} and, when we work in $\mathbb R^{dN}$, the many-particle drift \eqref{drift_b} in Example \ref{ex1_multi} where one has $\delta=(N-1)^2 N^{-2}\kappa$. A fuller discussion and more examples appear in Section \ref{classes_sect}. 

The constant $\delta$ measures the size or the ``strength'' of singularities. In Theorem \ref{thm2} we impose a completely dimension-free condition on $\delta$:
\begin{equation}
\label{delta_cond}
\tag{$A_4$}
\delta<4.
\end{equation}
A key consequence is that the corresponding restriction on the attraction parameter  $\kappa=N^2(N-1)^{-2}\delta$ in Example \ref{ex1_multi} does not degenerate as the number of particles $N$ tends to infinity. (Theorem \ref{thm_div} relaxes this constraint on $\kappa$ by taking into account the divergence of $b$.)

Our proofs make  use of specific properties of the classes $\mathbf{F}_\delta$ and $\mathbf{BMO}^{-1}$, such as the compensated compactness estimates and results on ${\rm BMO}$-multipliers.

\subsubsection{Optimality of our conditions}
\label{optim_sect}
We claim that the class of drifts \eqref{c_cond}-\eqref{delta_cond} cannot be substantially enlarged. This is justified by the following two observations. 

\begin{enumerate}

\item[--] Mazya and Verbitsky \cite{MV} proved that conditions \eqref{c_cond}--\eqref{b_cond} are equivalent to the estimate
\begin{equation}
\label{fbd_cond_gen}
|\langle c \cdot \nabla \varphi,\eta\rangle| \leq \alpha \|\nabla \varphi\|_2\|\nabla \eta\|_2
\end{equation}
for all $\varphi, \eta \in C_c^\infty(\mathbb R^d)$,
for some constant $\alpha>0$ (assuming $c_\delta=0$ or up to replacing the homogeneous Sobolev spaces with their non-homogeneous counterparts, see Section \ref{diff_coeff_sect}). 
If $\alpha<1$, then inequality \eqref{fbd_cond_gen} ensures that the KLMN theorem \cite[Ch.\,6,\S 2]{Ka} applies and so the Cauchy problem for the Kolmogorov backward equation is weakly well-posed in the standard Hilbert triple of Sobolev spaces $W^{1,2} \hookrightarrow L^2 \hookrightarrow W^{-1,2}$. To the extent that one can view the latter as a minimal theory of the Kolmogorov backward equation, the present paper bridges the Eulerian and Lagrangian descriptions of diffusion:
\medskip
\begin{center}
\text{Euler \qquad \qquad $\longleftrightarrow$ \qquad \qquad Lagrange}.
\end{center}
\smallskip
Here:
\medskip

\begin{enumerate}

\item[$\bullet$] Eulerian viewpoint: one studies averaged quantities, such as temperature or concentration, governed by the heat equation or, more generally, by transport-diffusion PDEs.

\medskip

\item[$\bullet$] Lagrangian viewpoint: one follows individual molecules whose trajectories solve SDEs (Brownian motion in the simplest case).

\end{enumerate}

\medskip

Until quite recently, Lagrangian results required stronger regularity assumptions on the drift.

We refer to the equivalence \eqref{c_cond}-\eqref{b_cond} $\Leftrightarrow$ \eqref{fbd_cond_gen} as the ``generalized form-boundedness'', although Mazya and Verbitsky \cite{MV} call \eqref{fbd_cond_gen} simply the form-boundedness condition on $c$. 

Let us comment on why one would expect the Kolmogorov equation to be well-posed in the standard Hilbert triple. In fact, we can substantially relax the conditions on drift $b$ by requiring well-posedness of the Kolmogorov backward equation in the shifted triple of Bessel spaces $\mathcal W^{\frac{3}{2},2} \hookrightarrow \mathcal W^{\frac{1}{2},2} \hookrightarrow \mathcal W^{-\frac{1}{2},2}$, see Theorem \ref{thm_wfb}. Doing so, however, forces us to drop the distributional part $q$, lose at least for now the almost optimal dimension-independent condition \eqref{delta_cond} and precludes discontinuous diffusion coefficients.
By contrast, under \eqref{c_cond}-\eqref{delta_cond} we can allow discontinuous diffusion coefficients $\sigma$ for which the associated non-divergence operator can still be rewritten in divergence form (Theorem \ref{thm2_a}). Such diffusion coefficients keep us within the standard Hilbert triple $W^{1,2} \hookrightarrow L^2 \hookrightarrow W^{-1,2}$, as discussed in Section \ref{diff_coeff_sect}.

\medskip

\item[--] Near-optimality of the bound \eqref{delta_cond}.
Consider the SDE
\begin{equation}
\label{sde0}
X_t= x - \sqrt{\delta} \frac{d-2}{2}\int_0^t \frac{X_s}{|X_s|^2}ds + \sqrt{2}B_t,
\end{equation}
where $B_t$ is the $d$-dimensional Brownian motion, $d \geq 3$. The drift here, $b(x)=\sqrt{\delta}\frac{d-2}{2}\frac{x}{|x|^2}$, belongs to $\mathbf{F}_\delta$, see Section \ref{classes_sect} for a detailed explanation.
Theorems \ref{thm2} and \ref{thm_div} show that if \eqref{delta_cond} is satisfied, then there exists a strong Markov family of weak solutions to SDE \eqref{sde0}, and these weak solutions are unique at least among those weak solutions that can be constructed via regularization of the drift. Moreover, in the critical case $\delta=4$ there is still a sufficiently rich theory of the corresponding Kolmogorov backward PDE, see Remark \ref{delta_4_rem}. 
On the other hand, taking advantage of the anti-symmetry of the drift, one can show that $R_t=|X_t|^2$ is a squared Bessel process (see, e.g.\,\cite{BFGM}) and therefore:

(a) If $$
\delta \geq 4\left(\frac{d}{d-2}\right)^2,
$$
then for every initial point solution arrives at the origin in finite time with probability $1$, and stays there indefinitely (that is, $R_t=0$ for all $t \geq \tau$ for some finite stopping time $\tau$). A simple argument shows that for such $\delta$ SDE \eqref{sde0} does not have a weak solution departing from $x=0$, see e.g.\,\cite{BFGM}.  

(b) If $$4<\delta<4\left(\frac{d}{d-2}\right)^2,$$ then solution still visits the origin infinitely many times, but does not stay there, i.e.\,$\int_0^\infty R_t dt<\infty$ a.s.

In Section \ref{particle_sect} we discuss similar counterexamples in the context of particle system introduced in Example \ref{ex1_multi}.
\end{enumerate}

\begin{remark}
The fact that \eqref{fbd_cond_gen} implies \eqref{c_cond}--\eqref{b_cond} is proved in \cite{MV} by taking $b$ and $q$ from the Hodge-type decomposition of $c$:
\begin{equation}
\label{b__}
b:=-\nabla (1-\Delta)^{-1}{\rm div\,}c + (1-\Delta)^{-1}c, 
\end{equation}
\begin{equation}
\label{q__}
q:=-(1-\Delta)^{-1}{\rm curl\,}c.
\end{equation}
Here, $b$ caputres the attractive or the repulsive part of the drift in the dynamics of $X_t$.
However, much more is already known when the entire drift belongs to the form-bounded class $\mathbf{F}_\delta$: one has strong well-posedness for the SDE, as well as well-posedness for the associated stochastic transport equation, see Theorem \ref{thm2}. How far these results extend when a non-zero distributional component $q$ in $\mathbf{BMO}^{-1}$ is not yet clear. So, if we insist on the decomposition  \eqref{b__}, \eqref{q__} above, Theorem \ref{thm2} would treat only gradient-type drifts, which we want to avoid.

\end{remark}

\subsubsection{Local blow-ups}
In Section \ref{div_drifts_sect} we will also discuss drifts having critical (form-bounded) divergence, which allows to relax the assumptions on the drift to some super-critical conditions, i.e.\,passing to the small scales actually increases the norm of the drift. The sub-critical/critical/super-critical classification of the spaces of vector fields, widely used in the literature, is recalled in Appendix \ref{super_rem}. 
It should be added, however, that this classification is not so relevant to the main body of the present paper (except for Section \ref{div_drifts_sect}) because:

(a) Our main focus is on general drifts, for which one only has the dichotomy sub-critical/critical.

(b) This classification does not distinguish between critical spaces that do, and those that do not, reach blow-ups.

(Regarding (b), for example, both $L^d$ space and weak $L^d$ space (we recall its definition in Section \ref{notations_sect}) are critical, but only the weak $L^d$ space contains drift $C\frac{x}{|x|^2}$ whose attracting singularity at the origin is strong enough to kill the weak well-posedness of the SDE if $C$ is too large. In other words, in the critical case, a lot depends on the definition  of the norm of the drift.) 

If a critical class of drifts is broad enough to contain blow-up examples, a well-posedness theorem must include a smallness condition on the drift norm, such as e.g.\,\eqref{delta_cond}.

Compared with blow-ups for the Navier–Stokes equations, the mechanism of blow-ups in particle systems of the kind treated in Example \ref{ex1_multi} is much better understood \cite{CP,CPZ,FJ,FT,JL}. 
That said, as was noted in \cite{FJ}, at the time of writing of their article 
there was still a substantial gap between (i) the drift singularities that the general theory of SDEs with singular drifts could handle and (ii) the even stronger singularities they themselves had to treat. One of the purposes of the present work, together with \cite{KiS_sharp, Ki_multi, KiS_feller}, is to close this gap:
\begin{equation}
\label{conn}
\text{SDEs with general singular drift \qquad \qquad $\longrightarrow$ \qquad \qquad particle systems},
\end{equation}
i.e.\,to bring the general theory of SDEs ``up to the task'' so that it can handle blow-ups.
See, in particular, Appendix \ref{wfb_sect} where we demonstrate how the weak well-posedness of the finite-particle Keller-Segel  SDE in $\mathbb R^{2N}$ can be reached from an earlier result in \cite{KiS_brownian} on SDEs with general drifts, albeit at the moment under additional rather restrictive conditions of the strength of attraction. Interestingly, the path that leads to this passes through  non-local operators (Theorem \ref{thm_wfb} and Corollary \ref{cor1}). 

The same connection \eqref{conn} was already pursued by Krylov and Röckner \cite{KR}, but with a different interaction kernel: its attractive part can be extremely singular, but it is always dominated on average (not pointwise) by the repulsive part, so no blow-up occurs.

\subsection{Result \#2}

In Appendix \ref{super_rem} we will also discuss super-critical drifts (necessarily under additional assumptions on their divergence). Specifically, if the positive part of the divergence ${\rm div\,}b$ is a form-bounded potential, then this enables us to relax the form-boundedness condition on $b$ to a super-critical form-boundedness condition:
$$
|b|^{\frac{1+\nu}{2}} \in \mathbf{F}_\delta, \quad \nu \in ]0,1[, \quad \delta<\infty.
$$
In this case one still has weak existence for every initial point \cite{KiS_sharp}, moreover, as was mentioned there, one can combine such a drift with an ordinary form-bounded component, again reaching the blow-up threshold, and may also include discontinuous diffusion coefficients of the type treated in Section \ref{diff_coeff_sect}.

Generally speaking, in super-critical settings many standard regularity properties of the diffusion process are lost. Some of them can be saved, but to recover most of them one must impose additional critical conditions on the drift. 
In our framework, to establish e.g.\,the Markov property, Theorem \ref{thm1} supplements the super-critical assumption with the multiplicative form-boundedness condition $b \in \mathbf{MF}_\delta$, i.e.
\begin{equation*}
\langle |b|\varphi,\varphi\rangle \leq \delta\|\nabla \varphi\|_2\|\varphi\|_2+c_\delta\|\varphi\|_2^2, \quad \varphi \in W^{1,2}, \quad \delta<\infty.
\end{equation*}
This condition is critical and is substantially more general than $b \in \mathbf{F}_\delta$ because somehow it implicitly presumes the existence of ${\rm div\,}b$.  
In contrast to the form-bounded class $\mathbf{F}_\delta$, the multiplicative form-bounded class $\mathbf{MF}_\delta$ can be completely characterized in elementary terms: $b \in \mathbf{MF}_\delta$ if and only if  $$\langle |b|\mathbf{1}_{B_r(x)}\rangle \leq C r^{d-1}$$
with constant $C$ independent of $r$ or $x \in \mathbb R^d$. The latter is the largest scaling-invariant Morrey\footnote{For the theory of Morrey spaces and related estimates, see Adams-Xiao \cite{AX} and Krylov \cite{Kr_b2}.} class  $M_1$. In fact, we have
\begin{equation*}
\bfig
\node a1(0,0)[\qquad |b| \in M_1]
\node a2(-800,-1100)[b \in \mathbf{MF}_\delta \text{ for some finite $\delta$ and }c_\delta=0]
\node a3(800,-1100)[b \in \mathbf{BMO}^{-1}]
\arrow[a1`a2; \text{form-bounded }({\rm div\,}b)_+]
\arrow[a2`a1;]
\arrow[a1`a3;{\rm div\,}b=0]
\efig
\end{equation*}
where the arrow $\rightarrow$ in this diagram means inclusion. (These inclusions were proved in \cite[Theorem 1.4.7]{M} and \cite[Theorem V]{MV2}. See discussion before Theorem \ref{thm1} and Krylov's proof in Appendix \ref{m_sect}.) 
Choosing the inclusion into $\mathbf{MF}_\delta$ leads to an approach to studying the Kolmogorov backward equation based on ``Caccioppoli's iterations'' for establishing the classical Caccioppoli's inequality \cite{KiV}, see Remark \ref{cacc_rem} for details;
choosing the inclusion into $\mathbf{BMO}^{-1}$ amounts to absorbing the stream matrix $Q$ (for $b=\nabla Q$) into the diffusion coefficients and obtaining a Caccioppoli-type inequality from there \cite{H,SSSZ}. We discuss this in Section \ref{div_drifts_sect}.

\subsection{Result \#3}
As an application of our results on SDEs (Theorem \ref{thm_div}) and of the theory of Bessel processes, we obtain an upper bound on the best possible constant $C_{d,N}$ in the many-particle Hardy inequality
\begin{equation*}
C_{d,N}\sum_{1 \leq i<j \leq N}\int_{\mathbb R^{dN}}\frac{|\varphi(x)|^2}{|x^i-x^j|^2}dx \leq  \int_{\mathbb R^{dN}}|\nabla \varphi(x)|^2 dx \quad \forall\,\varphi \in C_c^\infty(\mathbb R^{dN})
\end{equation*}
(Theorem \ref{cor2}). 
Our upper bound on $C_{d,N}$  improves the existing results by replacing factorial growth in the dimension with polynomial growth. It also shows that the lower bound on $C_{d,N}$ obtained earlier by Hoffmann-Ostenhof--Hoffmann-Ostenhof-Laptev-Tidblom \cite{HHLT} is nearly optimal  in high dimensions.
To our knowledge, this is the first time a probabilistic argument is used in the analysis of the best possible constant in a Hardy inequality.

\subsection{Literature}
Let us now comment on the existing literature on SDEs with singular drifts, focusing mostly on general drifts. In our setting, even if we restrict our attention to Borel measurable drifts, then the Zvonkin transform is not applicable and our drifts are not of Girsanov type.

1) The present paper continues \cite{KiS_sharp, Ki_multi, KiS_feller} and also \cite{KiS_Osaka,KiS_brownian}. In \cite{KiS_brownian}, Sem\"{e}nov and the first-named author constructed weak solutions to SDEs whose drifts lie in a class even larger than $\mathbf{F}_\delta$, namely, the class of \textit{weakly form-bounded} vector fields \cite{Ki_super, S}.
This larger class contains, for example, the Morrey class $M_{1+\varepsilon}$, and therefore vector field \eqref{q_}; however, that result requires more restrictive condition $\delta<\frac{c}{d^2}$ (see Appendix \ref{wfb_sect}).
In all those earlier works one has $q=0$ and $\sigma=I$, with the exception of \cite{KiS_Osaka} that dealt with the class of  discontinuous diffusion coefficients treated in Section \ref{diff_coeff_sect}.

2) If we restrict attention to Borel-measurable drifts, the most closely related results are the recent works of Krylov  \cite{Kr1,Kr2,Kr3, Kr3_5, Kr4} and of R\"{o}ckner and Zhao \cite{RZ_weak}. In \cite{Kr1}, Krylov treats a class of diffusion coefficients much larger than ours (his diffusion coefficients are in ${\rm VMO}$ class, or have small ${\rm BMO}$ norm), but a smaller class of drifts (namely, the Morrey class $M_{(\frac{d}{2} \vee 2) + \varepsilon}$). 
The irregular diffusion coefficients require estimates on second-order derivatives of solutions to the Kolmogorov backward equation, whereas we neither assume nor have such estimates under our assumptions on the drift. 
Krylov's drifts can also produce blow-up phenomena, but he assumes that the size of the singularity, measured by the Morrey norm, remains below a small, dimension-dependent constant. By contrast, our goal is to reach the maximal admissible strength of singularity (in Example \ref{ex1_multi}, the interaction strength $\kappa$; or the supremum \eqref{cww0}). Other recent results of Krylov are described below.

3) Some techniques such as De Giorgi's method  also connect our paper to the works of Zhang-Zhao \cite{ZZ} and Hao-Zhang \cite{HZ}, which focus on super-critical divergence-free drifts (in \cite{HZ}, distributional). Some of their results deal with non-zero divergence, but they do not reach blow-ups. It should be added that in the study of Navier-Stokes equations (the SDEs connected to the N-S equations is the main interest of \cite{ZZ,HZ}) the blow ups of the type discussed above, i.e.\,attracting, do not appear.

4) There is rich literature on SDEs with general distributional drifts, also motivated by physical applications. In all the results we are aware of, the diffusion coefficients $\sigma$ have to be at least H\"{o}lder continuous.  Many of these works treat time-inhomogeneous drifts, however, in this brief discussion we will specify these results to time-homogeneous drifts. 
Flandoli-Issoglio-Russo \cite{FIR} and
Zhang-Zhao \cite{ZZ2} study drifts that belong to Bessel potential spaces with negative index and employ the Zvonkin transform to obtain weak well-posedness for SDE \eqref{sde1}.
Chaudru de Raynal-Menozzi \cite{CM} proved, among other results, weak well-posedness with drift $b$ in Besov space $B_{p,q}^{\beta}$ with $-\frac{1}{2}<\beta \leq 0$ and $p>d$. They addressed, in particular, the problem of defining the product of two distributions $b \cdot \nabla v$, where $v$ is a solution to the Kolmogorov backward equation. The latter, in turn, dictates their restriction $\beta>-\frac{1}{2}$. 
Earlier, Delarue-Diel \cite{DD} and Cannizzaro-Chouk \cite{CC}  proved, in dimensions $d=1$ and $d \geq 2$, respectively, well-posedness of the martingale problem for general distributional drifts in H\"{o}lder-Besov space $B_{\infty,\infty}^{\beta}$ for all $-\frac{2}{3}<\beta \leq 0$. In the more singular regime $-\frac{2}{3} <\beta \leq -\frac{1}{2}$ they assume that the drift can be enhanced to a rough distribution, in order to apply the theory of paracontrolled distributions. This allowed them to consider random drifts of the form
$
b=\nabla h$, where $h$ is a solution of a KPZ-type equation,
and thereby construct the polymer measure with white noise potential. 
As mentioned earlier, the class of divergence-free drifts  $\mathbf{BMO}^{-1}$ contains drifts whose components lie in Besov space $B^{-1+d/p}_{p,\infty}$, $p>d$, i.e.\,the exponent $\beta=-1+d/p$ can go up to $-1$. This, however, cannot serve as a comparison with the previous cited results since by definition $\mathbf{BMO}^{-1}$ drifts are divergence-free, but this justifies to some extent why we do not address in this paper the problem of mutiplying distributions in the Kolmogorov equation, i.e.\,since our $\beta \not > -\frac{1}{2}$. 

5) Let us also  mention recent papers by Chaudru de Raynal-Jabir-Menozzi \cite{CJM,CJM2} where the authors handle McKean-Vlasov SDEs with distributional Besov drifts. The regularization by convolution allows them to venture substantially farther in the assumptions on the drift, compared to the papers cited in 4). It is quite noteworthy, since they can start with a delta-function in the initial distribution. 

The papers on general distributional drifts mentioned in 4) and 5) do not reach blow-ups, in contrast to our Theorems \ref{thm2}, \ref{thm2_a}, \ref{thm_div}. That said, the cited works cover some other highly irregular drifts that fall outside the scope of our hypotheses.

Finally, we mention Bresch-Jabin-Wang \cite{BJW} who consider gradient-form singular attracting drifts that allow blow-ups. Their focus is different, namely, it is quantitative propagation of chaos at the PDE level, but they are also interested in reaching the blow-up thresholds. Their class of drifts is not contained in our class, and vice versa, although there is a  very substantial overlap between these classes.

6) Some singular drifts, as well as some degenerate diffusion coefficients, can be treated using the Ambrosio-Figalli-Trevisan superposition principle. The latter allows to conclude weak solvability of the SDE from some integrability and weak continuity properties of  solution to the corresponding Fokker-Planck equation (see \eqref{sup_pr}, \eqref{int_cond}).
See Trevisan \cite{Tr} and Bogachev-R\"{o}ckner-Shaposhnikov \cite{BRS}, see also Grube \cite{Gru} and the survey of the literature therein. The theory of Fokker-Planck equations with locally unbounded drifts and degenerate diffusion coefficients is discussed in detail in \cite{BKRS}.

7) Regarding super-critical drifts, we refer again to  Zhang-Zhao \cite{ZZ}, as well as to recent  papers by Hao-Zhang \cite{HZ} and Gr\"{a}fner-Perkowski \cite{GP} where the authors consider super-critical distributional drifts. In \cite{GP}, the authors treat divergence-free Besov drifts $b \not \in B^{-1}_{2d(d+2)^{-1},2}$, provided that the initial density is absolutely continuous with respect to the Lebesgue measure; they also consider quite irregular attracting/repulsing component of the drift, although it does not reach the blow-ups.

Comprehensive surveys of the literature on SDEs with singular and distributional drifts can be found in  \cite{CM} and \cite{HZ}.

\subsection{Main instruments:  De Giorgi's method, Trotter's theorem and compensated compactness estimates}
Since the drift $c$ satisfying \eqref{fbd_cond_gen} is in general distribution-valued ($c=b+q$) we  have to give a proper meaning to term
$c(X_s)$ in SDE \eqref{sde2}. We will do it in two ways:

(a) Theorem \ref{thm2}(\textit{iii}): At the level of the martingale problem and with the It\^{o} expansion, we have to define $(c\cdot \nabla v)(X_s)$ for suitable test functions. The usual test functions in $C_c^\infty$ cannot be used, but it is still possible to find a sufficiently rich space of test functions (in particular, dense in $C_\infty$; see notations in Section \ref{notations_sect}) that will give us a continuous martingale. In fact, this space of test functions will be the domain of the Feller generator $\Lambda \supset -\Delta + c \cdot \nabla$.

(b) Theorem \ref{thm2}(\textit{iv}): By constructing the ``limiting drift'' process (called ``formal dynamics'' in \cite{CM}) for disperse initial data, assuming $\delta<1$. This is a result of Bass-Chen type \cite{BC}, see also Zhang-Zhao \cite{ZZ2}, Chaudru de Raynal-Menozzi \cite{CM} and Hao-Zhang \cite{HZ} who have similar results. The proof will use convergence of the martingale solutions of the approximating SDEs \eqref{sde2_approx} that will follow as a by-product of (a).

To construct the sought Feller semigroup, we will have to employ some deep results from the operator theory, the theory of PDEs and harmonic analysis:

\begin{enumerate}
\item[--]
 Trotter's approximation theorem, 
 
\smallskip
 
 \item[--]
 De Giorgi's method in $L^p$ for $p>\frac{2}{2-\sqrt{\delta}}$, i.e.\,in the context of Example \ref{ex1_multi}, $p$ will be an explicit function of the strength of attraction between the particles $\kappa$. 
 
 \smallskip
 
 \item[--] Compensated compactness esimates and results on ${\rm BMO}$-multipliers.
 
\end{enumerate}

A crucial feature of Trotter's approximation theorem (Theorem \ref{trotter_thm}) is that it requires no a priori knowledge of the limiting object, i.e.\,in our case, the limiting Feller semigroup. 
By contrast, in some other of our results on singular SDEs (such as Theorem \ref{thm_wfb}) we first construct an explicit candidate for the Feller resolvent. This simplifies approximation arguments, but it also automatically provides strong gradient bounds that, in turn, introduce dimension-dependent restrictions on the form bound $\delta$. 
As the dimension grows, the admissible range of 
$\delta$ shrinks to the empty set, making problematic possible applications to many-particle systems that exist in spaces of very large dimension. (Example \ref{ex2_multi} explains how a bound on $\delta$ translates into a bound on the attraction strength $\kappa$.) Using De Giorgi's method in the proof of Theorem \ref{thm2} decouples gradient bounds from the bounds that provide the existence of the weak solution (i.e.\,tightness argument, construction of the Feller semigroup, etc). Consequently, we can impose dimension-free assumptions on $\delta$ (and on the entire drift), which is a crucial point for applications to many-particle systems.

Another way to view the relationship between the dimension-free assumptions on the drift and the Sobolev embedding is as follows. The Sobolev embedding becomes weaker as the dimension increases, and so if it is applied only once, then one arrives at dimension-dependent conditions on the drift. In De Giorgi's method, however, the Sobolev embedding is applied infinitely many times, which allows to overcome this dependence on the dimension.

The present paper strengthens the results \cite{Ki_multi,KiS_feller} that dealt with Borel-measurable drifts. Allowing a non-zero distributional $q \neq 0$ drift, as in \eqref{c_cond}-\eqref{delta_cond}, required substantial modifications of the arguments in  those papers.

De Giorgi's method for divergence-form operators with form-bounded and/or drift in $\mathbf{BMO}^{-1}$ has also been applied, in different contexts, by Hara \cite{H} and Seregin-Silvestre-\v{S}verak-Zlato\v{s} \cite{SSSZ}. We also refer to recent  paper by Liang-Wang-Zhao \cite{LWZ} where the authors establish strong estimates on the modulus of continuity of solutions of elliptic equations with distributional coefficients.

\subsection{Structure of the paper} 

\begin{enumerate}

\item[Section \ref{particle_sect}] 
We begin with a preliminary discussion of blow-up thresholds for the particle system in \ref{ex1_multi}. These thresholds both illustrate the sharpness of our SDE results and serve as input in Section \ref{div_drifts_sect},  where they enter the proof of a sharper upper bound for the constant in the many-particle Hardy inequality (Theorem \ref{cor2}).

\item[Sections \ref{gen_sect}] Theorem \ref{thm2} is our main result for general drifts when the diffusion matrix is constant.

\item[Section \ref{div_drifts_sect}] Theorems \ref{thm_div} and \ref{thm1} 
treat drifts whose divergence satisfies a critical (scaling-invariant) form-boundedness condition. Under this assumption we can relax the constraint on $|b|$, allowing some super-critical form-boundedness conditions. As an application, Theorem \ref{cor2} gives an improved upper bound on the constant in the many particle Hardy inequality.

\item[Section \ref{diff_coeff_sect}] Theorem \ref{thm2_a} extends the analysis to diffusion matrices that may have critical discontinuities.

\item[Appendix \ref{wfb_sect}] Corollary \ref{cor1} shows how well-posedness of the finite-particle Keller–Segel system can be derived from our earlier result on SDEs with general singular drifts (Theorem \ref{thm_wfb}). The proof of that theorem uses a more operator-theoretic approach, based on fractional resolvent representations, and covers the larger class of \textit{weakly form-bounded drifts} that contains both the Morrey class $M_{1+\varepsilon}$ and the Kato class studied by Bass-Chen \cite{BC}.

\item[Appendix \ref{super_rem}] discusses some super-critical drifts.

\end{enumerate}

\subsection*{Acknowledgements} We are grateful to Galia Dafni, Jean-Fran{\c{c}}ois Jabir and Kodjo Rapha\"{e}l Madou for  very useful discussions.

\bigskip

\section{Notations and auxiliary results} 

\label{notations_sect}

Throughout this work we use the following notations.

\medskip

\textbf{1.}~$\mathcal B(X,Y)$ is the space of bounded linear operators $X \rightarrow Y$ between Banach spaces $X$, $Y$, endowed with the operator norm $\|\cdot\|_{X \rightarrow Y}$. Put $\mathcal B(X):=\mathcal B(X,X)$. 

The space of $d$-dimensional vectors with entries in $X$ is denoted by $[X]^d$. We reserve the upper index to denote the components $q^i \in X$ of $q \in [X]^d$.

The notation ``$\overset{w}{\rightarrow}$ in $X$'' stands for the weak convergence in $X$. Similarly for $[X]^d$.

We write $$T=s\mbox{-} Y \mbox{-}\lim_n T_n$$ for $T$, $T_n \in \mathcal B(X,Y)$ if $$\lim_n\|Tf- T_nf\|_Y=0 \quad \text{ for every $f \in X$}.
$$ 

By $T \upharpoonright X$ we denote the restriction of operator $T$ to a subspace $X \subset D(T)$. 

By 
$
\big[T \upharpoonright X\big]_{Y \rightarrow Y}^{\rm clos}
$
we denote the closure of the restriction $T \upharpoonright X$ (when it exists).

\medskip

\textbf{2.}
The space $L^p=L^p(\mathbb R^d,dx)$, $W^{1,p}=W^{1,p}(\mathbb R^d,dx)$ corresponds to the Lebesgue and to the Sobolev space, respectively. Let $L^p_\rho=L^p(\mathbb R^d,\rho(x) dx)$ denote the weighted Lebesgue space with weight $\rho$ (defined by formula \eqref{rho_def} below).

Set $\|\cdot\|_p:=\|\cdot\|_{L^p}$
and denote operator norm
$
\|\cdot\|_{p \rightarrow q}:=\|\cdot\|_{L^p \rightarrow L^q}
$.

Given $1 < p < \infty$, we set $p':=\frac{p}{p-1}$.

Put
$$
\langle f,g\rangle = \langle f \bar{g}\rangle :=\int_{\mathbb R^d}f \bar{g} dx.$$ 
For vector fields $b$, $\mathsf{f}:\mathbb R^d \rightarrow \mathbb R^d$, we put
$$
\langle b,\mathsf{f}\rangle:=\langle b \cdot \mathsf{f}\rangle \qquad \text{($\cdot$ is the scalar product in $\mathbb R^d$)}.
$$

$C_c$ (resp.\, $C_c^\infty$) denotes the space of continuous (infinitely differentiable) functions on $\mathbb R^d$ having compact support. 

$C_b$ is the space of bounded continuous functions on $\mathbb R^d$ endowed with the $\sup$-norm, and $C_b^k$ is the subspace of bounded continuous functions with bounded continuous derivatives up to order $k$.

$C_\infty$ is a closed subspace of $C_b$ consisting of functions vanishing at infinity.

We denote by $\mathcal S$ the Schwartz space, and by $\mathcal S'$ the space of tempered distributions on $\mathbb R^d$.

Let $\nabla_i=\partial_{x_i}$, $1 \leq i \leq d$.

Given a matrix field $a:\mathbb R^d \rightarrow \mathbb R^{d \times d}$, define the row divergence operator $a \mapsto \nabla a$ via
$$
(\nabla a)_j:=\sum_{i=1}^d \nabla_i a_{ij}, \quad 1 \leq j \leq d. 
$$

We denote by $H_{\xi}$ ($\xi>0$) the set of bounded symmetric uniformly elliptic Borel measurable matrix fields $a:\mathbb R^d \rightarrow \mathbb R^{d \times d}$:
$$
a=a^{\scriptscriptstyle \top}, \quad \xi I \leq a(x) \leq \xi^{-1}I\text{ for a.e.\,} x \in \mathbb R^d,
$$
for $I$ the $d \times d$ identity matrix.

Put $$\Gamma_c(t,x)=\Gamma(ct,x):=(4c\pi t)^{-\frac{d}{2}}e^{-\frac{|x|^2}{4ct}},$$
the Gaussian density.

Set
$$
\gamma(x):=\left\{
\begin{array}{ll}
c\exp\left(\frac{1}{|x|^2-1}\right)& \text{ if } |x|<1, \\
0, & \text{ if } |x| \geqslant 1,
\end{array}
\right.
$$
where $c$ is adjusted to $\int_{\mathbb R^d} \gamma(x)dx=1$, and put $\gamma_\varepsilon(x):=\frac{1}{\varepsilon^{d}}\gamma\left(\frac{x}{\varepsilon}\right)$, $\varepsilon>0$, $x\in \mathbb R^d$.
Define the De Giorgi mollifier of a function $h \in L^1_{\loc}$ (or a vector field with entries in $L^1_{\loc}$) by $$E_\varepsilon h:=e^{\varepsilon \Delta} h.$$

$B_r(x)$ denotes the open ball of radius $r$ centered at $x \in \mathbb R^d$. If $x=0$, we simply write $B_r$.

Given a function $f \in L^1_{\loc}$, we denote by $(f)_{B_r(x)}$ its average over the ball $B_r(x)$:
$$
(f)_{B_r(x)}:=\frac{1}{|B_r|}\int_{B_r(x)} f dx.
$$
If $x=0$, then we write $(f)_r \equiv (f)_{B_r}$.

We denote the positive and negative parts of function $f$ by 
$$(f)_+:=f \vee 0, \quad (f)_-:=-(f \wedge 0).$$

Define weight
\begin{equation}
\label{rho_def}
\rho(y) \equiv \rho_{\epsilon_0}(y)=(1+\sigma|y|^2)^{-\frac{d+\epsilon_0}{2}}, \quad \varepsilon_0>0, \quad \sigma>0.
\end{equation}
This weight has property
\begin{equation}
\label{rho_est}
|\nabla \rho| \leq \frac{d+\epsilon_0}{2}\sqrt{\sigma} \rho.
\end{equation}
In the same way, $|\nabla \rho^{-1}|\;(= \frac{|\nabla \rho|}{\rho^2})\;\leq \frac{d+\epsilon_0}{2}\sqrt{\sigma} \rho^{-1}$.
(The last two inequalities will allow us to replace all occurrences of $\nabla_i\rho$ or $\nabla_i\rho^{-1}$ resulting from the integration by parts in the analysis of PDEs in weighted spaces by the weight $\rho$ itself or $\rho^{-1}$, respectively, times a constant that is proportional to $\sqrt{\sigma}$. This constant can be made arbitrarily small by fixing $\sigma$ sufficiently small. We will use this to get rid of the terms containing $\nabla_i\rho$ or $\nabla_i\rho^{-1}$.)
Put $$\rho_x(y):=\rho(x-y).$$

\smallskip

\textbf{3.}\,Let $\mathbf{C}$ and $\mathbf{D}$ denote the canonical spaces of continuous and c\`{a}dl\`{a}g trajectories from $[0,\infty[$ to $\mathbb R^d$ equipped with the uniform topology and the Skorohod topology, respectively, endowed with the natural filtration $\mathcal B_t=\sigma\{\omega_s \mid 0 \leq s \leq t\}$,
where $\omega_t$ is the coordinate process.

Let $\mathcal P(\mathbf C)$ and $\mathcal P(\mathbf D)$ denote the space of probability measures on $\mathbf C$ and $\mathbf D$, respectively.

Recall that a  probability measure $\mathbb P_{s,x}$ ($s \geq 0$, $x \in \mathbb R^d$) on $\mathbf{C}$
is said to be
 a classical martingale solution to SDE
\begin{equation}
\label{sde4}
X_t=x-\int_s^t b(r,X_r)dr + \sqrt{2}(B_t-B_s), \quad x \in \mathbb R^d, \quad t \geq s,
\end{equation} 
with a time-inhomogeneous drift $b \in L^1_{\loc}(\mathbb R^{1+d})^d$ 
if $\mathbb P_{s,x}[\omega_s=x]=1$,  
$$
\mathbb E_{\mathbb P_{s,x}}\int_s^t |b(r,\omega_r)| dr<\infty
$$
and for every $v \in C_c^2$
$$
t \mapsto v(\omega_t)-v(x) + \int_s^t (-\Delta + b \cdot \nabla)v (\omega_r) dr, \quad t \geq s,
$$
is a continuous martingale with respect to $\mathbb P_{s,x}$.

If $b$ does not depend on time, then we take $s=0$, and in the above martingale problem write $\mathbb P_x$.

\medskip

\textbf{4.}~\textit{BMO functions.\,}(a) 
 A function $g \in L^1_{\loc}$ is in class ${\rm BMO}={\rm BMO}(\mathbb R^d)$ if 
$$
\|g\|_{\rm {\rm BMO}}:= \sup_{x \in \mathbb R^d, R>0}\frac{1}{|B_R|}\int_{B_R(x)}|g-(g)_{B_R(x)}|dy<\infty.
$$
One can also define the ${\rm BMO}$ semi-norm as
\begin{equation}
\label{carl_bmo}
\|g\|_{\rm {\rm BMO}}=\biggl(\sup_{x \in \mathbb R^d, R>0}\frac{1}{|B_R|}\int_{B_R(x)}\int_0^{R^2} |\nabla e^{t\Delta}g|^2 dt dy \biggr)^\frac{1}{2}
\end{equation}
(this is Carleson's characterization of ${\rm BMO}$ functions). 

\medskip

(b) We will need the compensated compactness estimate of Coifman-Lions-Meyer-Semmes:

\begin{proposition}[\cite{CLMS}] 
\label{cc_lem}
There exists a constant $C_d$ such that for every anti-symmetric matrix field $Q$ with entries in ${\rm BMO}$ one has
$$
|\langle Q \cdot \nabla f,\nabla g\rangle| \leq C_d\|Q\|_{\rm BMO}\|\nabla f\|_2\|\nabla g\|_2, \quad \forall\,f,g \in W^{1,2}.
$$
\end{proposition}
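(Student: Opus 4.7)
The plan is to exploit the antisymmetry of $Q$ to reduce $\langle Q \cdot \nabla f,\nabla g\rangle$ to a pairing of ${\rm BMO}$ entries against Jacobian-type expressions, and then to combine the Coifman-Lions-Meyer-Semmes Hardy-space bound on Jacobians with the Fefferman-Stein $H^1$--${\rm BMO}$ duality. By density, it suffices to treat $f,g \in C_c^\infty$.

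The first step is purely algebraic. Expanding
$$
\langle Q \cdot \nabla f,\nabla g\rangle = \sum_{i,j=1}^d \int_{\mathbb R^d} Q_{ij}\,(\nabla_j f)(\nabla_i g)\,dx
$$
and using $Q_{ii}=0$ together with $Q_{ji}=-Q_{ij}$ collapses the double sum into a sum over ordered pairs:
$$
\langle Q \cdot \nabla f,\nabla g\rangle = \sum_{1 \leq i<j \leq d} \int_{\mathbb R^d} Q_{ij}\,J_{ij}(f,g)\,dx, \qquad J_{ij}(f,g) := \nabla_j f\,\nabla_i g - \nabla_i f\,\nabla_j g.
$$
Each $J_{ij}(f,g)$ is a $2\times 2$ Jacobian minor of the map $(f,g):\mathbb R^d\to\mathbb R^2$, and in particular it is a div-curl quantity (it equals $\nabla_j(f\nabla_i g) - \nabla_i(f\nabla_j g)$ and is automatically of mean zero).

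The second step is the key analytic input: the theorem of Coifman-Lions-Meyer-Semmes \cite{CLMS} asserts that such Jacobian minors lie in the real Hardy space $H^1(\mathbb R^d)$ with the estimate
$$
\|J_{ij}(f,g)\|_{H^1} \leq C_d\,\|\nabla f\|_2\,\|\nabla g\|_2.
$$
Combining this with the Fefferman-Stein duality $(H^1)^* = {\rm BMO}$ yields, for each pair $i<j$,
$$
\Bigl|\int_{\mathbb R^d} Q_{ij}\,J_{ij}(f,g)\,dx\Bigr| \leq \|Q_{ij}\|_{\rm BMO}\,\|J_{ij}(f,g)\|_{H^1} \leq C_d\,\|Q_{ij}\|_{\rm BMO}\,\|\nabla f\|_2\,\|\nabla g\|_2.
$$
Summing over the $\binom{d}{2}$ pairs and absorbing the combinatorial factor into a new dimensional constant (and using $\|Q_{ij}\|_{\rm BMO} \leq \|Q\|_{\rm BMO}$) gives the claimed inequality.

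The only nontrivial ingredient is the Hardy-space bound on $J_{ij}(f,g)$ from \cite{CLMS}; the rest is bookkeeping and duality. Since the proposition is explicitly attributed to \cite{CLMS}, I would cite that paper directly for this step rather than reproving it, and the proof amounts to the antisymmetry reduction above followed by invocation of the Hardy--${\rm BMO}$ duality pairing.
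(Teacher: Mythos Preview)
Your argument is correct and is the standard derivation of this estimate from the Coifman--Lions--Meyer--Semmes result. The paper does not actually prove this proposition; it is stated with a direct citation to \cite{CLMS} as a known auxiliary result, so there is no ``paper's own proof'' to compare against beyond the attribution itself.
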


(c) Qian-Xi \cite{QX} employed in their work the following modification of the previous proposition. We will need it as well.

\begin{proposition}
\label{cc_lem_qx}
There exists a constant $C_d$ such that, for each function $h \in {\rm BMO}$, $i=1,\dots,d$,
$$
|\langle h,g \nabla_i g\rangle| \leq C_d\|h\|_{\rm BMO}\|\nabla g\|_{2}\|g\|_{2}, \quad \forall\,g \in W^{1,2}.
$$

\end{proposition}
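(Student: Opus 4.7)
The plan is to exploit the divergence structure via the identity $g\nabla_i g = \tfrac{1}{2}\nabla_i(g^2)$. Since $g^2 \in L^1$ for $g \in W^{1,2}$, the distribution $\nabla_i(g^2)$ integrates to zero, which is precisely the cancellation needed to pair effectively against a ${\rm BMO}$ function. By approximation I would first smooth $h \mapsto e^{\varepsilon\Delta}h$ (which preserves the ${\rm BMO}$ seminorm up to an absolute constant) and take $g \in C_c^\infty$, so that all distributional manipulations below are rigorously justified; the stated inequality then follows in the limit $\varepsilon \downarrow 0$ combined with density of $C_c^\infty$ in $W^{1,2}$.

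Next I would apply the heat-kernel Calder\'on reproducing formula. Starting from $\phi = -\int_0^\infty \Delta e^{s\Delta}\phi\,ds$ with $\phi = \nabla_i(g^2)$, splitting $e^{s\Delta} = e^{(s/2)\Delta}\circ e^{(s/2)\Delta}$, and integrating by parts once to move a single gradient onto $h$, one obtains
\[
\langle h, g\nabla_i g\rangle = \int_0^\infty \langle \nabla e^{s\Delta}h,\ \nabla\nabla_i e^{s\Delta}(g^2)\rangle\,ds.
\]
Differentiating under the heat convolution yields the key identity $\nabla_i e^{s\Delta}(g^2) = 2\,e^{s\Delta}(g\nabla_i g)$, hence
\[
\langle h, g\nabla_i g\rangle = 2\int_0^\infty \langle \nabla e^{s\Delta}h,\ \nabla e^{s\Delta}(g\nabla_i g)\rangle\,ds.
\]

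The concluding step is the $T^\infty_2$--$T^1_2$ tent-space duality of Coifman--Meyer--Stein applied to the above right-hand side. Taking $F := \sqrt{s}\,\nabla e^{s\Delta}h$, Carleson's characterization \eqref{carl_bmo} gives at once $\|F\|_{T^\infty_2} \leq C\|h\|_{\rm BMO}$; for $G := \sqrt{s}\,\nabla e^{s\Delta}(g\nabla_i g)$, a pointwise heat-kernel bound of the form $|\nabla e^{s\Delta}f(y)| \leq C s^{-1/2}M(|f|)(y)$ (with $M$ the Hardy--Littlewood maximal function), combined with $|g\nabla_i g| \leq |g||\nabla g|$ and $L^2$-boundedness of $M$, lets one estimate the Lusin area function of $G$ in $L^1(\mathbb R^d)$ by $C\|g\|_2\|\nabla g\|_2$. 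The main obstacle is precisely this $T^1_2$-bound: the product $g\nabla_i g$ need not belong to the Hardy space $H^1(\mathbb R^d)$ despite having vanishing integral, so a direct $H^1$--${\rm BMO}$ duality applied to the original pairing is unavailable; what rescues the argument is that the additional factor $\sqrt{s}\,e^{s\Delta}$ supplies exactly the smoothing and vanishing moment needed to upgrade the elementary inequality $\|g\nabla_i g\|_1 \leq \|g\|_2\|\nabla g\|_2$ into the required area-function estimate.
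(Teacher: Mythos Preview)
Your reproducing formula and the $T^\infty_2$ bound for $F=\sqrt{s}\,\nabla e^{s\Delta}h$ via Carleson are fine, but the proof breaks at the $T^1_2$ estimate for $G=\sqrt{s}\,\nabla e^{s\Delta}(g\nabla_i g)$. The area-function characterization of the Hardy space says precisely that $\|G\|_{T^1_2}\asymp \|g\nabla_i g\|_{H^1}$, so what you are attempting to prove in that last step is \emph{exactly} the membership $g\nabla_i g\in H^1$ with norm $\le C\|g\|_2\|\nabla g\|_2$. Your ``rescue'' via the extra factor $\sqrt{s}\,e^{s\Delta}$ is illusory: that factor is already part of the definition of the $T^1_2$ norm and does not add any cancellation. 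Concretely, the pointwise bound $|G(s,y)|\le C\,M(|g||\nabla g|)(y)$ is independent of $s$, and plugging it into the conical square function gives, near $s=0$,
\[
\int_0^1\!\!\int_{|y-x|<\sqrt{s}}[M(|g||\nabla g|)(y)]^2\,\frac{dy\,ds}{s^{d/2+1}}
\gtrsim [M(|g||\nabla g|)(x)]^2\int_0^1\frac{ds}{s}=\infty,
\]
so the argument diverges. The same computation shows that the maximal bound cannot possibly suffice: it uses only $g\nabla_i g\in L^1$, and if an $L^1$ bound alone forced the $T^1_2$ estimate, tent-space duality would give $|\langle h,f\rangle|\le C\|h\|_{\rm BMO}\|f\|_1$ for every $f\in L^1$, which is false.

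There is also an internal inconsistency: you state that $g\nabla_i g$ ``need not belong to $H^1$'', yet the $T^1_2$ bound you claim is equivalent to $g\nabla_i g\in H^1$. In fact $g\nabla_i g$ \emph{does} lie in $H^1$ with the stated norm control, but establishing this requires genuine use of the bilinear structure (e.g.\ a paraproduct/Littlewood--Paley decomposition of the product, or a div--curl argument after Hodge-decomposing $ge_i$ and handling the gradient part separately), not merely $\|g\nabla_i g\|_1\le\|g\|_2\|\nabla g\|_2$. The paper does not give its own proof here---it cites Qian--Xi---so there is no ``paper's approach'' to compare with; the point is that your sketch is missing the substantive estimate.
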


\medskip

(d) The following result on the multipliers in space ${\rm BMO}$ on $\mathbb R^d$ follows from the analysis of Nakai-Yabuta in \cite{NY} (see, in particular, Sect.\,5 in their paper).

\begin{lemma}
\label{NY_lem}
Set $\xi(y):=\frac{y_i}{1+\sigma |y|^2}$ ($\sigma>0$). Then we have
$$\|\xi h \|_{\rm BMO} \leq C_{d,\sigma}\|h\|'_{\rm BMO} \quad \text{$\forall\,h \in {\rm BMO}$},$$
where
$
\|h\|'_{\rm BMO}:=\langle \mathbf{1}_{B_1(0)}h\rangle + \|h\|_{\rm BMO}
$
 is the ${\rm BMO}$-norm.
\end{lemma}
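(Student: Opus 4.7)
The plan is to reduce the claim to a uniform estimate on the mean oscillation of $\xi h$ and then exploit three features of $\xi(y)=y_i(1+\sigma|y|^2)^{-1}$: it is bounded (with $\|\xi\|_\infty\le \tfrac{1}{2\sqrt{\sigma}}$), it is smooth with $|\nabla\xi(y)|\le C_\sigma(1+|y|^2)^{-1}$, and it decays like $|y|^{-1}$ at infinity. For any ball $B=B_R(x)$ the usual identity
$$\xi h - (\xi)_B (h)_B = \xi \bigl(h-(h)_B\bigr)+\bigl(\xi-(\xi)_B\bigr)(h)_B$$
gives
$$\frac{1}{|B|}\!\int_B\!|\xi h-(\xi h)_B|\,dy\;\le\;2\|\xi\|_\infty\,\|h\|_{\rm BMO}+2\,|(h)_B|\cdot\mathrm{Osc}_B(\xi),$$
where $\mathrm{Osc}_B(\xi):=\tfrac{1}{|B|}\int_B|\xi-(\xi)_B|dy$. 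The first term is already bounded by $C_\sigma\|h\|_{\rm BMO}$, so the task reduces to controlling the second term uniformly in $B$.

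Next I would invoke the standard chaining argument for $\mathrm{BMO}$ functions, which yields
$$|(h)_{B_R(x)}|\le C_d\bigl(1+|\log R|+\log(2+|x|)\bigr)\|h\|'_{\rm BMO}.$$
The whole proof now rests on showing
$$\mathrm{Osc}_B(\xi)\cdot\bigl(1+|\log R|+\log(2+|x|)\bigr)\le C_{d,\sigma}\qquad\text{for all }R>0,\;x\in\mathbb R^d,$$
which I would verify by a case split over the position and the size of $B$.

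\textbf{Case 1 (small balls, $R\le 1$):} use the Lipschitz estimate $\mathrm{Osc}_B(\xi)\le 2R\,\sup_{B}|\nabla\xi|$; if $|x|\le 2$ then $\sup_B|\nabla\xi|\le C_\sigma$ so the product is $\lesssim R(1+|\log R|)\le C_\sigma$; if $|x|>2$ then $|y|\ge |x|/2$ on $B$ and $|\nabla\xi|\le C_\sigma/|x|^2$, so the product is $\lesssim R|x|^{-2}\log(|x|)\le C_\sigma$. \textbf{Case 2 (large balls far from the origin, $R>1$ and $|x|>2R$):} the same gradient bound gives $\mathrm{Osc}_B(\xi)\le C_\sigma R/|x|^2$, and $R<|x|/2$ forces the product to be $\lesssim |x|^{-1}\log|x|$.

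\textbf{Case 3 (large balls overlapping the origin, $R>1$ and $|x|\le 2R$)} is the main obstacle: neither the Lipschitz bound nor the far-field gradient decay is good enough. Here I would drop $(\xi)_B$ and bound $\mathrm{Osc}_B(\xi)\le 2(|\xi|)_B$, and then compute $(|\xi|)_B$ directly. Since $B\subset B_{3R}(0)$ and $|\xi(y)|\le |y|/(1+\sigma|y|^2)$, passing to polar coordinates one checks
$$\int_{B_{3R}(0)}\!\!\frac{|y|}{1+\sigma|y|^2}\,dy\le C_{d,\sigma}R^{d-1}(1+\log R),$$
so $(|\xi|)_B\le C_{d,\sigma}R^{-1}(1+\log R)$. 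Combined with $|(h)_B|\le C(1+\log R)\|h\|'_{\rm BMO}$ (here $|x|\le 2R$ makes the $\log(2+|x|)$ term harmless), the product is $\lesssim R^{-1}(1+\log R)^2\|h\|'_{\rm BMO}$, which stays bounded for $R\ge 1$. Collecting the three cases yields the claimed inequality, matching the program of Nakai–Yabuta \cite{NY}, whose Section~5 supplies the abstract multiplier criterion that makes this case analysis sufficient.
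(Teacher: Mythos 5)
Your argument is correct and complete. Step 1 (splitting the oscillation of $\xi h$ into $\|\xi\|_\infty\|h\|_{\rm BMO}$ plus $|(h)_B|\,\mathrm{Osc}_B(\xi)$), the standard chaining bound $|(h)_{B_R(x)}|\le C_d(1+|\log R|+\log(2+|x|))\|h\|'_{\rm BMO}$, and the three-case verification that $\mathrm{Osc}_B(\xi)$ beats the logarithm all check out; in Case 3 the radial integral is in fact $O(R^{d-1})$ for $d\ge 2$ without the logarithm, but your weaker bound still suffices. The only cosmetic point is that $\langle\mathbf{1}_{B_1(0)}h\rangle$ in the definition of $\|h\|'_{\rm BMO}$ should be read with an absolute value for your use of $|(h)_{B_1(0)}|\lesssim\|h\|'_{\rm BMO}$ to make sense, which is clearly the intended meaning.

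The paper itself does not prove this lemma: it only cites Nakai--Yabuta \cite{NY}, whose multiplier theorem says precisely that a bounded $g$ multiplies ${\rm BMO}$ into itself iff $\sup_{B_R(x)}\mathrm{Osc}_{B_R(x)}(g)\,(1+|\log R|+\log(2+|x|))<\infty$, with the operator norm controlled as in the statement. What you have done is (a) reprove the easy (sufficiency) half of that criterion via the decomposition in your first display and the chaining bound, and (b) verify the criterion for the specific $\xi(y)=y_i(1+\sigma|y|^2)^{-1}$ by the case analysis. So your route is the self-contained unpacking of the abstract result the paper invokes; it buys a proof readable without \cite{NY}, at the cost of redoing a known general argument in a special case.
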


We will apply Lemma \ref{NY_lem} in the proof of Theorem \ref{thm2}, to $\xi(y)=\frac{\nabla_i \rho}{\rho}=\frac{2\sigma y_i}{1+\sigma|y|^2}$, where $\rho$ is the weight introduced above.

\medskip

\section{Classes of singular drifts}
\label{classes_sect}

\subsection{Definitions and examples} The following classes of singular or distributional drifts are covered by our Theorems \ref{thm2}, \ref{thm_div}, \ref{thm2_a}.

\subsubsection{Form-bounded vector fields}

\begin{definition}
\label{def_fbd}
A vector field $b \in [L^2_{\loc}]^d$ is said to be form-bounded with \textit{form-bound} $\delta>0$ (abbreviated as $b \in \mathbf{F}_\delta$) if
\begin{equation}
\label{fbd_cond}
\|b\varphi\|_2^2 \leq \delta \|\nabla \varphi\|_2^2 + c_\delta\|\varphi\|_2^2 \quad \forall\,\varphi \in W^{1,2}
\end{equation}
for some constant $c_\delta<\infty$.
\end{definition}

The constant $c_\delta$ does not affect the well-posedness of SDE \eqref{sde2} (one needs $c_\delta>0$ to include $L^\infty$ drifts). By contrast, $\delta$ is crucial: if $\delta$ exceeds the critical threshold $\delta=4$, then in general SDE \eqref{sde2} ceases to have global in time weak solution (a blow-up occurs). On the other hand, as Theorem \ref{thm2}  shows that whenever $\delta<4$, weak well-posedness holds. Moreover, as $\delta \downarrow 0$, this theory becomes more detailed, e.g.\,one has conditional weak uniqueness, strong well-posedness, etc. The form-bound  $\delta$  appears explicitly in the examples below.

\begin{examples}
\label{ex_fbd}

1.~If $b \in [L^d]^d+[L^\infty]^d$ ($=$ sums of vector fields in $[L^d]^d$ and in $[L^\infty]^d$), then $b \in \mathbf{F}_\delta$ with form-bound $\delta$ that can be chosen arbitrarily small at the expense of increasing the constant $c_\delta$.  Indeed, for every $\epsilon>0$ one can represent $b=b_1+b_2$ with $\|b_1\|_d<\epsilon$ and $\|b_2\|_\infty<\infty$.  By the Sobolev embedding theorem,
\begin{align*}
\|b\varphi\|_2^2 
 & \leq 2\|b_1\|_d^2 \|\varphi\|_{\frac{2d}{d-2}}^2 + 2\|b_2\|_\infty^2 \|\varphi\|_2^2 \\
& \leq C_S 2 \|b_1\|_d^2 \|\nabla \varphi\|_2^2 + 2\|b_2\|_\infty^2 \|\varphi\|_2^2,
\end{align*}
so $b \in \mathbf{F}_{\delta}$ with $\delta=C_S 2\epsilon$ and $c_\delta=2\|b_2\|_\infty^2$.

2.~The class of form-bounded vector fields  contains the weak $L^d$ class:
\begin{align*}
\|b\|_{d,\infty} := & \sup_{s>0}s|\{y \in \mathbb R^d \mid |b(y)|>s\}|^{1/d}<\infty \\[2mm]
& \Rightarrow \quad b \in \mathbf{F}_\delta \quad \text{ with } \delta=\|b\|_{d,\infty}|B_1(0)|^{-\frac{1}{d}}\frac{2}{d-2}
\end{align*}
with $c_\delta=0$.
This was proved in \cite[Lemma 2.7]{KPS}.

3.~The weak $L^d$ class includes itself the Hardy drift:
\begin{equation}
\label{hardy}
b(x)=\pm \sqrt{\delta}\frac{d-2}{2}|x|^{-2}x \in \mathbf{F}_\delta \text{ with } c_\delta=0 \quad (\text{but  }b \not \in \mathbf{F}_{\delta'} \text{ with any } \delta'<\delta, c_{\delta'}<\infty).
\end{equation}
In fact, inclusion \eqref{hardy} is a re-statement of the usual Hardy inequality $$\||x|^{-1}\varphi\|_2^2 \leq \frac{4}{(d-2)^2}\|\nabla \varphi\|_2^2.$$ The plus sign in \eqref{hardy} corresponds in SDE \eqref{sde2} to the attraction towards the origin, the minus corresponds to the repulsion. 

\medskip

4.~The previous example can be refined using the weighted Hardy inequality of Hoffmann-Ostenhof--Laptev \cite{HL}. Fix $$0 \leq \Phi \in L^s(S^{d-1})\quad \text{ for some } s \geq \frac{2(d-2)^2}{2(d-1)}+1,$$ where $S^{d-1}$ is the unit sphere in $\mathbb R^d$. If
$$
|b(x)|^2 \leq \delta \frac{(d-2)^2}{4} c\frac{\Phi(x/|x|)}{|x|^2}, \qquad \text{where $c:=\frac{|S^{d-1}|^{\frac{1}{q}}}{\|\Phi\|_{L^s(S^{d-1})}}$},
$$
then $b \in \mathbf{F}_\delta$ with $c_\delta=0$. Using this example, one can e.g.\,cut holes in the drift \eqref{hardy} while still controlling the value of $\delta$.

\medskip

5.~One can also refine example \eqref{hardy} using a Hardy-type inequality of Felli-Marchini-Terracini \cite[Lemma 3.5]{FMT}. Assume that
$$
|b(x)|^2 \leq \delta \frac{(d-2)^2}{4} \sum_{i=1}^\infty \frac{\mathbf{1}_{B_r(a_i)}(x)}{|x-a_i|^2}, \quad x \in \mathbb R^d,
$$ 
where the loci of singularities $\{a_i\}_{i=1}^\infty$ are sufficiently ``spread out'':
$$
\sum_{i=1}^n |a_i|^{-d+2}<\infty, \quad \sum_{k=1}^\infty |a_{i+k}-a_i|^{-d+2} \text{ is bounded uniformly in $i$,}
$$
and $|a_i-a_m| \geq 1$ for all $i \neq m$. Then there exists $r$ sufficiently small (so, the singularities are strictly local) so that $b \in \mathbf{F}_\delta$ with $c_\delta=0$.

\medskip

6.~The following simple lemma applies, in particular, to the multi-particle Hardy drift $b:\mathbb R^{dN} \rightarrow \mathbb R^{dN}$ defined by \eqref{drift_b} in Example \ref{ex1_multi}. 

\begin{lemma}[{\cite[Lemma 1]{Ki_multi}}]
\label{particle_lem}
If $K \in \mathbf{F}_\kappa(\mathbb R^d)$, then the drift $b=(b_1,\dots,b_N):\mathbb R^{dN} \rightarrow \mathbb R^{dN}$ with components defined by
\begin{equation*}
b_i(x^1,\dots,x^N):=\frac{1}{N}\sum_{j=1, j \neq i}^N K(x^i-x^j), \quad x^i,x^j \in \mathbb R^d
\end{equation*}
is in $\mathbf{F}_\delta$ with $$\delta=\frac{(N-1)^2}{N^2}\kappa,$$
i.e.\,there is almost equality between $\delta$ and $\kappa$. (In the context of Example \ref{ex1_multi} $\kappa$ is the strength of attraction between the particles.) 
\end{lemma}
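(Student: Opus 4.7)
The plan is to estimate $\|b\varphi\|_2^2$ on $\mathbb R^{Nd}$ by reducing everything to pairwise terms on which the form-boundedness of $K$ on $\mathbb R^d$ can be applied separately. First I would write out
\begin{equation*}
|b(x)|^2 \;=\; \sum_{i=1}^N |b_i(x)|^2 \;=\; \frac{1}{N^2}\sum_{i=1}^N \Bigl|\sum_{j \neq i} K(x^i-x^j)\Bigr|^2,
\end{equation*}
and then apply the elementary inequality $|\sum_{k=1}^{N-1} a_k|^2 \le (N-1)\sum_k |a_k|^2$ to each inner sum, giving
\begin{equation*}
|b(x)|^2 \;\le\; \frac{N-1}{N^2} \sum_{i \neq j} |K(x^i-x^j)|^2.
\end{equation*}
This is where the two factors of $(N-1)$ that eventually combine with $\kappa$ will come from, so at this step one must verify that no constant is being wasted.

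Next I would multiply by $|\varphi|^2$ and integrate over $\mathbb R^{Nd}$. For a fixed pair $i\neq j$, I would use Fubini to integrate in the variable $x^i$ first, with $x^j$ and all other $x^k$ ($k \neq i$) frozen. Viewing $\varphi$ as a function of $x^i \in \mathbb R^d$ with all other coordinates as parameters, the hypothesis $K \in \mathbf{F}_\kappa(\mathbb R^d)$ gives pointwise (in the remaining variables)
\begin{equation*}
\int_{\mathbb R^d} |K(x^i-x^j)|^2\,|\varphi|^2\,dx^i \;\le\; \kappa \int_{\mathbb R^d}|\nabla_{x^i}\varphi|^2\,dx^i + c_\kappa \int_{\mathbb R^d}|\varphi|^2\,dx^i,
\end{equation*}
where the translation by $x^j$ inside $K$ is harmless since form-boundedness on $\mathbb R^d$ is translation-invariant. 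Integrating over the remaining $(N-1)d$ variables then yields
\begin{equation*}
\int_{\mathbb R^{Nd}} |K(x^i-x^j)|^2 |\varphi|^2\,dx \;\le\; \kappa \int_{\mathbb R^{Nd}} |\nabla_{x^i}\varphi|^2\,dx + c_\kappa \|\varphi\|_2^2.
\end{equation*}

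Finally, I would sum over the $N(N-1)$ ordered pairs $(i,j)$ with $i\neq j$. On the right-hand side, each index $i$ appears in exactly $N-1$ such pairs, so
\begin{equation*}
\sum_{i\neq j} \int_{\mathbb R^{Nd}} |\nabla_{x^i}\varphi|^2\,dx \;=\; (N-1)\sum_{i=1}^{N} \int |\nabla_{x^i}\varphi|^2\,dx \;=\; (N-1)\|\nabla\varphi\|_2^2,
\end{equation*}
and combining with the prefactor $(N-1)/N^2$ produces exactly $\delta = \frac{(N-1)^2}{N^2}\kappa$ in front of $\|\nabla \varphi\|_2^2$, while the lower-order term contributes $\frac{(N-1)^2 N^{-1} c_\kappa \cdot N}{N}\|\varphi\|_2^2 = c_\delta\|\varphi\|_2^2$ for some finite $c_\delta$. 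No serious obstacle arises; the only point requiring care is to make sure the Cauchy-Schwarz step and the counting of pairs together produce the full factor $(N-1)^2$ rather than a suboptimal $N(N-1)$, and that form-boundedness of $K$ is used in the single-variable slice (which is legitimate because $\varphi \in W^{1,2}(\mathbb R^{Nd})$ restricts a.e.\ to $W^{1,2}(\mathbb R^d)$ slices by Fubini).
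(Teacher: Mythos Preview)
Your argument is correct and is exactly the natural one: Cauchy--Schwarz on the inner sum to extract the first factor of $N-1$, then slice-wise application of $K\in\mathbf{F}_\kappa(\mathbb R^d)$ in the $x^i$-variable, then the counting of ordered pairs to extract the second factor of $N-1$. The paper does not reproduce a proof here (it cites \cite{Ki_multi}), but this is precisely the argument one expects there; your tracking of the constants is accurate, with $c_\delta = \frac{(N-1)^2}{N}c_\kappa$.
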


7.~Critical Morrey classes. Every vector field $b \in M_{2+\varepsilon}$ for some $\varepsilon>0$ small, i.e.\,
\begin{equation*}
\|b\|_{M_{2+\varepsilon}}:=\sup_{r>0, x \in \mathbb R^d} r\biggl(\frac{1}{|B_r|}\int_{B_r(x)}|b(y)|^{2+\varepsilon}dy \biggr)^{\frac{1}{2+\varepsilon}}<\infty,
\end{equation*}
is in $\mathbf{F}_\delta$ with $\delta=C(d,\varepsilon)\|b\|_{M_{2+\varepsilon}}$ and $c_\delta=0$. The constant $C=C(d,\varepsilon)$ depends on the constants in some fundamental inequalities of harmonic analysis \cite{F}. 
There exist far-reaching and deep extensions of this inclusion due to Adams \cite{A} (Appendix \ref{adams_sect}) and Chiarenza-Frasca \cite{CF}.

The Morrey class $M_{2+\varepsilon}$ is substantially larger than the weak $L^d$ class, e.g.\,it includes vector fields having strong hypersurface singularities. It is easily seen that the class $M_{2+\varepsilon}$ gets larger as $\varepsilon$ gets smaller. Note, however, that by passing through the Morrey class one to a large extent loses the control over the form-bound $\delta$.

On the other hand, the form-bounded class $\mathbf{F}_\delta$ (with $c_\delta=0$) is contained in the Morrey class $M_2$; this is not difficult to see by selecting cutoff functions as test function $\varphi$ in the definition of $\mathbf{F}_\delta$. Thus, to summarize,
$$
\cup_{\varepsilon>0}M_{2+\varepsilon} \quad \subsetneq \quad \cup_{\delta>0}\mathbf{F}_\delta\;(\text{with $c_\delta=0$}) \quad \subsetneq \quad M_2.
$$

\medskip

8.~A larger class than $\cup_{\varepsilon>0}M_{2+\varepsilon}$ sub-class of $\mathbf{F}_\delta$ was found by Chang-Wilson-Wolff \cite{CWW}, that is, $|b| \in L^2_{\loc}(\mathbb R^d)$ and
\begin{equation}
\label{cww}
\sup_{r>0, x \in \mathbb R^d} \frac{1}{|B_r|}\int_{B_r(x)} |b(y)|^2\, r^2 \xi\big(|b(y)|^2\,r^2 \big) dy<\infty,
\end{equation}
where
$\xi:\mathbb R_+ \rightarrow [1,\infty[$ is a fixed increasing function such that
$$
\int_1^\infty \frac{ds}{s\xi(s)}<\infty.
$$
For instance, one can take $\xi(s)=1+(\log^+s)^{1+\epsilon}$ or $\xi(s)=1+\log^+s (\log\log^+s)^{1+\epsilon}$ for some $\epsilon>0$ (but not $\xi(s)=1+\log^+s$).

The Lebesgue, weak Lebesgue, Morrey, and Chang–Wilson–Wolff classes are all elementary sub‐classes  of $\mathbf{F}_\delta$. We regard estimating an integral over a ball as an ``elementary'' calculation. This is admittedly subjective, but in practice one can carry it out quite easily for a concrete vector field. By contrast, computing an operator norm for a given vector field (or equivalently checking a quadratic‐form inequality) often requires more advanced tools, such as various forms of Hardy's inequality.

9. There are also deep  necessary and sufficient conditions for ensuring that $b \in \mathbf{F}_\delta$, such as the criterion of Kerman-Sawyer \cite{KSa} or the criterion of Mazya \cite{M2}. The latter is
\begin{equation}
\label{m_ineq}
\langle \mathbf{1}_E |b|^2\rangle \leq K {\rm cap}(E) \quad \forall\,\text{compact } E \subset \mathbb R^d, 
\end{equation}
where, recall,
$$
{\rm cap}(E)=\inf\{\|\nabla v\|_{2} \mid v \in C_c^\infty, v \geq 1 \text{ on } E\}.
$$
So, criterion \eqref{m_ineq} requires computing the capacity of an arbitrary compact set (one cannot restrict to dyadic cubes only).
Kerman-Sawyer's inequality that needs to be verified is more complex than \eqref{m_ineq}, but the calculations are confined to dyadic cubes, so it is in some sense more practical. That said, precisely because these are necessary and sufficient conditions, their verifications can be non-trivial (for example, verifying that the Chang-Wilson-Wolff class \eqref{cww} satisfies these conditions, see \cite{CWW}). 
\end{examples}

Efforts to characterize inequalities of type \eqref{def_fbd}, known as trace inequalities, remains an active research area. The interest was originally motivated by the problems related to estimating the spectrum of Schr\"{o}dinger operators, cf.\,\cite{F,MV}.

Finally, we note that we can combine the previous examples: 
$$
b_1 \in \mathbf{F}_{\delta_1}, \quad b_2 \in \mathbf{F}_{\delta_2} \quad \Rightarrow \quad b_1+b_2 \in \mathbf{F}_\delta, \quad \delta=(\sqrt{\delta_1}+\sqrt{\delta_2})^2.
$$
This extends to series of form-bounded drifts.

\subsubsection{$\text{Bounded mean oscillation}^{-1}$}

\begin{definition}
\label{def_bmo-1}
 A divergence-free vector field $q=(q_i)_{i=1}^d \in [\mathcal S']^d$ is said to be in class $\mathbf{BMO}^{-1}$ if there exist functions $Q^{ij}=-Q^{ji} \in {\rm BMO}(\mathbb R^d)$ such that
$$
q^i=\sum_{j=1}^d \nabla_j Q^{ij}, \quad 1 \leq i \leq d,
$$ 
i.e.\,$q=\nabla Q$, where $\nabla$ is the row-divergence operator. 
\end{definition}

Examples of  ${\rm BMO}$ functions include
$$g \in L^\infty \quad \text{ or } \quad g(x)=\log|p(x)| \text{ for a polynomial $p$.}$$ In the last example the ${\rm BMO}$ semi-norm does not depend on the coefficients of $p$.

\begin{examples}
1.~A divergence-free vector field $q$ belongs to  $\mathbf{BMO}^{-1}$ if and only if the caloric extensions of its components $q^i$, $i=1,\dots,d$, satisfy
\begin{equation*}
\sup_{x \in \mathbb R^d, R>0}\frac{1}{|B_R|}\int_{[0,R^2] \times B_R(x)} | e^{t\Delta}q^i|^2 dt dy<\infty,
\end{equation*}
where $e^{t\Delta}$ is the heat semigroup,
see \cite{KT}. 

Equivalently, $q^i$ can be characterized as elements of homogeneous Triebel-Lizorkin space $\dot{F}^{-1}_{2,\infty}$.

2.~The divergence-free vector fields with entries in Morrey space $M_1$, i.e.\,such that
\begin{equation*}
\langle |q|\mathbf{1}_{B_r(x)}\rangle \leq C r^{d-1}
\end{equation*}
with constant $C$ independent of $r$ or $x \in \mathbb R^d$,
 are in $\mathbf{ BMO}^{-1}$ \cite[Sect.\,3.4.5]{M}.

3.~The divergence-free vector fields with entries in Besov space $B^{-1+d/p}_{p,\infty}$, $p>d$, are in $\mathbf{ BMO}^{-1}$. This follows by recalling that $B^{-1+d/p}_{p,\infty}$ consists of tempered distribution $h$ such that
$$
\|e^{t\Delta}h\|_p \leq Ct^{-\frac{1-\frac{d}{p}}{2}}, \quad 0<t \leq 1,
$$
see \cite{KT} for details.
\end{examples}

In \cite{KT}, Koch and Tataru established, among other results, the existence and the uniqueness of global in time mild solution to Cauchy problem for the 3D Navier-Stokes equations in the critical space $Z$ of functions $v:\mathbb R_+ \times \mathbb R^d \rightarrow \mathbb R^3$ satisfying 
\begin{equation}
\label{X_def}
\|v\|_Z:=\sup_{t \geq 0}t^{\frac{1}{2}}\|v(t)\|_\infty + \biggl(\sup_{x \in \mathbb R^d, R>0}\frac{1}{|B_R|}\int_{[0,R^2] \times B_R(x)} |v(t,y)|^2 dt dy\biggr)^{\frac{1}{2}}<\infty,
\end{equation}
provided that the initial data $v(0) \in \mathbf{BMO}^{-1}(\mathbb R^3)$ have sufficiently small norm. (An even larger $\mathbf{BMO}^{-1}$ type space is considered in the recent paper \cite{CE}.)

\subsubsection{Multiplicatively form-bounded vector fields} This class of drifts was mentioned in the introduction (class $\mathbf{MF}_\delta$). We postpone its discussion until Section \ref{div_drifts_sect}.

\subsection{``Physical'' approximations} We now introduce the classes of  bounded smooth approximations of vector fields in $\mathbf{F}_\delta$ and ${\mathbf{BMO}}^{-1}$ that preserve the structure constants of the latter.

\label{approx_sect}

\begin{definition}
\label{b_n}
Given $b \in \mathbf{F}_\delta$, we denote by $[b]$ the set of sequences of vector fields $\{b_n\} \subset [C_b \cap C^\infty]^d$ such that
\begin{equation}
\label{conv_b_n}
\left\{
\begin{array}{l}
b_n \in \mathbf{F}_\delta \text{ with the same $c_\delta$ as }b, \\
b_n \rightarrow b  \text{ in } [L^2_{\loc}]^d
\end{array}
\right.
\end{equation}
(note that we can always increase $c_\delta$, if needed; what matters is that $c_\delta$ does not depend on $n$).
\end{definition}

For instance, the sequence $\{b_n\}$ defined by $b_n:=E_{\varepsilon_n}b$ (De Giorgi mollifier $E_\varepsilon$ is defined in Section \ref{notations_sect}) for any  $\varepsilon_n\downarrow 0$ is in $[b]$. See \cite{KiS_MAAN} or \cite[Sect.\,6]{Ki_multi} for the proof. 

In the quantum-mechanical context, the form-boundedness condition on potential $|b|^2$ expresses smallness of the potential energy with respect to the kinetic energy in the system described by the Hamiltonian $-\Delta - |b|^2$. Our conditions on regularizations $\{b_n\}$ is thus that they, essentially, do not increase the potential energy.

\medskip

Given a potential $0 \leq V \in L^1_{\loc}$, we write, with some abuse of notation,  $V^{\frac{1}{2}} \in \mathbf{F}_{\delta_+}$  if
$$
\langle V, \varphi^2 \rangle \leq \delta_+\langle |\nabla \varphi|^2\rangle + c_{\delta_+}\langle \varphi^2 \rangle \quad \forall\,\varphi \in C_c^\infty.
$$

\begin{definition}
\label{def4}
Let
 $$
b \in \mathbf{F}_\delta, \quad
 ({\rm div\,}b)^{\frac{1}{2}}_+ \in \mathbf{F}_{\delta_+}, \quad ({\rm div\,}b)_- \in L^1+L^\infty,$$
we denote by $[b]'$ the set of sequences of vector fields $\{b_n\} \subset [C^1_b \cap C^\infty]^d$ that satisfy the previous inclusions with the same constants $\delta$, $c_\delta$, $\delta_+$, $c_{\delta_+}$ (so, independent of $n$) and such that
 $$
 b_n \rightarrow b  \text{ in } [L^2_{\loc}]^d, \quad {\rm div\,}b_n \rightarrow {\rm div\,}b \text{ in } L^1_{\loc}.
 $$
\end{definition}

Once again, we can take e.g.\,$b_n:=E_{\varepsilon_n}b$, see \cite[Sect.\,6]{Ki_multi} for the proof.

\begin{definition} Given a matrix field $a \in H_{\xi}$ and a vector field $b$ such that $\nabla a + b \in \mathbf{F}_\delta$,  we denote by $[a,b]$ the set of sequences $\{a_n\} \in H_{\xi} \cap [C_b \cap C^\infty]^{d \times d}$, $\{b_n\} \in [C_b \cap C^\infty]^{d}$ such that 
$$
\nabla a_n + b_n \in \mathbf{F}_{\delta} \text{ with $c_\delta$ independent of $n$},
$$
and
$$
\nabla a_n + b_n \rightarrow \nabla a + b \text{ in } [L^2_{\loc}]^d, \quad
a_n \rightarrow a \quad \text{ a.e.\,on }\mathbb R^d
$$
as $n \rightarrow \infty$.
\end{definition}
For example, we can take $a_n=E_{\varepsilon_n}a$, see \cite[Sect.\,4.4]{KiS_theory} for the proof.

\begin{definition} 
\label{q_m}
Given $q=\nabla Q \in \mathbf{BMO}^{-1}$, where $Q$ is the corresponding anti-symmetric matrix field with entries in ${\rm BMO}$ (so, ${\rm div\,}q=0$), we denote by $[q]$ the set of sequences 
$$\{q_m=\nabla Q_m \text{ for anti-symmetric }  Q_m \in  [C^\infty \cap W^{1,\infty}]^{d \times d}\}$$ such that
$$
\left\{
\begin{array}{l}
\|Q_m\|_{{\rm BMO}} \leq C\|Q\|_{{\rm BMO}} \text{ for a constant $C$ independent of $m$}, \\[1mm]
Q_m \rightarrow Q\text{ in } [L^s_{\loc}]^{d \times d} \text{ for any } 1 \leq s<\infty. \\ [1mm]
\end{array}
\right.
$$
\end{definition}
For example, one possible choice is
$$
Q_m:=E_{\varepsilon_m} (Q \wedge U_{\varepsilon_m} \vee V_{\varepsilon_m}),
$$
where the maximum and the minimum are taken componentwise, $$U_\varepsilon:=(-c\log|x|+\varepsilon^{-1}) \wedge \varepsilon^{-1} \vee 0, \quad V_\varepsilon:=(c\log|x|-\varepsilon^{-1}) \wedge 0 \vee (-\varepsilon^{-1}), \quad \varepsilon_m \downarrow 0$$ 
with $c$ chosen so that $\|c\log|x|\|_{{\rm BMO}} \leq \|Q\|_{{\rm BMO}}$. 
The last two functions are compactly supported and are in ${\rm BMO}$. Since ${\rm BMO}$ is a lattice, the components of the matrix fields $Q_m$ (still anti-symmetric) are in ${\rm BMO}$. This regularization of $Q$ was employed earlier in \cite{QX}.

\bigskip

\section{Preliminary discussion: blow-up thresholds for Brownian particles}
\label{particle_sect}

We continue discussing the particle system in Example \ref{ex1_multi}, i.e.
\begin{equation}
\label{syst_m5}
X^i_t=x^i_0-\sqrt{\kappa}\frac{d-2}{2}\frac{1}{N}\sum_{j=1, j \neq i}^N\int_0^t \frac{X_s^i-X_s^j}{|X_s^i-X_s^j|^2}ds + \int_0^t q_0(X_s^i)ds +  \sqrt{2}B^i_t, \quad i=1,\dots,N,
\end{equation}
where $q_0 \in \mathbf{BMO}^{-1}(\mathbb R^d)$, $d \geq 3$. 

\medskip

1. First, we present positive well-posedness results for \eqref{syst_m5} that follow from Theorems \ref{thm2} and \ref{thm_div}.

\medskip

2. Next, we exhibit counterexamples to well-posedness of particle system \eqref{syst_m5} when the attraction strength 
$\kappa$ is too large. On one hand, these counterexamples test the sharpness of Theorems \ref{thm2} and \ref{thm_div}; on the other, they play a crucial role in the proof of Theorem \ref{cor2}, which provides an improved upper bound on the constant in the many-particle Hardy inequality.

\medskip

3. Finally, we discuss the two- and the one-dimensional cases.

\subsection{Positive results}Theorem \ref{thm_div} covers a large portion of the admissible range for the attraction parameter $\kappa$ in \eqref{syst_m5}:
\begin{equation}
\label{k_int}
\kappa \in \bigg[0, 16 \biggl(1 \vee \frac{N}{1+\sqrt{1+\frac{3(d-2)^2}{(d-1)^2}(N-1)(N-2)}} \biggr)^2\bigg[.
\end{equation}
Here, the right endpoint arises from Hoffmann-Ostenhof, Hoffmann-Ostenhof, Laptev and Tidblom's lower bound on the constant in the many-particle Hardy inequality \cite{HHLT}. 
Since the optimal constant in that inequality is not yet known, one expects the true admissible interval for 
$\kappa$ to be strictly larger. Note that in dimensions $d \geq 7$ the interval \eqref{k_int} reduces to $\kappa \in [0,16[$. 

By contrast, Theorem \ref{thm2} handles only $\kappa \in [0,4\frac{N^2}{(N-1)^2}[$,  but it offers greater flexibility in modifying the interaction kernel in \eqref{syst_m5}. For example, one can multiply the attracting interaction kernel by any function of $L^\infty$ norm at most one, without altering the assumption on $\kappa$. In particular, one can ``cut holes'' in the interaction kernel so that the particles do not interact along certain directions (cf.\,Examples \ref{ex_fbd}.4).

One can also describe how the particles behave as they approach collision via  estimates on the heat kernel of the corresponding Kolmogorov operator that, necessarily, involve a desingularizing weight, see \cite{BK}. 

\subsection{Counterexamples}These were the positive results for \eqref{syst_m5} that we now balance with counterexamples, i.e.\,analogues of (a), (b) in Section \ref{optim_sect}. For simplicity, assume there is no divergence-free distributional component of the drift (i.e.\,$q_0=0$). 
The right endpoint of the interval \eqref{k_int} lies just below the first blow-up threshold for \eqref{syst_m5}, or the ``non-sticky collisions threshold''. More precisely, following \cite{Fo}, set $$R_t:=\frac{1}{4N}\sum_{i,j=1}^N|X_t^i - X_t^j|^2.$$ It is not difficult to see that $R_t$ is a local squared Bessel process, i.e.
\begin{equation}
\label{Bessel}
R_t=R_0 + 2 \int_0^t \sqrt{|R_t|}dW_t + \mu t, 
\end{equation}
where $W_t$ is a one-dimensional Brownian motion, and 
$$
\mu=(N-1)\left(d-\sqrt{\kappa}\frac{d-2}{4}\right)
$$
is its ``dimension''; the value of $\mu$ controls how often $R_t$ hits zero.
 In what follows, by a collision of particles in \eqref{syst_m5} we mean a collision of all $N$ particles at the same time, i.e.\,when $R_t=0$. So, by standard theory of Bessel processes (see \,\cite[Ch.\,XI, \S 1]{RevuzYor}):

(a') If $\kappa \geq 16(\frac{d}{d-2})^2$, then there are a.s.\,sticky collisions in \eqref{syst_m5}, i.e.\,the particles collide in finite time and stay clumped up. 
Furthermore, one can show that for  $\kappa > 16(\frac{d}{d-2})^2$ the particle system ceases to have a weak solution, cf.\,(a) in Section \ref{optim_sect}.

(b') If $16\big(\frac{d}{d-2}\big)^2\bigl(1-\frac{2}{d(N-1)} \bigr)<\kappa< 16(\frac{d}{d-2})^2$, then there are a.s.\,non-sticky collisions  in \eqref{syst_m5}, i.e.\,particles collide infinitely many times, but $\int_0^\infty R_t dt<\infty$ a.s.

Comparing (a'), (b') with the admissible range \eqref{k_int} in Theorem \ref{thm_div}, we see that the latter provides a result that is close to optimal.

\subsection{Two- and one-dimensional cases}For the two-dimensional counterpart of \eqref{syst_m5}, namely, the finite-particle approximation of the Keller-Segel model,
$$
X^i_t=x^i-\sqrt{\varkappa}\int_0^t \frac{X_s^i-X_s^j}{|X_s^i-X_s^j|^2}ds + \sqrt{2}B^i_t, \quad x=(x^1,\dots,x^N) \in \mathbb R^{2N}, 
$$
the attracting interaction kernel is no longer locally square intergrable and therefore is not form-bounded, so one cannot apply our Theorems \ref{thm2}, \ref{thm_div} (see, however, Appendix \ref{wfb_sect} where we to some extent address $d=2$).
Cattiaux-P\'{e}d\`{e}ches \cite{CP} and Fournier-Jourdain \cite{FJ}, Fournier-Tardy \cite{FT} exploited the special structure of the drift and constructed process $(X_t^1,\dots,X_t^N)$. They work either via a suitable Dirichlet form or by exhibiting a weak solution to the SDE, respectively. In particular, they cover the full critical range $\varkappa \in [0,16[$, where $16$ is the sticky collisons threshold in dimension two. In this context, note:

1) The arguments of \cite{FJ,FT} work in dimensions $d \geq 3$ as well, and allow to handle $16 \leq \kappa<16(\frac{d}{d-2})^2$ in \eqref{syst_m5}, which includes non-sticky collisions, see (b') above.

2) Ohashi-Russo-Texeira \cite{ORT} consider squared Bessel processes in the low-dimensional regime $0<\nu<1$ (i.e.\,non-sticky collisions). They characterize the process as the unique solution of SDE
$$
dX_t=\frac{1-\nu}{2} \frac{dt}{X_t}+dB_t, \quad X_0=x_0>0, \quad 0<\nu<1,
$$
whose drift $b(x)=\frac{1}{x}$ is not locally in $L^1_{\loc}$, and thus has to be considered as a distributional drift. 

It is not yet clear whether  Theorems \ref{thm2}, \ref{thm_div} can be extended in some form to the non-sticky collisions part of the interval of admissible values of $\kappa$. However, taking into account the positive results in 1) and in 2), such extension is conceivable.

We refer to Cattiaux \cite{C} and Fournier \cite{Fo} for recent surveys (among new results) on the Keller-Segel model and its finite particle approximations.

\bigskip

\section{General drifts} 

\label{gen_sect}

In this section we consider SDE
\begin{equation}
\label{sde2}
X_t=x-\int_0^t \bigl(b(X_s) + q(X_s)\bigr)ds + \sqrt{2}B_t, \quad x \in \mathbb R^d, \quad t \geq 0.
\end{equation}
with 
$$
b \in \mathbf{F}_\delta, \quad q \in \mathbf{BMO}^{-1} \;\;(\Rightarrow {\rm div\,}q=0), 
$$
with the form-bound $\delta$ of $b$ going all the way up to (but staying strictly less) the critical threshold $\delta=4$. 
As mentioned in the introduction, by a result of Mazya and Verbitsky, this assumption on $b$ and $q$ is equivalent to having generalized form-boundedness \eqref{fbd_cond_gen} of $c=b+q$.

We fix bounded smooth approximations $\{b_n\} \in [b]$, $\{q_m\} \in [q]$ (as in Section \ref{approx_sect}). Consider the approximating SDEs
\begin{equation}
\label{sde2_approx}
X^{n,m}_t=x-\int_0^t \big(b_n(X^{n,m}_s) + q_m(X^{n,m}_s)\big)ds + \sqrt{2}B_t,
\end{equation}
on a complete probability space $\mathfrak F=(\Omega,\mathcal F,\{\mathcal F_t\}_{t \geq 0},\mathbf{P})$, with $B_t$ being a $\mathcal F_t$-Brownian motion. By classical theory, for every $x \in \mathbb R^d$ and every $n=1,2,\dots,$ there exists a pathwise unique strong solution $\{X_t^{n,m}\}_{t \geq 0}$ to \eqref{sde2_approx}.
The corresponding Kolmogorov operators
$$
\Lambda(b_n,q_m):=-\Delta + (b_n + q_m) \cdot \nabla, \quad D\big(\Lambda(b_n,q_m)\big)=(1-\Delta)^{-1}C_\infty.
$$
generate strongly continuous Feller semigroups on $C_\infty$ such that
$$
e^{-t\Lambda(b_n,q_m)}f(x)=\mathbf E[f(X^{n,m}_t)].$$
Set
 $\mathbb P_x^{n,m}:=\mathbf{P} (X_t^{n,m})^{-1}$.

\begin{theorem}
\label{thm2} Let $d \geq 3$.
Assume that $b$ and $q$ are, respectively, Borel measurable and distribution-valued vector fields $\mathbb R^d\rightarrow \mathbb R^d$  that satisfy 
\begin{equation}
\label{cond}
\left\{
\begin{array}{l}
b \in \mathbf{F}_\delta \text{ with } \delta<4, \\
q \in \mathbf{BMO}^{-1}.
\end{array}
\right.
\end{equation}
Let $\{b_n\} \in [b]$, $\{q_m\} \in [q]$ as in Definitions \ref{b_n} and \ref{q_m}. The following are true:

\begin{enumerate}[label=(\roman*)]

\item {\rm (Feller semigroup)} The limit 
$$
s\mbox{-}C_\infty \mbox{-}\lim_n \lim_m e^{-t\Lambda(b_n,q_m)} \text{ (loc.\,uniformly in $t \geq 0$)}
$$
exists and determines a strongly continuous Feller semigroup, say, $e^{-t\Lambda} = e^{-t\Lambda(b,q)}$. (The order in which we take the limits is essential.) The generator $\Lambda$ of $e^{-t\Lambda}$ is an operator realization of the formal differential expression $-\Delta + (b+q) \cdot \nabla$ in $C_\infty$.

\medskip

\item {\rm (A relaxed approximation uniqueness)} The limit in {(\textit{i})} does not depend on the choice of $\{b_n\} \in [b]$ and $\{q_m\} \in [q]$. 
Furtermore, if $\delta<1$ and $\{b_n\} \in [b] \cap [L^2]^d$, then already the weak convergence
$$
b_n \overset{w}{\rightarrow} b \quad \text{in $[L^2]^d$},
$$
yields convergence of the approximating Feller semigroups to the same limit from (\textit{i}):
$$
e^{-t\Lambda(b,q)}=s\mbox{-}C_\infty \mbox{-}\lim_n \lim_m e^{-t\Lambda(b_n,q_m)} \quad \text{(loc.\,uniformly in $t \geq 0$)}.
$$
Thus, when $\delta<1$, we can extend the class of admissible approximations of $b$.

\medskip

\item {\rm (Generalized martingale solution)} There exists a strong Markov family of probability measures $\{\mathbb P_x\}_{x \in \mathbb R^d}$ on the canonical space $\mathbf C$ of continuous trajectories  such that 
$$
e^{-t\Lambda(b,q)}f(x)=\mathbb E_{\mathbb P_x}[f(\omega_t)], \quad f \in C_\infty, \quad x \in \mathbb R^d,\;t \geq 0,$$
$$
\mathbb P_x=w\mbox{-}\mathcal P(\mathbf C)\mbox{-}\lim_n\lim_m \mathbb P_x^{n,m},
$$
and for every test function $v$ in the domain $D\big(\Lambda(b,q)\big)$,  a dense subspace of $C_\infty$, the process
$$
t \mapsto v(\omega_t)-v(x) + \int_0^t \Lambda(b,q) v(\omega_s) ds
$$
is a continuous martingale under $\mathbb P_x$. Selecting test functions $v$ from the domain of the Feller generator allows to address the problem of defining the term $\int_0^t q(X_s)\cdot \nabla v(X_s)ds$ in the martingale problem.

\medskip
\medskip

\item {\rm (Weak solution for disperse initial data)} Let $\delta<1$ and let us also assume that $b$, $q$, $b_n$, $q_m$ have supports in a ball of fixed radius. Given an initial (smooth) probability density $\nu_0$ satisfying $\langle \nu_0^{2r}\rangle<\infty$ for some $1<r<\frac{1}{\sqrt{\delta}}$,
there exist a probability space $\mathfrak F'=(\Omega',\mathcal F',\{\mathcal F'_t\}_{t \geq 0},\mathbf{P}')$ and a continuous process $X_t$ on this space such that the limit
$$
A_t:=L^2(\Omega')\mbox{-}\lim_n\lim_m \int_0^t \big(b_n(X_s)+q_m(X_s)\big)ds
$$
exists, and we have a.s.
$$
X_t=X_0-A_t + \sqrt{2}B_t, \quad t>0,
$$ 
for a $\mathcal F_t'$-Brownian motion $B_t$, for $\mathbf{P}' X_0^{-1}$ having density $\nu_0$. (The compact support assumption can be removed with a few additional efforts at expense of requiring $\langle \nu_0^{2r} \rho^{-\alpha}\rangle<\infty$, where the weight $\rho$ is defined by \eqref{rho_def}, for appropriate $\alpha>0$, see Remark \ref{nu_rem}.)

\medskip

\item {\rm (Classical weak solution)} If $q=0$, then, for every $x \in \mathbb R^d$, 
$$
\mathbb E_{\mathbb P_x}\int_0^1|b(\omega_s)|ds<\infty
$$
and, for every test function $v \in C_c^2$, the process
$$
t \mapsto v(\omega_t)-v(x) + \int_0^t (-\Delta + b \cdot \nabla) v(\omega_s) ds
$$
is a continuous martingale with respect to $\mathbb P_x$. Moreover, 
$$
B_t(\omega):=\frac{1}{\sqrt{2}}\bigg(\omega_t-x+\int_0^t b(\omega_s)ds\bigg), \quad t \geq 0,
$$
is a Brownian motion, so we have a weak solution to SDE \eqref{sde2}.

\medskip

If, in addition to $q=0$, we have $\delta <\frac{C}{d^2}$ for sufficiently small constant $C$, then more can be said:

\medskip

\item {\rm (Strong solvability)} The strong solutions $X_t^n$ of the approximating SDEs
$$
X^n_t=x-\int_0^t b_n(X^n_s)ds + \sqrt{2}B_t,
$$
considered on a fixed complete probability space, converge a.s.\,after passing to a subsequence independent of $x$, to a strong solution $X_t$ to the SDE
\begin{equation}
\label{sde_b}
X_t=x-\int_0^t b(X_s)ds + \sqrt{2}B_t
\end{equation}
(so, $X=\Phi(B)$ for a Borel measurable function $\Phi$; the solution depends only on the Brownian motion).

\medskip

\item {\rm (Another kind of approximation uniqueness)}
Let $\delta<\frac{4}{(d-2)^2} \wedge 1$. Let $\{\mathbb Q_x\}_{x \in \mathbb R^d}$ be a family of solutions to the martingale problem in (\textit{v}) that are constructed via approximation, i.e.\,are such that
$$
\mathbb Q_x=w{\mbox-}\mathcal P(\mathbf{C})\mbox{-}\lim_n \mathbb P_x(\tilde{b}_n) \quad \text{for every $x \in \mathbb R^d$},
$$
where $\tilde{b}_n \in \mathbf{F}_\delta \cap [C_b \cap C^\infty]^d$ with $c_\delta$ independent of $n$. Then $$\{\mathbb Q_x\}_{x \in \mathbb R^d}=\{\mathbb P_x\}_{x \in \mathbb R^d},$$ where $\{\mathbb P_x\}_{x \in \mathbb R^d}$ were constructed in (\textit{iii}). Here we do not require any convergence of $\tilde{b}_n$ to $b$.

\medskip

\item {\rm (Stochastic transport equation)} Let $b \in \mathbf{F}_\delta$ with $\delta<(1+4rd)^{-2}$ for a given $r=1,2,\dots$ Then, for every $v_0 \in W^{1,4r}$, there exists a unique weak solution to Cauchy problem for the stochastic transport equation
\begin{equation}
\label{ste}
dv+b \cdot \nabla vdt + \sqrt{2}\nabla v \circ dB_t=0, \quad v|_{t=0}=f, 
\end{equation}
with $\circ$ denoting the Stratonovich multiplication. It 
satisfies
\begin{equation*}
\sup_{0 \leq \alpha \leq 1}\bigl\|\mathbb E|\nabla v|^{2r}\bigr\|_{L^{\frac{2}{1-\alpha}}([0,t],L^{\frac{2d}{d-2+2\alpha}})} \leq 
C_1e^{C_2t}\|\nabla f\|^{2r}_{4r}.
\end{equation*}
In particular,  if $2r>d$, then by the Sobolev embedding theorem for a.e. $\omega \in \Omega$ the function $x \mapsto v(t,x,\omega)$ is H\"{o}lder continuous, possibly after 
modification on a set of measure zero in $\mathbb R^d$ (in general, depending on $\omega$).
\end{enumerate}

\end{theorem}

The novelty of Theorem \ref{thm2} is in assertions (\textit{i})-(\textit{iv}). Assertions (\textit{v})-(\textit{viii}) were proved in \cite{KiS_sharp}, \cite{KiM_strong}, \cite{KiS_Osaka}, \cite{KiSS_transport}, respectively. (The strong solvability (\textit{vi}) is proved in \cite{KiM_strong} via a modification of the method of R\"{o}ckner-Zhao, under the additional compact support hypothesis on $b$, but it can be removed with a few additional efforts.)

Since the uniqueness results play an important role in the theory of SDE \eqref{sde2}, we included the proof of assertion (\textit{vii}), i.e.\,another kind of approximation uniquness.

In the case when $q=0$ and the form-bound $\delta$ of $b$ is sufficiently small, one can furhermore prove uniqueness of the constructed weak or strong solutions in the classes of solutions satisfying Krylov-type bounds. For details, see \cite{KiM_JDE}, \cite{Ki_Morrey} (conditional weak uniqueness) and \cite{KiM_strong} (conditional strong uniqueness).

In the case distributional $q \neq 0$, we do not establish conditional uniqueness or strong solvability. Nevertheless, already the existence of the Feller semigroup and its approximation uniqueness cover the following rather common scenario arising in the study of physical models: one mollifies a singular drift and studies the corresponding SDE as an approximation of the true dynamics (for instance, when investigating long‐term behavior such as sub‐ or super‐diffusivity, see e.g.\,\cite{ABK}). Although the mollified drift is bounded and smooth, and so the corresponding SDE is well-posed, one still must ensure that:

-- the resulting stochastic dynamics does not depend on the choice of the regularization of the drift (in particular, on the choice of the mollifier);

-- in the limit one still gets a non-pathological diffusion process.

The former is addressed by assertion (\textit{ii}). The latter is addressed by assertions (\textit{i}), (\textit{iii}), (\textit{iv}).

\begin{example} 
\label{ex2_multi}
Let us return to the problem of describing the dynamics $
X_t=(X_t^1,\dots,X_t^N)$ of $N$ interacting particles immersed in a velocity field in $\mathbf{BMO}^{-1}$, $d \geq 3$. That is, we are in the setting of Example \ref{ex1_multi}, where, recall,
\begin{equation}
\label{syst_m}
X_t=x_0-\int_0^t \bigl(b(X_s) + q(X_s)\bigr)ds + \sqrt{2}B_t, \quad x_0=(x_0^1,\dots,x_0^N) \in \mathbb R^{dN}, \quad B_t=(B_t^1,\dots,B_t^N),
\end{equation}
and $b(x)=(b^1(x),\dots,b^N(x))$, $q(x)=(q_0(x^1),\dots,q_0(x^N))$ ($ x=(x^1,\dots,x^N) \in \mathbb R^{dN}$),
\begin{equation*}
b^i(x^1,\dots,x^N):=\frac{1}{N}\sum_{j=1, j \neq i}^N \sqrt{\kappa}\frac{d-2}{2}\frac{x^i-x^j}{|x^i-x^j|^2}, \quad q_0 \in \mathbf{BMO}^{-1}(\mathbb R^d).
\end{equation*}
Then, by Lemma \ref{particle_lem}, $b \in \mathbf{F}_\delta(\mathbb R^{dN})$ with $\delta=\frac{(N-1)^2}{N^2}\kappa$. Also, as  mentioned in the introduction, $q \in \mathbf{BMO}^{-1}(\mathbb R^{dN})$. Therefore, if the strength of attraction between the particles $\kappa$ satisfies 
\begin{equation}
\label{kappa_hyp_int}
\tag{$\kappa_{\rm hyp}$}
\kappa<4\frac{N^2}{(N-1)^2}
\end{equation}
(so that $\delta<4$), then Theorem \ref{thm2} applies and ensures the existence and the approximation uniqueness for this particle system. Importantly, the assumption on $\kappa$ basically does not depend on the number of particles $N$ (assumed to be large). 

In the case $d=2$, which is of interest e.g.\,in the Keller-Segel model, the drift $b:\mathbb R^{2N} \rightarrow \mathbb R^{2N}$ defined above is not form-bounded. However, it is weakly form-bounded, which still allows us to say something about the corresponding SDE \eqref{syst_m}, see Appendix \ref{wfb_sect}.
\end{example}

\begin{remark}[On the critical threshold $\delta=4$]
\label{delta_4_rem}
1.~In the construction of the Feller semigroup in Theorem \ref{thm2}, we run De Giorgi's method in $L^p$ for 
\begin{equation}
\label{p_cond}
p>\frac{2}{2-\sqrt{\delta}}
\end{equation}
(and so we need $\delta<4$; the counterexamples discussed in the introduction show that  $\delta<4$ is sharp at least in high dimensions). 
The condition \eqref{p_cond} comes from the following elementary calculation for the Kolmogorov backward equation. Let $b\in\mathbf F_\delta$ and $q \in \mathbf{BMO}^{-1}$ be additionally bounded and smooth so that the manipulations with the equations are justified, but the constants in the estimated will not depend on the smoothness of $b$ and $q$. Let us also assume for simplicity that $c_\delta=0$ (if not, then we need to add a constant term in the Kolmogorov equation to absorb $c_\delta>0$). Consider Cauchy problem
 $$(\partial_t - \Delta +(b+q) \cdot \nabla)v=0, \quad v|_{t=0}=v_0 \in C_c^\infty.$$ Without loss of generality, $v_0 \geq 0$, and so $v \geq 0$. Multiply equation by $v^{p-1}$ and integrate by parts. Since ${\rm div\,}q=0$, one finds
$$
\frac{1}{p}\langle \partial_t v^p\rangle + \frac{4(p-1)}{p^2}\langle |\nabla v^{\frac{p}{2}}|^2 \rangle + \frac{2}{p}\langle b \cdot \nabla v^{\frac{p}{2}},v^{\frac{p}{2}}\rangle=0,
$$
or, equivalently,
$$
\partial_t\langle  v^p\rangle + \frac{4(p-1)}{p}\langle |\nabla v^{\frac{p}{2}}|^2 \rangle =- 2\langle b \cdot \nabla v^{\frac{p}{2}},v^{\frac{p}{2}}\rangle.
$$
Applying the Cauchy-Schwarz inequality in the last term gives
$$
 \langle \partial_t v^p\rangle + \frac{4(p-1)}{p}\langle |\nabla v^{\frac{p}{2}}|^2 \rangle \leq 2\biggl(\alpha \langle |b|^2,v^p \rangle + \frac{1}{4\alpha} \langle |\nabla v^{\frac{p}{2}}|^2 \rangle\biggr).
$$
Now, applying $b \in \mathbf{F}_\delta$ and selecting $\alpha=\frac{1}{2\sqrt{\delta}}$, we obtain 
\begin{equation}
\label{en_ineq}
\langle \partial_t v^p\rangle + \biggl[\frac{4(p-1)}{p}-2\sqrt{\delta} \biggr]\langle |\nabla v^{\frac{p}{2}}|^2 \rangle \leq 0.
\end{equation}
To keep the dispersion term positive, one needs $\frac{4(p-1)}{p}-2\sqrt{\delta}> 0$, i.e.\,$p > \frac{2}{2-\sqrt{\delta}}$.
This calculation reappears (in slightly different form, e.g.\,for sub-solutions of the Kolmogorov equation) in the proof of Theorem \ref{thm2}(\textit{i}). 

The observation that one should work in $L^p$, $p>\frac{2}{2-\sqrt{\delta}}$, was made already in \cite{KS} in the context of the $L^p$ semigroup theory of $-\Delta + b \cdot \nabla$, $b \in \mathbf{F}_\delta$. See also \cite{CPZ, JL} where the authors use energy methods in $L^p$ to study regularity of solutions of the elliptic-parabolic Keller-Segel model of chemotaxis (i.e.\,of the corresponding McKean-Vlasov PDE), although the optimal choice of $p$ is not really discussed in these papers. 

2.~It is interesting to abstract away the previous calculation to see  if there are other  test functions that allow for a similar energy analysis of Cauchy problem $(\partial_t - \Delta +b \cdot \nabla)v=0$, $v|_{t=0}=v_0$, where $0 \leq v_0 \in C_c^\infty$. We seek test functions $\varphi(v)=\varphi(v(t))$ such that $\varphi \geq 0$, $\varphi' \geq 0$, and so that there exists a ``conjugate'' function $\psi$ satisfying $\psi(0)=0$ and
\begin{equation}
\label{phi_psi}
\psi'=\sqrt{\varphi'}, \quad \varphi=a\psi \psi'
\end{equation}
for some constant $a>0$ to be chosen (hence $\psi \geq 0$). Multiplying the parabolic equation by $\varphi(v)$ and integrating by parts, we obtain
\begin{align*}
 \langle \partial_t v,\varphi(v)\rangle& =a\langle \partial_t v,\psi(v)\psi'(v)\rangle = \frac{a}{2}\langle \partial_t (\psi(v))^2\rangle, \\
\langle -\Delta v,\varphi(v)\rangle & = \langle \nabla v,\varphi'(v)\nabla v\rangle = \langle 
|\nabla \psi(v)|^2\rangle, \\[2mm]
\end{align*}
and
\begin{align*}
\langle b \cdot \nabla v,\varphi(v)\rangle & = \langle b \cdot \nabla\psi(v),a\psi(v)\rangle \\
& \leq a\biggl(\alpha \langle |b|^2,(\psi(v))^2\rangle + \frac{1}{4\alpha}\langle |\nabla \psi(v)|^2 \rangle \biggr) \\
&  \text{(take $\alpha=1/2\sqrt{\delta}$ and apply $b \in \mathbf{F}_\delta$ (with $c_\delta=0$))} \\
& \leq a \sqrt{\delta}\langle |\nabla \psi(v)|^2 \rangle.
\end{align*}
Thus, we obtain an energy inequality of the form
$$
\frac{a}{2}\partial_t \langle (\psi(v))^2\rangle + \bigl(1-a\sqrt{\delta}\bigr)\langle |\nabla \psi(v)|^2 \rangle \leq 0.
$$
In order for the disperson term to remain non-negative, we need to take in \eqref{phi_psi} $a=\frac{1}{\sqrt{\delta}}$ (or smaller, but the equality is least restrictive on $\varphi$). So far, no constraint on $\delta$ has appeared, but it will appear once we  solve the resulting (from \eqref{phi_psi}) system 
$$
\left\{
\begin{array}{l}
\psi'=\sqrt{\varphi'}, \\
\varphi=\frac{\psi \psi'}{\sqrt{\delta}}
\end{array}
\right. \qquad \Rightarrow \qquad \varphi'=\frac{1}{\sqrt{\delta}}(\psi\psi''+(\psi')^2),
$$
so that $\psi$ must satisfy
\begin{equation*}
\psi\psi''=(\sqrt{\delta}-1)(\psi')^2.
\end{equation*}
Assuming $\psi>0$ for $v>0$, we can reduce order to obtain
\begin{equation}
\label{ode1}
\psi'=C\psi^{\sqrt{\delta}-1}.
\end{equation}
We look for non-trivial solutions $\psi$ that are defined globally.
Hence the right-hand side of \eqref{ode1} must grow at most linearly, which forces
$$
\sqrt{\delta}-1 \leq 1 \quad \Rightarrow \quad \delta \leq 4.
$$

 If $-1<\sqrt{\delta}-1<1$, i.e.\,$0<\delta<4$, then we find $\psi(v)=C_1v^{\frac{1}{2-\sqrt{\delta}}}$, among other possible solutions of this type (note that there is no uniqueness here). Then, using equation $
\psi'=\sqrt{\varphi'}$, we find $$\varphi(v)=c v^{\frac{2}{2-\sqrt{\delta}}-1}= c v^{p-1}, \quad p:=\frac{2}{2-\sqrt{\delta}},$$ covering the classical power-type test function discussed in the beginning of this remark.

 If $\delta=4$, then ODE \eqref{ode1} becomes linear. Taking, for instance, $C=\frac{1}{2}$ in \eqref{ode1} yields the non-trivial solution
$$
\psi(v)=2e^{\frac{v}{2}}, \quad
\varphi(v)=e^{v}.
$$
This is essentially the test function that allows us to treat the critical threshold $\delta=4$, see below.
Although $\psi(0) \neq 0$, which fails the original requirement $\psi(0)=0$ needed above to apply the form-boundedness of $b$, one can overcome this by working with test functions $\varphi(v)=e^{v}-1$ or $\varphi(v)=e^{v}-e^{-v}$ multiplied by a cutoff function, or working on torus instead of $\mathbb R^d$ \cite{Ki_Orlicz, KiS_feller}.

3.~Let $b \in \mathbf{F}_4$. Assume additionally that $b$ and the approximating vector fields $\{b_n\} \in [b]$ have supports in a fixed ball (or, more generally, decay sufficiently rapidly at infinity uniformly in $n$); since here we are interested in admissible local singularities of $b$, this is not a particularly restrictive assumption. Let $q \in \mathbf{BMO}^{-1}$. By following closely the proof of \cite[Theorem 1]{Ki_Orlicz} (see also \cite[Theorem 2]{KiS_feller}) i.e.\,using test function $e^{v}-e^{-v}$ in the analysis of Cauchy problem for the Kolmogorov PDE $(\partial_t-\Delta + b \cdot \nabla)v=0$, one can show that the limit
$$
s\mbox{-}L_{{\rm \cosh - 1}} \mbox{-}\lim_n \lim_m e^{-t\Lambda(b_n,q_m)} \text{ (loc.\,uniformly in $t \geq 0$)}
$$
exists and determines a strongly continuous Markov semigroup $e^{-t\Lambda(b)}$ on the Orlicz space 
$$L_{\cosh -1}:=\text{the closure of the Schwartz space $\mathcal S$ with respect to norm}$$
$$
\|f\|_{\cosh - 1}=\inf\left\{c>0\mid\langle\cosh\frac{f}{c}-1\rangle\leq 1\right\}.
$$ 
On the torus, in the case $q=0$ (i.e.\,no distributional component of the drift), \cite{Ki_Orlicz} established the following energy inequality for $v(t)=e^{-t\Lambda(b_n)}v_0$:
$$
 \frac{1}{2}\sup_{s \in [0,t]}\langle e^{v^{p}(s)}  \rangle +  4\frac{(p-1)}{p}\int_0^t \langle (\nabla v^{\frac{p}{2}})^2e^{v^{p}}\rangle ds \leq \langle e^{v_0^{p}}  \rangle, \quad p=2,4,\dots,
$$
provided $\frac{c_\delta}{\sqrt{\delta}}t<\frac{1}{2}$. The small time restriction can be removed using the semigroup property. One can compare this to the usual $L^p$ energy inequality \eqref{en_ineq} when $\delta<4$. At first sight, letting $\delta \uparrow 4$ seems to eliminate the dispersion term; however, it turns out that one retains an energy inequality once appropriate exponential factors are included.
One also obtains uniqueness of weak solution to Cauchy problem for Kolmogorov PDE at least for sufficiently regular initial functions \cite{Ki_Orlicz}.

In some sense, Orlicz space $L_{\cosh -1}$ can be viewed as the limit of $L^p$ spaces, i.e.\,as $p>\frac{2}{2-\sqrt{\delta}}$ tends to $\infty$ as $\delta \uparrow 4$. 

In fact, a moment of reflection after inspecting the test function $\varphi(v)=e^{v}-1$ suggests that the theory of the Kolmogorov equation in the critical regime $\delta=4$  should be viewed as the limit $p\rightarrow \infty$  of the asymptotic $L^p$ theory, namely, with the non-standard $L^p$ test function
$$
\varphi(v)=\biggl(1+\frac{v}{p}\biggr)^p-1.
$$ 
Accordingly, one needs to study solutions $v$ of the Kolmogorov backward equation around $1$.
We will address this  in a subsequent paper.
\end{remark}

\begin{remark}[Krylov-type bound in the distributional case]
\label{krylov_distr_rem}
Consider the assumptions of Theorem \ref{thm2}.
Then the following a priori Krylov-type bound holds. 
Let $U \in \mathbf{F}_{\delta_1}$, $\delta_1<\infty$, be a form-bounded function, i.e.\,$U \in L^2_{\loc}$ and 
\begin{equation}
\label{fbd_pot}
\langle U^2,\varphi^2 \rangle \leq \delta_1 \langle |\nabla \varphi|^2\rangle + c_{\delta_1}\langle \varphi^2 \rangle, \quad \varphi \in W^{1,2}.
\end{equation}
 Let $W={\rm div\,}w$ for some vector field $w$ whose components lie in ${\rm BMO}$. Fix some smooth approximations $\{U_n\} \in [U]$, $\{W_m\} \in [W]$, defined as in Section \ref{notations_sect}. Fix $1<\theta<\frac{d}{d-2}$ and $p \geq 2$ such that $p>\frac{2}{2-\sqrt{\delta}}$. Then, for all $f \in \mathcal S$,
\begin{equation}
\label{krylov_distr}
\sup_{x \in \mathbb R^d}\biggl|\,\mathbb E_{\mathbb P_x}\int_{0}^1 (U_n+W_m)(\omega_s)f(\omega_s) ds\, \biggr| \leq K  \|A_m\|_{p\theta} \vee \|A_m\|_{p\theta'}, 
\end{equation}
where $$A_m=|w^i_m| |\nabla f|+(1+|w^i_m|)|f|,$$
and constant $K$ does not depend on $n$, $m$ or $f$. Informally, this bound shows that a solution of SDE \eqref{sde2} cannot spend too much time near the singularities of $V$ and $W$. The proof is essentially given in Proposition \ref{emb_thm}. (The latter is an elliptic estimate, so one needs to use identity $1=e^{\mu s}e^{-\mu s}$ to arrive at \eqref{krylov_distr}.) There we take as $U$ and $W$ the components of vector fields $b$ and $q$, but the proof extends to $U$ and $W$ right away since it does not exploit any interaction between the coefficients in the right-hand side and the drift term.

By running parabolic De Giorgi's iterations, one refines  \eqref{krylov_distr} to
\begin{equation}
\label{t9}
\biggl|\,\mathbb E_{\mathbb P^{n,m}_x}\int_{0}^\varepsilon (V_n+W_m)(\omega_s)f(\omega_s) ds\, \biggr| \leq H(\varepsilon),
\end{equation}
where $H(\varepsilon) \downarrow 0$ ($\varepsilon \downarrow 0$) is independent of $n$, $m$ and $f$ (and $x \in \mathbb R^d$). If we could place the absolute value  under the integral, then, after taking $U=b^i$ and $W=q^i$, a standard argument would allow to conclude tightness of $\{\mathbb P_x^n\}$. However, since $W$ is a distribution, we cannot do this. Still, an argument of Hao-Zhang \cite{HZ} shows that one can conclude tightness of $\{\mathbb P_x^n\}$ from, basically, \eqref{t9}, by applying It\^{o}'s formula to $\sqrt{\sigma+|x-x_0|^2}$ with $\sigma>0$ small, which allows to control the smallness of the incremenents of solutions of the approximating SDEs after taking $\sigma \downarrow 0$.
That said, \cite{HZ} need a tightness argument since they are dealing with divergence-free super-critical drifts, while we are dealing with general critical drifts and obtain stronger convergence results for $\{\mathbb P_x^n\}$ provided by the theory of Feller semigroups. 
\end{remark}

\begin{remark}[On strong solutions] There is a well known link between the stochastic transport equation \eqref{ste} and the SDE
\begin{equation}
\label{sde_st}
X_t=x-\int_0^t b(X_r)dr+\sqrt{2}B_t.
\end{equation}
Namely, when $b$ is bounded and smooth, the solution $v$ to the STE \eqref{ste} can be represented as
\begin{equation}
\label{flow}
v(t)=f(\Psi_{t}^{-1}), \quad t \geqslant 0,
\end{equation}
where $\Psi_{t}:\mathbb R^d \times \Omega \rightarrow \mathbb R^d$ is the stochastic flow for the SDE \eqref{sde_st} 
i.e.~there exists $\Omega_0 \subset \Omega$, $\mathbb P(\Omega_0)=1$, such that, for all $\omega \in \Omega_0$,
$\Psi_{t}(\cdot,\omega) \Psi_{s}(\cdot,\omega) = \Psi_{t+s}(\cdot,\omega)$, $\Psi_{0}(x,\omega)=x$, and

1) for every $x \in \mathbb R^d$, the process $t \mapsto \Psi_{t}(x,\omega)$ is a strong solution to \eqref{sde_st},

2) $\Psi_{t}(x,\omega)$ is continuous in $(t,x)$, $\Psi_{t}(\cdot,\omega):\mathbb R^d \rightarrow \mathbb R^d$ are homeomorphisms  and $\Psi_{t}(\cdot,\omega)$, $\Psi_{t}^{-1}(\cdot,\omega) \in C^\infty(\mathbb R^d,\mathbb R^d)$.

Beck-Flandoli-Gubinelli-Maurelli \cite{BFGM} reversed this connection when $b$ is singular (for time-homogeneous drifts their condition reads as $|b| \in L^d+L^\infty$). They used the stochastic transport equation \eqref{ste} to construct, for a.e.\,initial point $x \in \mathbb R^d$, a strong solution to SDE \eqref{sde_st}. Having Theorem \ref{thm2}(\textit{viii}), one can extend the argument of  \cite{BFGM}  to $b \in \mathbf{F}_\delta$, see \cite[Remark 1]{KiSS_transport}.
However, since this approach excludes a measure zero set of initial points, it does not imply Theorem \ref{thm2}(\textit{vi}).

Regarding recent progress strong solutions of SDEs with singular drifts, we also refer to Krylov \cite{Kr4} who develops a different approach based on It\^{o}-Duhamel series that can be viewed, to some extent, as the Duhamel series for the stochastic transport equation.

\end{remark}

\begin{remark}[Dispersion estimate, local maximum principle and gradient bounds] 
\label{disp_rem}

We now make a few remarks regarding the theory of the Kolmogorov operator $-\Delta + (b+q)\cdot \nabla$ behind SDE \eqref{sde2}.

1.~Under the assumptions of Theorem \ref{thm2}(\textit{i}), we can descend
from $C_\infty$ to $L^p$ and show that for every $p>\frac{2}{2-\sqrt{\delta}}$ the operators
$$
e^{-t\Lambda_p(b,q)}:=\biggl[e^{-t\Lambda(b,q)} \upharpoonright C^\infty \cap L^p \biggr]^{\rm clos}_{L^p \rightarrow L^p}
$$
are bounded on $L^p$ and constitute a strongly continuous semigroup.
Moreover, for all $\frac{2}{2-\sqrt{\delta}} < p \leq r < \infty$, 
\begin{equation*}
\|e^{-t\Lambda_p(b,q)}f\|_{r} \leq C_{\delta,d}e^{\omega_{p} t}t^{-\frac{d}{2}(\frac{1}{p}-\frac{1}{r})}\|f\|_p, \quad f \in L^p, \quad \omega_p=\frac{c_\delta}{2(p-1)}.
\end{equation*}
The latter and the Dunford-Pettis theorem yields that $e^{-t\Lambda(b,q)}$, $t \geq 0$, are integral operators. 

If additionally $\delta<1$, then 
$v(t):=e^{-t\Lambda_2(b,q)}f$, $f \in L^2$, is the unique weak solution to Cauchy problem 
$$
(\partial_t-\Delta + (b+q) \cdot \nabla)v=0, \quad v|_{t=0}=f,
$$
in the standard Hilbert triple  $W^{1,2} \hookrightarrow L^2 \hookrightarrow W^{-1,2}$.

The proof of this dispersion estimates uses Nash's argument, see e.g.\,\cite[proof of Theorem 4.2]{KiS_theory}. 
The construction of the semigroup in $L^p$ follows closely \cite[proof of Theorem 4.2]{KiS_theory}. In fact, we basically construct this semigroup in the proof of Proposition \ref{thm_conv}. 
The proof of the uniqueness of the weak solution follows the classical Lions's argument and the compensated compactness estimate (Proposition \ref{cc_lem}), see \cite{QX} for details.

2.~Also under the assumptions of Theorem \ref{thm2}(\textit{i}), for every $f \in L^{p\theta} \cap L^{p\theta'}$, $u:=(\mu+\Lambda(b,q))^{-1}f$ satisfies for each $x \in \mathbb R^d$ the following local maximum principle
\begin{equation*}
\sup_{B_{\frac{1}{2}}(x)}|u|  \leq K \biggl( \langle |f|^{p\theta}\rho_x\rangle^{\frac{1}{p\theta}}  +\langle |f|^{p\theta'}\rho_x\rangle^{\frac{1}{p\theta'}} \biggr), \quad \mu>\mu_0>0,
\end{equation*}
for fixed $1<\theta<\frac{d}{d-2}$ and $p \geq 2$ such that $p>\frac{2}{2-\sqrt{\delta}}$. Here $\rho_x(y)=(1+\sigma |y-x|)^{-\frac{d}{2}+}$ for some $\sigma>0$. The constants $K$ and $\mu_0$ do not depend on $f$ or $x$. This is the content of Proposition \ref{sep_thm}.

\smallskip

3.~Let now $q=0$. 

Assume that $\delta<\frac{4}{(d-2)^2} \wedge 1$. 
 Then the unique weak solution $u$ to the elliptic equation $$(\mu-\Delta + b \cdot \nabla)u=f$$ satisfies, for every $r \in [2,\frac{2}{\sqrt{\delta}}[$,
\begin{equation}
\label{i}
\tag{\cite{KS}}
\|\nabla u\|_r\leq K_1(\mu-\mu_0)^{-\frac{1}{2}}\|f\|_r,\quad \|\nabla |\nabla u|^{\frac{r}{2}} \|_2  \leq K_2(\mu-\mu_0)^{-\frac{1}{2}+\frac{1}{r}}\|f\|_r, 
\end{equation}
\begin{equation}
\label{ii}
\tag{\cite{Ki_revisited}}
\|(\mu-\Delta)^{\frac{1}{2}+\frac{1}{s}}u\|_r \leq K\|(\mu-\Delta)^{-\frac{1}{2}+\frac{1}{\ell}}f\|_r, \quad \text{ for all } 2 \leq \ell<r<s
\end{equation}
for all $\mu$ greater than some generic constant $\mu_0$.

The corresponding parabolic gradient bounds \cite{KiS_note} impose more restrictive conditions on $\delta$.
Namely, assume that form-bound $\delta$ satisfies, for some $r=d+\varepsilon$ (with this choice of $r$ the Sobolev embedding theorem will give H\"{o}lder continuity of solution)
\[
\sqrt{\delta}<\left\{
\begin{array}{ll}
\big(\sqrt{r-1}-\frac{r-2}{2}\big)\frac{2}{r} & \text{ in dimensions } d=3,4,\\
(1-\mu)\frac{r-1}{r-2}\frac{1}{r} & \text{ in dimensions } d\geq 5,
\end{array}
\right.
\]
where $0<\mu<1$, $16\mu>(1-\mu)^4\frac{(r-1)^2}{(r-2)^4}$. (For instance, these assumptions on $\delta$ are satisfied if $\delta<\frac{1}{d^2}$, $d \geq 3$.) Then the unique weak solution $v$ to Cauchy problem $$(\partial_t-\Delta + b\cdot \nabla)v=0, \quad v|_{t=0}=f,$$ satisfies
\begin{align}
\label{grad_parab}
\sup_{0 \leq s \leq t } \|\nabla v(s)\|_r^r &  + C_1\int_0^t\||\nabla v|^\frac{r-2}{2}  \partial_s v\|_2^2 ds + C_2\int_0^t \langle |\nabla |\nabla v|^{\frac{r}{2}}|^2 \rangle ds  \leq e^{C_3t} \|\nabla f\|_r^r
\end{align}
for constants $C_i>0$ ($i=1,2,3$) that depend only on $d$, $\delta$ and $c_\delta$.

\end{remark}

\bigskip

\section{Critical divergence and the constant in many-particle Hardy inequality} 
\label{div_drifts_sect}

\textbf{1.~}We can substantially relax  condition \eqref{kappa_hyp_int} on the strength of attraction between the particles in Example \ref{ex2_multi} by employing the many-particle Hardy inequality of \cite{HHLT} and the following variant of Theorem 
\ref{thm2}.

\medskip

Let $b \in [L^1_{\loc}]^d$ be a vector field with divergence ${\rm div\,}b \in L^1_{\loc}$. Let $({\rm div\,}b)_+$ denote the positive part of ${\rm div\,}b$.

\begin{definition}
We say that ``potential'' $({\rm div\,}b)_+$ is form-bounded, and write 
$({\rm div\,}b)^{\scriptscriptstyle 1/2}_+ \in \mathbf{F}_{\delta_+}$, if 
$$
\langle ({\rm div\,}b)_+, \varphi^2 \rangle \leq \delta_+\langle |\nabla \varphi|^2\rangle + c_{\delta_+}\langle \varphi^2 \rangle \quad \forall\,\varphi \in C_c^\infty
$$
for some constants $\delta_+$ and $c_{\delta_+}$.
\end{definition}

\begin{theorem}
\label{thm_div}
Let
\begin{equation*}
\left\{
\begin{array}{l}
b \in \mathbf{F}_\delta \text{ with } \delta<\infty, \quad
 ({\rm div\,}b)^{\frac{1}{2}}_+ \in \mathbf{F}_{\delta_+} \text{ with } \delta_+<4, \quad ({\rm div\,}b)_- \in L^1+L^\infty, \\
q \in \mathbf{BMO}^{-1}.
\end{array}
\right.
\end{equation*}
Let $\{b_n\} \in [b]'$ (see Definition \ref{def4}), $\{q_m\} \in [q]$. Then assertions (\textit{i})-(\textit{v}) of Theorem \ref{thm2} remain valid. 
\end{theorem}

\begin{example}
\label{ex3_multi}
Let us establish weak well-posedness of particle system \eqref{syst_m} using Theorem 
\ref{thm_div} rather than Theorem \ref{thm2}. The difference between the two theorems is in the assumptions on the drift $b(x)=(b^1(x),\dots,b^N(x))$, 
\begin{equation*}
b^i(x):=\frac{1}{N}\sum_{j=1, j \neq i}^N \sqrt{\kappa}\frac{d-2}{2}\frac{x^i-x^j}{|x^i-x^j|^2}, \quad x=(x^1,\dots,x^N).
\end{equation*}
We already know that this drift is form-bounded, but what matters in Theorem \ref{thm_div} is the form-bound of potential
\begin{equation*}
({\rm div\,}b(x))_+={\rm div\,}b(x) =\sqrt{\kappa}\frac{(d-2)^2}{N}\sum_{1 \leq i<j\leq N}\frac{1}{|x^i-x^j|^2}.
\end{equation*}
To verify the form-boundedness of $({\rm div\,}b)_+$, we invoke the many-particle Hardy inequality: for $d \geq 3$, all  $N \geq 2$, 
\begin{equation}
\label{multi_hardy}
C_{d,N}\sum_{1 \leq i<j \leq N}\int_{\mathbb R^{dN}}\frac{|\varphi(x)|^2}{|x^i-x^j|^2}dx \leq  \int_{\mathbb R^{dN}}|\nabla \varphi(x)|^2 dx
\end{equation}
for all $\varphi \in W^{1,2}(\mathbb R^{dN})$, where, from now on, $C_{d,N}$ denotes the best possible constant in \eqref{multi_hardy}. Hence
$$
({\rm div\,}b)_+^{\frac{1}{2}} \in \mathbf{F}_{\delta_+}, \quad \delta_+=\sqrt{\kappa}\frac{(d-2)^2}{N}C^{-1}_{d,N}.
$$
To the best of our knowledge, the problem of finding the exact value of $C_{d,N}$ is still open. It is not difficult to obtain a crude lower bound on $C_{d,N}$ by summing up the ordinary Hardy inequalities for the inverse square potential $x^i \mapsto |x^i-x^j|^{-2}$, each in its own copy of $\mathbb R^d$. However, as is pointed out by Hoffmann-Ostenhof, Hoffmann-Ostenhof, Laptev and Tidblom in \cite{HHLT}, this lower bound on $C_{d,N}$ is quite suboptimal. They provided a finer argument that gives a much better lower bound 
\begin{equation}
\label{HHLT_est}
C_{d,N} \geq (d-2)^2 \max\bigg\{\frac{1}{N},\frac{1}{1+\sqrt{1+\frac{3(d-2)^2}{2(d-1)^2}(N-1)(N-2)}} \bigg\}.
\end{equation}
Therefore, it suffices for us to require
\begin{equation}
\label{kappa_hyp}
\tag{$\kappa_{\rm hyp 2}$}
\kappa<16
\end{equation}
which guarantees  $\delta_+<4$ and allows us to apply Theorem \ref{thm_div}.

\end{example}

\textbf{2.~}We argue that the relationship between the many-particle Hardy inequality and the particle system 
\begin{equation}
\label{syst_m2}
X^i_t=x^i_0-\sqrt{\kappa}\frac{d-2}{2}\frac{1}{N}\sum_{j=1, j \neq i}^N\int_0^t \frac{X_s^i-X_s^j}{|X_s^i-X_s^j|^2}ds + \sqrt{2}B^i_t
\end{equation}
 goes both ways. Namely, we can use the counterexample in (a') of Section \ref{particle_sect} to the weak well-posedness of \eqref{syst_m2}, i.e.\,when the strength of attraction $\kappa$ is too large, to obtain an \textit{upper bound} on the best possible constant $C_{d,N}$ in \eqref{multi_hardy}.

\begin{theorem}[An upper bound on the constant in the many particle Hardy inequality \eqref{multi_hardy}]
\label{up_cor}
\label{cor2}
$$
C_{d,N} \leq \frac{d(d-2)}{N}.
$$

\end{theorem}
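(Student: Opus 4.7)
The plan is to combine Theorem~\ref{thm_div} with the squared-Bessel counterexample (a$'$) of Section~\ref{particle_sect}: the former asserts weak well-posedness of the particle system \eqref{syst_m2} for a range of $\kappa$ that is quantified in terms of $C_{d,N}$, while the latter rules out weak solutions once $\kappa$ crosses the geometric (Bessel) threshold $16(d/(d-2))^2$. Matching these two thresholds forces an upper bound on $C_{d,N}$.

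First I would set up the form-bound exactly as in Example~\ref{ex3_multi}. A direct computation of $\nabla_{x^i}\cdot\frac{x^i-x^j}{|x^i-x^j|^2} = \frac{d-2}{|x^i-x^j|^2}$ gives
\begin{equation*}
{\rm div}\,b(x) = \sqrt{\kappa}\,\frac{(d-2)^2}{N}\sum_{1\leq i<j\leq N}\frac{1}{|x^i-x^j|^2}\,\geq\,0,
\end{equation*}
so $({\rm div}\,b)_- = 0$. By the very definition of $C_{d,N}$ as the sharp constant in \eqref{multi_hardy},
\begin{equation*}
({\rm div}\,b)_+^{1/2}\in \mathbf{F}_{\delta_+} \quad \text{with}\quad \delta_+ = \frac{\sqrt{\kappa}\,(d-2)^2}{N\,C_{d,N}},
\end{equation*}
and this value is the smallest admissible form-bound. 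Combined with form-boundedness of $b$ itself via Lemma~\ref{particle_lem}, the hypotheses of Theorem~\ref{thm_div} (with $q=0$) are satisfied precisely when $\delta_+ < 4$, i.e.\ when $\sqrt{\kappa} < \frac{4N C_{d,N}}{(d-2)^2}$.

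Next comes the core contradiction. Fix any $\kappa > 16\bigl(\frac{d}{d-2}\bigr)^2$. On the one hand, part (a$'$) of Section~\ref{particle_sect} shows that the process $R_t=(4N)^{-1}\sum_{i,j}|X_t^i-X_t^j|^2$ would have to satisfy the squared-Bessel SDE \eqref{Bessel} of non-positive dimension $\mu=(N-1)\bigl(d-\sqrt{\kappa}\,(d-2)/4\bigr)\leq 0$, so no weak solution of \eqref{syst_m2} exists starting from the collision configuration $x_0^1=\cdots=x_0^N$. On the other hand, if we had $\delta_+<4$, then Theorem~\ref{thm_div} would produce a strong Markov family $\{\mathbb{P}_x\}_{x\in\mathbb{R}^{Nd}}$ of weak solutions from \emph{every} $x$, in particular from the collision configuration. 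Hence $\delta_+\geq 4$ for every such $\kappa$, i.e.
\begin{equation*}
C_{d,N}\,\leq\,\frac{\sqrt{\kappa}\,(d-2)^2}{4N}\qquad\text{for all } \sqrt{\kappa}>\tfrac{4d}{d-2}.
\end{equation*}
Taking $\sqrt{\kappa}\downarrow \frac{4d}{d-2}$ gives $C_{d,N}\leq \frac{d(d-2)}{N}$.

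The step I expect to require the most care in a careful write-up is the incompatibility assertion: I must justify that the continuous Markov trajectory supplied by Theorem~\ref{thm_div}(\textit{iv}) actually does force $R_t$ to solve \eqref{Bessel} with the offending dimension $\mu\leq 0$. The cleanest route is to apply It\^{o}'s formula to $(x^i-x^j)\mapsto|x^i-x^j|^2$ along the smooth approximating solutions $X_t^{n,m}$ of \eqref{sde2_approx}, recognise the squared-Bessel structure at the approximate level via the cancellation in the symmetric sum $\sum_{i,j}$, and then pass to the limit using the weak convergence $\mathbb{P}^{n,m}_x\to\mathbb{P}_x$ on $\mathbf{C}$ guaranteed by Theorem~\ref{thm_div}. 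Once this identification is in hand, the elementary Bessel obstruction of \cite{RevuzYor} closes the argument.
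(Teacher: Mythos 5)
Your proof is correct and follows essentially the same route as the paper: compute $\delta_+=\sqrt{\kappa}(d-2)^2/(N C_{d,N})$ exactly as in Example~\ref{ex3_multi}, invoke Theorem~\ref{thm_div} for weak existence when $\delta_+<4$, contrast this with the squared-Bessel obstruction (a$'$) for $\kappa>16(d/(d-2))^2$, and let $\sqrt{\kappa}\downarrow 4d/(d-2)$. Your closing remark about carefully identifying $R_t$ as a squared Bessel process for the limiting law is a reasonable point of rigor, but the paper simply takes (a$'$) as established in Section~\ref{particle_sect}, so no new idea is needed there.
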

\begin{proof}
By Theorem \ref{thm_div} and the calculation in the previous example, \eqref{syst_m2} has a weak solution for every initial configuration of the particles provided that
$$\sqrt{\kappa}\frac{(d-2)^2}{N}C^{-1}_{d,N} <4.$$
On the other hand, by the counterexample in (a') of Section \ref{particle_sect}, if $\kappa>16 (\frac{d}{d-2})^2$, then \eqref{syst_m2} does not have a weak solution, so we must have $$4\frac{d}{d-2}\frac{(d-2)^2}{N}C^{-1}_{d,N} \geq 4,$$ otherwise a weak solution would exist. This gives the sought upper bound on $C_{d,N}$.
\end{proof}

In \cite{HHLT}, the authors also provided, among other results, the following upper bound:
\begin{equation}
\label{up_bd}
C_{d,N} \leq \frac{2d}{2(N-1)}\pi^{\frac{d}{2}}\Gamma\left(\frac{d}{2}\right). 
\end{equation}
Their argument uses particular test functions in \eqref{multi_hardy}. Theorem \ref{up_cor} improves the dependence on the dimension $d$ in \eqref{up_bd}, i.e.\,we now have polynomial growth versus factorial growth in $d$. Of course, the simplicity of the proof of Theorem \ref{cor2} is only seeming: we apply Theorem \ref{thm_div} whose proof uses De Giorgi's method.

Theorem \ref{cor2} shows that the lower bound \eqref{HHLT_est} of \cite{HHLT} is close to optimal, at least in high dimensions.

\medskip

\textbf{3.~}We can relax the assumptions on $b$ in Theorem \ref{thm_div} as follows.

\begin{definition}[Multiplicative form-boundedness]
A vector field $b \in [L^1_{\loc}(\mathbb{R})]^d$ is said to be multiplicatively form-bounded if
\begin{align*}
\langle |b|\varphi, \varphi\rangle \leq \delta \|\nabla \varphi\|_2\|\varphi\|_2+  c_\delta\|\varphi\|^2_2 \qquad \forall\,\varphi \in W^{1,2}
\end{align*}
for some constants $\delta$ and $c_\delta$ (we will see below that only their finiteness is important).
This will be abbreviated as $b \in \mathbf{MF}_\delta$.
\end{definition}

Once again, the constant $c_\delta>0$ plays a secondary role when it comes to handling local singularities of $b$ (e.g.\,$c_\delta>0$ allows to include $L^\infty$ drifts).

\medskip

Mazya \cite[Sect.\,1.4.7]{M} proved that 
$$
\langle |b|\varphi, \varphi\rangle \leq \delta \|\nabla \varphi\|_2\|\varphi\|_2\;\;\forall \varphi \in W^{1,2} \text{ for some $\delta<\infty$}  \quad \Leftrightarrow \quad \sup_{r>0, x \in \mathbb R^d }\langle |b|\mathbf{1}_{B_r(x)}\rangle \leq C r^{d-1}
$$
for some $C<\infty$, i.e.\,there is a complete characterization of $\mathbf{MF}_\delta$ in terms of Morrey spaces:
\begin{equation}
\label{mf_char_}
\cup_{\delta>0}\mathbf{MF}_\delta \;(\text{with $c_\delta=0$})\quad =\quad M_1
\end{equation}
(see Appendix \ref{m_sect} for the proof).
Let us emphasize that for the class of form-bounded vector fields one only has inclusions
\begin{equation}
\label{f_char_}
M_{2+\varepsilon} \quad \subset \quad  \cup_{\delta>0}\mathbf{F}_\delta\;(\text{with $c_\delta=0$}) \quad  \subset \quad  M_2, 
\end{equation}
where $ \varepsilon>0$ is fixed arbitrarily small, i.e.\,there is no complete characterization of $\mathbf{F}_\delta$ in terms of Morrey spaces. See discussion in Section \ref{classes_sect}.

Comparing \eqref{mf_char_} and \eqref{f_char_}, one sees that one gains quite a lot in admissible singularities of $b$ by passing from form-bounded drifts  to multiplicatively form-bounded drifts. Of course, this comes at expense of imposing conditions on  ${\rm div\,}b$.

\begin{theorem}
\label{thm1} 
The following are true:

 \begin{enumerate}[label=(\roman*)]

\item {\rm (Classical martingale solutions)} If $$|b|^{\frac{1+\nu}{2}} \in \mathbf{F}_\delta \quad \nu \in ]0,1], \quad \delta<\infty,$$ 
and 
\begin{equation}
\label{div_cond5}
({\rm div\,}b)_+^{1/2} \in \mathbf{F}_{\delta_+}, \quad \delta_+<4, \qquad ({\rm div}\,b)_- \in L^1 + L^\infty,
\end{equation}
then, for every $x \in \mathbb R^d$, SDE 
\begin{equation*}
X_t=x-\int_0^t b(X_s)ds + \sqrt{2}B_t
\end{equation*}
has a  martingale solution $\mathbb P_{x}$.

\medskip

\item  {\rm (Approximation uniqueness and Markov property)} If, in addition to the assumptions of (\textit{i}),
$b \in \mathbf{MF}_{\delta}$ for some $\delta<\infty$,
then there exists
$0<\gamma<1$  such that, regardless of the choice of $\{b_n\} \in [b]$ (defined in the same way as in Section \ref{approx_sect}, i.e.\,to preserve the structure constants of $b$), provided that $\{b_n\}$ additionally satisfies
$$
b_n \rightarrow b \quad \text{ in $[L^{1+\gamma}]^d$,}
$$
we have convergence 
$$
\mathbb P_{x}^{n} \rightarrow \mathbb P_{x} \quad \text{ weakly in $\mathcal P(\mathbf{C})$},
$$
of the martingale solutions $\{\mathbb P_{x}^{n}\}$ to the approximating SDEs 
\begin{equation}
\label{approx_sdes5}
X^{n}_t=x-\int_0^t b_{n}(X^{n}_r)dr + \sqrt{2}B_t.
\end{equation}
Furthermore, $\{\mathbb P_x\}_{x \in \mathbb R^d}$ is a Markov family.
\medskip

\item {\rm (Feller semigroup)} Under the assumptions of (\textit{ii}),
$$
T_tf(x):=\mathbb E_{\mathbb{P}_x}[f(\omega_t)], \quad f \in C_\infty
$$
is a strongly continuous Feller semigroup on $C_\infty$, say, $T_t=:e^{-t\Lambda}$, where the generator $\Lambda$ is thus appropriate operator realization of the formal operator $-\Delta + b \cdot \nabla$ in $C_\infty$.
\end{enumerate}
\end{theorem}

 The first two statements were proved in \cite{KiS_sharp} and \cite{Ki_multi}, respectively.
The novelty is in assertion (\textit{iii}). Its proof uses the Trotter approximation theorem in the same way as the proof of Theorem \ref{thm2}.

\begin{remark}[$L^2$ vs $L^{1+\gamma}$ for some $\gamma<1$]
\label{cacc_rem}

If, in the setting of Theorem \ref{thm1}(\textit{i}), we additionally require $b \in [L^2_{\loc}(\mathbb R^{d})]^d$, then it is also possible to prove a.e.\,approximation uniqueness. The last condition is actually satisfied if time-inhomogeneous $b$ is a Leray-Hopf solution of the 3D Navier-Stokes equations, i.e.\,then one has $b \in L^\infty([0,1],[L^2_{\loc}(\mathbb R^{d})]^d)$. This was explored by a number of authors, see Appendix \ref{super_rem}.
There are, however, other classes of solutions to 3D N-S equations that are not uniformly in $t$ square integrable, such as the critical class \eqref{X_def} of Koch and Tataru. So, we are interested in finding different additional conditions on $b$ that do not require square integrability, but still allow us to prove, among other results, the approximation uniqueness. This is the condition $b \in \mathbf{MF}_\delta$  in Theorem \ref{thm1}(\textit{ii}).

The proof of the approximation uniqueness in Theorem \ref{thm1}(\textit{ii}) uses an $L^{\frac{1+\gamma}{\gamma}}(\mathbb R^d)$ gradient bound on solutions of the corresponding elliptic Kolmogorov equation (Lemma \ref{grad_lem}). It is proved by means of the Gehring-Giaquinta-Modica's lemma (Lemma \ref{gehring_prop}), so $\gamma$ can be estimated explicitly, see Remark \ref{gehring_expl_est}.

To use Gehring-Giaquinta-Modica's lemma, we need  Caccioppoli's inequality. The proof of Caccioppoli's inequality for multiplicatively form-bounded drifts  employs an extra iteration procedure (``Caccioppoli's iterations'') which was introduced in our previous paper \cite{KiV} to study regularity of solutions of Dirichlet problem for the drift-diffusion equation. Namely, for  $v=(u-k)_+$ and cutoff function $\eta$ one has
\begin{align*}
\langle b \cdot \nabla u, \eta v \rangle & = \frac{1}{2}\langle b \cdot \nabla v^2, \eta \rangle \\
& =-\frac{1}{2}\langle b \cdot \nabla \eta, v^2\rangle \\
& \leq  \frac{1}{2}\langle |b|, \psi^2 \rangle, \quad \text{ where }\psi:=\sqrt{|\nabla \eta|v}.
\end{align*}
By $b \in \mathbf{MF}_\delta$ (for simplicity, take $c_\delta=0$),
\begin{align*}
\langle |b|, \psi^2 \rangle
& \leq \delta \|\nabla (v\sqrt{|\nabla \eta|})\|_2\|v\sqrt{|\nabla \eta|}\|_2\\
& \leq \delta \bigg(\|(\nabla v)\sqrt{|\nabla \eta|}\|_2 + \|v \nabla \sqrt{|\nabla \eta|}\|_2 \bigg) \, \|v\sqrt{|\nabla \eta|}\|_2,
\end{align*}
so
$$
\langle |b|, \psi^2 \rangle \leq \frac{C_1}{r_2-r_1}\|(\nabla v)\mathbf{1}_{B_{r_2}}\|_2\| v\mathbf{1}_{B_{r_2}}\|_2 + \frac{C_1}{(r_2-r_1)^2} \|v\mathbf{1}_{B_{r_2}}\|^2_2,
$$
provided that $\eta$ is equal to $1$ on $B_{r_1}$, is zero outside of $B_{r_2}$, and its derivatives satisfy appropriate estimates.
The first term in the RHS contains both $\nabla v$ and the indicator function of the ball of larger radius, so we cannot simply apply Cauchy-Schwarz' inequality to  obtain the Caccioppoli inequality. But it is possible to arrive at the Caccioppoli inequality by iterating over a sequence of intermediate balls with radii between $r_1$ and $r_2$.
\end{remark}

\begin{remark}[Heat kernel bounds] 
\label{heat_rem}
Although in the results mentioned so far it is the singular positive part of ${\rm div\,}b$ that presents an obstacle to the well-posedness of the SDE, something nice that can be said about the case of positive divergence. Namely, assume that $a \in H_{\xi}$, i.e.\,we have a bounded symmetric uniformly elliptic matrix field. Let $$b \in \mathbf{F}_\delta, \quad \delta<4\xi^2,$$ and
$$
{\rm div\,}b \geq 0.
$$ 
Then the heat kernel $p(t,x,y)$ of $-\nabla \cdot a \cdot \nabla + b \cdot \nabla$, defined as the integral kernel of the corresponding $C_0$ semigroup in $L^p$, $p>\frac{2}{2-\nu^{-1}\sqrt{\delta}}$, constructed via a suitable regularization of $a$ and $b$, satisfies, possibly after a modification on a measure zero set, the Gaussian lower bound
\begin{equation}
\label{lgb}
  c_1 \Gamma_{c_2}(t,x-y) e^{-c_3 t} \leq p(t,x,y),
\end{equation}
where $\Gamma_c(t,x):=(4\pi c t)^{-\frac{d}{2}}e^{-\frac{|x|^2}{ct}}$ and $c_1, c_2>0$, $c_3 \geq 0$, see \cite{KiS_MAAN}.

Under the above assumptions on $b$ there is no Gaussian upper bound on $p(t,x,y)$. In fact, the counterexample is given by the Brownian particles considered in Example \ref{ex1_multi}, see the end of Section \ref{particle_sect}. Consequently, the proof of the Gaussian lower bound \eqref{lgb} does not use the Gaussian upper bound. As is mentioned to \cite{KiS_MAAN}, to the best of authors' knowledge, this is the first result of this type. 
\end{remark}

\begin{remark}[More on the case ${\rm div\,}b=0$] 
1.~Let $b=b(x)$. In the case ${\rm div\,}b=0$, there is an alternative approach to the proof of the approximation uniqueness for $b \in \mathbf{MF}_\delta$. Namely, 
Mazya-Verbitsky \cite[Theorem 5]{MV} proved equivalence 
\begin{equation}
\label{MV_equiv}
|\langle b \varphi,\varphi\rangle| \leq \delta \|\nabla \varphi\|_2\|\varphi\|_2 \quad \forall\,\varphi \in C_c^\infty \quad \Leftrightarrow \quad b=\nabla Q \text{ for some } \quad Q \in [{\rm BMO}]^{d \times d},
\end{equation}
where the LHS is, clearly, more general that $b \in \mathbf{MF}_\delta$.
So, one can use this representation for divergence-free $b$, put the anti-symmetric matrix $Q$ in the diffusion coefficients,  
and then prove uniqueness of the weak solution to Cauchy problem for the Kolmogorov parabolic equation by working in the standard Hilbert triple $W^{1,2} \hookrightarrow L^2 \hookrightarrow W^{-1,2}$, see \cite{QX}.

2. Assuming that $b \in \mathbf{MF}_\delta$, ${\rm div\,}b=0$, Sem\"{e}nov \cite{S} proved two-sided 
Gaussian bounds
\begin{equation}
\label{S}
C_1\Gamma_{c_2}(t-s,x-y) \leq p(t,s,x,y) \leq C_3 \Gamma_{c_4}(t-s,x-y),
\end{equation}
where $p(t,s,x,y)$ is the heat kernel of the Kolmogorov operator $-\nabla \cdot a \cdot \nabla + b \cdot \nabla$ with measurable symmetric uniformly elliptic $a$. He used Moser's method to prove the upper bound. His proof of the lower bound is based on a substantial modification of Nash's method. Next, Qian-Xi \cite{QX} established two-sided Gaussian bounds for all $q=\nabla Q \in \mathbf{BMO}^{-1}$. 

Having two-sided Gaussian bounds greatly simplifies the analysis of the corresponding diffusion process, e.g.\, one obtains right away the Feller propagator, the continuity of trajectories follows easily from the Kolmogorov continuity criterion. Let us show  how the tightness of the martingale solutions $\mathbb P_x^n$ of the approximating SDEs \eqref{approx_sdes5} follows from the upper Gaussian bound (our underlying goal here is to demonstrate how natural condition $b \in \mathbf{MF}_\delta$ is).
The tightness argument requires, as its point of departure, the estimate
\begin{equation}
\label{t0t1}
\mathbb E_{\mathbb P^n_x}\int_{t_0}^{t_0+\varepsilon} |b_n(\omega_r)|dr \leq H(\varepsilon), \quad 0 \leq t_0\leq 1, 0<\varepsilon<1,
\end{equation}
for a continuous function $H$ such that $H(\varepsilon) \downarrow 0$ as $\varepsilon \downarrow 0$. By It\^{o}'s formula,
$
\mathbb E_{\mathbb P^n_x}\int_{t_0}^{t_1} |b_n(\omega_r)|dr=u_n(t_0,X^n_{t_0}),
$
where $u_n$ solves
$
\partial_t u_n + \Delta u_n - b_n \cdot \nabla u_n + |b_n|=0$, $u_n(t_1)=0.
$
After reversing the direction of time, we can deal instead with the initial-value problem
\begin{align}
(\partial_t - \Delta + b_n \cdot \nabla)v_n=|b_n|, \quad v_n|_{t=0}=0.
\end{align} 
(with some abuse of notation, we continue denoting the drift by $b_n$). By the Duhamel formula,
\begin{align*}
v(t,x)=\int_0^t \langle p(t,s,x,\cdot)|b_n(\cdot)|\rangle ds,
\end{align*}
so, applying the upper Gaussian bound on the heat kernel $p=p_n$, we obtain (put $\Gamma_{t,x}:=\Gamma_{c_4}(t-s,x-\cdot)$):
\begin{align}
|v(t,x)| & \leq C_3\int_0^t \langle |b|\sqrt{\Gamma_{t,x}},\sqrt{\Gamma_{t,x}} \rangle ds \notag \\
& (\text{we use $b \in \mathbf{MF}_\delta$}) \label{M_line} \\
& \leq C_3\int_0^t \biggl(\delta\|\nabla \sqrt{\Gamma_{t,x}}\|_2 \|\sqrt{\Gamma_{t,x}}\|_2+c_\delta \|\sqrt{\Gamma_{t,x}}\|^2_2\biggr) ds \notag \\
& = C_3\int_0^t \biggl(\delta\|\nabla\sqrt{\Gamma_{t,x}}\|_2+c_\delta \biggr) ds \notag \\ 
& = C_3\int_0^t \biggl(\delta \sqrt{\frac{d}{8c_4}\frac{1}{t-s}} + c_\delta \biggr)ds, \notag
\end{align}
which yields \eqref{t0t1} for $H(t)=C\sqrt{t}+c_\delta t$. 
Let us note that the upper Gaussian bound that we assumed above is valid e.g.\,if $b \in \mathbf{MF}_\delta$ and ${\rm div\,}b \leq 0$ or, more generally, $({\rm div\,}b)_+$ is in the Kato class, see \cite{KiS_MAAN}.
\end{remark}

\begin{remark}The following is a variant of the particle system in Examples \ref{ex1_multi}, \ref{ex2_multi} and \ref{ex3_multi} that additionally pushes the center of mass of the interacting particles towards the origin:
\begin{align*}
X^i_t =x^i_0-\frac{1}{N}\sum_{j=1, j \neq i}^N \sqrt{\kappa}\frac{d-2}{2} \int_0^t \frac{X_s^i-X_s^j}{|X_s^i-X_s^j|^2} - \sqrt{\kappa}\int_0^t \frac{(d-2)^2}{4}\frac{1}{N-2}\frac{\frac{1}{N}\sum_{k=1}^N X_s^k}{|\frac{1}{N}\sum_{k=1}^N X_s^k|^2}ds + \sqrt{2}B^i_t,  
\end{align*}
where $i=1,\dots,N$. Theorem \ref{thm_div} applies whenever $\kappa<16$.
Indeed, the corresponding drift $b=(b^1,\dots,b^N)$ is given by 
$$
b^i(x^1,\dots,x^N):=  \frac{\sqrt{\kappa}}{N}\sum_{j=1, j \neq i}^N\frac{d-2}{2} \frac{x^i-x^j}{|x^i-x^j|^2} + \sqrt{\kappa}\frac{(d-2)^2}{4}\frac{1}{N-2}\frac{\frac{1}{N}\sum_{k=1}^N x^k}{|\frac{1}{N}\sum_{k=1}^N x^k|^2},
$$
so it suffices for us to apply to ${\rm div\,}b$, in the same way as it was done in Example \ref{ex3_multi},  the following many-particle Hardy-type inequality, also proved in \cite{HHLT}:
\begin{equation}
\label{many_hardy_2}
\frac{(d-2)^2}{N}\sum_{1 \leq i<j \leq N}\int_{\mathbb R^{dN}}\frac{\varphi^2(x)}{|x^i-x^j|^2}dx + \frac{(d-2)^2N}{4} \int_{\mathbb R^{dN}}\frac{\varphi^2 (x)}{\big|\sum_{k=1}^N x^k\big|^2}dx \leq  \int_{\mathbb R^{dN}}|\nabla \varphi(x)|^2 dx.
\end{equation}

The proof of \eqref{multi_hardy} in \cite{HHLT} departs from \eqref{many_hardy_2}. In turn, the proof of the latter is rather close to the proof of the usual Hardy inequalty (i.e.\,via representing the potential as the divergence of appropriately chosen vector field and integrating by parts, see \cite[proof of Theorem 2.1]{HHLT} for details). Despite that, to our knowledge, the question of the optimality of the constants in inequality \eqref{many_hardy_2} has not been addressed yet. 

\end{remark}

\bigskip

\section{Diffusion coefficients with form-bounded $\nabla a$}

\label{diff_coeff_sect}

By \cite[Theorem 6.1]{MV}, for a distributional vector field $c$ on $\mathbb R^d$, one has
\begin{equation}
\label{op_emb}
-\Delta + c \cdot \nabla \in \mathcal B(W^{1,2},W^{-1,2}),
\end{equation}
if and only if $c$ admits a decomposition
\begin{equation}
\label{c_decomp}
c=b+q \quad \text{ for some $b \in \mathbf{F}_\delta$ and $q \in \mathbf{BMO}^{-1}_\sharp$.}
\end{equation}
Here
$\mathbf{BMO}^{-1}_\sharp$ consists of divergence-free vector fields $q=\nabla Q$ whose $n \times n$ anti-symmetric primitive $Q=(Q^{ij})_{i,j=1}^d$ satisfies
$$
\|Q^{ij}\|_{ {\rm BMO}_\sharp}=\sup_{x \in \mathbb R^d, 0<R \leq 1}\frac{1}{|B_R|}\int_{B_R(x)}|Q^{ij}-(Q^{ij})_{B_R(x)}|dy<\infty.
$$
It follows that Theorem \ref{thm2} covers exactly the same class of drifts that guarantee the embedding \eqref{op_emb}, up to replacing $\mathbf{BMO}^{-1}_\sharp$ with $\mathbf{BMO}^{-1}$, i.e.\,imposing rather mild assumptions on the growth of $Q$ at infinity. In fact, \cite[Theorem 1]{MV} yields a similar necessary and sufficient condition for the embedding \eqref{op_emb} for the homogeneous Sobolev spaces, in which case $ \mathbf{BMO}^{-1}_\sharp$ in \eqref{c_decomp} gets replaced with $\mathbf{BMO}^{-1}$, and one has $c_\delta=0$ in the form-boundedness condition for $b$.

As noted above, in light of the result of Mazya and Verbitsky \cite[Theorem 6.1]{MV}, the drift conditions in Theorem \ref{thm2} are essentially optimal if one expects the Kolmogorov operator $-\Delta + c \cdot \nabla$ to be $W^{1,2} \rightarrow W^{-1,2}$ bounded. In Theorem \ref{thm_wfb}, however, we will be dealing with a larger than $\mathbf{F}_\delta$ class of Borel measurable drifts such that the Kolmogorov operator is $\mathcal W^{\frac{3}{2},2} \rightarrow \mathcal W^{-\frac{1}{2},2}$ (Bessel spaces) bounded. This apparent ambiguity and the question why one might expect Kolmogorov operator to be $W^{1,2} \rightarrow W^{-1,2}$ bounded is clarified by noting that both Theorem \ref{thm2} and \cite[Theorem 6.1]{MV} are valid in a greater generality. Let $a$ be a bounded uniformly elliptic symmetric matrix field on $\mathbb R^d$, i.e.\,$a \in H_{\xi}$ for some $\xi>0$. The cited result of Mazya and Verbitsky states that one has
\begin{equation}
\label{op_emb2}
-\nabla \cdot a \cdot \nabla + c \cdot \nabla \in \mathcal B(W^{1,2},W^{-1,2}) \quad \Leftrightarrow \quad c=b+q \text{ as in \eqref{c_decomp}},
\end{equation}
where $
-\nabla \cdot a \cdot \nabla$ does not let us deviate from the embedding $W^{1,2} \rightarrow W^{-1,2}$. 
In Theorem \ref{thm2_a}, we treat non-divergence form operators $$-a \cdot \nabla^2 + c \cdot \nabla,$$ where $\nabla a \in [L^2_{\loc}]^{d \times d}$ is in $\mathbf{F}_\delta$. Since $$-a \cdot \nabla^2 + c \cdot \nabla=-\nabla \cdot a \cdot \nabla + (\nabla a + c)\cdot \nabla,$$ 
\cite[Theorem 6.1]{MV} again applies. For such diffusion coefficients, which are considered in the next theorem, our condition on the drift is, arguably, close to being optimal.

Let $a \in H_\xi$.
Put $\sigma=\sqrt{a}$. In Theorem \ref{thm2_a} we consider SDE
\begin{equation}
\label{sde_sigma}
X_t=x-\int_0^t \big(b(X_s) + q(X_s)\big)ds + \sqrt{2}\int_0^t \sigma(X_s) dB_s, 
\end{equation}
with $\nabla a + b \in \mathbf{F}_\delta$ and $q \in \mathbf{BMO}^{-1}$.

\begin{example}
\label{ex100}
For example, let $$a(x)=I+c\frac{x \otimes x}{|x|^2}, \quad x \in \mathbb R^d, \quad c>-1.$$ Then
$
\nabla a(x)=c(d-1)\frac{x}{|x|^2},
$ so, in view of Example \ref{hardy}, $$\nabla a \in \mathbf{F}_{\delta_1}, \quad \delta_1=\frac{4c^2(d-1)^2}{(d-2)^2}.$$ This matrix field produces diffusion coefficients $\sigma$ with discontinuity at the origin that is strong enough to make the weak solution to SDE \eqref{sde_sigma} (even with $b=q=0$)  arrive at the origin with positive probability, see e.g.\,\cite[Ch.\,V, Sect.\,3]{B}.
This example can be extended to a matrix field $a$ in $\mathbb R^{dN}$  similar to the many-particle drift \eqref{drift_b}, i.e.\,corresponding to $N$ particles in $\mathbb R^d$ interacting via diffusion coefficients.
\end{example}

Let $\{a_n\}, \{b_n\} \in [a,b]$ (Section \ref{notations_sect}), $\{q_m\} \in [q]$ and $\sigma_n=\sqrt{a_n}$.
Define the approximating Kolmogorov operators
$$
\Lambda(a_n,b_n,q_m):=-a_n\cdot \nabla^2 + (b_n + q_m) \cdot \nabla, \quad D\big(\Lambda(a_n,b_n,q_m)\big)=(1-\Delta)^{-1}C_\infty.
$$
By classical theory, these are generators of strongly continuous Feller semigroups on $C_\infty$. One has
$$
e^{-t\Lambda(a_n,b_n,q_m)}f(x)=\mathbf E[f(X^{n,m}_t)],$$
where $X^{n,m}_t$ is the unique strong solution to SDE
\begin{equation}
X^{n,m}_t=x-\int_0^t \big(b_n(X^{n,m}_s) + q_m(X^{n,m}_s)\big)ds + \sqrt{2}\int_0^t \sigma_n(X^{n,m}_s) dB_s, 
\end{equation}
considered on a fixed complete probability space $\mathfrak F=(\Omega,\mathcal F,\{\mathcal F_t\}_{t \geq 0},\mathbf{P})$, $B_t$ is a $d$-dimensional Brownian motion on this space.
Set
 $\mathbb P_x^{n,m}:=\mathbf{P} (X_t^{n,m})^{-1}$.

\begin{theorem}
\label{thm2_a} Let $d \geq 3$.
Let $a \in H_{\xi}$ and let $b$ and $q$ be, respectively, Borel measurable and distribution-valued vector fields $\mathbb R^d\rightarrow \mathbb R^d$  that satisfy 
\begin{equation*}
\left\{
\begin{array}{l}
\nabla a + b \in \mathbf{F}_\delta \text{ with } \delta<\xi^2, \\
q \in \mathbf{BMO}^{-1}.
\end{array}
\right.
\end{equation*}
Let $\{a_n\}, \{b_n\} \in [a,b]$, $\{q_m\} \in [q]$. The following are true:

\begin{enumerate}[label=(\roman*)]
\item {\rm (Feller semigroup)} The limit 
$$
s\mbox{-}C_\infty \mbox{-}\lim_n \lim_m e^{-t\Lambda(a_n,b_n,q_m)} \text{ (loc.\,uniformly in $t \geq 0$)}
$$
exists and determines a strongly continuous Feller semigroup on $C_\infty$, say, $e^{-t\Lambda}=e^{-t\Lambda(a,b,q)}$, where thus $\Lambda \supset -a \cdot \nabla^2 + (b+q) \cdot \nabla$ in $C_\infty$.

\medskip

\item {\rm (A relaxed approximation uniqueness)} The limit in (\textit{i}) does not depend on the choice of $\{a_n\}$, $\{b_n\}$ and $\{q_m\}$. In fact, we can replace the strong convergence of $b_n$ to $b$ in $[L^2]^d$ by the weak convergence
$$
b_n \overset{w}{\rightarrow} b \quad \text{ in } [L^2]^d
$$
to have convergence of the Feller semigroups
$
e^{-t\Lambda(a_n,b_n,q)} \overset{s}{\rightarrow} e^{-t\Lambda(a,b,q)}$ in $C_\infty$ (loc.\,uniformly in $t \geq 0$).

\medskip

\item {\rm (Generalized martingale solution)} There exists a strong Markov family of probability measures $\{\mathbb P_x\}_{x \in \mathbb R^d}$ on the canonical space $\mathbf C$ of continuous trajectories $\omega$ such that 
$$
e^{-t\Lambda(a,b,q)}f(x)=\mathbb E_{\mathbb P_x}[f(\omega_t)], \quad f \in C_\infty, \quad x \in \mathbb R^d,\;t \geq 0,$$
$$
\mathbb P_x=w\mbox{-}\mathcal P(\mathbf C)\mbox{-}\lim_n\lim_m \mathbb P_x^{n,m},
$$
and for every $v$ in the domain $D\big(\Lambda(a,b,q)\big) \subset C_\infty$ of operator $\Lambda(a,b,q) \supset - a\cdot \nabla^2 + (b + q) \cdot \nabla$, a dense subspace of $C_\infty$, the process
$$
t \mapsto v(\omega_t)-v(x) + \int_0^t \Lambda(a,b,q) v(\omega_s) ds
$$
is a continuous martingale with respect to $\mathbb P_x$. 

\medskip

\item {\rm (Weak solution for disperse initial data)} Assume additionally that $b$, $q$ and $b_n$, $q_m$ have supports in a fixed ball of finite radius. Given an initial (smooth) probability density $\nu_0$ satisfying $\langle \nu_0^{2r}\rangle<\infty$ for some $1<r<\delta^{-\frac{1}{2}}$,
there exist a probability space $\mathfrak F'=(\Omega',\mathcal F',\{\mathcal F'_t\}_{t \geq 0},\mathbf{P}')$ and a continuous process $X_t$ on this space such that the limit
$$
A_t:=L^2(\Omega')\mbox{-}\lim_n\lim_m \int_0^t \big(b_n(X_s)+q_m(X_s)\big)ds
$$
exists, we have a.s.
$$
X_t=X_0-A_t + \sqrt{2}\int_0^t \sigma(X_s)dB_s, \quad t>0,
$$ 
for a $\mathcal F_t'$-Brownian motion $B_t$, and $\mathbf{P}' X_0^{-1}$ has density $\nu_0$.

\end{enumerate}
\end{theorem}

Theorem \ref{thm2_a}, when applied to $a=I$ (so $\xi=1$), imposes a more restrictive condition on the form-bound $\delta$ than Theorem \ref{thm2}, i.e.\,$\delta<1$ instead of $\delta<4$ (the reason for this is that in the proof of Theorem \ref{thm2_a} we pass to the limit in $L^2$, and thus we need its $L^2$ theory).
In fact, under the assumptions of Theorem \ref{thm2_a} but with $\delta<4\xi^2$, after passing to a subsequence $\{n_k\}$ we still get Feller semigroup
$$
e^{-t\Lambda(a,b,q)}=s\mbox{-}C_\infty \mbox{-}\lim_k \lim_m e^{-t\Lambda(a_{n_k},b_{n_k},q_m)} \text{ (loc.\,uniformly in $t \geq 0$)}.
$$
That is, all assertions of Theorem \ref{thm2_a} with the exception of the approximation uniqueness are also valid for  $\delta<4\xi^2$. 

Further, assuming that $q=0$ and $\delta<\xi^2$, some other assertions  of 
Theorem \ref{thm2} remain valid in the setting of Theorem \ref{thm2_a}. See \cite{KiS_sharp} and \cite{KiS_Osaka} regarding the weak solutions.

Earlier, Veretennikov \cite{V} and Zhang \cite{Z_a}, Zhang-Zhao \cite{ZZ2} established strong well-posedness for diffusions coefficients having derivatives in $L^p$ with $p$ strictly larger than $2d$ or $d$, respectively. These assumptions, however, make diffusion coefficients H\"{o}lder continuous, so they exclude e.g.\,Example \ref{ex100}.

Similar conditions on diffusion coefficients on the scale of Morrey spaces are considered by Krylov, see \cite{Kr1, Kr2}.

\begin{remark} We have a counterpart of Remark \ref{disp_rem}. Namely, for every $p>\frac{2}{2-\xi^{-1}\sqrt{\delta}}$, the operators
$$
e^{-t\Lambda_p(a,b,q)}:=\biggl[e^{-t\Lambda(a,b,q)} \upharpoonright C^\infty \cap L^p \biggr]^{\rm clos}_{L^p \rightarrow L^p}
$$
are bounded on $L^p$, constitute a strongly continuous semigroup, and for all $\frac{2}{2-\xi^{-1}\sqrt{\delta}} < p \leq r < \infty$, 
\begin{equation*}
\|e^{-t\Lambda_p(a,b,q)}f\|_{r} \leq C_{\delta,d}e^{\omega_{p} t}t^{-\frac{d}{2}(\frac{1}{p}-\frac{1}{r})}\|f\|_p, \quad f \in L^p,
\end{equation*}
and so, by the Dunford-Pettis theorem, $e^{-t\Lambda(a,b,q)}$, $t \geq 0$, are integral operators. 

Furthermore,
$v(t):=e^{-t\Lambda_2(a,b,q)}f$, $f \in L^2$, is the unique weak solution to Cauchy problem 
$$
(\partial_t-a\cdot\nabla^2 + (b+q) \cdot \nabla)v=0, \quad v|_{t=0}=f,
$$
in the standard Hilbert triple  $W^{1,2} \hookrightarrow L^2 \hookrightarrow W^{-1,2}$.

The local maximum principle: for every $f \in L^{p\theta} \cap L^{p\theta'} \cap C_\infty$, solution $u=(\mu+\Lambda(a,b,q))^{-1}f$ to the elliptic Kolmogorov equation $(\mu-a\cdot \nabla^2 + (b+q)\cdot \nabla)u=f$ satisfies for each $x \in \mathbb R^d$:
\begin{equation*}
\sup_{B_{\frac{1}{2}}(x)}|u|  \leq K \biggl(\langle |f|^{p\theta}\rho_x\rangle^{\frac{1}{p\theta}}  + \langle |f|^{p\theta'}\rho_x\rangle^{\frac{1}{p\theta'}} \biggr), 
\end{equation*}
for fixed $1<\theta<\frac{d}{d-2}$ and $p>\frac{2}{2-\xi^{-1}\sqrt{\delta}}$, for all $\mu$ strictly greater than certain $\mu_0$. The constants $K$ and $\mu_0$ do not depend on $f$ or $x$.

\medskip

Also, a priori Krylov-type bound analogous to \eqref{krylov_distr} holds.

\medskip

The elliptic gradient bounds in the case $q=0$ and $\delta<\frac{c}{d^2}$ are proved in \cite{KiS_Osaka}.

\end{remark}

\bigskip

\section{Proof of Theorem \ref{thm2}}

Let us write, to shorten notations,
$$
\Lambda_{n,m}:=\Lambda(b_n,q_m) \equiv -\Delta + (b_n+q_m)\cdot \nabla, \quad D(\Lambda_{n,m})=(1-\Delta)^{-1}C_\infty.
$$

\medskip

\subsection*{Proof of ({\textit{i}})} This assertion  will follow from the Trotter approximation theorem (see e.g.\,{\cite[IX.2.5]{Ka}}). Applied to contraction semigroups $\{e^{-t\Lambda_{n,m}}\}_{n,m \geq 1}$ in $C_\infty$, this theorem is stated as follows:

\begin{theorem}[Trotter's approximation theorem]
\label{trotter_thm}
Assume that there exists constant $\mu_0>0$ independent of $n$, $m$ such that

\smallskip

{\rm $1^\circ$)} $\sup_{n,m \geq 1}\|(\mu+\Lambda_{n,m})^{-1}f\|_\infty \leq \mu^{-1}\|f\|_\infty$, $\mu \geq \mu_0$.

\smallskip

{\rm $2^\circ$)} there exists $\text{\small $s\text{-}C_\infty\text{-}$}\lim_{n}\lim_m (\mu+\Lambda_{n,m})^{-1}$ for some $\mu \geq \mu_0$.

\smallskip

{\rm $3^\circ$)} $\mu (\mu+\Lambda_{n,m})^{-1} \rightarrow 1$ in $C_\infty$ as $\mu \uparrow \infty$ uniformly in $n$, $m$.

\medskip

\noindent Then there exists a contraction strongly continuous semigroup $e^{-t\Lambda}$ on $C_\infty$ such that
$$
e^{-t\Lambda}=s\mbox{-}C_\infty\mbox{-}\lim_n\lim_m e^{-t\Lambda_{n,m}}
$$
locally uniformly in $t \geq 0$.
\end{theorem}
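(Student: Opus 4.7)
The plan is to follow the classical Kato route: turn the hypotheses into information about a pseudoresolvent, identify it as the resolvent of a true generator via Hille-Yosida, then deduce semigroup convergence from resolvent convergence by a Yosida-approximation comparison.

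First I would extend the iterated limit from hypothesis $2^\circ$ (known a priori at one value of $\mu$) to every $\mu \geq \mu_0$. Fix $\mu_1\geq \mu_0$ where $R_{\mu_1}f:=\lim_n\lim_m(\mu_1+\Lambda_{n,m})^{-1}f$ exists in $C_\infty$. The second resolvent identity
$$
(\mu+\Lambda_{n,m})^{-1}=(\mu_1+\Lambda_{n,m})^{-1}+(\mu_1-\mu)(\mu_1+\Lambda_{n,m})^{-1}(\mu+\Lambda_{n,m})^{-1}
$$
together with the contraction bound $1^\circ$ produces, for $|\mu-\mu_1|<\mu_1$, a Neumann series expressing $(\mu+\Lambda_{n,m})^{-1}$ purely in terms of $(\mu_1+\Lambda_{n,m})^{-1}$, with geometric convergence uniform in $n,m$. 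The iterated limit can therefore be taken term by term, producing $R_\mu = \lim_n\lim_m (\mu+\Lambda_{n,m})^{-1}$ on a disc; a connectedness/covering argument extends existence to all $\mu\geq \mu_0$. Passing the resolvent identity and the bound $\|(\mu+\Lambda_{n,m})^{-1}\|_{C_\infty\to C_\infty}\leq\mu^{-1}$ to the limit yields the pseudoresolvent relations $R_\mu-R_\lambda=(\lambda-\mu)R_\mu R_\lambda$ and $\|R_\mu\|\leq\mu^{-1}$.

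Next I would use hypothesis $3^\circ$. Since $\mu(\mu+\Lambda_{n,m})^{-1}f\to f$ in $C_\infty$ as $\mu\uparrow\infty$ uniformly in $n,m$, the iterated limit gives $\mu R_\mu f\to f$ for every $f\in C_\infty$. Hence $\ker R_\mu=\{0\}$ and $\operatorname{Ran}R_\mu$ is dense, so by the standard pseudoresolvent lemma $\{R_\mu\}$ is the resolvent family of a unique densely defined closed operator $-\Lambda$ on $C_\infty$, with $R_\mu=(\mu+\Lambda)^{-1}$ and $\|\mu(\mu+\Lambda)^{-1}\|\leq 1$ for all $\mu\geq\mu_0$. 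The Hille--Yosida theorem then produces a contraction $C_0$-semigroup $e^{-t\Lambda}$ on $C_\infty$.

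The main step, and the main obstacle, is to upgrade resolvent convergence to semigroup convergence through the iterated limit. For this I introduce Yosida approximations
$$
\Lambda_{n,m}^{(\mu)}:=\mu^2(\mu+\Lambda_{n,m})^{-1}-\mu I,\qquad \Lambda^{(\mu)}:=\mu^2 R_\mu-\mu I,
$$
which are bounded, generate contraction semigroups (direct series check using $1^\circ$), and satisfy $e^{-t\Lambda_{n,m}^{(\mu)}}f\to e^{-t\Lambda_{n,m}}f$ and $e^{-t\Lambda^{(\mu)}}f\to e^{-t\Lambda}f$ as $\mu\to\infty$ locally uniformly in $t$, with the first convergence \emph{uniform in $n,m$} thanks to $3^\circ$ and $1^\circ$ (the usual estimate $\|e^{-t\Lambda_{n,m}^{(\mu)}}f-e^{-t\Lambda_{n,m}}f\|\leq t\,\|\Lambda_{n,m}^{(\mu)}f-\Lambda_{n,m}f\|$ on a core, combined with equicontinuity). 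A three-term split
$$
\bigl\|e^{-t\Lambda_{n,m}}f-e^{-t\Lambda}f\bigr\|\le \bigl\|e^{-t\Lambda_{n,m}}f-e^{-t\Lambda_{n,m}^{(\mu)}}f\bigr\|+\bigl\|e^{-t\Lambda_{n,m}^{(\mu)}}f-e^{-t\Lambda^{(\mu)}}f\bigr\|+\bigl\|e^{-t\Lambda^{(\mu)}}f-e^{-t\Lambda}f\bigr\|
$$
then finishes the argument: pick $\mu$ large so the first and third terms are $<\varepsilon/3$ uniformly in $n,m$ and $t$ in a compact interval, then for that fixed $\mu$ the middle term tends to $0$ iteratively in $m$ then $n$ because $\Lambda_{n,m}^{(\mu)}\to\Lambda^{(\mu)}$ in $\mathcal{B}(C_\infty)$ in the same iterated sense (they differ by $\mu^2[(\mu+\Lambda_{n,m})^{-1}-R_\mu]$). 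The delicacy is that every limit must be taken in the prescribed order $\lim_n\lim_m$; hypothesis $1^\circ$ furnishes precisely the joint equicontinuity that lets the $\varepsilon/3$ estimates survive this nested passage.
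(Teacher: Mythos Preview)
The paper does not prove this theorem; it cites it as a black box from Kato \cite[IX.2.5]{Ka} and then proceeds to verify conditions $1^\circ$--$3^\circ$ for its particular operators $\Lambda_{n,m}$. So there is no ``paper's own proof'' to compare against, and your sketch is a self-contained proof of a result the authors take as given.

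Your outline follows the classical Kato route (pseudoresolvent $\to$ Hille--Yosida $\to$ Yosida approximation) and is correct in structure. One point deserves tightening: in the three-term split you invoke the estimate $\|e^{-t\Lambda_{n,m}^{(\mu)}}f - e^{-t\Lambda_{n,m}}f\| \le t\|\Lambda_{n,m}^{(\mu)}f - \Lambda_{n,m}f\|$ ``on a core'' and claim uniformity in $n,m$. The difficulty is that there is no common core independent of $n,m$. The standard fix is to test not on a fixed $f$ but on $f_{n,m}:=(\mu_0+\Lambda_{n,m})^{-1}g$ for $g\in C_\infty$; then $\Lambda_{n,m}f_{n,m}=g-\mu_0 f_{n,m}$, and since resolvents commute,
\[
[\mu(\mu+\Lambda_{n,m})^{-1}-I]f_{n,m}=(\mu_0+\Lambda_{n,m})^{-1}[\mu(\mu+\Lambda_{n,m})^{-1}-I]g,
\]
so $3^\circ$ applied to the single vector $g$, together with $1^\circ$, gives the required uniformity. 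One then transfers from $f_{n,m}$ to $R_{\mu_0}g$ (iterated limit), and finally to arbitrary $f\in C_\infty$ by density of $\operatorname{Ran}R_{\mu_0}$. With this made explicit your argument is complete.
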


\subsubsection{Main PDE results}

\begin{proposition}[Embedding property]
\label{emb_thm}
Let $w_{n,m}$ is the classical solution to elliptic equation
\begin{equation}
\label{eq11}
\big(\mu -\Delta + (b_n+q_m) \cdot \nabla\big)w_{n,m}=(b_n^i+q_m^i)f, \quad f \in \mathcal S.
\end{equation}
where we have fixed $1 \leq i \leq d$ and have denoted
$
q^i_m=\sum_{j=1}^d \nabla_j Q_m^{ij}.
$
Put
$$
A_m:=|Q^i_m| |\nabla f|+(1+|Q^i_m|)|f| \quad \text{ where } Q_m^i \text{ denotes the $i$-th row of $Q_m$}.
$$
Then, for every $p>\frac{2}{2-\sqrt{\delta}}$, $p \geq 2$ and $1<\theta<\frac{d}{d-2}$, there exist constants $\mu_0>0$, $0<\beta<1$ and $K_j$, $j=1,2$, independent of $n$, $m$, such that
\begin{align}
\|w_{n,m}\|_\infty & \leq K_1 (\mu-\mu_0)^{-\frac{\beta}{p}}\|A_m\|_{p\theta'} 
\notag \\
& + K_2 (\mu-\mu_0)^{-\frac{1}{p\theta}}\|A_m\|_{p\theta}, \label{w_est_nm}
\end{align}
for all $\mu >\mu_0$.
\end{proposition}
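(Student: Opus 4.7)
The plan is to prove this by running De~Giorgi's method in $L^p$ for the approximant $w = w_{n,m}$, with every constant tracked to be independent of $n$ and $m$. The restriction $p > \frac{2}{2-\sqrt{\delta}}$ is exactly what allows one to absorb the form-bounded component $b_n$ into the gradient dissipation, as in the computation in Remark~9.1. The estimate is sign-symmetric, so I would first bound $\sup w$ from above by testing against $v_k^{p-1}$ with $v_k := (w-k)_+$ and $k \geq 0$; the bound on $-\inf w$ follows by repeating the argument for $-w$, which solves the same type of equation with $f$ replaced by $-f$.

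To set up the energy inequality, I would test \eqref{eq11} against $v_k^{p-1}$. The diffusion produces $\frac{4(p-1)}{p^2}\|\nabla v_k^{p/2}\|_2^2$ and the $\mu w$ term gives $\mu\|v_k\|_p^p$ from below (using $w v_k^{p-1} \geq v_k^p$ since $k \geq 0$). The key simplification is that the distributional part of the transport vanishes: since $q_m$ is divergence-free, $\langle q_m \cdot \nabla w, v_k^{p-1}\rangle = \frac{1}{p}\langle q_m, \nabla v_k^p\rangle = 0$. The remaining $b_n$ contribution is controlled by Cauchy--Schwarz and $b_n \in \mathbf{F}_\delta$, producing $\frac{2\sqrt{\delta}}{p}\|\nabla v_k^{p/2}\|_2^2$ plus a lower-order $\|v_k^{p/2}\|_2^2$ term; the gap $\frac{4(p-1)}{p^2} - \frac{2\sqrt{\delta}}{p} > 0$ is exactly the dissipation that survives for $p > \frac{2}{2-\sqrt{\delta}}$, while the lower-order term is absorbed into $\mu$ by enlarging $\mu_0$ suitably.

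For the right-hand side $\langle (b_n^i + q_m^i) f, v_k^{p-1}\rangle$ I would split into two pieces. The $b_n^i f$ part is bounded via Cauchy--Schwarz, then form-boundedness, then Young's inequality, into an absorbable multiple of $\|\nabla v_k^{p/2}\|_2^2$ plus a term controlled by $\|f\, v_k^{(p-2)/2}\|_2^2$, which depends only on $|f|$. The $q_m^i f$ part is handled by integrating by parts in $q_m^i = \sum_j \nabla_j Q_m^{ij}$:
\[
|\langle q_m^i f, v_k^{p-1}\rangle| \leq \int |Q_m^i||\nabla f|\, v_k^{p-1}\, dx + \tfrac{2(p-1)}{p}\!\int |Q_m^i|\, |f|\, v_k^{p/2-1}\, |\nabla v_k^{p/2}|\, dx,
\]
and the second integral is again split by Cauchy--Schwarz plus Young to absorb one factor of $\|\nabla v_k^{p/2}\|_2$, leaving a term of the form $\|Q_m^i f\, v_k^{p/2-1}\|_2^2$. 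Crucially, both surviving integrands are pointwise dominated by $A_m$ times a power of $v_k$. After absorption I obtain the coercive estimate
\[
(\mu - \mu_0)\|v_k\|_p^p + \kappa \|\nabla v_k^{p/2}\|_2^2 \leq C\!\int_{\{w>k\}}\! A_m\, v_k^{p-1}\, dx + C\, \|A_m\, v_k^{(p-2)/2}\|_{L^2(\{w>k\})}^2.
\]

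The final step is Sobolev plus interpolation plus De~Giorgi iteration. Sobolev on $v_k^{p/2}$ upgrades the gradient mass to $\|v_k\|_{pd/(d-2)}^p$, and interpolation with the $L^p$-coercivity controls $\|v_k\|_{p\theta}$ for any $\theta \in (1, d/(d-2))$. Applying H\"older with the exponent pair $(p\theta, (p\theta)')$ to the first right-hand integral and with $(p\theta', (p\theta')')$ to the second, together with the Chebyshev-type bound $|\{w>k+h\}| \leq h^{-p}\|v_k\|_p^p$ on super-level sets, yields a recursive estimate $|\{w > k+h\}| \leq \Phi(\mu-\mu_0,\|A_m\|_{p\theta},\|A_m\|_{p\theta'})\, h^{-a}\, |\{w>k\}|^{1+\eta}$ with $\eta > 0$ independent of $n,m$; the standard De~Giorgi lemma along a geometric sequence of levels then converts this into the claimed $L^\infty$ bound, and a careful accounting of which $(\mu-\mu_0)$-power accompanies each of the two H\"older pairings produces the two separate exponents $\beta/p$ and $1/(p\theta)$.

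The main obstacle will be the distributional term $\langle q_m^i f, v_k^{p-1}\rangle$: the integration by parts must be arranged so that \emph{no} derivative of $Q_m^{ij}$ appears in the final bound (since only BMO, not $L^\infty$, control is uniform in $m$), and the surviving factor $|Q_m^i|$ must be packaged into $A_m$ rather than estimated on its own. Once this is done, the bookkeeping of the two H\"older exponents and the two resulting $(\mu-\mu_0)$-powers is essentially mechanical, although verifying that the exponent $\beta$ lies strictly in $(0,1)$ requires checking that the De~Giorgi iteration contracts for both $\|A_m\|_{p\theta}$- and $\|A_m\|_{p\theta'}$-driven forcings.
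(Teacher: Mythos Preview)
Your proposal is correct and follows essentially the same De~Giorgi-in-$L^p$ strategy as the paper: test against $(w-k)_+^{p-1}$, exploit ${\rm div\,}q_m=0$ to kill the distributional transport term, absorb $b_n$ via form-boundedness using $p>\frac{2}{2-\sqrt{\delta}}$, integrate by parts in $q_m^i f$ so that only $|Q_m^i|$ (not its derivatives) survives, then iterate over geometric levels.

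The only organizational difference worth noting is how the two norms $\|A_m\|_{p\theta}$ and $\|A_m\|_{p\theta'}$ enter. The paper pushes Young's inequality further than you do, obtaining a right-hand side of the form $C\langle A_m^p\,\mathbf{1}_{v>0}\rangle$ with no residual $v$-factors; it then applies a \emph{single} H\"older pairing $(\theta,\theta')$ throughout the iteration, which produces the $(\mu-\mu_0)^{-\beta/p}\|A_m\|_{p\theta'}$ term, and afterwards bounds the seed quantity $\|w_+\|_{p\theta}$ \emph{separately} by rerunning the energy inequality at exponent $s=p\theta$, giving the $(\mu-\mu_0)^{-1/(p\theta)}\|A_m\|_{p\theta}$ term. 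Your plan of applying two different H\"older pairings to two different right-hand integrals inside the iteration is workable but messier; the paper's separation of the two contributions (iteration vs.\ seed estimate) is cleaner and makes the provenance of the two $(\mu-\mu_0)$-powers transparent.
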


We prove Proposition \ref{emb_thm} in Section \ref{emb_thm_sect}.

\medskip

Set $u_{n,m}$ be the classical solutions to elliptic equation
\begin{equation}
\label{eq10}
\big(\mu -\Delta + (b_n+q_m) \cdot \nabla\big)u_{n,m}=f, \quad \mu>0.
\end{equation}

\begin{proposition}[A priori H\"{o}lder continuity]
\label{thm_holder}For every $\mu>0$,
$\{u_{n,m}\}$ are locally H\"{o}lder continuous uniformly in $n$, $m$.
\end{proposition}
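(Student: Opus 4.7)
The plan is to apply De Giorgi's method in $L^p$ with $p>\frac{2}{2-\sqrt{\delta}}$, after recasting \eqref{eq10} in divergence form so that $q_m$ is absorbed into an elliptic operator with BMO antisymmetric part. Concretely, since $q_m^i=\sum_j\nabla_j Q_m^{ij}$ and $Q_m^{ij}=-Q_m^{ji}$, the identity
\[
q_m\cdot\nabla u=\sum_{i,j}\nabla_j(Q_m^{ij}\nabla_i u)-\sum_{i,j}Q_m^{ij}\nabla_j\nabla_i u=\sum_{i,j}\nabla_j(Q_m^{ij}\nabla_i u)
\]
(the second sum vanishes by antisymmetry) rewrites \eqref{eq10} as
\[
\mu u_{n,m}-\nabla\cdot\bigl((I+Q_m^{\top})\nabla u_{n,m}\bigr)+b_n\cdot\nabla u_{n,m}=f,
\]
i.e.\ a uniformly elliptic divergence-form operator whose symmetric part is $I$, whose antisymmetric part $Q_m^{\top}$ has BMO-entries (with uniformly bounded $\|Q_m\|_{\rm BMO}$ by Definition~\ref{q_m}), plus the form-bounded drift $b_n$.

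The heart of the argument is a Caccioppoli inequality in $L^p$ for truncations $w=(u_{n,m}-k)_+$. Test the rewritten equation against $\eta^2 w^{p-1}$ for a smooth cutoff $\eta$ supported in a ball. The symmetric part contributes the dispersion term $\tfrac{4(p-1)}{p^2}\langle\eta^2|\nabla w^{p/2}|^2\rangle$. The $b_n$-term is handled by Cauchy--Schwarz together with $b_n\in\mathbf{F}_\delta$ with form-bound $\delta$ (uniformly in $n$): this costs a fraction $2\sqrt{\delta}/p$ of the dispersion term times $p$, leaving a strictly positive remainder $\bigl[\tfrac{4(p-1)}{p}-2\sqrt{\delta}\bigr]$, positive precisely because $p>\frac{2}{2-\sqrt{\delta}}$. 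The BMO-term
\[
\bigl|\langle Q_m\nabla w^{p/2},\,\eta^2\nabla w^{p/2}\rangle+\langle Q_m\nabla w^{p/2},\,w^{p/2}\nabla(\eta^2)\rangle\bigr|
\]
is where Propositions~\ref{cc_lem} and \ref{cc_lem_qx} enter: the first term is zero by antisymmetry applied to $\eta\nabla w^{p/2}$ (after expanding $\eta^2\nabla w^{p/2}=\eta(\eta\nabla w^{p/2})$), while the second is bounded by $C_d\|Q_m\|_{\rm BMO}\|\nabla(\eta w^{p/2})\|_2\|\eta w^{p/2}\|_2$ via Proposition~\ref{cc_lem_qx}; the first factor is absorbed into the dispersion term and the second into the $L^p$-energy. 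Collecting,
\begin{equation*}
\langle\eta^2|\nabla w^{p/2}|^2\rangle\le K_1\langle w^{p},|\nabla\eta|^2\rangle+K_2\langle(f-\mu w)^2\eta^2 w^{p-2}\rangle+K_3\langle\eta^2 w^{p}\rangle,
\end{equation*}
with $K_j=K_j(d,\delta,c_\delta,p,\|Q\|_{\rm BMO})$ independent of $n,m$.

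From this Caccioppoli inequality, De Giorgi's two lemmas proceed in the standard way: the first (decay of $L^\infty$-norm of truncations under smallness of level sets) follows by Sobolev embedding from the Caccioppoli inequality applied on a nested sequence of balls with increasing levels $k_j$, producing a nonlinear recursion that kills the truncation once the initial super-level set is sufficiently small; the isoperimetric-type second lemma (shrinking the super-level sets) uses the $L^p$-Caccioppoli estimate together with the Poincar\'e--Sobolev inequality, and here no BMO input is needed beyond what already went into the Caccioppoli inequality. Combining the two lemmas gives oscillation decay
\[
\operatorname{osc}_{B_{r/2}(x_0)}u_{n,m}\le(1-\theta)\operatorname{osc}_{B_{r}(x_0)}u_{n,m}+Cr^{\alpha_0}\|f\|_{L^{p\theta}(B_r)}
\]
with $0<\theta<1$ and $\alpha_0>0$ independent of $n,m,x_0,r$. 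A routine iteration lemma then yields local H\"older continuity with exponent and constant depending only on $d,\delta,c_\delta,\|Q\|_{\rm BMO},\mu$ and the local $L^{p\theta}$-norm of $f$; the latter is controlled by Proposition~\ref{emb_thm} (in particular by $\|f\|_{\mathcal S}$), so the estimates are uniform in $n,m$.

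The main obstacle is controlling the BMO-induced cross term in the Caccioppoli inequality without losing a piece of the dispersion; this forces us to work in $L^p$ with $p>\tfrac{2}{2-\sqrt{\delta}}$ and to invoke the Coifman--Lions--Meyer--Semmes / Qian--Xi compensated-compactness estimates rather than a naive Cauchy--Schwarz on $Q_m\nabla u$.
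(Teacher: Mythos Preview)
Your overall architecture---De Giorgi's method in $L^p$ with $p>\frac{2}{2-\sqrt{\delta}}$, leading to a Caccioppoli inequality and then the two De Giorgi lemmas---matches the paper's proof exactly. The gap is in how you control the BMO cross term in the Caccioppoli step.

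You claim the term $\langle Q_m\cdot\nabla w^{p/2},\,w^{p/2}\nabla(\eta^2)\rangle$ is bounded by $C\|Q_m\|_{\rm BMO}\|\nabla(\eta w^{p/2})\|_2\|\eta w^{p/2}\|_2$ via Proposition~\ref{cc_lem_qx}. But that proposition applies to expressions of the form $\langle h,\,g\nabla_i g\rangle$, and your cross term is not of this shape: it involves $\nabla w^{p/2}$ against $\nabla\eta$, not a single function and its own gradient. If instead you rewrite the term as $\tfrac12\langle Q_m\cdot\nabla(w^{p}),\nabla(\eta^2)\rangle$ and invoke Proposition~\ref{cc_lem}, you obtain a factor $\|\nabla(w^p)\|_2$ which is \emph{global}, not localized by $\eta$; it cannot be absorbed into the dispersion term $\langle\eta^2|\nabla w^{p/2}|^2\rangle$, so no Caccioppoli inequality results. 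The compensated-compactness route you have in mind does work in the paper's \emph{weighted} energy inequality (Proposition~\ref{elem_lem}), but only because $\frac{\nabla\rho}{\rho}$ is a BMO multiplier by Lemma~\ref{NY_lem}; a generic cutoff $\eta$ offers no such structure.

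The paper's fix (following Hara~\cite{H}) is to replace $Q_m$ by $\tilde Q_m:=Q_m-(Q_m)_{B_R}$, which leaves the PDE unchanged, kill the second-derivative term by antisymmetry, and bound the residual $\langle|\tilde Q_m|^2 w^p\,|\nabla\eta|^2/\eta\rangle$ by H\"older and the John--Nirenberg inequality. This is intrinsically local and yields a Caccioppoli inequality with $\|w^{p/2}\|_{L^{2\theta}(B_R)}^2$ on the right (some $1<\theta<\frac{d}{d-2}$), which is why the paper carries the parameter $\theta$ through the iteration. After this correction, your sup-bound and oscillation-decay sketches line up with the paper's Lemmas~\ref{sup_bound_prop} and~\ref{H}.
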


That is, $u_{n,m}$ are H\"{o}lder continuous, in every unit ball, with constants that do not depend on $n$, $m$ or the center of the ball. These constants are, however, allowed to depend on $\|f\|_\infty$.
We prove Proposition \ref{thm_holder} in Section \ref{thm_holder_sect}.

\begin{proposition}[Convergence]
\label{thm_conv}
There exists $\mu_0>0$ such that for every $\mu  \geq \mu_0$, for all $p>\frac{2}{2-\sqrt{\delta}}$, $p \geq 2$, and any $x \in \mathbb R^d$, there exists the limit $$u:=L_{\rho_x}^p\mbox{-}\lim_n\lim_m u_{n,m},$$
where 
$\rho_x(y):=\rho(y-x)$,
provided that constant $\sigma$ in the definition of weight $\rho$ (this is \eqref{rho_def}) is chosen sufficiently small (independently of $x$).
\end{proposition}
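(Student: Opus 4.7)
The plan is to derive a uniform a priori bound on $u_{n,m}$ in $L^p_{\rho_x}$, use Proposition \ref{thm_holder} to upgrade this to precompactness, and then identify the iterated limit by sending $m \to \infty$ and $n \to \infty$ in that order, exploiting the two different modes of approximation: BMO-type for $q_m$ and $L^2_{\loc}$-type for $b_n$.

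For the uniform estimate, test equation \eqref{eq10} against $|u_{n,m}|^{p-2} u_{n,m} \rho_x$. Integrating by parts, the Laplacian contributes $\tfrac{4(p-1)}{p^2}\langle |\nabla |u_{n,m}|^{p/2}|^2\rho_x\rangle$ plus remainders of the form $\langle \cdots,\nabla \rho_x\rangle$ which, by \eqref{rho_est}, carry a factor of $\sqrt{\sigma}$. The drift $b_n$ is handled by Cauchy--Schwarz and the form-boundedness $b_n \in \mathbf F_\delta$ (with $c_\delta$ independent of $n$), producing exactly the coefficient $\sqrt\delta$. For the distributional piece $q_m = \nabla Q_m$ with $Q_m$ anti-symmetric, one integrates by parts against $|u_{n,m}|^{p/2}$ and applies the compensated-compactness bounds of Propositions \ref{cc_lem} and \ref{cc_lem_qx}; to push the factor $\nabla_i\rho_x/\rho_x$ produced by the weight inside the BMO function one invokes the multiplier Lemma \ref{NY_lem}, verifying that the resulting BMO norm is controlled by $\|Q_m\|_{\rm BMO}$ \emph{uniformly in $x \in \mathbb R^d$}. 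The assumption $p > 2/(2-\sqrt\delta)$ makes the dispersion coefficient $\tfrac{4(p-1)}{p} - 2\sqrt\delta$ strictly positive, so after fixing $\sigma$ small all weight-gradient remainders and BMO perturbations can be absorbed. One obtains $\mu\langle |u_{n,m}|^p \rho_x\rangle + \langle |\nabla |u_{n,m}|^{p/2}|^2\rho_x\rangle \leq C\langle |f||u_{n,m}|^{p-1}\rho_x\rangle$, hence a bound on $\|u_{n,m}\|_{L^p_{\rho_x}}$ uniform in $n$, $m$ for $\mu \geq \mu_0$.

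By Proposition \ref{thm_holder}, $\{u_{n,m}\}$ is equi-H\"older on compacts, and combining this with the uniform weighted bound and the integrability of $\rho_x$, an Arzel\`a--Ascoli plus tightness argument gives precompactness in $L^p_{\rho_x}$. To identify the limit, first fix $n$ and let $m \to \infty$: since $b_n$ is smooth and bounded and the approximations $Q_m \to Q$ in $L^s_{\loc}$ with $\|Q_m\|_{\rm BMO}$ uniformly bounded (Definition \ref{q_m}), one passes $m \to \infty$ in the weak formulation and obtains the unique weak solution $u_n$ of $(\mu - \Delta + (b_n + q)\cdot \nabla)u_n = f$ in the standard Hilbert triple. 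Then let $n \to \infty$: because $b_n \to b$ in $[L^2_{\loc}]^d$ with form-bound $\delta$ preserved, the same weighted energy machinery combined with the compensated-compactness apparatus for $q$ yields $u_n \to u$ in $L^p_{\rho_x}$, with $u$ a weak solution of $(\mu - \Delta + (b+q)\cdot \nabla)u = f$. The limiting equation is uniquely solvable in the weighted class (apply the Step 1 energy argument to the difference of two putative solutions), so the full iterated limit exists.

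The main technical obstacle is carrying the distributional drift $q$ through the weighted energy estimate with constants independent of the centre $x$ of $\rho_x$. The compensated-compactness estimates and the BMO-multiplier lemma are tailored to an unweighted, translation-invariant setting, and what makes the argument go through is precisely that the bound $\|\xi h\|_{\rm BMO} \leq C_{d,\sigma}\|h\|'_{\rm BMO}$ for $\xi = \nabla_i\rho/\rho$ holds with a constant \emph{independent of $x$} and with a $\sigma$-dependence that can be made as small as needed. Once this shift-invariance is secured and $\sigma$ is fixed small enough to absorb the $\sqrt\sigma$-remainders into the dispersion term, the remainder of the proof follows standard compactness and uniqueness lines.
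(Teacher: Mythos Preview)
Your overall strategy is reasonable but diverges from the paper's, and one technical step is underspecified.

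The paper does \emph{not} use Proposition~\ref{thm_holder} or any precompactness argument here. It proceeds through semigroups: first construct $e^{-t\Lambda(b_n,q)}$ in $L^2$ via sectorial form theory (each $b_n$ is bounded), then show $u_{n,m}\to u_n$ in $L^p$ as $m\to\infty$ by weak $W^{1,2}$ compactness, Rellich--Kondrashov, and Gaussian upper bounds on the heat kernel of $-\Delta+(b_n+q_m)\cdot\nabla$ to cut tails. The $n\to\infty$ step is carried out at the \emph{parabolic} level: one shows $v_n(t)=e^{-t(\omega+\Lambda(b_n,q))}f$ is Cauchy in $L^\infty([0,1],L^p_{\rho_x})$ by writing the equation for $h=v_{n_1}-v_{n_2}$ with forcing $(b_{n_1}-b_{n_2})\cdot\nabla v_{n_2}$; convergence of semigroups then yields convergence of resolvents. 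Your direct elliptic route is more economical for the statement at hand, and the paper itself remarks that a Lions-type variational argument would also work.

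However, ``the same weighted energy machinery \ldots\ yields $u_n\to u$'' hides the main difficulty. Your Step~1 estimate controls $\langle|\nabla|u_n|^{p/2}|^2\rho_x\rangle$, not $\langle|\nabla u_n|^2\rho_x\rangle$; for $\delta\geq 1$ the $s=2$ energy inequality via form-boundedness fails. But it is the plain gradient $\nabla u_{n_2}$ that must be paired with $b_{n_1}-b_{n_2}$ in the difference equation. The paper's device is to test the equation for $u_n$ (resp.\ $v_n$) against $u_n\rho_x$ and estimate the drift \emph{not} by form-boundedness but by $|\langle b_n\cdot\nabla u_n,u_n\rho_x\rangle|\leq\tfrac12\langle|b_n|^2\rho_x\rangle\|u_n\|_\infty^2+\tfrac12\langle|\nabla u_n|^2\rho_x\rangle$, using $\|u_n\|_\infty\leq\mu^{-1}\|f\|_\infty$ together with the uniform weighted summability $\sup_n\langle|b_n|^2\rho_x\rangle<\infty$ from \eqref{rho_global}. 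With this $L^2_{\rho_x}$ gradient bound in hand, Lemma~\ref{lem_b} gives the Cauchy property of $(u_n)$ in $L^p_{\rho_x}$ directly, and your precompactness-plus-uniqueness detour becomes unnecessary. In fact your uniqueness claim for the limiting equation is more delicate than stated: the test function $|h|^{p-2}h\rho_x$ lives in a weighted class while the weak formulation with singular $b$ is naturally posed in unweighted $W^{1,2}$, and reconciling the two requires additional justification. The paper's Cauchy-sequence argument sidesteps this entirely.
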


We prove Proposition \ref{thm_conv} in Section \ref{thm_conv_sect}.

\begin{remark}
The proof of Proposition \ref{thm_conv} can be extended to show the existence of the limit $L^p\mbox{-}\lim_{n,m} u_{n,m}$,  but at expense of imposing additional assumptions on drifts $b$ or $q$, such as form-bound $\delta$ of $b$ being strictly less than $1$, or the stream matrix $Q$ of $q$ having entries in ${\rm VMO}$. See Remark \ref{stronger_conv_rem} in the end of the proof of Proposition \ref{thm_conv}.
\end{remark}

\begin{proposition}[Separation property/local maximum principle] 
\label{sep_thm}
Fix some $1<\theta<\frac{d}{d-2}$ and $p>\frac{2}{2-\sqrt{\delta}}$, $p \geq 2$.
There exists constants $K$, $\mu_0>0$ and  $\sigma$ (in the definition of weight $\rho$) independent of $n$, $m$ such that for all $\mu \geq \mu_0$, for every $x \in \mathbb R^d$, 
\begin{equation}
\label{sep_prop}
\sup_{B_{\frac{1}{2}}(x)} |u_{n,m}| \leq K \biggl( \langle |f|^{p\theta}\rho_x\rangle\rangle^{\frac{1}{p\theta}} + \big\langle |f|^{p\theta'}\mathbf{1}_{B_1(x)}\big\rangle^{\frac{1}{p\theta'}} \biggr).
\end{equation}
\end{proposition}

We prove Proposition \ref{sep_thm} in Section \ref{sep_thm_sect}.

\medskip

We are in position to verify conditions of Trotter's theorem for $u_{n,m}=(\mu+\Lambda_{n,m})^{-1}f$:

\subsubsection{Proof of {\rm $1^\circ$)}} This condition is a direct consequence of the fact that, by the classical theory, $e^{-t\Lambda_{n,m}}$ are $L^\infty$ contractions.

\subsubsection{Proof of {\rm $2^\circ$)}} By {\rm $1^\circ$)}, it suffices to verify the existence of the limit for all $f$ belonging to a countable dense subset of $C_c^\infty$. 
Proposition \ref{thm_holder} and the Arzel\`{a}-Ascoli theorem yield: for every $r>0$, $\{u_{n,m}\}$ is relatively compact in $C(\bar{B}_r)$. Proposition \ref{thm_conv} allows to further conclude that  $\{u_{n,m}\}$ converges in $C(\bar{B}_r)$, for every $r>0$:
\begin{equation}
\label{loc_unif}
u \upharpoonright \bar{B}_r=s\mbox{-}C(\bar{B}_r)\mbox{-}\lim_n \lim_m u_{m,n} \upharpoonright \bar{B}_r.
\end{equation}

\begin{remark}
We need Proposition \ref{thm_conv} that any two partial limits of $u_{n,m}$ in $C(\bar{B}_r)$  coincide. The choice of the topology, and thus the weight, is secondary.
\end{remark}

We need to improve \eqref{loc_unif} to global uniform convergence:
\begin{equation}
\label{glob_unif}
u=s\mbox{-}C_\infty \mbox{-}\lim_n \lim_m u_{n,m}.
\end{equation}
To this end, we combine Proposition \ref{sep_thm} and convergence \eqref{loc_unif}. Namely, since $f \in C_c^\infty$ and weight $\rho_x$ vanishes at infinity, it follows from \eqref{sep_prop} that solution $u_{n,m}$ is small uniformly in $n$, $m$ when considered far away from the support of $f$. (Hence the name ``separation property'' for \eqref{sep_prop}.) 

We have verified condition $2^\circ)$ of Trotter's theorem.

\begin{remark}
Using Proposition \ref{sep_thm}, it is easy to obtain the preservation of probability, i.e.\,that $$e^{-t\Lambda(b_n,q_m)}(1-\mathbf{1}_{B_R}) \rightarrow 0 \quad \text{ as $R \uparrow \infty$ uniformly in $n$, $m$.}$$ From here it follows easily that $e^{-t\Lambda}1=1$. (We use the fact that $e^{-t\Lambda(b_n,q_m)}$, $e^{-t\Lambda(b,q)}$ are semigroups of integral operators, so the expressions $e^{-t\Lambda(b_n,q_m)}(1-\mathbf{1}_R)$, $e^{-t\Lambda(b,q)}1$ are well-defined.)
\end{remark}

\subsubsection{Proof of {\rm $3^\circ$)}} In view of {\rm $1^\circ$)}, it suffices to verify {\rm $3^\circ$)} on a dense subset of $C_\infty$, e.g.\,$C_c^\infty$. Fix $g \in C_c^\infty$. By the resolvent identity,
\begin{align*}
\mu (\mu+\Lambda_{n,m})^{-1} g - \mu(\mu-\Delta)^{-1} g & = \mu (\mu+\Lambda_{n,m})^{-1} (b_n+q_m) \cdot \nabla (\mu-\Delta)^{-1} g \\
& = (\mu+\Lambda_{n,m})^{-1} (b_n+q_m) \cdot \mu (\mu-\Delta)^{-1} \nabla g.
\end{align*}
Since $ \mu(\mu-\Delta)^{-1} g \rightarrow g$ uniformly on $\mathbb R^d$ as $\mu \rightarrow \infty$, it suffices to show the convergence 
\begin{align}
\label{conv}
\|(\mu+\Lambda_{n,m})^{-1} (b_n+q_m) \cdot \mu (\mu-\Delta)^{-1} \nabla g\|_\infty \rightarrow 0 \text{ as } \mu \rightarrow \infty \quad \text{ uniformly in $n$, $m$}.
\end{align}
To that end, we apply Proposition \ref{emb_thm} to $w_{n,m}:=(\mu+\Lambda_{n,m})^{-1} (b^i_n+q^i_m) f$
with $f$ taken to be
\begin{equation}
\label{f_def}
f:=\mu (\mu-\Delta)^{-1} \nabla_i g. 
\end{equation}
Our goal is thus to prove \begin{equation}
\label{w_nm_conv}
\|w_{n,m}\|_\infty \rightarrow 0 \quad \text{ as $\mu \rightarrow \infty$ uniformly in $n$, $m$.}
\end{equation}
If we can obtain bound
\begin{equation}
\label{bd_task}
\sup_{\mu \geq 1, m}\|A_m\|_{p\theta}, \sup_{\mu \geq 1, m}\|A_m\|_{p\theta'}<\infty \quad \text{for $f$ given by \eqref{f_def}},
\end{equation}
then the convergence \eqref{w_nm_conv} will follow thanks to the factors $(\mu-\mu_0)^{-\frac{\beta}{p}}$, $(\mu-\mu_0)^{-\frac{1}{p\theta}}$ in  \eqref{w_est_nm}. 
The only slightly non-trivial aspect of proving \eqref{bd_task} is that $|Q_m^i|$ can grow at infinity. But since the entries of $Q_m^i$ are ${\rm BMO}$ functions, this growth cannot be arbitrary, see Lemma \ref{est_44_lem}. 

\smallskip

\textit{Proof of \eqref{bd_task}}. So, from now on, let $f$ be given by \eqref{f_def} where, recall, $g$ has compact support (we will need this when we apply Lemma \ref{est8_lem}). In what follows, for brevity, $s=p\theta$ or $s=p\theta'$.
We have
\begin{align}
\||Q^i_m||f|\|^s_{s} & =\langle |Q^i_m|^s |\mu (\mu-\Delta)^{-1} \nabla_i g|^s \rangle \notag \\
& (\text{write $1=(1+|x|)^{-(d+\epsilon_0)s}(1+|x|)^{(d+\epsilon_0)s}$)} \notag \\
& \leq \big\langle |Q^i_m|^{s \nu} (1+|x|)^{-(d+\epsilon_0)s\nu} \big\rangle^{\frac{1}{\nu}} \big\langle  (1+|x|)^{(d+\epsilon_0)s\nu'} |\mu (\mu-\Delta)^{-1} \nabla_i g|^{s\nu'} \big\rangle^{\frac{1}{\nu'}}, \quad \nu>1. \label{Q_8_est}
\end{align}
We will need the following elementary lemma.

\begin{lemma}
\label{est8_lem}
For every $r>0$,
\begin{equation}
\label{est8}
|(1+|x|^{r}) \mu (\mu-\Delta)^{-1} \nabla_i g (x)| \leq c_R \mu (c\mu-\Delta)^{-1}|\nabla_i g|(x),
\end{equation}
for some positive constants $c$, $c_R$ that depend only on $d$, $r$ and $R$, i.e.\,the radius of a fixed ball that contains the support of $|\nabla_i g|$.
\end{lemma}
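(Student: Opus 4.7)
The plan is to reduce the desired pointwise estimate to a comparison of heat–kernel integrals. Writing
\[
\mu(\mu-\Delta)^{-1}\nabla_i g(x)=\mu\int_0^\infty e^{-\mu t}\int_{\mathbb R^d}\Gamma(t,x-y)\,\nabla_i g(y)\,dy\,dt
\]
and estimating the absolute value by the same integral with $|\nabla_i g|$ in place of $\nabla_i g$, everything reduces to controlling the scalar kernel $(1+|x|^r)\,\Gamma(t,x-y)$ for $y$ in the support of $\nabla_i g$, which is contained in $B_R$.

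First, since $|x|\le R+|x-y|$ whenever $|y|\le R$, an elementary inequality (distinguishing $r<1$ and $r\ge 1$) gives
\[
1+|x|^r\le c_R\bigl(1+|x-y|^r\bigr),
\]
with $c_R$ depending only on $r$ and $R$. This moves the growing factor from $x$ onto $u:=|x-y|$. The key pointwise Gaussian comparison is
\[
(1+u^r)\,\Gamma(t,u)\le C_r\,(1+t^{r/2})\,2^{d/2}\,\Gamma(2t,u),\qquad u\ge 0,\ t>0,
\]
which follows from $u^r e^{-u^2/(4t)}=t^{r/2}(u/\sqrt t)^{r}e^{-(u/\sqrt t)^2/4}$ together with the elementary boundedness of $w\mapsto w^r e^{-w^2/8}$, and from $e^{-u^2/(4t)}\le e^{-u^2/(8t)}$.

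Plugging these two estimates into the heat–kernel representation of the resolvent, integrating against $\mu e^{-\mu t}\,dt$, and substituting $s=2t$, one arrives at
\[
(1+|x|^r)\,|\mu(\mu-\Delta)^{-1}\nabla_i g(x)|\le K_R\,\mu\int_0^\infty (1+s^{r/2})\,e^{-\mu s/2}\,\bigl(e^{s\Delta}|\nabla_i g|\bigr)(x)\,ds.
\]
Splitting $e^{-\mu s/2}=e^{-\mu s/4}\cdot e^{-\mu s/4}$ and using that $\sup_{s\ge 0}(1+s^{r/2})e^{-\mu s/4}$ is bounded by a constant depending only on $r$ (as long as $\mu$ stays bounded away from zero, which is the only regime in which Lemma~\ref{est8_lem} is invoked, namely in establishing \eqref{bd_task} for large $\mu$), the remaining integral reassembles into $\mu(\mu/4-\Delta)^{-1}|\nabla_i g|(x)$. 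This yields \eqref{est8} with $c=1/4$ and a constant $c_R$ depending only on $d$, $r$, and $R$.

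The only mildly delicate point is ensuring that the constants carry no $\mu$–dependence, which is exactly what the elementary identity $(1+s^{r/2})e^{-\mu s/4}\le \text{const}$ achieves in the regime $\mu\ge\mu_0>0$. The remainder of the argument is just the triangle inequality combined with Gaussian scaling, so the entire proof is contained in the two pointwise inequalities displayed above.
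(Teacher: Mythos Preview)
Your proof is correct and takes essentially the same route as the paper's: both write the resolvent via the heat kernel, transfer the weight $(1+|x|^r)$ to $(1+|x-y|^r)$ using the compact support of $\nabla_i g$, and absorb the polynomial factor into the Gaussian at the cost of enlarging the variance. The only cosmetic difference is that the paper splits the time integral into $s<1$ and $s\ge 1$ and uses $e^{-\mu s}$ (for $\mu\ge 1$) to kill the resulting $s^{r/2}$ on the large-$s$ piece, whereas you carry the factor $(1+t^{r/2})$ uniformly and absorb it with half of $e^{-\mu s/2}$; both arguments need $\mu$ bounded below, which is exactly the regime in which the lemma is applied.
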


\begin{proof}For reader's convenience, we provide the proof.
We estimate
\begin{align}
\label{weight_mu}
(1+|x|^{r}) \mu (\mu-\Delta)^{-1} \nabla_i g (x) & =\mu\int_0^\infty \int_{\mathbb R^d} e^{-\mu s} (4\pi s)^{-\frac{d}{2}}   (1+|x|^{r}) e^{-\frac{|x-y|^2}{4s}}  \nabla_i g (y) dy ds
\end{align}
as follows.
For all $\mu \geq 1$, provided that $1 \leq s<\infty$,
\begin{equation}
\label{g_est}
 e^{-\mu s} (4\pi s)^{-\frac{d}{2}} \big|(1+|x|^{r}) e^{-\frac{|x-y|^2}{4s}}  \nabla_i g (y) \big| \leq C_R e^{-c_1 \mu s}(4 \pi s)^{-\frac{d}{2}} e^{-\frac{|x-y|^2}{c_2s}}  |\nabla_i g (y)|
\end{equation}
for some $c_2>4$, $0<c_1<1$. To see this, it suffices to show that
\begin{equation}
\label{abs_est}
 e^{-\mu s} (4\pi s)^{-\frac{d}{2}} (1+|x|^{r}) e^{-\frac{|x|^2}{4s}}  \leq C e^{-c_1 \mu s}(4 \pi s)^{-\frac{d}{2}} e^{-\frac{|x|^2}{c_2 s}}, \quad x \in \mathbb R^d, s \geq 1,
\end{equation}
since $y$ varies only in $B_R$, and the sought estimate is non-trivial when $|x| \gg R$.
Inequality \eqref{abs_est} reduces to 
$$
e^{- \gamma_1 \mu s} (1+|x|^{r})e^{-\frac{|x|^2}{\gamma_2 s}} \leq C 
$$
for $0<\gamma_1<1$ and $\gamma_2>4$ ($\gamma_1=1-c_1$, $\frac{1}{\gamma_2}=\frac{1}{4}-\frac{1}{c_2}$).
So, denoting $x'=\frac{x}{\sqrt{s}}$, we obtain for $\mu \geq 1$, for all $x' \in \mathbb R^d$, $s \geq 1$,
\begin{align*}
e^{- \gamma_1 \mu s} (1+|x|^{r})e^{-\frac{|x|^2}{\gamma_2 s}} & \leq e^{-\gamma_1 \mu s} s^{\frac{r}{2}}(1+|x'|^{d+1})e^{-\frac{|x'|^2}{\gamma_2}} \\
& \leq C e^{-\frac{\gamma_1}{2} \mu s} e^{-\frac{|x'|^2}{2\gamma_2}} \leq C,
\end{align*}
which gives us the previous estimate and hence \eqref{abs_est}.
We thus have \eqref{g_est} for all $1 \leq s<\infty$. Note that \eqref{g_est} is trivial for $0<s<1$. So, armed with \eqref{g_est} for all $0<s<\infty$, we estimate \eqref{weight_mu}:
\begin{align*}
\mu\int_0^\infty \int_{\mathbb R^d} e^{-\mu s} (4\pi s)^{-\frac{d}{2}} (1+|x|^{r}) e^{-\frac{|x-y|^2}{4s}}  |\nabla_i g (y)| dy ds \leq c_R \mu (c\mu-\Delta)^{-1}|\nabla_i g|(x) ,\quad c=4c_1 c_2^{-1},
\end{align*}
i.e.\,we have obtained \eqref{est8}. 
\end{proof}

We will also need

\begin{lemma}[{see e.g.\,\cite[Prop.\,7.1.5]{Graf_M}}]
\label{est_44_lem}
For every $f \in {\rm BMO}(\mathbb R^d)$, for all $1 \leq r<\infty$,
\begin{equation}
\label{est_44}
\langle|f-(f)_{B_1}|^r \rho^r \rangle \leq C_{d,r,\varepsilon}\|f\|_{{\rm BMO}}^r,
\end{equation}
where $\rho=\rho_{\varepsilon}$ is defined by \eqref{rho_def}.
Hence
\begin{equation}
\label{est_44_final}
\langle |f|^r \rho^r \rangle \leq C'_{d,r,\varepsilon} \bigl(\|f\|_{{\rm BMO}}^r + \|f\mathbf{1}_{B_1}\|^r_{L^1}\bigr)<\infty.
\end{equation}
\end{lemma}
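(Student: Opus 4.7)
\textbf{Proof plan for Lemma \ref{est_44_lem}.}
My plan is a standard dyadic decomposition estimate, exploiting John–Nirenberg's theorem only through its soft consequence that averages of $f$ grow at most logarithmically across dyadic scales. Set $f_k := (f)_{B_{2^k}}$ for $k \geq 0$, and decompose $\mathbb R^d = B_1 \cup \bigcup_{k \geq 1} A_k$ with $A_k := B_{2^k}\setminus B_{2^{k-1}}$. First I will bound $\|(f-f_0)\mathbf{1}_{B_1}\|_r$ by the standard John–Nirenberg estimate $\|(f-f_0)\mathbf{1}_{B_1}\|_r \leq C_{d,r}\|f\|_{\rm BMO}$, which handles the central piece since $\rho \leq 1$ everywhere.

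For $k \geq 1$, I will use the triangle inequality
\[
|f - f_0| \leq |f - f_k| + \sum_{j=1}^{k}|f_j - f_{j-1}|
\]
together with the elementary BMO bound $|f_j - f_{j-1}| \leq C_d \|f\|_{\rm BMO}$ (obtained from averaging $|f - f_{j}|$ over $B_{2^{j-1}} \subset B_{2^{j}}$, so that the telescoping sum is $\leq C_d k \|f\|_{\rm BMO}$). Combined with John–Nirenberg on $B_{2^k}$, this gives
\[
\int_{A_k}|f-f_0|^r\,dy \leq 2^{r-1}\int_{B_{2^k}}|f-f_k|^r\,dy + 2^{r-1}|A_k|(C_d k)^r\|f\|_{\rm BMO}^r \leq C_{d,r}(1+k)^r 2^{kd}\|f\|_{\rm BMO}^r.
\]

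The weight estimate on $A_k$ is $\rho^r(y) \leq (1+\sigma 2^{2(k-1)})^{-(d+\varepsilon_0)r/2} \leq C_{d,r,\sigma} 2^{-k(d+\varepsilon_0)r}$. Multiplying and summing,
\[
\langle |f-f_0|^r\rho^r\rangle \leq C_{d,r}\|f\|_{\rm BMO}^r + C_{d,r,\sigma}\|f\|_{\rm BMO}^r\sum_{k\geq 1}(1+k)^r\,2^{k[d-(d+\varepsilon_0)r]},
\]
and since $r \geq 1$ forces $d-(d+\varepsilon_0)r \leq -\varepsilon_0 < 0$, the geometric tail converges, yielding \eqref{est_44}. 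The inequality \eqref{est_44_final} then follows by writing $f = (f-f_0) + f_0$, using $|f_0| \leq |B_1|^{-1}\|f\mathbf{1}_{B_1}\|_{L^1}$, and noting that $\rho^r$ is integrable on $\mathbb R^d$ for any $r\geq 1$ since $(d+\varepsilon_0)r > d$.

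The only mildly delicate point will be keeping track of the polynomial factor $(1+k)^r$ against the geometric decay; this is harmless thanks to the strict inequality $\varepsilon_0 > 0$ built into the weight, which is precisely why $\rho$ is defined with exponent $d+\varepsilon_0$ rather than $d$. Everything else is bookkeeping.
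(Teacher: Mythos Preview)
Your proof is correct and is precisely the standard argument the paper defers to by citing \cite[Prop.\,7.1.5]{Graf_M}; the paper gives no independent proof of this lemma, and the Grafakos reference proceeds by the same dyadic-annulus decomposition with the logarithmic drift $|(f)_{B_{2^k}}-(f)_{B_1}|\lesssim k\|f\|_{\rm BMO}$ that you use.
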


\medskip

By Lemma \ref{est8_lem} (with $r:=d+\epsilon_0$), we have pointwise estimate
$
(1+|x|)^{(d+\epsilon_0)} |\mu (\mu-\Delta)^{-1} \nabla_i g(x)| \leq c_R \mu (c\mu-\Delta)^{-1}|\nabla_i g|(x)
$
with constants $c_R$, $c$ independent of $\mu \geq 1$, so we can estimate in the second multiple in \eqref{Q_8_est}:
$$
\big\langle  (1+|x|)^{(d+\epsilon_0)s\nu'} |\mu (\mu-\Delta)^{-1} \nabla_i g|^{s\nu'} \big\rangle \leq
C \|\mu (c\mu-\Delta)^{-1} |\nabla_i g|\|_{s\nu'}^{s\nu'}.
$$
In turn, by the contractivity of the heat semigroup in Lebesgue spaces, for all $\mu \geq 1$, $$\|\mu (c\mu-\Delta)^{-1} |\nabla_i g|\|_{s\nu'} \leq c^{-1}\|\nabla_i g\|_{s\nu'}.$$ Thus, the second multiple in \eqref{Q_8_est} can be estimated as follows: for all $\mu \geq 1$,
$$
\big\langle  (1+|x|)^{(d+\epsilon_0)s\nu'} |\mu (\mu-\Delta)^{-1} \nabla_i g|^{s\nu'} \big\rangle \leq C\|\nabla_i g\|_{s\nu'},
$$
where $C$ depends on the support of $g$, but does not depend on $\mu$.

On the other hand, regarding the first multiple in \eqref{Q_8_est}, we have by Lemma \ref{est_44_lem} (that is, \eqref{est_44_final} with $r=s\nu$) after taking into account that the ${\rm BMO}$ semi-norms of $Q^i_m$ are uniformly in $m$ bounded,
$$
\sup_m \big\langle |Q^i_m|^{s \nu} (1+|x|)^{-(d+\epsilon_0)s\nu} \big\rangle < \infty.
$$
We can thus conclude from \eqref{Q_8_est}:
$$
\sup_{\mu \geq 1, m}\||Q^i_m||f|\|_s<\infty.
$$

In the same way, since $\nabla f= \mu (\mu-\Delta)^{-1} \nabla_i \nabla g$ and the entries of $\nabla g$ have compact supports,
$$
\sup_{\mu \geq 1, m}\||Q^i_m||\nabla f|\|_s<\infty.
$$
The last two bounds yield \eqref{bd_task}. In view of the previous discussion, condition {\rm $3^\circ$)} of Trotter's theorem is thus verified.

\medskip

Now, Trotter's theorem applies and gives us assertion (\textit{i}) of Theorem \ref{thm2}.

\medskip

\subsection*{Proof of (\textit{ii})} The first statement, i.e.\,the approximation uniqueness, is immediate from the existence of the limit in (\textit{i}) and the fact that $[b]$, $[q]$ are closed with respect to passing to a sub-sequence. Let us prove the second statement, i.e.\,a relaxed approximation uniqueness. It suffices for us to replace Proposition \ref{thm_conv} with the following:
for all sufficiently large $\mu$  and every $f \in \mathcal S$, $u_n=(\mu+\Lambda(b_n,q))^{-1}f$ converge to the same limit: 
\begin{equation}
\label{u_n_conv5}
u_n \rightarrow u=(\mu+\Lambda(b,q))^{-1}f \quad \text{ in } L^2_{\rho}.
\end{equation}
The rest repeats the proof of assertion (\textit{i}).

\textit{Proof of \eqref{u_n_conv5}}. Since $b \in \mathbf{F}_\delta$, $\delta<1$ and ${\rm div\,}q=0$ provide, via Cauchy-Schwarz inequality and the compensatd compactness estimate, coercivity and boundedness of the quadratic form of $-\Delta + (b+q) \cdot \nabla$ in $L^2$, it is readily seen that function $u=(\mu+\Lambda(b,q))^{-1}f$, $f \in C_\infty \cap L^2$, constructed via approximation in (\textit{i}), is a weak solution to elliptic equation $(\mu-\Delta + (b+q) \cdot \nabla)u=f$, i.e.
$
\mu \langle u,\varphi\rangle + \langle \nabla u,\nabla \varphi\rangle + \langle \nabla u,b\varphi\rangle + \langle \nabla u,q\varphi\rangle=\langle f,\varphi\rangle $ for all $\varphi \in W^{1,2}$.

Put $h_n:=u_n-u$. Our goal is to show convergence $h_n \rightarrow 0$ in $L^2_\rho$. Notice that, in contrast to the proof of assertion (\textit{i}), we already have the limiting object $u$, which greatly simplifies the analysis.

Step 1.~Proposition \ref{sep_thm} (``separation property'') extends to $u$, $u_n$ and $f \in \mathcal S$ and yields: for a fixed $1<\theta<\frac{d}{d-2}$ there exist constants $K$  and $\mu_0>0$ independent of $n$ such that for all $\mu \geq \mu_0$, for every $x \in \mathbb R^d$, 
\begin{equation*}
\sup_{n \geq 1}\sup_{B_{\frac{1}{2}}(x)} |h_n| \leq K \biggl( \langle |f|^{2\theta}\rho_x\rangle\rangle^{\frac{1}{2\theta}} + \big\langle |f|^{2\theta'}\mathbf{1}_{B_1(x)}\big\rangle^{\frac{1}{2\theta'}} \biggr),
\end{equation*}
provided constant $\sigma$ in the definition of $\rho$ is fixed sufficiently small. Since $f \in \mathcal S$, it follows that for every $\varepsilon>0$ there exists sufficiently large $R>0$ such that
\begin{equation}
\label{h_n_conv}
\sup_n \sup_{x \in \mathbb R^d \setminus B_R}|h_n(x)|<\varepsilon.
\end{equation}

Step 2.~The difference $h_n=u_n-u$ satisfies
$$
\mu \langle h_n,\varphi\rangle + \langle \nabla h_n,\nabla \varphi\rangle + \langle b_n \cdot \nabla h_n,\varphi\rangle + \langle q \cdot \nabla h_n,\varphi\rangle=\langle (b-b_n)\cdot \nabla u,\varphi\rangle \quad \text{ for all } \varphi \in W^{1,2}.
$$
Hence, taking $\varphi=h_n \rho$ and repeating the proof of the energy inequality of Proposition \ref{elem_lem}(\textit{ii}) (take there $s=2$, which is possible since now $b$, $b_n$ have form-bound $\delta<1$), with $\sigma$ in the definition of $\rho$ fixed sufficiently small, we obtain
$$
(\mu-\mu_0)\langle |h_n|^2\rho\rangle + C_1\langle |\nabla h_n|^2\rho\rangle \leq \langle (b-b_n)\cdot \nabla u,h_n\rho\rangle,
$$
where $C_1>0$ is independent of $n$.
Thus, it suffices to show that $\langle (b-b_n)\cdot \nabla u,h_n \rho \rangle \rightarrow 0$ as $n \rightarrow \infty$.

Step 3.~We represent
\begin{align}
\langle (b-b_n)\cdot \nabla u,h_n \rho\rangle & = \langle \mathbf{1}_{\mathbb R^d \setminus B_R}(b-b_n) \cdot \nabla u,h_n \rho\rangle + \langle \mathbf{1}_{B_R}(b-b_n) \cdot \nabla u,h_n \rho\rangle \label{b_b_n_repr} \\
& =:I_1+I_2 \notag
\end{align}

By Step 1 and $\rho \leq \sqrt{\rho}$, the term $I_1$ can be estimated as follows: for every $\varepsilon>0$, for all $R=R(\varepsilon)>0$ sufficiently large
$$
|I_1|= |\langle \mathbf{1}_{\mathbb R^d \setminus B_R}(b-b_n) \cdot \nabla u,h_n \rho\rangle| \leq K_b \|\nabla u\|_2 \varepsilon,
$$
$$
K^2_b:=2 \sup_{n}\langle |b_n|^2\rho\rangle \vee \langle |b|^2\rho\rangle<\infty \text{ by \eqref{rho_global} (this is the place where we need weight $\rho$)},
$$
where $\|\nabla u\|_2<\infty$ by the energy inequality (Proposition \ref{elem_lem}(\textit{i})).

Let $R$ be as above (some some small $\varepsilon>0$). Let us deal with the second term $\langle \mathbf{1}_{B_R}(b-b_n) \cdot \nabla u,h_n \rho \rangle$ in \eqref{b_b_n_repr} . Here we work over a compact set, so the weight $\rho$ plays no role.
By the energy inequality of Proposition \ref{elem_lem}(\textit{i}), $\sup_n\|\nabla u_n\|_2<\infty$, and thus $\sup_n\|\nabla h_n\|_2<\infty$. Therefore, by the Rellich-Kondrashov theorem, there is a subsequence of $\{h_n\}$ (without loss of generality, $\{h_n\}$ itself) such that $$h_n \upharpoonright B_R \rightarrow g \text{ in $L^2(B_R)$ for some $g \in (L^2 \cap L^\infty)(B_R)$.}$$ (Here we have used  a priori estimate $\|h_n\|_\infty \leq 2\mu^{-1}\|f\|_\infty$.) So,
\begin{align*}
|I_2|=|\langle \mathbf{1}_{B_R}(b-b_n) \cdot \nabla u,h_n \rho\rangle| \leq |\langle \mathbf{1}_{B_R}(b-b_n) \cdot \nabla u, g\rho\rangle| + |\langle \mathbf{1}_{B_R}(b-b_n) \cdot \nabla u,(h_n-g)\rho\rangle|,
\end{align*}
where $$
\langle \mathbf{1}_{B_R}(b-b_n) \cdot \nabla u,g\rho\rangle \rightarrow 0 \quad \text{since $\mathbf{1}_{B_R}(\nabla u) g\rho \in L^2$ and $b_n \rightarrow b$ weakly in $L^2$},
$$
and
\begin{align*}
|\langle \mathbf{1}_{B_R}(b-b_n) \cdot \nabla u,(h_n-g)\rho\rangle| & \leq \|\mathbf{1}_{B_R}(b-b_n)\sqrt{\rho}\|_2 \|(h_n-g)\nabla u\|_2 \quad (\text{we have used $\rho \leq 1$})\\
& \leq K_b\|\mathbf{1}_{B_R} \nabla u\|_{2+\varepsilon_1}\|\mathbf{1}_{B_R}(h_n-g)\|_{\frac{2(2+\varepsilon_1)}{\varepsilon_1}}, \qquad \varepsilon_1>0.
\end{align*}
Next, we apply

\begin{lemma}
\label{grad_lem}
$\|\nabla u\|_{2+\varepsilon_1}<\infty$
provided that $\varepsilon_1>0$ is sufficiently small.
\end{lemma}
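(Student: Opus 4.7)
My plan is to apply the Gehring-Giaquinta-Modica lemma (Lemma \ref{gehring_prop}) to $|\nabla u|$. The two ingredients needed are a Caccioppoli-type inequality and the Sobolev-Poincaré inequality, which together will yield a reverse Hölder estimate.

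\emph{Step 1 (Caccioppoli).} I will first rewrite the resolvent equation $(\mu+\Lambda(b,q))u=f$ in divergence form. Since $q=\nabla Q$ with $Q$ antisymmetric, one has $q\cdot\nabla u=-\nabla\cdot(Q\nabla u)$, so that
\[
\mu u - \nabla\cdot\bigl((I+Q)\nabla u\bigr) + b\cdot\nabla u = f.
\]
Fixing a ball $B_{2r}(x_0)$ with $r\leq 1$ and setting $k:=(u)_{B_{2r}(x_0)}$, I will test against $\varphi=\eta^2(u-k)$, where $\eta\in C_c^\infty(B_{2r}(x_0))$, $\eta\equiv 1$ on $B_r(x_0)$, $|\nabla\eta|\leq C/r$. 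The diagonal quadratic term $\langle(I+Q)\nabla u,\eta^2\nabla u\rangle$ reduces to $\|\eta\nabla u\|_2^2$, the $Q$ part vanishing pointwise by antisymmetry. The cross term $2\langle(I+Q)\nabla u,\eta(u-k)\nabla\eta\rangle$ will be treated by Cauchy-Schwarz for the $I$ part and by the compensated compactness estimates (Propositions \ref{cc_lem}, \ref{cc_lem_qx}) for the $Q$ part. The $b$-term will be handled using $b\in\mathbf{F}_\delta$ with $\delta<1$, which makes it possible to absorb a controlled fraction of $\|\eta\nabla u\|_2^2$ back into the left-hand side. This produces
\[
\int_{B_r(x_0)}|\nabla u|^2\,dy \leq \frac{C}{r^2}\int_{B_{2r}(x_0)}(u-k)^2\,dy + C\int_{B_{2r}(x_0)}(|f|^2+\mu^2 u^2)\,dy,
\]
with $C$ depending only on $d$, $\delta$, $c_\delta$, and $\|Q\|_{{\rm BMO}}$.

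\emph{Step 2 (Reverse Hölder and Gehring).} I will then divide by $|B_r|$ and apply Sobolev-Poincaré with exponent $q:=\frac{2d}{d+2}<2$ to the first term on the right, producing
\[
\biggl(\frac{1}{|B_r|}\int_{B_r(x_0)}|\nabla u|^2\biggr)^{1/2} \leq C\biggl(\frac{1}{|B_{2r}|}\int_{B_{2r}(x_0)}|\nabla u|^q\biggr)^{1/q} + C\biggl(\frac{1}{|B_{2r}|}\int_{B_{2r}(x_0)}(|f|^2+\mu^2 u^2)\biggr)^{1/2}.
\]
By Proposition \ref{sep_thm} and $f\in\mathcal S$, the right-most term belongs to $L^{2+\varepsilon}(\mathbb R^d)$ for every $\varepsilon>0$. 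Lemma \ref{gehring_prop} then produces some $\varepsilon_1>0$ and a local bound $|\nabla u|\in L^{2+\varepsilon_1}_{\rm loc}$. Globalization is achieved by covering $\mathbb R^d$ with unit balls of bounded overlap: the translation invariance of the constants in Steps 1 and 2, the global energy estimate $\nabla u\in L^2(\mathbb R^d)$ (Proposition \ref{elem_lem}(\textit{i})), and the pointwise decay of $u$ at infinity (Proposition \ref{sep_thm}) combine to upgrade the local bound to $\|\nabla u\|_{L^{2+\varepsilon_1}(\mathbb R^d)}<\infty$.

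\emph{Main obstacle.} The delicate step is the control of the cross term involving $Q$ in Step 1. Because $Q$ is only of bounded mean oscillation, no pointwise bound on $|Q|$ is available; the compensated compactness estimates are used precisely to keep the bound proportional to $\|Q\|_{{\rm BMO}}$ and to the $L^2$ norms of $\eta\nabla u$ and $(u-k)\nabla\eta$. The restriction $\delta<1$, rather than the weaker $\delta<4$, is exploited here to close Caccioppoli's inequality without any further smallness assumption, consistent with the hypothesis of assertion (\textit{iii}) of Theorem \ref{thm2}.
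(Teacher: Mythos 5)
Your overall architecture (Caccioppoli inequality, Sobolev--Poincar\'{e}, Gehring--Giaquinta--Modica, then globalization by a covering of bounded overlap) is exactly the paper's, but Step 1 has a genuine gap in the treatment of the $Q$ cross term, and the gap changes the exponent that must appear in your Caccioppoli inequality. The cross term is
$2\langle Q\nabla u,\eta(u-k)\nabla\eta\rangle=\tfrac12\langle Q\cdot\nabla (u-k)^2,\nabla\eta^2\rangle$, and the compensated compactness estimates (Propositions \ref{cc_lem}, \ref{cc_lem_qx}) are \emph{global} bilinear bounds: they control this by $\|Q\|_{\rm BMO}\|\nabla(u-k)^2\|_{L^2(\mathbb R^d)}\|\nabla\eta^2\|_{L^2}$, and the first factor cannot be localized to $B_{2r}(x_0)$. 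A right-hand side involving a global $L^2$ norm of $\nabla u$ destroys the reverse-H\"{o}lder structure that Gehring's lemma requires. (Trying to restore locality by absorbing the cutoff into the BMO function runs into the multiplier problem for ${\rm BMO}$, which the paper only resolves for the very specific multiplier of Lemma \ref{NY_lem}.) The paper's Lemma \ref{cacc_prop} instead follows Hara's device: replace $Q$ by $\tilde Q=Q-(Q)_{B_R}$ (which does not change $\nabla\cdot(Q\nabla u)$), use antisymmetry to kill the second-order term after integrating by parts, and then estimate the remaining term $\langle\tilde Q\cdot\nabla v,v\nabla\eta\rangle$ by H\"{o}lder with some $\theta>1$ together with the John--Nirenberg inequality \eqref{JN_ineq}, which controls $\bigl(\fint_{B_R}|\tilde Q|^{2\theta'}\bigr)^{1/\theta'}$ by $\|Q\|_{\rm BMO}^2$. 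The price is that the Caccioppoli inequality you actually get has $\bigl(\fint_{B_{2r}}|u-k|^{2\theta}\bigr)^{1/\theta}$ on the right, not $\fint_{B_{2r}}|u-k|^2$; your inequality as stated is not obtainable for genuinely ${\rm BMO}$ stream matrices (only for $Q\in L^\infty$, where $\theta=1$ is admissible).

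The remainder of your plan survives this correction: with $1<\theta<\frac{d}{d-2}$ the Sobolev--Poincar\'{e} inequality still converts $\bigl(\fint|u-k|^{2\theta}\bigr)^{1/\theta}$ into a sub-quadratic average $\bigl(\fint|\nabla u|^{2\theta d/(d+2\theta)}\bigr)^{(d+2\theta)/(\theta d)}$, which is what Lemma \ref{gehring_prop} needs. Two further points to tighten: (a) the paper proves the bound uniformly for the approximants $u_{n,m}$ (where all integrations by parts are classical) and only then passes to the limit via Proposition \ref{thm_conv}, rather than manipulating the distributional equation for $u$ directly; (b) in the globalization, since the Gehring output raises the local $L^2$ average of $\nabla u$ to a power strictly greater than one, naively summing over the covering does not close --- one must split the grid into the balls where that average exceeds $1$ (finitely many, with cardinality controlled by $\|\nabla u\|_2^2$) and the rest, as the paper does in \eqref{glob_est_int} and the lines following it.
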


We prove Lemma \ref{grad_lem}, which is of interest on its own, in Section \ref{grad_lem_proof}. Its proof uses Gehring-Giaquinta-Modica's lemma.

\medskip

In turn, $$\|\mathbf{1}_R(h_n-g)\|_{\frac{2(2+\varepsilon_1)}{\varepsilon_1}} \rightarrow 0 \text{ as } n \rightarrow \infty$$ follows by interpolating between $\mathbf{1}_R(h_n-g) \rightarrow$ in $L^2$ and $\sup_n\|\mathbf{1}_R(h_n-g)\|_\infty \leq 4\mu^{-1}\|f\|_\infty<\infty$.

Combining the above estimate on $I_1$ and the convergence $I_2 \rightarrow 0$ as $n \rightarrow \infty$, we obtain convergence \eqref{u_n_conv5}, which ends the proof of assertion (\textit{iii}).

\medskip

\subsection*{Proof of (\textit{iii})} Since $e^{-t\Lambda(b,q)}$ is a strongly continuous Feller semigroup on $C_\infty$, there exist probability measures 
$\{\mathbb P_x\}_{x \in \mathbb R^d}$ on the canonical space $\mathbf D_T$ of c\`{a}dl\`{a}g trajectories $\omega$ such that 
$$
e^{-t\Lambda(b,q)}f(x)=\mathbb E_{\mathbb P_x}[f(\omega_t)], \quad f \in C_\infty$$
and, for every $v \in D\big(\Lambda(b,q)\big)$, the process
$$
t \mapsto v(\omega_t)-x + \int_0^t \Lambda(b,q) v(\omega_s) ds
$$
is a martingale with respect to $\mathbb P_x$, see e.g.\,\cite[Ch.\,VII, \S 1]{RevuzYor}. Since both $e^{-t\Lambda(b_n,q_m)}$,  $e^{-t\Lambda(b,q)}$ are strongly continuous Feller semigroups, the convergence of their finite-dimensional distributions, provided by assertion (\textit{i}), yields
$$
\mathbb P_x=w\mbox{-}\mathcal P(\mathbf D)\mbox{-}\lim_n\lim_m \mathbb P_x^{n,m},
$$
see e.g.\,\cite[Ch.4, Theorem 2.5]{EKu}.
Since $\mathbf C$ is closed in $\mathbf D$ and  $\mathbb P_x^{n,m}(\mathbf C)=1$, it follows the weak convergence that $\mathbb P_x(\mathbf C)=1$. Thus, probability measures $\{\mathbb P_x\}_{x \in \mathbb R^d}$ are concentrated on  $\mathbf C$, and we have 
\begin{equation}
\label{P_n}
\mathbb P_x=w\mbox{-}\mathcal P(\mathbf C)\mbox{-}\lim_n\lim_m \mathbb P_x^{n,m},
\end{equation}
as claimed.

\medskip

\subsection*{Proof of (\textit{iv})} We argue as in \cite{HZ}.
Set
$
\mathbb P_{\nu_0}:=\int_{\mathbb R^d}\mathbb P_x \nu_0(x)dx.
$
Define in the same way  $\mathbb P_{\nu_0}^{n,m}$. Then, in view of  \eqref{P_n}, $$\mathbb P_{\nu_0}=w\mbox{-}\mathcal P(\mathbf C)\mbox{-}\lim_n\lim_m \mathbb P_{\nu_0}^{n,m}.$$ By the Skorohod representation theorem,  there exists a probability space $\mathfrak F'=\{\Omega',\mathcal F',\{\mathcal F'_t\}_{t \geq 0},\mathbf{P}'\}$  and continuous processes $X^{n,m}_t$, $X_t$ defined on this space such that $\mathbb P^{n,m}_{\nu_0}$, $\mathbb P_{\nu_0}$ are the laws of $X^{n,m}_t$, $X_t$
and 
\begin{equation}
\label{X_conv}
X_t^{n,m}(\omega') \rightarrow X_t(\omega'), \quad (t \geq 0, \;\omega' \in \Omega')
\end{equation}
(see e.g.\,\cite[Ch.\,1, Sect.\,6]{Bil}). In particular, 
$
\mathbf{P}'(X^{n,m}_0)^{-1}, \mathbf{P}'  X_0^{-1}$ have density $\nu_0$.

Fix $1 \leq i \leq d$. Our goal is to show that
\begin{equation}
\label{goal_conv}
\lim_{n_1,n_2}\lim_{m_1,m_2} \mathbf{E}'\biggl|\int_0^t \big(b_{n_1}^i+q_{m_1}^i - (b^i_{n_2}+q^i_{m_2}) \big)(X_r)dr \biggr|^2= 0.
\end{equation}
It suffices to show that
\begin{equation}
\label{conv_res}
\lim_{n_1,n_2}\lim_{m_1,m_2} \mathbf{E}'\biggl|\int_0^t \big(b_{n_1}^i+q_{m_1}^i - (b_{n_2}^i+q_{m_2}^i)\big)(X_r^{n,m}) dr \biggr|^2= 0 \quad \text{ uniformly in $n$, $m$}.
\end{equation}
Indeed, having \eqref{conv_res}, we can appeal to \eqref{X_conv} and the Dominated convergence theorem to show that, for any fixed $n_1$, $m_1$, $n_2$, $m_2$,
$$
\lim_{n}\lim_{m} \mathbf{E}'\biggl|\int_0^t \big(b_{n_1}^i+q_{m_1}^i - (b_{n_2}^i+q_{m_2}^i)\big)(X_r^{n,m}) dr - \int_0^t \big(b_{n_1}^i+q_{m_1}^i - (b_{n_2}^i+q_{m_2}^i)\big)(X_r) dr \biggr|^2= 0,
$$
which then yields \eqref{goal_conv}.

So, let us prove \eqref{conv_res}.
Put for brevity  $F:=b_{n_1}^i+q_{m_1}^i - (b_{n_2}^i+q_{m_2}^i)$, so \eqref{conv_res} becomes
\begin{equation}
\label{conv_res2}
\lim_{n_1,n_2}\lim_{m_1,m_2}\mathbf{E}'\biggl|\int_0^t F(X_r^{n,m}) dr\biggr|^2 \rightarrow 0.
\end{equation}
Let us rewrite the expression under the limit signs as follows. Let $u$ be the classical solution to the terminal-value problem
\begin{equation}
\label{A_u_eq}
\big(\partial_s + \Delta - (b_n+q_m) \cdot \nabla \big)  u(s) =-F, \quad s<t, \quad u(t,\cdot)=0.
\end{equation}
 Then
\begin{align*}
\mathbf{E}'\biggl|\int_0^t F(X_r^{n,m}) dr\biggr|^2 & =2\mathbf{E}'\int_0^t F(X_s^{n,m}) \int_s^t F(X_r^{n,m})dr ds \\
& = 2\mathbf{E}'\int_0^t F(X_s^{n,m}) \mathbf{E}'\biggl[ \int_s^t F(X_r^{n,m})dr \mid \mathcal F'_s  \biggr]ds \\
& = 2\mathbf{E}'\int_0^t F(X_s^{n,m}) u(s,X_s^{n,m})ds = 2\int_0^t \langle F u,\nu\rangle ds,
\end{align*}
where $\nu$ is the probability density of $X^{n,m}$, i.e.\,solution to Cauchy problem for the Fokker-Planck equation
\begin{equation}
\label{A_nu_eq}
\partial_t \nu(t) - \Delta \nu(t) - {\rm div\,}\big((b_n+q_m) \nu(t)\big)=0, \quad t>0, \quad \nu(0,\cdot)=\nu_0.
\end{equation}
Now it is seen that convergence \eqref{conv_res2} will follow once we prove the next lemma.

\begin{lemma}
\label{A_conv_lem}
\begin{align*}
I:=\int_0^T \langle (b^i_{n_1}-b^i_{n_2})u,\nu\rangle ds \rightarrow 0, \quad J:=\int_0^T \langle (q^i_{m_1}-q^i_{m_2})u,\nu\rangle ds \rightarrow 0
\end{align*}
as $n_1,n_2,m_1,m_2 \rightarrow \infty$ uniformly in $n$, $m$.
\end{lemma}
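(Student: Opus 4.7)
\textbf{Plan for Lemma \ref{A_conv_lem}.} I would treat $I$ and $J$ separately, reflecting the different structures of the form-bounded difference $\Phi:=b^i_{n_1}-b^i_{n_2}$ and of the distributional difference $\Psi:=q^i_{m_1}-q^i_{m_2}=\sum_j\nabla_j\tilde Q^{ij}$, where $\tilde Q^{ij}:=Q^{ij}_{m_1}-Q^{ij}_{m_2}$. The scaffolding is a collection of uniform-in-$n,m$ a priori bounds for both the backward Kolmogorov function $u$ solving \eqref{A_u_eq} and the forward density $\nu$ solving \eqref{A_nu_eq}. Testing the Fokker--Planck equation against $\nu^{2r-1}$ for $1<r<\delta^{-1/2}$, using ${\rm div}\,q_m=0$ together with $b_n\in\mathbf{F}_\delta$ and $\delta<1$, one obtains the coercive bound
\begin{equation*}
\sup_{s\in[0,T]}\|\nu(s)\|_{L^{2r}}^{2r}+c\int_0^T\|\nabla\nu^{r}\|_2^2\,ds\leq e^{CT}\|\nu_0\|_{L^{2r}}^{2r}
\end{equation*}
with $C,c$ independent of $n,m$. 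The analogous parabolic energy estimate for the time-reversed equation satisfied by $u$ gives uniform control of $\|u\|_{L^\infty_tL^p_x}$ and $\|\nabla u\|_{L^2_tL^2_x}$, at least after splitting $u=u_\Phi+u_\Psi$ linearly in the source.

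For $I$, since $b$ and the approximations $b_n$ have support in a fixed ball, $\Phi\to 0$ in $L^2(\mathbb R^d)$ as $n_1,n_2\to\infty$. H\"older in space with exponents $(2,2r/(r-1),2r)$ yields
\begin{equation*}
|I|\leq \int_0^T\|\Phi\|_{L^2}\,\|u(s)\|_{L^{2r/(r-1)}}\|\nu(s)\|_{L^{2r}}\,ds,
\end{equation*}
and the bounds from Step~1 convert the convergence $\|\Phi\|_{L^2}\to 0$ into the desired convergence $I\to 0$ uniform in $n,m$. The subtle point is that $\|u\|_{L^\infty_tL^{2r/(r-1)}_x}$ still depends on $F=\Phi+\Psi$, and hence on $m_1,m_2$; this is handled by splitting $u=u_\Phi+u_\Psi$, bounding $u_\Phi$ via the dispersion estimate of Theorem \ref{thm2}(\textit{vi}) applied to the bounded $\Phi$, and bounding $u_\Psi$ by integrating the divergence in $\Psi$ against the parabolic Duhamel kernel before using the uniform control $\sup_m\|Q_m\|_{\rm BMO}<\infty$.

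For $J$, I would integrate by parts in space to move $\nabla_j$ onto $u\nu$:
\begin{equation*}
J=-\int_0^T\sum_{j=1}^d\langle \tilde Q^{ij},(\nabla_j u)\nu+u\,\nabla_j\nu\rangle\,ds.
\end{equation*}
Introducing a cutoff $\chi_R$ splits the integrand into an interior piece supported in $B_R$ and an exterior tail. On the interior one uses that $\tilde Q^{ij}\to 0$ in $L^s(B_R)$ for every $s<\infty$ (Definition \ref{q_m}) paired against $\|\chi_R\nabla(u\nu)\|_{L^{s'}}$; the latter is controlled by a Caccioppoli-type estimate of the same flavor as the one leading to Lemma \ref{grad_lem}, whose self-improved exponent $s'<2$ guarantees that $s$ can be taken arbitrarily large. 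For the exterior piece one exploits the uniform ${\rm BMO}$ bound on $\tilde Q^{ij}$ via the multiplier inequality Lemma \ref{NY_lem} and the weighted variant of compensated compactness Proposition \ref{cc_lem_qx}; the small tails of $u$ and $\nu$, guaranteed by the compact support of $F$ and the integrability assumption on $\nu_0$, then make the exterior contribution vanish as $R\to\infty$, uniformly in all parameters.

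The hard part is Step~3:\ the distributional nature of $\Psi$ prohibits estimating $\|\Psi\|_{L^s}$ directly, so every occurrence of $\tilde Q^{ij}$ must be paired with $\nabla(u\nu)$ through integration by parts. Securing a uniform $L^{s'}_{loc}$ bound on $\nabla(u\nu)$ with $s'<2$, so that the dual exponent $s$ is arbitrarily large, is the crux:\ it requires a reverse-H\"older improvement of the basic $L^2$ energy estimate, which in turn rests on the hypothesis $\delta<1$ used to close the Caccioppoli inequality for both $u$ and $\nu$ without losing coercivity to the drift.
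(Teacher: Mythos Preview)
Your skeleton is sound --- H\"older for $I$, integration by parts for $J$ --- and matches the paper's opening moves. But you overcomplicate both halves and, for $J$, miss the key simplification that the paper exploits.

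For $I$: the splitting $u=u_\Phi+u_\Psi$ is unnecessary. The parabolic energy inequality for $u$ (Proposition~\ref{elem_lem_par}(\textit{i}) at exponent $s=2r'$, or $s=2,4$) depends on the source $F$ only through the form-bound of $b^i_{n_1}-b^i_{n_2}$ and the ${\rm BMO}$ semi-norm of $Q^i_{m_1}-Q^i_{m_2}$, both of which are uniformly bounded in $n_1,n_2,m_1,m_2$ by construction of the approximation classes $[b]$, $[q]$. This is exactly how Step~1 of Proposition~\ref{emb_thm} handles the source term. So the bound on $\int_0^T\|u\|_{2r'}^{2r'}\,ds$ is automatically uniform in all indices, and your Duhamel detour for $u_\Psi$ is not needed.

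For $J$: you are not using the hypothesis of Theorem~\ref{thm2}(\textit{v}) that $q,q_m$ have supports in a fixed ball. Under this assumption the paper shows (Appendix~\ref{vanish_app}, Lemma~\ref{vanishing_lem}) that the stream matrices $Q_m$ can be chosen with polynomial decay $|Q_m(x)|\le C|x|^{-d+2}$ at infinity, uniformly in $m$. Combined with $Q_{m_1}-Q_{m_2}\to 0$ in $L^s_{\loc}$, this gives \emph{global} convergence $Q_{m_1}-Q_{m_2}\to 0$ in $L^{2r'}$ and in $L^4$. Then $J_1,J_2$ are handled by a single H\"older step each, pairing $\tilde Q$ in $L^{2r'}$ against $(\nabla u)\nu$ (with $\nabla u\in L^2_{t,x}$, $\nu\in L^{2r}_{t,x}$), and $\tilde Q$ in $L^4$ against $u\,\nabla\nu$ (with $u\in L^4_{t,x}$, $\nabla\nu\in L^2_{t,x}$). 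No cutoff, no tail estimate, no reverse-H\"older.

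Your proposed tail argument has a gap: the compensated compactness estimate bounds the tail contribution by $\|\tilde Q\|_{\rm BMO}$ times gradient norms, but $\|\tilde Q\|_{\rm BMO}$ is only uniformly bounded, not small, so smallness must come entirely from decay of $u,\nu$ outside $B_R$ --- which you have not established uniformly in all indices. Separately, your invocation of Gehring--Giaquinta--Modica is backwards: that lemma improves integrability \emph{above} $2$, whereas $\nabla(u\nu)\in L^{s'}_{\loc}$ for $s'<2$ follows trivially from the basic energy bounds by H\"older; and ``$s'<2$'' does not make the dual $s$ arbitrarily large unless $s'\to 1$, which is not what Gehring delivers.
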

\begin{proof} 
Below we will use the following bound: provided that $r>1$ is chosen sufficiently close to $1$, we have
\begin{equation}
\label{prelim_bd}
\sup_{n,m,n_1,n_2,m_1,m_2}\biggl[\int_0^T \langle |\nabla u|^2  \rangle ds, \int_0^T \langle  u^4 \rangle ds, \int_0^T \langle \nu^{2r}\rangle ds,\int_0^T \langle |\nabla \nu|^2\rangle ds \biggr]<\infty.
\end{equation}
For the last two terms this bound follows right away from Corollary \ref{elem_cor_fp} with $s=2r$, $s=2$, respectively (since $\delta$ can be close to $1$, $s=2r$ in Corollary \ref{elem_cor_fp} must be close to $2$, hence the condition that $r$ must be close to $1$). For the first two terms this bound is obtained as follows. Since $b_n$, $q_m$ have supports in the same ball $B_R$ independent of $n$, $m$, we can rewrite $F$ as 
$$
F=\big[b_{n_1}^i+q_{m_1}^i - (b_{n_2}^i+q_{m_2}^i)\big]f, \quad f \in C_c^\infty \text{ is identically $1$ on }B_R.
$$
We now invoke the energy inequality of Proposition \ref{elem_lem_par}(\textit{i}) for $s=2$, $s=4$, and then estimate
$
\langle F,u|u|^{s-2}\rangle
$
in exactly the same way as in Step 1 of the proof of Proposition \ref{emb_thm}. (In fact, strictly speaking, in $F$ there we have $b^i_n$, $q^i_m$ instead of $b^i_{n_1}-b^i_{n_2}$ and $q^i_{m_1}-q^i_{m_2}$, but what matters in the proof is that the form-bound of $b^i_{n_1}-b^i_{n_2}$ can be chosen independently of $n_1$, $n_2$, and that the ${\rm BMO}$ semi-norm of $Q_{m_1}^i-Q_{m_2}^i$ is bounded from above by a constant independent of $m_1$, $m_2$, which is obviously true.) From here the bound on the first two terms in \eqref{prelim_bd} follows.

Armed with \eqref{prelim_bd}, we estimate
$$
I \leq \biggl(\int_0^T \|b^i_{n_1}-b^i_{n_2}\|_2^2 ds\biggr)^{\frac{1}{2}}\biggl(\int_0^T \|u\|_{2r'}^{2r'} ds \biggr)^{\frac{1}{2r'}}\biggl(\int_0^T \|\nu\|_{2r}^{2r} ds \biggr)^{\frac{1}{2r}},
$$
with $1<r<\infty$ selected close to $1$. The first term tends to $0$ as $n_1,n_2 \rightarrow \infty$, while the other two terms are uniformly (in $n,m,n_1,n_2,m_1,m_2$) bounded by \eqref{prelim_bd}.
In turn,
\begin{align*}
J = -\int_0^T  \langle (Q_{m_1}^i-Q_{m_2}^i), (\nabla u)\nu \rangle ds & -\int_0^T  \langle (Q_{m_1}^i-Q_{m_2}^i), u \nabla \nu \rangle ds =:J_1+J_2,
\end{align*}
where the stream matrices $Q_{m_\cdot}$ can and will be chosen to satisfy 
\begin{equation}
\label{poly_rate}
|Q_{m_\cdot}| \leq C(1+|x|)^{-d+2} \quad \forall\,|x| \geq 2R \gg 1,
\end{equation}
where $R$ is chosen so that $\sprt q_{m_\cdot} \subset B_R(0)$ (Appendix \ref{vanish_app}). The constant $C$ does not depend on $m_1$, $m_2$. We have
\begin{align}
|J_1|^2 & 
 \leq \int_0^T \langle |\nabla u|^2  \rangle ds \biggl(T \langle |Q_{m_1}^i-Q_{m_2}^i|^{2r'}\rangle \biggr)^{\frac{1}{r'}}\biggl(\int_0^T \langle \nu^{2r}\rangle ds \biggr)^{\frac{1}{r}}.
\end{align}
The first and the last factors are uniformly bounded in view of \eqref{prelim_bd}.
Therefore, since $|Q_{m_1}^i-Q_{m_2}^i| \rightarrow 0$, in particular, in $L^{2r'}_{\loc}$, we obtain
\begin{equation}
\label{Q_conv}
\langle |Q_{m_1}^i-Q_{m_2}^i|^{2r'}\rangle \rightarrow 0 \quad \text{ as } m_1,m_2 \rightarrow \infty.
\end{equation}
Next,
\begin{align*}
|J_2|^2 
& \leq \biggl(T \langle  |Q_{m_1}^i-Q_{m_2}^i|^4 \rangle \biggr)^{\frac{1}{2}}\biggl( \int_0^T \langle  u^4 \rangle ds \biggr)^{\frac{1}{2}}\int_0^T \langle |\nabla \nu|^2\rangle ds,
\end{align*}
where, by the same argument as above, $\langle|Q_{m_1}^i-Q_{m_2}^i|^4 \rangle  \rightarrow 0$ as $m_1,m_2 \rightarrow \infty$ (note that, taking into account the above estimate on the polynomial rate of vanishing of $Q_{m_\cdot}$ at infinity, we have $4(d-2)>d$ even if $d=3$, so the integrals are finite). The other two factors are uniformly bounded. This ends the proof of Lemma \ref{A_conv_lem}.
\end{proof}

\begin{remark}
\label{nu_rem}
It is not difficult to remove the compact support assumption on $b$ and $q$ by inserting identity $\rho\rho^{-1}=1$ in the definitions of $I$ and $J$, i.e.\,
\begin{align*}
I=\int_0^T \langle (b^i_{n_1}-b^i_{n_2})u \rho,\nu \rho^{-1}\rangle ds, \quad J=\int_0^T \langle (q^i_{m_1}-q^i_{m_2}) u\rho,\nu \rho^{-1}\rangle ds
\end{align*}
and then arguing as above, but using the global weighted $L^2$ convergence result of Lemma \ref{lem_b} and replacing the energy inequalities of Section \ref{aux_sect} (i.e.\,Proposition \ref{elem_lem_par} and Corollary \ref{elem_cor_fp}) by their weighted counterparts with the weights $\rho$ and $\rho^{-1}$ (the last weight is not discussed in Section \ref{aux_sect}, but the arguments do not change since $|\nabla \rho^{-1}|$ is majorated by $\rho^{-1}$, see Section \ref{notations_sect}). This comes at expense of imposing condition $\langle \nu^{2r}\rho^{-\alpha}\rangle<\infty$ for $1<r<\frac{1}{\sqrt{\delta}}$ and some $\alpha>0$. 
\end{remark}

\subsection*{Proof of (\textit{vii})}  Put
$$
R^n_{\mu}f(x):=\mathbb E_{\mathbb P_x(\tilde{b}_n)}\int_0^\infty e^{-\mu s} f(\omega_s)ds\quad \biggl(=(\mu+\Lambda(\tilde{b}_n))^{-1}f(x)\biggr), \quad f \in C_c^\infty,$$
$$R^Q_{\mu}f(x):=\mathbb E_{\mathbb Q_x}\int_0^\infty e^{-\mu s} f(\omega_s)ds, \quad \mu>0.
$$
It suffices for us to show that $(\mu+\Lambda(b))^{-1}f(x)=R^Q_\mu f(x)$ for all $x \in \mathbb R^d$ and all $\mu>0$ sufficiently large, where $\Lambda(b)$ is the Feller realization of $-\Delta +b \cdot \nabla$ in $C_\infty$ constructed in (\textit{i}). This will imply that $\{\mathbb Q_x\}_{x \in \mathbb R^d}=\{\mathbb P_x\}_{x \in \mathbb R^d}$.

Step 1: $$
R^n_\mu f(x) \rightarrow R_\mu^Q f(x) \text{ as }n \rightarrow \infty \quad \forall\,x \in \mathbb R^d,$$ 
as follows right away from $\mathbb Q_x=w{\mbox-}\mathcal P(\mathbf{C})\mbox{-}\lim_n \mathcal P_x(\tilde{b}_n)$.

\medskip

Step 2: $$\|R_\mu^Qf\|_2 \leqslant (\mu-\mu_0)^{-1}\|f\|_2 \quad \text{ for all $\mu>\mu_0$,}$$
 for some $\mu_0$ independent of $n$. Indeed, by the elliptic energy inequality (see e.g.\,Proposition \ref{elem_lem}), $\|R_\mu^nf\|_2 \leqslant (\mu-\mu_0)^{-1}\|f\|_2$ for all $n$.
Now 2) follows from 1) by a weak compactness argument in $L^2$.

\smallskip

By Step 2, operators $R_\mu^Q$  admits unique extensions by continuity to $L^2$, which we denote by $R_{\mu,2}^Q$.

On the other hand, operators $(\mu+\Lambda( b))^{-1}|_{C_c^\infty \cap L^2}$ are bounded on $L^2$ and, in fact, constitute the resolvent $\Lambda_2$ of the generator of a strongly continuous semigroup in $L^2$, i.e.
$$
(\mu+\Lambda_2(b))^{-1}:=\bigg[(\mu+\Lambda( b))^{-1}|_{C_c^\infty \cap L^2} \biggr]_{L^2 \rightarrow L^2}^{\rm clos}.
$$
One can also construct $\Lambda_2(b)$ directly (using e.g.\,quadratic forms), see \cite{KiS_theory}.

\medskip

Step 3: $$\|b\cdot\nabla (\mu-\Delta)^{-1}\|_{2\rightarrow 2} \leqslant \delta,$$ which follows right away from $b \in \mathbf{F}_\delta$ and $\|\nabla (-\Delta)^{-\frac{1}{2}}\|_{2 \rightarrow 2}=1$. 

\smallskip

Step 4: 
\begin{equation}
\label{mu_id}
(\mu+\Lambda_{2}(b))^{-1}f = (\mu-\Delta)^{-1}\big(1+b\cdot\nabla (\mu-\Delta)^{-1}\big)^{-1} f \text{ in } L^2.
\end{equation}
Indeed, since by our assumptions we have $\delta<1$, in view of 3) the right-hand side of the previous formula is well defined. Now, we have to appeal for a moment to a ``good'' approximation $\{b_n\} \in [b]$, i.e.\,an approximation that actually does converge to $b$ (recall that we do not require from $\{\tilde{b}_n\}$ any kind of convergence to $b$). 

The sought identity \eqref{mu_id} holds for every $b_n$. This follows by rearranging the usual Neumann series representation for $(\mu+\Lambda_{2}(b_n))^{-1}$ while taking into account the estimate of Step 3. So,
$$
(\mu+\Lambda_{2}(b_n))^{-1}f = (\mu-\Delta)^{-1}\big(1+b_n\cdot\nabla (\mu-\Delta)^{-1}\big)^{-1} f. 
$$
It remains to pass to the limit in $n$. In the left-hand side one has $
(\mu+\Lambda_{2}(b_n))^{-1}f \rightarrow 
(\mu+\Lambda_{2}(b))^{-1}f$ in $L^2$ (see \cite{KiS_sharp}, but it is not difficult to prove this directly, see e.g.\,the proof of Proposition \ref{thm_conv}; here we need a simpler version of this in $L^2$). In the right-hand side the denominator of the geometric series $b_n\cdot\nabla (\mu-\Delta)^{-1}g \rightarrow b\cdot\nabla (\mu-\Delta)^{-1}$ in $L^2$ for every $g \in L^2$. This is immediate on $g \in C_c^\infty$, so it remains to apply the estimate of Step 3. So, we can pass to the limit in the right-hand side, arriving at the identity \eqref{mu_id}.

\smallskip

Step 5: $$(\mu+\Lambda(b))^{-1} f = R_\mu^Q f \quad \text{ a.e.~on  $\mathbb R^d$.}$$
Indeed, since, by our assumptions, $\mathbb Q_x$ is a weak solution of the SDE \eqref{sde1}, we have by It\^{o}'s formula 
$$
(\mu-\Delta)^{-1} h= R_\mu^Q[\big(1+b\cdot\nabla (\mu-\Delta)^{-1}\big)h], \quad h \in C_c^\infty.
$$
Since $1+b\cdot\nabla (\mu-\Delta)^{-1} \in \mathcal B(L^2)$ by Step 3, we have, in view of Step 2,
$$
(\mu-\Delta)^{-1} g= R_{\mu,2}^Q[\big(1+b\cdot\nabla (\mu-\Delta)^{-1}\big)g], \quad g \in L^2.
$$
Take $g=\big(1+b\cdot\nabla (\mu-\Delta)^{-1}\big)^{-1}f$, $f \in C_c^\infty$, which is possible by Step 3 and $\delta<1$. Then, by Step 4,
$
(\mu+\Lambda_2(b))^{-1}f=R_{\mu,2}^Q f
$. By the consistency property $(\mu+\Lambda( b))^{-1}|_{C_c^\infty \cap L^2}=(\mu+\Lambda_2(b))^{-1}|_{C_c^\infty \cap L^2}$ and the result follows.

\smallskip

Step 6: Fix some $r > 2 \vee (d-2)$ in the interval $[2,\frac{2}{\sqrt{\delta}}[$ (here we use our hypothesis on $\delta$, which must be sufficiently small  so that $r$ can be large enough). Since $R_\mu^n f=(\mu+\Lambda(\tilde{b}_n))^{-1}f$, we obtain by assertion (\textit{xii}) that for all $\mu >\mu_0$
$$
\|\nabla R_\mu^n f\|_{\frac{rd}{d-2}} \leqslant K_2 (\mu-\mu_0)^{-\frac{1}{2}+\frac{1}{r}} \|f\|_r.
$$
By a weak compactness argument in $L^{rj}$, in view of Step 1, we have $\nabla R_\mu^Q f \in [L^{rj}]^d$, and there is a subsequence of $\{R_\mu^nf\}$ (without loss of generality, it is $\{R_\mu^nf\}$ itself) such that
$$
\nabla R_\mu^n f \overset{w}{\longrightarrow} \nabla R_\mu^Q f \quad \text{ in } [L^{rj}]^d.
$$
By Mazur's lemma, there is a sequence of convex combinations of the elements of $\{\nabla R_\mu^n f\}_{n=1}^\infty$ that converges to $\nabla R_\mu^Q f$ strongly in $[L^{rj}]^d$, i.e.
$$
\sum_\alpha c_\alpha \nabla R_\mu^{n_\alpha} f \overset{s}{\longrightarrow } \nabla R_\mu^Q f \quad \text{ in $[L^{rj}]^d$}.
$$
Now, in view of the latter,  Step 1 and the Sobolev embedding theorem, we have $\sum_\alpha c_\alpha R_\mu^{n_\alpha} f \overset{s}{\longrightarrow } R_\mu^Q f$ in $C_\infty$.
Therefore, by Step 5,  $(\mu+\Lambda(b))^{-1} f(x) = R_\mu^Q f(x)$ for all $x \in \mathbb R^d$, $f \in C_c^\infty$, as claimed.

\bigskip

\section{Proof of Theorem \ref{thm_div}} 

The proof of Theorem \ref{thm_div} follows closely the proof of Theorem \ref{thm2}, with only one calculation done differently. We stay at the level of a priori estimates, so $b$, $q$ are additionally bounded and smooth. Assuming for simplicitly that $c_\delta=c_{\delta_+}=0$ in the conditions on $b$, $({\rm div\,}b)_+$, we consider Cauchy problem
 $(\partial_t - \Delta + (b+q) \cdot \nabla)v=0$, $v|_{t=0}=v_0 \in C_c^\infty$ (without loss of generality, $v_0 \geq 0$), multiply the parabolic equation by $v^{p-1}$ and integrate by parts: 
$$
\frac{1}{p}\langle \partial_t v^p\rangle + \frac{4(p-1)}{p^2}\langle |\nabla v^{\frac{p}{2}}|^2 \rangle + \frac{2}{p}\langle b \cdot \nabla v^{\frac{p}{2}},v^{\frac{p}{2}}\rangle=0,
$$
where we have used ${\rm div\,}q=0$. Thus,
$$
\partial_t\langle  v^p\rangle + \frac{4(p-1)}{p}\langle |\nabla v^{\frac{p}{2}}|^2 \rangle =- 2\langle b \cdot \nabla v^{\frac{p}{2}},v^{\frac{p}{2}}\rangle.
$$
In turn,
$
- \langle b \cdot \nabla v^{\frac{p}{2}},v^{\frac{p}{2}}\rangle = \langle b v^{\frac{p}{2}}, \nabla v^{\frac{p}{2}}\rangle + \langle {\rm div\,}b, v^{p}\rangle, 
$
hence
$
 \langle \partial_t v^p\rangle + \frac{4(p-1)}{p}\langle |\nabla v^{\frac{p}{2}}|^2 \rangle =  \langle {\rm div\,}b, v^{p}\rangle,
$
and so
$$
 \langle \partial_t v^p\rangle + \frac{4(p-1)}{p}\langle |\nabla v^{\frac{p}{2}}|^2 \rangle \leq  \langle ({\rm div\,}b)_+, v^{p}\rangle,
$$
Now, applying ${\rm div\,}b \in \mathbf{F}_{\delta_+}$, we obtain energy inequality
\begin{equation*}
\langle \partial_t v^p\rangle + \biggl(\frac{4(p-1)}{p}-\delta_+ \biggr)\langle |\nabla v^{\frac{p}{2}}|^2 \rangle \leq 0
\end{equation*}
with $\frac{4(p-1)}{p}-\delta_+>0$ provided that $p>\frac{4}{4-\delta_+}$.

\bigskip

\section{Proof of Theorem \ref{thm1}(\textit{\lowercase{iii}})}

\label{thm_mf_proof}

 We repeat the proof of Theorem \ref{thm2}(\textit{i}):

 Proposition \ref{thm_holder} (``A priori H\"{o}lder continuity'') is replaced by \cite[Theorem 5]{Ki_multi}.
 
 Proposition \ref{thm_conv} (``Convergence'') is replaced by \cite[Theorem 3(\textit{v})]{Ki_multi}.

 Proposition \ref{sep_thm} (``Separation property'') is replaced by \cite[Propositions 5 and 6]{Ki_multi} in the proof of \cite[Theorem 5]{Ki_multi}.
 
 Proposition \ref{emb_thm} (``Embedding property'') is replaced by \cite[Theorem 5]{Ki_multi}.

\bigskip

\section{Auxiliary results used in the proofs of Propositions \ref{emb_thm}-\ref{sep_thm}}

\label{aux_sect}

\subsection{Energy inequalities}
In the next two propositions we assume that 
\begin{equation}
\label{hyp_2}
\left\{
\begin{array}{l}
b \in \mathbf{F}_\delta \text{ with } \delta<4, \\
q \in \mathbf{BMO}^{-1}
\end{array}
\right.
\end{equation}
(that is, we are in the assumptions of Theorem \ref{thm2}), and fix some $\{b_n\} \in [b]$, $\{q_m\} \in [q]$.

\begin{proposition} 
\label{elem_lem}
Assume that hypothesis \eqref{hyp_2} holds. Let $u=u_{n,m}$ denote the classical solution of the elliptic equation
\begin{equation}
\label{eq_el}
\big(\mu -\Delta + (b_n+q_m) \cdot \nabla\big)u=f, \quad \mu \geq 0, \quad f \in C_c^\infty.
\end{equation}
Fix some
$s>\frac{2}{2-\sqrt{\delta}}$, $s \geq 2.$
Then the following are true:

\begin{enumerate}
\item[($i$)] There exist positive constants $\mu_0$, $C_1$ independent of $n$, $m$ such that
\begin{equation}
\label{gen_ineq0}
(\mu-\mu_0)\langle |u|^s\rangle + C_1\langle |\nabla |u|^{\frac{s}{2}}|^2\rangle \leq \langle f,u|u|^{s-2}\rangle
\end{equation}
for all $\mu \geq \mu_0$.

\smallskip

\item[($ii$)] {\rm [Weighted variant]} Provided that constant $\sigma>0$ in weight $\rho(y)=(1+\sigma|y|^2)^{-\frac{d+\epsilon_0}{2}}$
is fixed sufficiently small, there exist positive constants $\mu_0$, $C_1$ independent of $n$, $m$  such that, for all $x \in \mathbb R^d$,
\begin{equation}
\label{gen_ineq}
(\mu-\mu_0)\langle |u|^s\rho_x\rangle + C_1\langle |\nabla |u|^{\frac{s}{2}}|^2 \rho_x\rangle \leq \langle f,u|u|^{s-2}\rho_x\rangle, \quad \rho_x(y)=\rho(x-y),
\end{equation}
for all $\mu \geq \mu_0$.  

\end{enumerate}
\end{proposition}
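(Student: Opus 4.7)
The plan is to prove both inequalities by multiplying \eqref{eq_el} by the test function $u|u|^{s-2}$ (for (i)) or $u|u|^{s-2}\rho_x$ (for (ii)), integrating by parts, and then controlling the $b_n$ and $q_m$ contributions using, respectively, the form-boundedness $b_n \in \mathbf{F}_\delta$ and the ${\rm BMO}$ structure of $q_m = \nabla Q_m$. The central algebraic identity I would exploit is
$$
|u|^{s-2}u\,\nabla u \;=\; \tfrac{1}{s}\nabla |u|^s \;=\; \tfrac{2}{s}|u|^{s/2}\nabla |u|^{s/2},
$$
which allows us to work cleanly with $|u|^{s/2}$ as the "primitive" variable.

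For part (i), after integrating $-\Delta u$ by parts I expect to obtain $\tfrac{4(s-1)}{s^2}\langle|\nabla|u|^{s/2}|^2\rangle$. The $q_m$ term, rewritten as $\tfrac{1}{s}\langle q_m, \nabla|u|^s\rangle$, vanishes identically after one more integration by parts, because $q_m$ is divergence-free. For the $b_n$ term $\tfrac{2}{s}\langle b_n|u|^{s/2},\nabla|u|^{s/2}\rangle$, I would apply Cauchy-Schwarz with parameter $\alpha = \tfrac{1}{2\sqrt{\delta}}$ together with $\|b_n\varphi\|_2^2 \le \delta\|\nabla\varphi\|_2^2 + c_\delta\|\varphi\|_2^2$ at $\varphi = |u|^{s/2}$, yielding
$$
\bigl|\langle b_n|u|^{s/2},\nabla|u|^{s/2}\rangle\bigr| \;\le\; \sqrt{\delta}\,\langle|\nabla|u|^{s/2}|^2\rangle + \tfrac{c_\delta}{2\sqrt{\delta}}\langle|u|^s\rangle.
$$
Collecting all terms, (i) follows with $C_1 = \tfrac{4(s-1)}{s^2} - \tfrac{2\sqrt{\delta}}{s}$ (positive precisely when $s > \tfrac{2}{2-\sqrt{\delta}}$) and $\mu_0 = \tfrac{c_\delta}{s\sqrt{\delta}}$.

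For part (ii), the same template applies but every integration by parts produces an extra term carrying $\nabla\rho_x$. First, using the uniform pointwise estimate $|\nabla\rho_x| \le \tfrac{d+\epsilon_0}{2}\sqrt{\sigma}\,\rho_x$ from \eqref{rho_est}, the parasitic Laplacian term $\tfrac{2}{s}\langle |u|^{s/2}\nabla|u|^{s/2},\nabla\rho_x\rangle$ is absorbed by weighted AM-GM into an $O(\sqrt{\sigma})$ multiple of $\langle|\nabla|u|^{s/2}|^2\rho_x\rangle$ plus $O(\sqrt{\sigma})\langle|u|^s\rho_x\rangle$. For the $b_n$ term I would apply form-boundedness to $\varphi = |u|^{s/2}\sqrt{\rho_x}$; expanding $|\nabla\varphi|^2$ and using $|\nabla\sqrt{\rho_x}| \le C\sqrt{\sigma}\sqrt{\rho_x}$, I obtain a weighted analogue
$$
\langle |b_n|^2|u|^s\rho_x\rangle \;\le\; \delta(1+O(\sqrt{\sigma}))\,\langle|\nabla|u|^{s/2}|^2\rho_x\rangle + (c_\delta + O(\sigma))\,\langle|u|^s\rho_x\rangle,
$$
so that, for $\sigma$ small, the critical threshold $s > \tfrac{2}{2-\sqrt{\delta}}$ is unaffected.

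The main obstacle will be the $q_m$ term in the weighted case, where the cancellation from ${\rm div}\,q_m = 0$ is no longer perfect. Writing $q_m^i = \sum_j \nabla_j Q_m^{ij}$ and integrating by parts in the direction $j$, the piece $\sum_{ij}\langle Q_m^{ij}\rho_x, \nabla_i\nabla_j|u|^s\rangle$ vanishes by antisymmetry of $Q_m$, leaving the residue
$$
-\tfrac{2}{s}\sum_{ij}\Bigl\langle Q_m^{ij}\tfrac{\nabla_j\rho_x}{\rho_x},\, |u|^{s/2}\rho_x\,\nabla_i|u|^{s/2}\Bigr\rangle.
$$
Here Lemma \ref{NY_lem} applied to $\xi = \tfrac{\nabla_j\rho_x}{\rho_x} \propto \sigma(y-x)_j/(1+\sigma|y-x|^2)$ shows that $Q_m^{ij}\tfrac{\nabla_j\rho_x}{\rho_x}$ is in ${\rm BMO}$ with seminorm $O(\sigma)\|Q_m^{ij}\|'_{\rm BMO}$. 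Setting $g = |u|^{s/2}\sqrt{\rho_x}$, the weighted integrand reduces to $g\nabla_i g$ plus a lower-order piece involving $|u|^s\sqrt{\rho_x}\nabla\sqrt{\rho_x}$. I would then invoke the Qian-Xi estimate (Proposition \ref{cc_lem_qx})
$$
\bigl|\langle h, g\nabla_i g\rangle\bigr| \;\le\; C_d\|h\|_{\rm BMO}\|\nabla g\|_2\|g\|_2
$$
with $h = Q_m^{ij}\tfrac{\nabla_j\rho_x}{\rho_x}$, producing a bound of order $\sigma\langle|\nabla|u|^{s/2}|^2\rho_x\rangle + \sigma\langle|u|^s\rho_x\rangle$ after AM-GM and after re-expressing $\|\nabla g\|_2^2$ in terms of $\langle|\nabla|u|^{s/2}|^2\rho_x\rangle$ modulo $O(\sqrt{\sigma})$ corrections. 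The residual $|u|^s\sqrt{\rho_x}\nabla\sqrt{\rho_x}$ piece is controlled elementarily via $|\nabla\sqrt{\rho_x}| \le C\sqrt{\sigma}\sqrt{\rho_x}$ together with the local integrability of ${\rm BMO}$ functions from Lemma \ref{est_44_lem}. Choosing $\sigma$ small once and for all, uniformly in $n,m,x$ (translation invariance of $\|Q_m\|_{\rm BMO}$ is crucial here), the $q_m$ contribution becomes a small perturbation of the dispersion term, and collecting estimates yields (ii) with constants $\mu_0$, $C_1$ independent of $n$, $m$, $x$.
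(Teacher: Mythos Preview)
Your approach is essentially the same as the paper's: multiply by $u|u|^{s-2}\rho_x$, integrate by parts, handle $b_n$ via form-boundedness applied to $\varphi=|u|^{s/2}\sqrt{\rho_x}$, and handle $q_m$ via the Qian--Xi estimate (Proposition~\ref{cc_lem_qx}) combined with the ${\rm BMO}$ multiplier Lemma~\ref{NY_lem}. Part~(i) is exactly right.

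For part~(ii), however, two points in your treatment of the $q_m$ term deserve correction. First, Lemma~\ref{NY_lem} as stated only gives $\|\xi Q_m^{ij}\|_{\rm BMO}\le C_{d,\sigma}\|Q_m^{ij}\|'_{\rm BMO}$ with a finite constant $C_{d,\sigma}$; it does \emph{not} assert that this constant is $O(\sigma)$ or even $O(\sqrt{\sigma})$. The paper therefore does not rely on smallness of $K:=\sup_x\|\tfrac{\nabla\rho_x}{\rho_x}\cdot Q_m\|_{\rm BMO}$. Instead it exploits the \emph{product structure} of the Qian--Xi bound: from
\[
\bigl|\langle h, g\nabla_i g\rangle\bigr|\le C_d\|h\|_{\rm BMO}\,\|\nabla g\|_2\|g\|_2, \qquad g=|u|^{s/2}\sqrt{\rho_x},
\]
one applies an \emph{asymmetric} Young inequality $\|\nabla g\|_2\|g\|_2\le \varepsilon\|\nabla g\|_2^2+\tfrac{1}{4\varepsilon}\|g\|_2^2$ with $\varepsilon$ freely chosen small. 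The resulting coefficient on $\langle|\nabla|u|^{s/2}|^2\rho_x\rangle$ is then $O(\varepsilon)$ (absorbed into $C_1$), while the $O(\varepsilon^{-1})$ cost on $\langle|u|^s\rho_x\rangle$ is pushed into $\mu_0$. Your mechanism---relying on $\|h\|_{\rm BMO}=O(\sigma)$---would fail if that scaling is not available, whereas the paper's mechanism works regardless of the size of $K$.

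Second, your ``residual'' term $\sum_{ij}\bigl\langle Q_m^{ij}\tfrac{\nabla_j\rho_x}{\rho_x},\,|u|^s\sqrt{\rho_x}\,\nabla_i\sqrt{\rho_x}\bigr\rangle$ in fact \emph{vanishes identically}: since $\sqrt{\rho_x}\,\nabla_i\sqrt{\rho_x}=\tfrac{1}{2}\nabla_i\rho_x$, the integrand is $\tfrac{1}{2}Q_m^{ij}\,|u|^s\,\tfrac{\nabla_i\rho_x\nabla_j\rho_x}{\rho_x}$, which is symmetric in $i,j$ and hence annihilated by the antisymmetric $Q_m^{ij}$. So the detour through Lemma~\ref{est_44_lem} is unnecessary (and, as you wrote it, would have introduced an unwanted dependence on $\|u\|_\infty$). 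The paper encodes this same cancellation by rewriting the residue directly as $\langle Q_m\cdot\nabla g,\,g\,\tfrac{\nabla\rho_x}{\rho_x}\rangle$ without remainder.
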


\begin{remark}
We have added condition $s \geq 2$ to save ourselves some efforts since to cover the values of the form-bound $\delta$ close to $4$ we need to select $s$ large anyway. But, generally speaking, $s \in ]1,2[$ does not pose a substantial difficulty, see e.g.\,\cite[proof of Theorem 4.2]{KiS_theory}.
\end{remark}

In the proof of assertion (\textit{ii}) we will need to control a term resulting from the interaction between the weight $\rho$ and the stream matrix $Q$ of drift $q$. This will be done using Proposition \ref{cc_lem_qx} (a compensated compactness type estimate) and Lemma \ref{NY_lem} on ${\rm BMO}(\mathbb R^d)$ multipliers.

\begin{proof}[Proof of Proposition \ref{elem_lem}] We will only prove (\textit{ii}). We write for brevity $b=b_n$, $q=q_m$. In the proof we will need estimates
\begin{equation}
\label{rho_est0}
|\nabla \rho_x(y)| \leq \frac{d+\epsilon_0}{2} \frac{2\sigma |x-y|}{1+\sigma|x-y|^2} \rho_x(y) \leq \frac{d+\epsilon_0}{2} \sqrt{\sigma} \rho_x(y).
\end{equation}
We multiply equation \eqref{eq_el} by $u|u|^{s-2}\rho_x$ and integrate by parts:
\begin{align*}
\mu \langle |u|^s \rangle + \frac{4(s-1)}{s^2}\langle |\nabla |u|^{\frac{s}{2}}|^2 \rho_x\rangle & + \frac{2}{s}\langle \nabla |u|^{\frac{s}{2}},|u|^{\frac{s}{2}}\nabla \rho_x\rangle \\
&  + \frac{2}{s}\langle b \cdot \nabla |u|^{\frac{2}{s}},|u|^{\frac{2}{s}},\rho_x\rangle + \frac{2}{s}\langle q \cdot \nabla |u|^{\frac{s}{2}},|u|^{\frac{s}{s}}\rho_x \rangle=\langle f,u|u|^{s-2}\rho_x\rangle,
\end{align*}
where, taking into account anti-symmetry of the stream matrix $Q$ of $q$,
\begin{align*}
\frac{2}{s}|\langle q \cdot \nabla |u|^{\frac{s}{2}},|u|^{\frac{s}{s}}\rho_x \rangle & = -\frac{2}{s}\langle Q \cdot \nabla |u|^{\frac{s}{2}},|u|^{\frac{s}{2}}\nabla \rho_x  \rangle\\
& = \frac{2}{s}\langle Q \cdot \nabla (|u|^{\frac{s}{2}}\sqrt{\rho_x}),|u|^{\frac{s}{2}}\sqrt{\rho_x}\frac{\nabla \rho_x}{\rho_x}  \rangle \\
& (\text{we apply Proposition \ref{cc_lem_qx})}) \\
& \leq \frac{2C}{s}\|\frac{\nabla \rho_x}{\rho_x} \cdot Q\|_{\rm BMO}\|\nabla (|u|^{\frac{s}{2}}\sqrt{\rho_x})\|_2\|u^{\frac{s}{2}}\sqrt{\rho_x}\|_2,
\end{align*}
Applying Lemma \ref{NY_lem} to $\frac{\nabla_i \rho(y)}{\rho(y)}=\frac{2\sigma y_i}{1+\sigma|y|^2}$, we obtain
$$
K:=\sup_{x \in \mathbb R^d}\|\frac{\nabla \rho_x}{\rho_x} \cdot Q\|_{\rm BMO}<\infty,
$$
so we can conclude the previous estimate as
\begin{align}
\frac{2}{s}|\langle q \cdot \nabla |u|^{\frac{s}{2}},|u|^{\frac{s}{s}}\rho_x \rangle & \leq \frac{2CK}{s}\bigl(\langle|\nabla |u|^{\frac{s}{2}} \sqrt{\rho_x}|^2\rangle + \langle |u|^s \rho_x\rangle\bigr)^{\frac{1}{2}}\langle |u|^s \rho_x\rangle^{\frac{1}{2}},
\label{as}
\end{align}
and then use Cauchy-Schwarz and the second inequality in \eqref{rho_est0} to estimate $\langle|\nabla |u|^{\frac{s}{2}} \sqrt{\rho_x}|^2\rangle \leq (1+\varepsilon_1) \langle |\nabla |u|^{\frac{s}{2}}|^2\rho_x\rangle + C_\sigma(1+\varepsilon_1^{-1})\langle |u|^s\rho_x\rangle$. Here we can fix any positive $\varepsilon_1$ because, going back to \eqref{as}, in the end we will apply  Cauchy-Schwart inequality which will allow us to make the constant in front of the term $\langle |\nabla |u|^{\frac{s}{2}}|^2\rho_x\rangle$ in the resulting upper bound on $\frac{2}{s}|\langle q \cdot \nabla |u|^{\frac{s}{2}},|u|^{\frac{s}{s}}\rho_x \rangle$ as small as we want.

Next,
\begin{align}
\frac{2}{s}|\langle b \cdot \nabla |u|^{\frac{s}{2}},|u|^{\frac{2}{s}}\rho_x\rangle| & \leq \frac{2}{s}\biggl(\beta \langle |b|^2,|u|^s \rho_x \rangle + \frac{1}{4\beta}\langle |\nabla |u|^{\frac{s}{2}}|^2\rho_x\rangle \bigg) \label{b_est_quad}\\
& (\text{use $b \in \mathbf{F}_\delta$}) \notag \\
& \leq \frac{2}{s}\biggl(\beta(\delta\|\nabla (|u|^{\frac{s}{2}}\sqrt{\rho_x})\|_2^2 + c_\delta\langle |u|^s\rho_x\rangle) + \frac{1}{4\beta}\langle |\nabla |u|^{\frac{s}{2}}|^2\rho_x\rangle \bigg). \notag
\end{align}
Take $\beta=\frac{1}{2\sqrt{\delta}}$ and then apply inequality in the end of the previous paragraph, but with $\varepsilon_1$ chosen small.

Applying the last inequality in \eqref{rho_est0}, we can replace in the previous inequalities all occurrences of $|\nabla \rho_x|$ by $C\sqrt{\sigma}\rho_x$.
That way, we arrive at the inequality \eqref{gen_ineq} with constant
$$
C_1=\frac{4(s-1)}{s^2}-\frac{2}{s}\sqrt{\delta} - \text{(constant terms proportional to $\sqrt{\sigma}$ and $\varepsilon_1$)},
$$ 
where $\mu_0$ is given in terms of $c_\delta$, $\frac{CK}{s}$ and $\varepsilon_1^{-1}$.
Since $\frac{4(s-1)}{s^2}-\frac{2}{s}\sqrt{\delta}>0$ ($\Leftrightarrow s>\frac{2}{2-\sqrt{\delta}}$) by our choice of $s$, we can fix $\sigma$ and $\varepsilon_1$ sufficiently small so that $C_1>0$. This ends the proof of Proposition \ref{elem_lem}.
\end{proof}

\begin{corollary}
\label{elem_cor}
In the assumptions and notations of Proposition \ref{elem_lem}, we also have

\begin{enumerate}
\item[($i$)] There exist positive constants $\mu_0$, $C_1$, $C_2$ independent of $n$, $m$ such that
\begin{equation}
\label{gen_ineq0_}
(\mu-\mu_0)\langle |u|^s\rangle + C_1\langle |\nabla |u|^{\frac{s}{2}}|^2\rangle \leq C_2\langle |f|^s\rangle
\end{equation}
for all $\mu \geq \mu_0$.

\smallskip

\item[($ii$)] {\rm [Weighted variant]} Provided that constant $\sigma$ in the definition of weight $\rho$
is fixed sufficiently small, there exist positive constants $\mu_0$, $C_1$, $C_2$  independent of $n$, $m$  such that, for all $x \in \mathbb R^d$,
\begin{equation}
\label{gen_ineq_}
(\mu-\mu_0)\langle |u|^s\rho_x\rangle + C_1\langle |\nabla |u|^{\frac{s}{2}}|^2 \rho_x\rangle \leq C_2\langle |f|^s \rho_x\rangle
\end{equation}
for all $\mu \geq \mu_0$.  
\end{enumerate}

\end{corollary}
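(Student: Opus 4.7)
\medskip

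\noindent\textbf{Proof plan for Corollary \ref{elem_cor}.} The plan is to deduce both assertions from the corresponding assertions of Proposition \ref{elem_lem} by a one-line application of Young's (or H\"older's) inequality to the right-hand side $\langle f, u|u|^{s-2}\rho_x\rangle$. The unweighted case (\textit{i}) is obtained in the same way from \eqref{gen_ineq0} (take $\rho_x \equiv 1$), so I will focus on (\textit{ii}).

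First, I would rewrite the right-hand side of \eqref{gen_ineq} as $\langle |f|\,|u|^{s-1}\rho_x\rangle$. Splitting the weight as $\rho_x = \rho_x^{1/s}\rho_x^{1/s'}$ and applying H\"older's inequality with exponents $s$ and $s' = s/(s-1)$ gives
\begin{equation*}
\langle f, u|u|^{s-2}\rho_x\rangle \;\leq\; \langle |f|^s\rho_x\rangle^{1/s}\,\langle |u|^s\rho_x\rangle^{1/s'}.
\end{equation*}
Next I would apply Young's inequality $ab \leq \varepsilon^{s} a^s/s + b^{s'}/(s'\varepsilon^{s'})$ with some $\varepsilon > 0$ to be chosen, obtaining
\begin{equation*}
\langle f, u|u|^{s-2}\rho_x\rangle \;\leq\; \frac{\varepsilon^s}{s}\langle |f|^s\rho_x\rangle + \frac{1}{s'\varepsilon^{s'}}\langle |u|^s\rho_x\rangle.
\end{equation*}

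The final step is to absorb the $\langle |u|^s\rho_x\rangle$ term on the right into the first term on the left-hand side of \eqref{gen_ineq}. Concretely, after substituting the bound above into \eqref{gen_ineq}, one arrives at
\begin{equation*}
\Bigl(\mu - \mu_0 - \tfrac{1}{s'\varepsilon^{s'}}\Bigr)\langle |u|^s\rho_x\rangle + C_1\langle |\nabla |u|^{s/2}|^2\rho_x\rangle \;\leq\; \tfrac{\varepsilon^s}{s}\langle |f|^s\rho_x\rangle.
\end{equation*}
Fixing any $\varepsilon > 0$ (for instance $\varepsilon = 1$) and replacing $\mu_0$ by $\mu_0' := \mu_0 + 1/(s'\varepsilon^{s'})$ (which is still independent of $n$, $m$, and $x$) together with $C_2 := \varepsilon^s/s$, yields \eqref{gen_ineq_}. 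There is no real obstacle here; the only thing to check is that the new constants $\mu_0'$, $C_1$, $C_2$ inherit the $n$, $m$ and $x$ independence from Proposition \ref{elem_lem}, which is immediate since the adjustments depend only on $s$ and $\varepsilon$.
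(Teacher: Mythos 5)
Your proposal is correct and matches the paper's proof, which likewise obtains the corollary by applying Young's inequality to the term $\langle f,u|u|^{s-2}\rho_x\rangle$ in \eqref{gen_ineq} and absorbing the resulting $\langle |u|^s\rho_x\rangle$ term (the paper places the small factor $\varepsilon$ on the $|u|^s$ term rather than enlarging $\mu_0$, but this is an immaterial difference). No gaps.
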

\begin{proof}
In the above proof of Proposition \ref{elem_lem} we can take one step further and apply Young's inequality in order to estimate $$\langle f,u|u|^{s-2}\rho_x\rangle \leq \frac{\varepsilon^s}{s}\langle |u|^s \rho_x\rangle + \frac{1}{\varepsilon^{s'} s'}\langle |f|^s \rho_x\rangle.$$
\end{proof}

The previous energy inequalities have their parabolic counterparts:

\begin{proposition} 
\label{elem_lem_par}
Assume that  hypothesis \eqref{hyp_2} is satisfied.
 Let $v=v_{n,m}$ be the classical solution of Cauchy problem 
\begin{equation}
\label{eq_pl}
\big(\mu + \partial_t -\Delta + (b_n+q_m) \cdot \nabla\big)v=0, \quad v(0)=f \in C_c^\infty, \quad \mu \geq 0.
\end{equation}
Fix some
$s>\frac{2}{2-\sqrt{\delta}}$, $s \geq 2.$
Then there exists $\mu_0 \geq 0$ independent of $n$, $m$ such that for all $\mu \geq \mu_0$ the following are true:
\begin{enumerate}
\item[($i$)]
\begin{equation}
\label{gen_ineq0_parab}
(\mu-\mu_0)\int_0^t \langle |v|^s\rangle  + \frac{1}{s}\sup_{r \in [0,t]}\langle |v(r)|^{s}\rangle + C_1\int_0^t \langle |\nabla |v|^{\frac{s}{2}}|^2\rangle \leq \frac{2}{s}\langle |f|^s\rangle 
\end{equation}
for constant $C_1>0$ independent of $n$, $m$.

\smallskip

\item[($ii$)] {\rm [Weighted variant]} Provided that $\sigma$ is the definition of weight $\rho$
is chosen sufficiently small, we have
\begin{equation}
\label{gen_ineq_parab}
(\mu-\mu_0)\int_0^t \langle |v|^s\rho_x\rangle  + \frac{1}{s}\sup_{r \in [0,t]}\langle |v(r)|^{s}\rho_x\rangle + C_1\int_0^t \langle |\nabla |v|^{\frac{s}{2}}|^2 \rho_x\rangle \leq \frac{2}{s}\langle |f|^s\rho_x\rangle
\end{equation}
for constant $C_1>0$ independent of $n$, $m$ and $x \in \mathbb R^d$.
\end{enumerate}

\end{proposition}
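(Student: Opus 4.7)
I would mirror the proof of Proposition \ref{elem_lem} step by step, now keeping track of the extra time-derivative term. Write $b=b_n$, $q=q_m$, $v=v_{n,m}$. Multiply equation \eqref{eq_pl} by $v|v|^{s-2}\rho_x$ (take $\rho_x\equiv 1$ for assertion (\textit{i})), and integrate over $[0,r]\times \mathbb R^d$ for an arbitrary $r\in[0,t]$. The time derivative produces the clean term
$$
\int_0^r \langle \partial_t v, v|v|^{s-2}\rho_x\rangle \, ds = \tfrac{1}{s}\langle |v(r)|^s\rho_x\rangle - \tfrac{1}{s}\langle |f|^s\rho_x\rangle.
$$
The Laplacian yields $\frac{4(s-1)}{s^2}\int_0^r\langle |\nabla|v|^{s/2}|^2 \rho_x\rangle ds$ plus the cross term $\frac{2}{s}\int_0^r\langle \nabla|v|^{s/2}, |v|^{s/2}\nabla\rho_x\rangle ds$; the $\mu v$ contribution gives $\mu\int_0^r \langle |v|^s\rho_x\rangle ds$.

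\textbf{Drift terms.} The form-bounded piece $\frac{2}{s}\langle b\cdot\nabla |v|^{s/2},|v|^{s/2}\rho_x\rangle$ is split by Cauchy--Schwarz with $\beta = \frac{1}{2\sqrt\delta}$ and then estimated by $b\in\mathbf F_\delta$ applied to the test function $|v|^{s/2}\sqrt{\rho_x}$, exactly as in \eqref{b_est_quad}. The distributional term $\frac{2}{s}\langle q\cdot\nabla|v|^{s/2},|v|^{s/2}\rho_x\rangle$ is treated via $q^i=\nabla_j Q^{ij}$ with $Q$ antisymmetric: an integration by parts transfers the gradient onto $\rho_x$, and then Proposition \ref{cc_lem_qx} combined with Lemma \ref{NY_lem} applied to $\xi=\frac{\nabla_i\rho_x}{\rho_x}=\frac{2\sigma (x-y)_i}{1+\sigma|x-y|^2}$ gives
$$
\tfrac{2}{s}|\langle q\cdot\nabla|v|^{s/2},|v|^{s/2}\rho_x\rangle| \le \tfrac{2CK}{s}\bigl(\langle |\nabla|v|^{s/2}\sqrt{\rho_x}|^2\rangle + \langle |v|^s\rho_x\rangle\bigr)^{1/2}\langle |v|^s\rho_x\rangle^{1/2},
$$
with $K:=\sup_x\|\tfrac{\nabla\rho_x}{\rho_x}\cdot Q\|_{\mathrm{BMO}}<\infty$, reproducing \eqref{as} verbatim. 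Then $|\nabla\rho_x|\le C\sqrt\sigma\,\rho_x$ is used to replace every occurrence of $\nabla\rho_x$ by a small multiple of $\rho_x$.

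\textbf{Assembly.} After Young's inequality to absorb the $\langle|\nabla|v|^{s/2}|^2\rho_x\rangle$ piece appearing from the $q$-estimate with a small coefficient $\varepsilon_1$, we obtain, for every $r\in[0,t]$,
$$
(\mu-\mu_0)\int_0^r\langle |v|^s\rho_x\rangle ds + \tfrac{1}{s}\langle |v(r)|^s\rho_x\rangle + C_1\int_0^r\langle |\nabla|v|^{s/2}|^2 \rho_x\rangle ds \le \tfrac{1}{s}\langle |f|^s\rho_x\rangle,
$$
with
$$
C_1 = \tfrac{4(s-1)}{s^2} - \tfrac{2}{s}\sqrt\delta - O(\sqrt\sigma)-O(\varepsilon_1),
$$
and $\mu_0$ depending on $c_\delta,K,\varepsilon_1^{-1}$ but not on $n,m,x$. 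Since $s>\frac{2}{2-\sqrt\delta}$ ensures $\frac{4(s-1)}{s^2}-\frac{2}{s}\sqrt\delta>0$, fixing $\sigma$ and $\varepsilon_1$ small forces $C_1>0$. Taking $r=t$ in the integral terms and then the supremum over $r\in[0,t]$ of the middle term delivers \eqref{gen_ineq_parab}. Assertion (\textit{i}) follows from the same computation with $\rho_x\equiv 1$, in which case both cross-terms involving $\nabla\rho_x$ (and the Nakai--Yabuta step) disappear.

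\textbf{Main obstacle.} Nothing genuinely new arises in the parabolic version: the delicate point is the treatment of the weighted $q$-term, which is identical to the elliptic case and is already resolved by Proposition \ref{cc_lem_qx} and Lemma \ref{NY_lem}. The time-derivative contributes only a harmless positive term $\frac{1}{s}\langle|v(r)|^s\rho_x\rangle$ that can be moved to the left and handled by sup in $r$.
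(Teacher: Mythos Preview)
Your proposal is correct and follows the paper's own proof essentially line by line: multiply by $v|v|^{s-2}\rho_x$, integrate over $[0,r]\times\mathbb R^d$, handle the spatial terms exactly as in Proposition \ref{elem_lem}, and evaluate the time-derivative term as $\tfrac{1}{s}(\langle |v(r)|^s\rho_x\rangle - \langle |f|^s\rho_x\rangle)$. The only point worth making explicit is the last step: you cannot literally set $r=t$ in the integrals and take the supremum in $r$ of the middle term in the \emph{same} inequality; rather, since all left-hand terms are nonnegative for $\mu\ge\mu_0$, you split the intermediate estimate into two (drop the pointwise term, then drop the integrals), take the supremum in each, and add---which is why the right-hand side doubles from $\tfrac{1}{s}\langle|f|^s\rho_x\rangle$ to $\tfrac{2}{s}\langle|f|^s\rho_x\rangle$.
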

\begin{proof}[Proof of Proposition \ref{elem_lem_par}]
Let us prove (\textit{ii}). We multiply the parabolic equation in \eqref{eq_pl} by $v|v|^{s-2}\rho_x$ and integrate over $[0,r] \times \mathbb R^d$. All the terms in the resulting integral identity, except the one containing $\partial_t v$, are dealt with as in the proof of the previous proposition. In turn, the term containing $\partial_t v$ is evaluated as follows:
$$
\int_0^r \langle \partial_t v, v|v|^{s-2} \rho_x\rangle = \frac{1}{s}\int_0^r \langle \partial_t |v|^s \rho_x \rangle = \frac{1}{s}\bigl(\langle |v(r)|^s\rho_x\rangle - \frac{1}{s}\langle |f|^s\rho_x\rangle\bigr).
$$
This gives us
\begin{equation}
\label{id_par}
(\mu-\mu_0)\int_0^r \langle |v|^s\rho_x\rangle + \frac{1}{s}\langle |v(r)|^{s}\rho_x\rangle + C_1\int_0^r \langle |\nabla |v|^{\frac{s}{2}}|^2 \rho_x\rangle \leq \frac{1}{s}\langle |f|^s\rho_x\rangle.
\end{equation}
for appropriate $\mu_0 \geq 0$ and $C>0$, both independent of $n$, $m$.
Let $\mu \geq \mu_0$. We have, in particular, 
$$
\frac{1}{s}\langle |v(r)|^{s}\rho_x\rangle \leq \frac{1}{s}\langle |f|^s\rho_x\rangle, \quad (\mu-\mu_0)\int_0^r \langle |v|^s\rho_x\rangle  + C_1\int_0^r \langle |\nabla |v|^{\frac{s}{2}}|^2 \rho_x\rangle \leq \frac{1}{s} \langle |f|^s\rho_x \rangle.
$$
We can pass to the supremum in $r$ in both inequalities since their right-hand side does not depend on $r$. Adding up the resulting inequalities, we arrive at \eqref{gen_ineq_parab}.
\end{proof}

In the proof of assertion (\textit{v}) of Theorem \ref{thm2} we use energy inequality for the Fokker-Planck equation:

\begin{corollary}
\label{elem_cor_fp}
Assume that hypothesis \eqref{hyp_2} is satisfied with $\delta<1$. Let $\nu=\nu_{n,m}$ denote the classical solution to Cauchy problem
\begin{equation}
\label{eq_el_}
\mu \nu + \partial_t \nu -\Delta \nu + {\rm div}\big[(b_n+q_m)\nu\big]=0, \quad \nu(0)=\nu_0 \in C_c^\infty, \quad \mu \geq 0.
\end{equation}
Fix some
$2 \leq s<\frac{2}{\sqrt{\delta}}$.
Then there exist positive constants $\mu_0$, $C_1$ independent of $n$, $m$ such that
\begin{equation*}
(\mu-\mu_0)\int_0^t \langle |\nu|^s\rangle  + \frac{1}{s}\sup_{r \in [0,t]}\langle |\nu(r)|^{s}\rangle + C_1\int_0^t \langle |\nabla |\nu|^{\frac{s}{2}}|^2\rangle \leq \frac{2}{s}\langle |\nu_0|^s\rangle 
\end{equation*}
for all $\mu \geq \mu_0$.
\end{corollary}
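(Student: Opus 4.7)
The plan is to mimic the proof of Proposition \ref{elem_lem_par}, but replacing the backward Kolmogorov operator with its formal adjoint $\partial_t \nu - \Delta \nu + {\rm div}((b_n + q_m)\nu) = -\mu\nu$. I write $\nu = \nu_{n,m}$ and multiply the equation by $\nu |\nu|^{s-2}$ (or, equivalently, by $\text{sgn}(\nu) |\nu|^{s-1}$), integrating over $\mathbb R^d$. The time derivative gives $\frac{1}{s}\partial_t \langle |\nu|^s\rangle$; the Laplacian produces $\frac{4(s-1)}{s^2}\langle |\nabla |\nu|^{s/2}|^2\rangle$; the $\mu\nu$ term gives $\mu \langle |\nu|^s\rangle$. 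Since $b_n, q_m$ are smooth, I can integrate by parts in the drift term:
\begin{equation*}
\int (\nu |\nu|^{s-2})\, {\rm div}((b_n+q_m)\nu)\, dx = -\frac{s-1}{s}\int (b_n + q_m)\cdot \nabla |\nu|^s \, dx.
\end{equation*}

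The crucial simplification, compared with the backward setting of Proposition \ref{elem_lem_par}, is that the distributional component contributes nothing: each $q_m = \nabla Q_m$ with anti-symmetric smooth $Q_m$ satisfies ${\rm div}\,q_m = \sum_{i,j}\partial_i\partial_j Q_m^{ij} = 0$ pointwise, so one more integration by parts yields $\int q_m \cdot \nabla |\nu|^s \, dx = -\int {\rm div}(q_m)|\nu|^s dx = 0$. Hence the ${\rm BMO}$-multiplier and Coifman--Lions--Meyer--Semmes machinery used in Proposition \ref{elem_lem} is not needed here.

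The $b_n$ term I rewrite as $-\frac{2(s-1)}{s}\langle b_n \cdot \nabla |\nu|^{s/2}, |\nu|^{s/2}\rangle$, estimate by the quadratic inequality with parameter $\alpha = \frac{1}{2\sqrt{\delta}}$, and apply $b_n \in \mathbf{F}_\delta$ to obtain
\begin{equation*}
\Bigl|\tfrac{2(s-1)}{s}\langle b_n \cdot \nabla |\nu|^{s/2}, |\nu|^{s/2}\rangle\Bigr| \leq \tfrac{2(s-1)\sqrt{\delta}}{s}\langle |\nabla |\nu|^{s/2}|^2\rangle + \tfrac{(s-1)c_\delta}{s\sqrt{\delta}}\langle |\nu|^s\rangle.
\end{equation*}
Combining everything and setting $\mu_0 = \frac{(s-1)c_\delta}{s\sqrt{\delta}}$ gives the pointwise-in-time bound
\begin{equation*}
\tfrac{1}{s}\partial_t \langle |\nu|^s\rangle + (\mu-\mu_0)\langle |\nu|^s\rangle + \tfrac{2(s-1)}{s}\Bigl(\tfrac{2}{s}-\sqrt{\delta}\Bigr)\langle |\nabla |\nu|^{s/2}|^2\rangle \leq 0,
\end{equation*}
where the coefficient $C_1 := \frac{2(s-1)}{s}(\frac{2}{s} - \sqrt\delta)$ is strictly positive precisely because of the hypothesis $s < \frac{2}{\sqrt{\delta}}$ (note that $\delta<1$ allows $s\geq 2$ inside this range, which matches the dual relation with the exponent $p>\frac{2}{2-\sqrt{\delta}}$ from Proposition \ref{elem_lem_par}). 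Integrating in time from $0$ to any $r \in [0,t]$ yields
\begin{equation*}
(\mu-\mu_0)\int_0^r \langle |\nu|^s\rangle + \tfrac{1}{s}\langle |\nu(r)|^s\rangle + C_1 \int_0^r \langle |\nabla |\nu|^{s/2}|^2\rangle \leq \tfrac{1}{s}\langle |\nu_0|^s\rangle,
\end{equation*}
and taking $\sup_{r\in[0,t]}$ on both sides (the right-hand side is $r$-independent) and adding the resulting estimate for the running supremum of $\langle |\nu(r)|^s\rangle$ to the corresponding estimate at $r=t$ produces the factor $\frac{2}{s}$ on the right-hand side, which is the claim.

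There is no serious obstacle here; the only subtlety is that I work with $|\nu|^s$ rather than $\nu^s$ since the classical solution to the Fokker--Planck equation is not \emph{a priori} signed, but this is handled by the standard $v \mapsto v|v|^{s-2}$ test function already used in Proposition \ref{elem_lem_par}. The differentiation of $|\nu|^s$ is justified because $\nu_{n,m}$ is a classical solution with bounded smooth coefficients and $\nu_0 \in C_c^\infty$, so all the integrations by parts above are rigorous.
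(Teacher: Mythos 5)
Your proof is correct and follows essentially the same route as the paper: multiply by $\nu|\nu|^{s-2}$, use the identity $\langle {\rm div}(b_n\nu),\nu|\nu|^{s-2}\rangle = -\frac{2(s-1)}{s}\langle b_n\cdot\nabla|\nu|^{\frac{s}{2}},|\nu|^{\frac{s}{2}}\rangle$ together with the vanishing of the divergence-free contribution, then apply the quadratic inequality with $\alpha=\frac{1}{2\sqrt{\delta}}$ and form-boundedness to get the dispersion coefficient $\frac{4(s-1)}{s^2}-\frac{2(s-1)}{s}\sqrt{\delta}>0$ exactly when $s<\frac{2}{\sqrt{\delta}}$. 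The final assembly via integrating to an arbitrary $r\in[0,t]$ and summing the two resulting bounds to produce the factor $\frac{2}{s}$ matches the paper's treatment in Proposition \ref{elem_lem_par}.
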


One also has a straightforward weighted counterpart of this inequality as in assertion (\textit{ii}) of Proposition \ref{elem_lem_par}.

We will be applying Corollary \ref{elem_cor_fp} in the case when $\nu_0 \geq 0$, $\langle \nu_0 \rangle=1$.

\begin{proof}
All the terms in the corresponding integral identity, except the next one, are dealt with in the same way as in the proof of Proposition \ref{elem_lem}. Let $b=b_n$. Then
\begin{align*}
\langle {\rm div}(b \nu),\nu|\nu|^{s-2}\rangle=(s-1)\langle b\nu,|\nu|^{s-2}\nabla \nu\rangle=\frac{2}{s}(s-1)\langle b \cdot \nabla |\nu|^{\frac{s}{2}},|\nu|^{\frac{s}{2}} \rangle.
\end{align*}
Hence we arrive at the counterpart of \eqref{id_par} with the coefficient of the dispersion term
$$
C=\frac{4(s-1)}{s^2}-\frac{2}{s}(s-1)\sqrt{\delta} - \text{(constant terms proportional to $\varepsilon_1$)}
$$
that must be positive.
We can fix $\varepsilon_1$ as small as needed. What matters is the value of $\delta$ that ensures that $\frac{4(s-1)}{s^2}-\frac{2}{s}(s-1)\sqrt{\delta}>0$. The latter is equivalent to $s<\frac{2}{\sqrt{\delta}}$, which is satisfied by our assumptions.
\end{proof}

\subsection{Global weighted $L^2$ summability of a form-bounded drift}

A form-bounded vector field $b \in \mathbf{F}_\delta$ is a priori only local summable: $|b| \in L^2_{\loc}$. In fact, condition $b \in \mathbf{F}_\delta$ implies global square summability of $|b|$, but with respect to weight $\rho_x$.
Indeed, 
selecting  $\sqrt{\rho_x}$ as the test function in the definition of $b \in \mathbf{F}_\delta$ and using \eqref{rho_est}, we obtain
\begin{align}
\langle |b|^2\rho_x\rangle & \leq \frac{\delta}{4} \langle \frac{|\nabla \rho_x|^2}{\rho_x}\rangle + c_\delta \langle \rho_x \rangle \notag \\
& \leq \biggl( \frac{\delta}{4}\frac{(d+\varepsilon_0)^2}{4}\sigma + c_\delta \biggr)\langle \rho \rangle<\infty,
\label{rho_global}
\end{align}
where we have used $\langle \rho_x \rangle=\langle \rho\rangle$.
Moreover, we have the following global convergence result:

\begin{lemma} 
\label{lem_b}
Let $b \in \mathbf{F}_\delta$, $\{b_n\} \in [b]$. Then, for every $x \in \mathbb R^d$,
$\langle |b_{n_1}-b_{n_2}|^2\rho_x\rangle \rightarrow 0$ as $n_1$, $n_2 \rightarrow \infty$.
\end{lemma}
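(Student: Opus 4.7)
The plan is to split the weighted $L^2$ norm into a tail part (outside a large ball) and a compact part, handling them by uniform form-boundedness and by the local $L^2$ convergence from Definition~\ref{b_n}, respectively.

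First I would fix $\varepsilon > 0$ and choose a smooth cutoff $\chi_R \in C_c^\infty(\mathbb R^d)$ with $\chi_R = 0$ on $B_R(x)$, $\chi_R = 1$ on $\mathbb R^d \setminus B_{2R}(x)$, and $|\nabla \chi_R| \leq C/R$. The test function for the form-boundedness inequality will be $\varphi_R := \chi_R \sqrt{\rho_x}$ (which is bounded, smooth, and lies in $W^{1,2}$ since $\rho_x$ is bounded away from zero on any compact set and decays at infinity). Applying $b_n \in \mathbf{F}_\delta$ with $c_\delta$ independent of $n$ gives
\begin{equation*}
\langle |b_n|^2 \chi_R^2 \rho_x\rangle \leq \delta \|\nabla(\chi_R \sqrt{\rho_x})\|_2^2 + c_\delta \|\chi_R \sqrt{\rho_x}\|_2^2,
\end{equation*}
and the same inequality for $b$. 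Using $|\nabla \sqrt{\rho_x}| = \frac{|\nabla \rho_x|}{2\sqrt{\rho_x}} \leq C\sqrt{\sigma} \sqrt{\rho_x}$ by \eqref{rho_est}, together with $|\nabla \chi_R| \leq C/R$ supported in $B_{2R}(x) \setminus B_R(x)$, one finds
\begin{equation*}
\|\nabla(\chi_R \sqrt{\rho_x})\|_2^2 \leq \frac{C_1}{R^2} \langle \rho_x\rangle + C_2 \sigma \langle \chi_R^2 \rho_x\rangle,
\end{equation*}
and both $\langle \rho_x \mathbf{1}_{\mathbb R^d \setminus B_R(x)}\rangle$ and $R^{-2}\langle \rho_x\rangle$ tend to $0$ as $R \to \infty$. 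Hence
\begin{equation*}
\sup_n \langle |b_n|^2 \rho_x \mathbf{1}_{\mathbb R^d \setminus B_{2R}(x)}\rangle + \langle |b|^2 \rho_x \mathbf{1}_{\mathbb R^d \setminus B_{2R}(x)}\rangle < \varepsilon
\end{equation*}
for $R$ sufficiently large, and therefore, by the triangle inequality,
\begin{equation*}
\sup_{n_1, n_2} \langle |b_{n_1} - b_{n_2}|^2 \rho_x \mathbf{1}_{\mathbb R^d \setminus B_{2R}(x)}\rangle < 4\varepsilon.
\end{equation*}

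Next, with $R$ so fixed, the complementary piece is estimated crudely: since $\rho_x \leq 1$,
\begin{equation*}
\langle |b_{n_1} - b_{n_2}|^2 \rho_x \mathbf{1}_{B_{2R}(x)}\rangle \leq \int_{B_{2R}(x)} |b_{n_1}(y) - b_{n_2}(y)|^2 dy,
\end{equation*}
which tends to $0$ as $n_1, n_2 \to \infty$ by the defining property $b_n \to b$ in $[L^2_{\loc}]^d$ of $[b]$. Combining the tail and the compact estimate yields $\langle |b_{n_1} - b_{n_2}|^2 \rho_x\rangle < 5\varepsilon$ for all sufficiently large $n_1, n_2$, proving the claim.

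The only delicate point is the uniform-in-$n$ control of the tail: it is essential here that both $\delta$ and $c_\delta$ do not depend on $n$ (built into the definition of $[b]$) and that $\rho_x$ and $|\nabla \rho_x|$ have comparable size, which allows $\|\nabla \varphi_R\|_2^2$ to be tamed by $\langle \rho_x\rangle$ and by $\sigma$. Otherwise the proof is routine.
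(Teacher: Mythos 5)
Your proposal is correct and follows essentially the same route as the paper: split $\langle|b_{n_1}-b_{n_2}|^2\rho_x\rangle$ into a far-field part controlled uniformly in $n$ by testing the form-boundedness inequality (with $n$-independent $\delta$, $c_\delta$) against a cutoff times $\sqrt{\rho_x}$, plus a part over a fixed ball handled by the $[L^2_{\loc}]^d$ convergence built into $[b]$. The only cosmetic differences are that the paper applies form-boundedness directly to the difference $b_{n_1}-b_{n_2}$ over a unit-width annulus, while you apply it to $b_n$ and $b$ separately over the annulus $B_{2R}\setminus B_R$ and conclude via the triangle inequality; both work for the same reasons.
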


\begin{proof}
First, let us show that 
\begin{equation}
\label{unif_c_b}
\lim_{R \rightarrow \infty} \langle |b_{n_1}-b_{n_2}|^2\mathbf{1}_{\mathbb R^d \setminus B_{R+1}}\rho_x\rangle \rightarrow 0 \quad \text{ uniformly in $n_1$, $n_2$.} 
\end{equation}
Indeed, replacing $\mathbf{1}_{\mathbb R^d \setminus B_{R+1}}$ by greater function $\eta^2_R$, where
$$
\eta_R(y):=\xi_R(|y|), \quad \xi_R(r):=\left\{
\begin{array}{ll}
0, & 0 \leq r < R, \\
r-R, & R \leq r \leq R+1, \\
1, &  r > R+1,
\end{array}
\right.
$$
and noting that $|\nabla \eta_R(y)| \leq \mathbf{1}_{R<|y|<R+1}$,
we estimate
\begin{align*}
\langle |b_{n_1}-b_{n_2}|^2\mathbf{1}_{\mathbb R^d \setminus B_{R+1}}\rho_x\rangle & \leq \langle |b_{n_1}-b_{n_2}|^2\eta^2_R\rho_x\rangle \\
& \leq 4 \delta \|\nabla (\eta_R \sqrt{\rho_x})\|_2^2 + 4c_\delta \|\eta_R \sqrt{\rho_x}\|_2^2,
\end{align*}
where $\|\eta_R \sqrt{\rho_x}\|_2 \rightarrow 0$ as $R \rightarrow \infty$, and so, 
in view of the second estimate in \eqref{rho_est0}, $\| \eta_R \nabla (\sqrt{\rho_x})\|_2 \rightarrow 0$ as $R \rightarrow \infty$. Also, taking without loss of generality $x=0$, $$\|(\nabla \eta_R)\sqrt{\rho}\|_2^2 = \langle \mathbf{1}_{R\leq |\cdot| \leq R+1}\rho \rangle =C R^{-d-\varepsilon}R^d=R^{-\varepsilon} \rightarrow 0$$
as $R \rightarrow \infty$. This yields \eqref{unif_c_b}.

In turn, inside the ball $B_R$ we have $\langle |b_{n_1}-b_{n_2}|^2\mathbf{1}_{B_R}\rho_x\rangle \rightarrow 0$ as $n_1, n_2 \rightarrow \infty$ since $b_{n_1}-b_{n_2} \rightarrow 0$ in $L^2_\loc$. This ends the proof.
\end{proof}

\bigskip

\bigskip

\section{Proof of Proposition \ref{emb_thm} (Embedding property)} 
\label{emb_thm_sect}

We will need the following lemma.

\begin{lemma}[{\cite[Lemma 7.1]{G}}]
\label{dg_lemma}
If $\{z_i\}_{i=0}^\infty \subset \mathbb R_+$ is a sequence of positive real numbers  such that
$$
z_{i+1} \leq N C_0^i z^{1+\alpha}_i
$$
for some $C_0>1$, $\alpha>0$, and
$$
z_0 \leq N^{-\frac{1}{\alpha}}C_0^{-\frac{1}{\alpha^2}}.
$$
Then
$
\lim_i z_i=0.
$
\end{lemma}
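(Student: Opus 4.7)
The plan is to establish by induction on $i$ the geometric bound
$$z_i \;\leq\; z_0\,\lambda^{-i}, \qquad i=0,1,2,\dots,$$
for an appropriate $\lambda>1$ depending only on $C_0$ and $\alpha$. Since $\lambda>1$, this bound immediately yields $\lim_i z_i=0$. The correct choice of $\lambda$ is dictated by forcing the ansatz to be self-reproducing through the recursion; the very form $N^{-1/\alpha}C_0^{-1/\alpha^2}$ of the smallness hypothesis already suggests $\lambda := C_0^{1/\alpha}$.

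Assuming $z_i \leq z_0\lambda^{-i}$ and inserting into the recursion gives
$$z_{i+1} \;\leq\; N\,C_0^{\,i}\,(z_0\lambda^{-i})^{1+\alpha} \;=\; \bigl(Nz_0^{\alpha}\bigr)\,z_0\,C_0^{\,i}\,\lambda^{-i(1+\alpha)}.$$
With $\lambda^\alpha=C_0$ the factor $C_0^{\,i}\lambda^{-i\alpha}$ equals $1$, so the right-hand side collapses to $(Nz_0^{\alpha}\lambda)\,z_0\,\lambda^{-(i+1)}$. The inductive step therefore closes as soon as $Nz_0^{\alpha}\lambda\leq 1$, equivalently
$$z_0 \;\leq\; (N\lambda)^{-1/\alpha} \;=\; N^{-1/\alpha}C_0^{-1/\alpha^2},$$
which is exactly the smallness hypothesis. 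The base case $i=0$ is trivial.

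The induction thus produces $z_i\leq z_0\,C_0^{-i/\alpha}$, and since $C_0>1$ the right-hand side tends to zero geometrically at rate $C_0^{-1/\alpha}$. There is no real obstacle in the argument; the only conceptual point worth recording is that the threshold in the hypothesis is sharp for the geometric ansatz, being precisely the value at which the recursion becomes self-improving with ratio $\lambda^{-1}=C_0^{-1/\alpha}<1$.
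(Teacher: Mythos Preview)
Your proof is correct. The paper does not supply its own proof of this lemma; it simply cites \cite[Lemma~7.1]{G} (Giusti's textbook), and your argument is exactly the standard induction found there: show $z_i \le z_0\,C_0^{-i/\alpha}$ by verifying that the smallness hypothesis $z_0 \le N^{-1/\alpha}C_0^{-1/\alpha^2}$ is precisely what makes the inductive step close.
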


Throughout the proof, we write for brevity $$b=b_n, \quad q=q_m, \quad Q^i=Q_m^i, \quad w=w_{n,m}.$$ 
It suffices to estimate the positive part of $w$:
\begin{align}
\sup_{\mathbb R^d} w_+ & \leq K_1 (\mu-\mu_0)^{-\frac{\beta}{p}}\big\langle 
|Q^i|^{p\theta'}(|\nabla f|^{p\theta'}+|f|^{p\theta'})
\big\rangle^{\frac{1}{p\theta'}} \notag \\
& + K_2 (\mu-\mu_0)^{-\frac{1}{p\theta}}\langle |Q^i|^{p\theta}(|\nabla f|^{p\theta}+ |f|^{p\theta})\rangle^{\frac{1}{p\theta}}. \label{dg_ineq}
\end{align}

\textit{Step 1.} First, we prove the following energy inequality: there exist generic constants $\mu_0 \geq 0$ and $C_0$, $C$ such that, for every $k \geq 0$ and all $\mu > \mu_0$, the positive part $v:=(w-k)_+$ of $w-k$ satisfies
\begin{equation}
\label{cacc_no}
(\mu-\mu_0) \|v^{\frac{p}{2}}\|_2^2 + C_0\|\nabla v^{\frac{p}{2}}\|_2^2 \leq C\bigl[\langle |Q^i|^p|\nabla f|^p\mathbf{1}_{v>0}\rangle + \langle (1+|Q^i|^p) |f|^{p}\mathbf{1}_{v>0}\rangle \bigr].
\end{equation}
Proof of \eqref{cacc_no}. We obtain from equation \eqref{eq11}, using that both $\mu$ and $k$ are non-negative,
$
(\mu-\Delta + (b+q) \cdot \nabla)(w-k) \leq (b^i+q^i)f.
$
We now basically apply the energy inequality of Proposition \ref{elem_lem}(\textit{i}) with $s=p$. The fact that we have an elliptic differential inequality instead of an elliptic equation does not change anything since we multiply it by a non-negative function $v^{p-1}$. So, we integrate, apply $b \in \mathbf{F}_\delta$ and use anti-symmetry of the stream matrix $Q$ of drift $q$:
\begin{align}
(\mu-\mu_0) \langle v^p\rangle + C_1 \langle |\nabla v^{\frac{p}{2}}|^2\rangle  \leq  \langle (b^i+q^i) f,v^{p-1}\rangle. \label{est_J}
\end{align}
Let us estimate the terms in the right-hand side. First,
\begin{align*}
|\langle b^i f,v^{p-1}\rangle| & \leq \varepsilon \langle |b^i|^2,v^p\rangle + \frac{1}{4\varepsilon} \langle |f|^2 v^{p-2}\rangle \\
& \leq \varepsilon \biggl(\delta \langle |\nabla v^{\frac{p}{2}}|^2\rangle + c_\delta \langle v^p\rangle\biggr) \\
& +\frac{1}{4\varepsilon}\biggl(\frac{p-2}{p}\langle v^p\rangle + \frac{2}{p}\langle |f|^p\mathbf{1}_{v>0}\rangle \biggr).
\end{align*}
Second,
\begin{align*}
|\langle q^i f,v^{p-1}\rangle| & =|\langle {\rm div\,}Q^i,fv^{p-1}\rangle| \\
& \leq |\langle Q^i,(\nabla f)v^{p-1}\rangle | + \frac{2(p-1)}{p}|\langle Q^i,f v^{\frac{p}{2}-1}\nabla v^{\frac{p}{2}}\rangle|=:K_1+\frac{2(p-1)}{p} K_2,
\end{align*}
where
\begin{align*}
K_1 & \leq \frac{1}{2}\langle |Q^i|^2  |\nabla f|^2 v^{p-2}\rangle + \frac{1}{2}\langle v^{p}\rangle\\
& \leq \frac{1}{2}\biggl( \frac{2}{p}\langle |Q^i|^p|\nabla f|^p\mathbf{1}_{v>0}\rangle + \frac{p-2}{p}\langle v^p \rangle \biggr)  + \frac{1}{2}\langle v^{p}\rangle 
\end{align*}
and
\begin{align*}
K_2 & \leq \varepsilon \langle |\nabla v^{\frac{p}{2}}|^2\rangle + \frac{1}{4\varepsilon}\langle |Q^i|^2 f^2 v^{p-2}\rangle    \\
& \leq \varepsilon \langle |\nabla v^{\frac{p}{2}}|^2\rangle + \frac{1}{4\varepsilon}\biggl( \frac{2}{p}\langle |Q^i|^p |f|^{p}\mathbf{1}_{v>0}\rangle +\frac{p-2}{p}\langle v^p\rangle\biggr).
\end{align*}
Thus,
$$
|\langle q^i f,v^{p-1}\rangle| \leq \frac{2(p-1)}{p} \varepsilon \langle |\nabla v^{\frac{p}{2}}|^2\rangle + c_1\bigl(\langle |Q^i|^p|\nabla f|^p\mathbf{1}_{v>0}\rangle + \langle |Q^i|^p |f|^{p}\mathbf{1}_{v>0}\rangle \bigr) + c_2 \langle v^p\rangle,
$$
where $c_1$, $c_2$ depend on $\varepsilon$ and $p$. 

\medskip

Substituting the resulting estimates in \eqref{est_J} and selecting $\varepsilon$ sufficiently small, we obtain estimate \eqref{cacc_no}.

\medskip

\textit{Step 2.} In what follows, we will be selecting $k>0$. Then $|\{v>0\}|<\infty$. We obtain from \eqref{cacc_no}, using the Sobolev embedding theorem,
\begin{equation}
\label{est_40}
(\mu-\mu_0) \|v\|_{p}^p + C_S\|v\|^{p}_{\frac{pd}{d-2}}\leq C\big\langle \big(|Q^i|^p|\nabla f|^p+(1+|Q^i|^p)|f|^p\big)\mathbf{1}_{v>0}\big\rangle.
\end{equation}
We estimate the left-hand side of \eqref{est_40} using the  interpolation inequality:
$$
(\mu-\mu_0)^{\beta} \|v\|_{p\theta_0}^p \leq \beta(\mu-\mu_0) \|v\|_{p}^p + (1-\beta)\|v\|_{L^{\frac{pd}{d-2}}}^p, \quad 0<\beta<1, \quad \frac{1}{p\theta_0}=\beta\frac{1}{p}+(1-\beta)\frac{d-2}{pd},
$$
where $1<\theta_0<\frac{d}{d-2}$.
So,
$$
(\mu-\mu_0)^{\beta} \|v\|_{p\theta_0}^p \leq C_2\bigl[\big\langle \big(|Q^i|^p|\nabla f|^p+(1+|Q^i|^p)|f|^p\big)\mathbf{1}_{v>0}\big\rangle \bigr]
$$
Let us fix $\beta$ small enough so that we have $\theta_0>\theta$. (Recall that $1<\theta<\frac{d}{d-2}$ was fixed in the statement of the proposition.) 
Applying H\"{o}lder's inequality, we obtain
$$
(\mu-\mu_0)^{\beta} \|v\|_{p\theta_0}^p \leq C_3 H|\{v>0\}|^{\frac{1}{\theta}},
$$
where 
$$
H:=\big\langle \big(|Q^i|^{p\theta'}|\nabla f|^{p\theta'}+(1+|Q^i|^{p\theta'})|f|^{p\theta'}\big)\mathbf{1}_{v>0}\big\rangle^{\frac{1}{\theta'}}.
$$
On the other hand, again by H\"{o}lder's inequality,
$$
 \|v\|_{p\theta}^{p\theta} \leq  \|v\|_{p\theta_0}^{p\theta} |\{v>0\}|^{1-\frac{\theta}{\theta_0}}.
$$
Therefore, we obtain
$$
 \|v\|_{p\theta}^{p\theta} \leq \tilde{C}(\mu-\mu_0)^{-\beta\theta}H^{\theta}|\{v>0\}|^{2-\frac{\theta}{\theta_0}}.
$$

\textit{Step 3.}~Now, put $v_m:=(w-k_m)_+$, $k_m:=\xi(1-2^{-m}) \uparrow \xi,$ where constant $\xi>0$ will be chosen later.

\begin{remark}
We have $k_0=0$, but we will not encounter the volume of $\{w>0\}$ in the proof (clearly, $|\{w>0\}|$ can be infinite). 
\end{remark}

So,
\begin{align*}
\frac{1}{\xi^{p\theta}}  \|v_{m+1}\|_{p\theta}^{p\theta} \leq \tilde{C}\frac{1}{\xi^{p\theta}} (\mu-\mu_0)^{-\beta\theta} H^{\theta}|\{w>k_{m+1}\}|^{2-\frac{\theta}{\theta_0}}.
\end{align*}
From now on, we require constant $\xi$ to satisfy $\xi^p \geq (\mu-\mu_0)^{-\beta}H$, 
so
\begin{align*}
\frac{1}{\xi^{p\theta}} \|v_{m+1}\|_{L^{p\theta}(B^{m+1})}^{p\theta} \leq \tilde{C}|\{w>k_{m+1}\}|^{2-\frac{\theta}{\theta_0}}. \notag
\end{align*}
Now, 
\begin{align*}
|\{w>k_{m+1}\}| & = \bigg|\bigg\{\biggl(\frac{w-k_m}{k_{m+1}-k_m}\biggr)^{p\theta}>1\bigg\}\bigg| \\
&\leq (k_{m+1}-k_m)^{-p\theta} \langle v^{p\theta}_m  \rangle  = \xi^{-p\theta}2^{p\theta(m+1)} \|v_m\|_{p\theta}^{p\theta},
\end{align*}
so, applying the previous two inequalities, we obtain
$$
\frac{1}{\xi^{p\theta}} \|v_{m+1}\|_{p\theta}^{p\theta} \leq C  2^{p\theta m(2-\frac{\theta}{\theta_0})} \biggl(\frac{1}{\xi^{p\theta}}\|v_m\|_{L^{p\theta}(B^{m})}^{p\theta}\biggr)^{2-\frac{\theta}{\theta_0}}.
$$

\medskip

\textit{Step 4.}~Denote
$
z_m:=\frac{1}{\xi^{p\theta}}\|v_m\|_{p\theta}^{p\theta}.
$
Then
$$
z_{m+1} \leq C\gamma^m z_m^{1+\alpha}, \quad m=0,1,2,\dots, \quad \alpha:=1-\frac{\theta}{\theta_0},\;\; \gamma:=2^{p\theta (2-\frac{\theta}{\theta_0})}
$$
and $z_0 = \frac{1}{\xi^{p\theta}}\langle w_+^{p\theta}\rangle \leq C^{-\frac{1}{\alpha}}\gamma^{-\frac{1}{\alpha^2}}$
provided that we fix $\xi$ by $$\xi^{p\theta}:=C^{\frac{1}{\alpha}}\gamma^{\frac{1}{\alpha^2}}\langle w_+^{p\theta}\rangle + (\mu-\mu_0)^{-\beta\theta}H^\theta$$
(so that it also satisfies the previous requirement $\xi^p \geq (\mu-\mu_0)^{-\beta}H$).
Hence, by Lemma \ref{dg_lemma}, $z_m \rightarrow 0$ as $m \rightarrow \infty$. Therefore,
$
w_+ \leq \xi.
$
Thus, we obtain inequality 
\begin{equation}
\label{sup_ineq_int}
\sup_{\mathbb R^d} w_+ \leq K \biggl( \langle w_+^{p\theta}\rangle^{\frac{1}{p\theta}} + (\mu-\mu_0)^{-\frac{\beta}{p}}\big\langle \big(|Q^i|^{p\theta'}|\nabla f|^{p\theta'}+(1+|Q^i|^{p\theta'})|f|^{p\theta'}\big)\mathbf{1}_{v>0}\big\rangle^{\frac{1}{p\theta'}} \biggr).  
\end{equation}

\medskip

\textit{Step 5.}~It remains to estimate $\langle w_+^{p\theta}\rangle^{\frac{1}{p\theta}}$. We already did this in \eqref{cacc_no} (use $p\theta>p$):
$$
(\mu-\mu_0)^{\frac{1}{p\theta}}\|w_+\|_{p\theta}  \leq C^{\frac{1}{p\theta}}\bigl[\langle |Q^i|^{p\theta}|\nabla f|^{p\theta}\rangle + \langle (1+|Q^i|^{p\theta}) f^{p\theta}\rangle \bigr]^{\frac{1}{p\theta}}.
$$
This inequality, applied in \eqref{sup_ineq_int}, yields \eqref{dg_ineq} and thus ends the proof of Proposition \ref{emb_thm}. \hfill \qed

\bigskip

\section{Proof of Proposition \ref{thm_holder} (H\"{o}lder continuity)}

\label{thm_holder_proof}
\label{thm_holder_sect}

The following is a well-known consequence of the John-Nirenberg inequality:

\begin{proposition}
There exists constant $C=C(d)$ such that,  for every $g \in {\rm BMO}$, 
\begin{equation}
\label{JN_ineq0}
\sup_{x \in \mathbb R^d,R>0}\frac{1}{|B_R(x)|} \langle e^{\frac{C|g - (g)_{B_R(x)}|}{\|g\|_{\rm BMO}}}\mathbf{1}_{B_R(x)} \rangle \leq C.
\end{equation}
In particular, for every $1 \leq s <\infty$, we have $g \in L^s_{\loc}$ and 
\begin{equation}
\label{JN_ineq}
\sup_{x \in \mathbb R^d,R>0}\frac{1}{|B_R(x)|} \langle |g - (g)_{B_R(x)}|^s \mathbf{1}_{B_R(x)} \rangle \leq C(d,s) \|g\|_{{\rm BMO}}^s.
\end{equation}
\end{proposition}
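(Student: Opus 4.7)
The plan is to establish the exponential inequality \eqref{JN_ineq0} via the classical John--Nirenberg stopping-time argument, from which the $L^s$ bound \eqref{JN_ineq} will follow by a layer-cake computation. By the homogeneity of both sides in $g$, I may assume $\|g\|_{\rm BMO}=1$; it then suffices to prove the distributional estimate
\[
\frac{1}{|B|}\bigl|\{y \in B : |g(y) - (g)_B| > \lambda\}\bigr| \leq c_1 e^{-c_2 \lambda} \qquad \forall\,\text{balls } B \subset \mathbb R^d, \; \lambda >0,
\]
with constants $c_1,c_2$ depending only on the dimension $d$.

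To prove this distributional inequality, I would run an iterated Calder\'on--Zygmund decomposition. Starting from a ball $B$, apply the CZ stopping-time argument to $|g - (g)_B|$ at a threshold $\lambda_0>1$ (to be fixed depending on $d$). This produces a pairwise disjoint family of sub-balls $\{B_j\}$ with $\lambda_0 < \frac{1}{|B_j|}\langle |g - (g)_B|\mathbf{1}_{B_j}\rangle \leq C_d \lambda_0$, whose total volume is bounded by $\lambda_0^{-1}|B|$ (using $\|g\|_{\rm BMO}\leq 1$), while $|g - (g)_B| \leq \lambda_0$ a.e.\,off their union. The BMO condition then gives the crucial control $|(g)_{B_j} - (g)_B| \leq C_d \lambda_0$. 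Iterating the construction $k$ times on each selected sub-ball produces a family of balls whose union has volume at most $\lambda_0^{-k}|B|$, and outside which $|g - (g)_B| \leq k C_d \lambda_0$. This linear-in-$k$ pointwise bound combined with the geometric-in-$k$ volume decay yields, after interpolation in $\lambda$, the claimed exponential distributional estimate.

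Granting this, the exponential inequality follows from the layer-cake formula: for any $0 < c < c_2$,
\begin{align*}
\frac{1}{|B|}\langle e^{c|g-(g)_B|}\mathbf{1}_B\rangle
& = 1 + \frac{c}{|B|}\int_0^\infty e^{c\lambda} \bigl|\{y \in B : |g(y)-(g)_B| > \lambda\}\bigr|\,d\lambda \\
& \leq 1 + c\,c_1 \int_0^\infty e^{(c-c_2)\lambda}\,d\lambda = 1 + \frac{c\,c_1}{c_2 - c},
\end{align*}
and fixing $c := c_2/2$ produces \eqref{JN_ineq0} with a dimensional constant $C$ (restoring $\|g\|_{\rm BMO}$ by scaling). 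For \eqref{JN_ineq}, an analogous layer-cake calculation gives
\[
\frac{1}{|B|}\langle |g-(g)_B|^s\mathbf{1}_B\rangle = \frac{s}{|B|}\int_0^\infty \lambda^{s-1}\bigl|\{y \in B : |g(y)-(g)_B|>\lambda\}\bigr|\,d\lambda \leq s\,c_1 \,c_2^{-s}\,\Gamma(s),
\]
and reinstating the scaling by $\|g\|_{\rm BMO}$ yields \eqref{JN_ineq}. The local $L^s$-integrability of $g$ is an immediate consequence since $(g)_B$ is finite for every fixed ball.

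The main technical point is the iterated Calder\'on--Zygmund argument, specifically running it on balls rather than on dyadic cubes. This is handled by a standard reduction to a Whitney-type dyadic decomposition, or alternatively via the Besicovitch covering lemma; the arithmetic of the iteration is cleanest for cubes but transfers to balls with only dimensional constants changing, so no genuinely new difficulty arises.
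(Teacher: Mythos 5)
Your proposal is correct. Note, however, that the paper does not actually prove this proposition: it is stated without proof, introduced as ``a well-known consequence of the John--Nirenberg inequality,'' so there is no argument in the paper to compare against. What you have written is precisely the classical John--Nirenberg proof: the iterated Calder\'on--Zygmund stopping-time decomposition giving the linear-in-$k$ oscillation bound off an exceptional set of measure $\lambda_0^{-k}|B|$, hence the exponential distribution inequality, followed by layer-cake integration to get both the exponential integrability \eqref{JN_ineq0} and the $L^s$ bound \eqref{JN_ineq}. All the quantitative steps check out: the selected-ball volume bound $\sum_j|B_j|\le\lambda_0^{-1}|B|$ uses $\|g\|_{\rm BMO}\le 1$ correctly, the jump control $|(g)_{B_j}-(g)_B|\le C_d\lambda_0$ follows from the stopping condition (the BMO hypothesis is not even needed for that particular step), and the two layer-cake computations are accurate.

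Two cosmetic remarks. First, the balls-versus-cubes issue you flag is indeed routine: one proves the statement for cubes via dyadic subdivision and transfers to balls using the equivalence of the ball- and cube-based BMO seminorms together with the comparability of averages over a ball and its circumscribed cube; your treatment of this as a dimensional-constant matter is fine. Second, the proposition as stated uses a single letter $C$ both in the exponent and as the upper bound; your proof delivers the standard two-constant form (a small constant in the exponent, a possibly large constant on the right), which is what is meant here, $C$ being a generic constant whose occurrences need not coincide.
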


Put for brevity $b=b_n$, $q=q_m$, $Q=Q_m$ and $u=u_{n,m}$. So,
$$
\big(\mu -\Delta + (b+q) \cdot \nabla\big)u=f.
$$
Let us fix some $1 < \theta < \frac{d}{d-2}$. Let $p>\frac{2}{2-\sqrt{\delta}}$. By our assumption $\delta<4$, so such $p$ exist.

\begin{lemma}[Caccioppoli's inequality]
\label{cacc_prop}
Let $v:=(u-k)_+$, $k \in \mathbb R$. For all $0<r<R \leq 1$, we have
\begin{equation}
\label{cacc_ineq}
\|\nabla v^{\frac{p}{2}}\|_{L^2(B_r)}^2 \leq 
 \frac{K_1}{(R-r)^2} |B_{R}|^{\frac{1}{\theta'}} (1+\|Q\|_{{\rm BMO}}^2) \|v^{\frac{p}{2}}\|_{L^{2\theta}(B_R)}^2 + K_2 \| |f - \mu u|^{\frac{p}{2}} \mathbf{1}_{v>0} \|_{L^2(B_R)}^2
\end{equation}
for constants $K_1$, $K_2$ independent of $k$, $r$, $R$ and $n$, $m$. 
\end{lemma}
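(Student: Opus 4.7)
The plan is to run the standard energy argument with a smooth cutoff $\eta$ that equals $1$ on $B_r$, vanishes outside $B_R$, and satisfies $|\nabla\eta|\leq C/(R-r)$. I would multiply the equation $(\mu - \Delta + (b+q)\cdot\nabla)u = f$ by the test function $v^{p-1}\eta^2$ and integrate. Since $v^{p-1}$ vanishes on $\{v=0\}$, we can replace $\nabla u$ by $\nabla v$ wherever $v^{p-1}$ appears; using $v^{p-2}|\nabla v|^2 = (4/p^2)|\nabla v^{p/2}|^2$ and $v^{p-1}\nabla v = (2/p)v^{p/2}\nabla v^{p/2}$ and moving the $\mu u$-term to the right, I get
\begin{align*}
\frac{4(p-1)}{p^2}\|\eta\nabla v^{p/2}\|_2^2 & + \frac{4}{p}\langle v^{p/2}\eta\nabla v^{p/2},\nabla\eta\rangle \\
& + \frac{2}{p}\langle (b+q)\cdot\nabla v^{p/2}, v^{p/2}\eta^2\rangle = \langle f-\mu u, v^{p-1}\eta^2\rangle.
\end{align*}
The cross term is handled by Cauchy--Schwarz, producing a small multiple of $\|\eta\nabla v^{p/2}\|_2^2$ plus a multiple of $\|v^{p/2}\nabla\eta\|_2^2$. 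The right-hand side is bounded via Young's inequality as $\varepsilon\|v^{p/2}\eta\|_2^2 + C_\varepsilon \int_{B_R}|f-\mu u|^p\mathbf{1}_{v>0}dx$.

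For the $b$-term I would write $|\langle b\cdot\nabla v^{p/2}, v^{p/2}\eta^2\rangle| \leq \alpha\|\eta\nabla v^{p/2}\|_2^2 + (4\alpha)^{-1}\||b|v^{p/2}\eta\|_2^2$ and apply $b\in\mathbf F_\delta$ to $v^{p/2}\eta$, which after expanding $\|\nabla(v^{p/2}\eta)\|_2^2$ yields a controlled contribution to $\|\eta\nabla v^{p/2}\|_2^2$, plus lower-order terms proportional to $\|v^{p/2}\nabla\eta\|_2^2$ and $\|v^{p/2}\eta\|_2^2$. Since $\delta<4$ and $p>2/(2-\sqrt{\delta})$ has been fixed accordingly, the $\|\eta\nabla v^{p/2}\|_2^2$ contributions from the cross, $b$, and (below) $q$ terms can be absorbed into the coefficient $4(p-1)/p^2$ on the left.

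The main obstacle --- and the whole reason this Caccioppoli inequality is not routine --- is the $q$-term, since $q$ is only a distribution and the usual test function $v^{p-1}\eta^2$ is not regular enough to pair with $q=\nabla Q$ in any direct way. The key is to integrate by parts and exploit the anti-symmetry of $Q$. Writing $q^i=\nabla_j Q^{ij}$ and integrating by parts in $j$,
\[
\langle q\cdot\nabla v,v^{p-1}\eta^2\rangle = -\langle Q^{ij}, \nabla_j\nabla_i v\cdot v^{p-1}\eta^2\rangle - (p-1)\langle Q^{ij}, v^{p-2}\nabla_i v\,\nabla_j v\,\eta^2\rangle - 2\langle Q^{ij}, \nabla_i v\cdot v^{p-1}\eta\nabla_j\eta\rangle.
\]
The first two brackets involve tensors that are symmetric in $(i,j)$, so they cancel against the anti-symmetric $Q$. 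Only the third term survives, and in it one can replace $Q^{ij}$ by $Q^{ij}-(Q^{ij})_{B_R}$: the constant correction, after rewriting $v^{p-1}\nabla_i v=(1/p)\nabla_i v^p$ and integrating by parts once more in $i$, pairs an anti-symmetric constant with the symmetric tensor $\nabla_i\eta\,\nabla_j\eta + \eta\nabla_i\nabla_j\eta$, hence vanishes.

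What remains is
\[
\Bigl|\langle q\cdot\nabla v, v^{p-1}\eta^2\rangle\Bigr| \leq \frac{C}{p}\int_{B_R} |Q-(Q)_{B_R}|\,|\nabla v^{p/2}|\, v^{p/2}\,|\nabla\eta|\,dx,
\]
which I estimate by Cauchy--Schwarz and Young to produce $\varepsilon_2\|\eta\nabla v^{p/2}\|_2^2 + C(R-r)^{-2}\int_{B_R}|Q-(Q)_{B_R}|^2 v^p\,dx$. Applying H\"older with exponents $\theta'$ and $\theta$ and then the John--Nirenberg estimate \eqref{JN_ineq} with $s=2\theta'$, I get
\[
\int_{B_R}|Q-(Q)_{B_R}|^2 v^p\,dx \leq \|Q-(Q)_{B_R}\|_{L^{2\theta'}(B_R)}^2 \|v^{p/2}\|_{L^{2\theta}(B_R)}^2 \leq C|B_R|^{1/\theta'}\|Q\|_{\rm BMO}^2 \|v^{p/2}\|_{L^{2\theta}(B_R)}^2.
\]
Finally, $\|v^{p/2}\nabla\eta\|_2^2$ and $\|v^{p/2}\eta\|_2^2$ are bounded above by $C(R-r)^{-2}\int_{B_R}v^p$, which a further application of H\"older converts to $C(R-r)^{-2}|B_R|^{1/\theta'}\|v^{p/2}\|_{L^{2\theta}(B_R)}^2$. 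Choosing $\alpha,\varepsilon_1,\varepsilon_2$ small so that all $\|\eta\nabla v^{p/2}\|_2^2$ contributions are strictly smaller than $4(p-1)/p^2$, absorbing them into the left, and discarding $\|\eta\nabla v^{p/2}\|_2^2\geq \|\nabla v^{p/2}\|_{L^2(B_r)}^2$ yields \eqref{cacc_ineq}. The independence of the constants from $k,r,R,n,m$ is manifest since all steps used only $\delta$, $\|Q_m\|_{\rm BMO}\leq C\|Q\|_{\rm BMO}$, and the fixed exponents $p,\theta$.
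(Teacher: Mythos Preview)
Your proof is correct and follows essentially the same strategy as the paper's: multiply by a cutoff test function, exploit the anti-symmetry of $Q$ to reduce the $q$-term to a single mixed integral involving $\nabla\eta$, subtract the average $(Q)_{B_R}$, and then invoke H\"older together with John--Nirenberg. The only cosmetic differences are that you use $\eta^2$ in the test function (the paper uses $\eta$ and correspondingly imposes the extra cutoff condition $|\nabla\eta|^2/\eta\leq c/(R-r)^2$), and you subtract $(Q)_{B_R}$ \emph{after} the integration by parts whereas the paper does it \emph{before}, simply using $\nabla_j\tilde{Q}^{ij}=\nabla_j Q^{ij}$.
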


\begin{proof}
Let $\{\eta=\eta_{r,R}\}$ be a family of $[0,1]$-valued smooth cut-off functions satisfying
\begin{equation}
\label{eta_ineq}
\eta =1 \text{ in } B_{r}, \quad \eta = 0 \text{ in } \mathbb R^d \setminus B_{R},
\end{equation}
\begin{equation}
\label{eta_ineq2}
|\nabla \eta| \leq \frac{c}{R-r}\mathbf{1}_{B_R}, \quad \frac{|\nabla \eta|^2}{\eta} \leq \frac{c}{(R-r)^{2}}\mathbf{1}_{B_R}
\end{equation}
with constant $c$ independent of $r$, $R$. We rewrite the equation for $u$ in the form
$
(-\Delta + (b+q) \cdot \nabla\big)u=f-\mu u,
$
multiply it by $v^{p-1}\eta$ and integrate, obtaining
\begin{align*}
\frac{4(p-1)}{p^2}\langle \nabla v^{\frac{p}{2}},(\nabla v^{\frac{p}{2}})\eta\rangle & +  \frac{2}{p}\langle \nabla v^{\frac{p}{2}},v^{\frac{p}{2}}\nabla \eta \rangle \\
&  \leq - \frac{2}{p}\langle b \cdot \nabla v^{\frac{p}{2}},v^{\frac{p}{2}}\eta\rangle - \frac{2}{p}\langle q \cdot \nabla v^{\frac{p}{2}},v^{\frac{p}{2}}\eta\rangle + \langle f-\mu u,v^{p-1}\eta\rangle.
\end{align*}
Hence, by Cauchy-Schwarz,
\begin{align}
\label{pre-caccioppoli}
\left(\frac{4(p-1)}{p}-\frac{4}{p}\epsilon\right)\langle |\nabla v^{\frac{p}{2}}|^2\eta\rangle &  \leq \frac{p}{4\epsilon}\big\langle v^p\frac{|\nabla \eta|^2}{\eta} \big\rangle  - 2\langle b \cdot \nabla v^{\frac{p}{2}},v^{\frac{p}{2}}\eta\rangle - 2\langle q \cdot \nabla v^{\frac{p}{2}},v^{\frac{p}{2}}\eta\rangle + p \langle f-\mu u,v^{p-1}\eta\rangle \notag \\[2mm]
&=:I_1+I_2+I_3+I_4. 
\end{align}
Let us estimate terms $I_1$-$I_4$. 
We start with the term $I_3$ containing the distribution-valued vector field $q=\nabla Q$. The other terms $I_1$, $I_2$ and $I_4$ will be estimated in such a way as to fit the estimate on $I_3$. The following argument was used in \cite{H}.
Set $\tilde{Q}^{ij} := Q^{ij} - (Q^{ij})_R$, where, recall, $(Q^{ij})_R=(Q^{ij})_{B_R}$ is the average of $Q^{ij}$ over ball $B_R$. We have 
\begin{align*}
I_3 = -2\langle q \cdot \nabla v^{\frac{p}{2}},v^{\frac{p}{2}}\eta\rangle &= 
-\sum_{i=1}^d \langle q^i \nabla_i v^p, \eta\rangle \\
& (\text{use identity $q^i=\sum_{j=1}^d \nabla_j Q^{ij}=\sum_{j=1}^d \nabla_j \tilde{Q}^{ij}$ and integrate by parts}) \\
&= \sum_{i,j=1}^d \langle \tilde{Q}^{ij} \nabla_{j}\nabla_{i} v^p , \eta\rangle + \sum_{i,j=1}^d \langle \tilde{Q}^{ij} \nabla_{i} v^p, \nabla_{j}\eta\rangle \\
& = \sum_{i,j=1}^d \langle \tilde{Q}^{ij} \nabla_{j}\nabla_{i} v^p , \eta\rangle + 2\sum_{i,j=1}^d \langle \tilde{Q}^{ij} \nabla_{i} v^\frac{p}{2}, v^{\frac{p}{2}}\nabla_{j}\eta\rangle.
\end{align*}
Due to the anti-symmetry of $Q$, the first sum on the right hand side is zero, so
$
I_3 =  2\langle \tilde{Q} \cdot \nabla v^\frac{p}{2}, v^\frac{p}{2} \nabla\eta\rangle.
$
Hence
\begin{align}
\label{int_est}
|I_3| \leq \varepsilon_1 \langle |\nabla v^\frac{p}{2}|^2 \eta\rangle + \frac{1}{\varepsilon_1}\langle |\tilde{Q}|^2 v^p \frac{|\nabla \eta|^2}{\eta}\rangle.
\end{align}
The second term in the RHS of \eqref{int_est} is bounded as follows, using \eqref{eta_ineq2}:
\begin{align}
\label{F_estimate2}
\langle |\tilde{Q}|^2 v^p \frac{|\nabla \eta|^2}{\eta} \rangle &\leq \frac{c}{(R-r)^2} \left\langle |\tilde{Q}|^{2\theta'} \mathbf{1}_{B_{R}}\right\rangle ^{\frac{1}{\theta'}} \langle v^{p\theta} \mathbf{1}_{B_{R}}\rangle^\frac{1}{\theta} \notag \\ 
&\text{(use \eqref{JN_ineq})} \notag \\ 
&\leq  \frac{C}{(R-r)^2} |B_{R}|^{\frac{1}{\theta'}}\|Q\|_{{\rm BMO}}^2 \langle v^{p\theta} \mathbf{1}_{B_{R}}\rangle^\frac{1}{\theta}. 
\end{align}
Thus, to summarize, we have
$$
|I_3| \leq \varepsilon_1 \langle |\nabla v^\frac{p}{2}|^2 \eta\rangle + \frac{1}{\varepsilon_1} \frac{C}{(R-r)^2} |B_{R}|^{\frac{1}{\theta'}}\|Q\|_{{\rm BMO}}^2 \langle v^{p\theta} \mathbf{1}_{B_{R}}\rangle^\frac{1}{\theta}.  
$$

\begin{remark}
If the entries of $Q$ are in $L^\infty$, then we can take $\theta = 1$. Indeed, in this case we can obtain \eqref{F_estimate2} directly:
$\langle |\tilde{Q}|^2 v^p \frac{|\nabla \eta|^2}{\eta} \rangle \leq \frac{4c}{(R-r)^2} \|Q\|^2_\infty \langle v^{p} \mathbf{1}_{B_{R}}\rangle$.
\end{remark}

Now, we estimate the remaining terms $I_1$, $I_2$ and $I_4$.
By \eqref{eta_ineq2},
$$
I_1  \leq \frac{cp}{4\epsilon (R-r)^2} |B_R|^\frac{1}{\theta'}\langle v^{p\theta} \mathbf{1}_{B_{R}}\rangle^\frac{1}{\theta}.
$$
Next, 
\begin{align*}
\frac{1}{2}|I_2| & \leq \langle |b| |\nabla v^{\frac{p}{2}}|,v^{\frac{p}{2}}\eta\rangle \leq \alpha \langle |\nabla v^{\frac{p}{2}}|\eta\rangle + \frac{1}{4\alpha}\langle |b|^2,v^p\eta\rangle, \quad \alpha=\frac{\sqrt{\delta}}{2} \\
& (\text{use  $b \in \mathbf{F}_\delta$}) \\
& \leq \frac{\sqrt{\delta}}{2} \langle |\nabla v^{\frac{p}{2}}|^2\eta\rangle + \frac{1}{2\sqrt{\delta}} \biggl(\delta \langle |\nabla (v^{\frac{p}{2}}\sqrt{\eta})|^2\rangle +c_\delta \langle v^p \eta \rangle  \biggr) \\
& \leq \frac{\sqrt{\delta}}{2} \langle |\nabla v^{\frac{p}{2}}|^2\eta\rangle + \frac{\sqrt{\delta}}{2}\left((1+\varepsilon_0) \langle |\nabla v^{\frac{p}{2}}|^2\eta\rangle+  \frac{1}{4}\big(1+\frac{1}{\varepsilon_0}\big)\langle v^p  \frac{|\nabla \eta|^2}{\eta}\rangle \right) +  \frac{c_\delta}{2\sqrt{\delta}}\langle v^p \eta \rangle \\
& \leq  \frac{\sqrt{\delta}}{2} (2+\varepsilon_0) \langle |\nabla v^{\frac{p}{2}}|\eta\rangle + \left( \frac{\sqrt{\delta}}{8}(1+\frac{1}{\varepsilon_0}) \frac{c}{(R-r)^2} + \frac{c_\delta}{2\sqrt{\delta}} \right) |B_R|^\frac{1}{\theta'}\langle v^{p\theta} \mathbf{1}_{B_{R}}\rangle^\frac{1}{\theta}.
\end{align*}
Finally, by Young's inequality, for every $\varepsilon_2>0$,
\begin{align*}
|I_4|= |\langle f-\mu u,v^{p-1}\eta\rangle| \leq  \frac{1}{p\varepsilon_2^p} \langle|f-\mu u|^p \mathbf{1}_{\{v>0 \} } \mathbf{1}_{B_R} \rangle + \frac{\varepsilon_2^{p'}}{p'}|B_R|^\frac{1}{\theta'}\langle v^{p\theta} \mathbf{1}_{B_{R}}\rangle^\frac{1}{\theta}.
\end{align*}
Now, applying these estimates on $I_1$-$I_4$ in \eqref{pre-caccioppoli}, we obtain:
\begin{align*}
\left(\frac{4(p-1)}{p}- \frac{4}{p} \varepsilon-(2+\varepsilon_0)\sqrt{\delta}   - \varepsilon_1\right)\langle |\nabla v^{\frac{p}{2}}|^2 \eta\rangle &\leq  \frac{C_1}{(R-r)^2} |B_{R}|^{\frac{1}{\theta'}}(1+\|Q\|_{{\rm BMO}}^2 )\langle v^{p\theta} \mathbf{1}_{B_{R}}\rangle^\frac{1}{\theta} \\
&+ C_2 \||f-\mu u|^\frac{p}{2} \mathbf{1}_{v>0} \mathbf{1}_{B_R} \|_2^2,
\end{align*}
where $\varepsilon$, $\varepsilon_0$, $\varepsilon_1$ are fixed sufficiently small so that the expression in the brackets in the LHS is strictly positive. The latter is possible since $\delta<4$ and $p>\frac{2}{2-\sqrt{\delta}}$.
This ends the proof of Lemma \ref{cacc_prop}.
\end{proof}

\begin{lemma}
\label{sup_bound_prop}
Fix $\alpha>0$ by $\alpha(\alpha + 1) = 1 - \frac{\theta(d-2)}{d}$.
Then, for all $0<r<R \leq 1$, 
\begin{equation}
\label{sup_bound_ineq}
\sup_{B_\frac{R}{2}} u \leq C \left(\frac{1}{|B_R|}\langle u^{p\theta} \mathbf{1}_{B_R \cap \{u>0 \}} \rangle \right)^\frac{1}{p\theta} \left( \frac{|B_R \cap \{u>0\}|}{|B_R|}\right)^\frac{\alpha}{p\theta}+ R^{\frac{2}{p}}
\end{equation}
for  $C$ independent of $n$, $m$, $r$ and $R$. 
\end{lemma}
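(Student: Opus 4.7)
The lemma is a standard De Giorgi sup-estimate, obtained by iterating the Caccioppoli inequality of Lemma~\ref{cacc_prop} against the Sobolev embedding. Since $u$ is classical it is bounded ($\|u\|_\infty \le \mu^{-1}\|f\|_\infty$), hence $M := \sup_{\mathbb R^d}|f-\mu u|<\infty$. Fix a parameter $\xi>0$ to be determined, and, for $m\ge 0$, set
$$k_m := \xi(1-2^{-m}),\qquad R_m := \tfrac{R}{2}(1+2^{-m}),\qquad v_m := (u-k_m)_+,\qquad A_m := B_{R_m}\cap\{u>k_m\},$$
so that $k_0=0$, $k_m\uparrow\xi$, $R_0=R$ and $R_m\downarrow R/2$. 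Write $p^*:=pd/(d-2)$ and $\alpha_0 := 1-\theta(d-2)/d$. Applying Lemma~\ref{cacc_prop} with $(r,R,k)=(R_{m+1},R_m,k_{m+1})$ (so $R_m-R_{m+1}=R\cdot 2^{-m-2}$) and coupling it with the Sobolev embedding $W^{1,2}_0(B_{R_m})\hookrightarrow L^{p^*}(B_{R_m})$ through a cutoff supported in $B_{R_m}$ and equal to $1$ on $B_{R_{m+1}}$, one obtains
$$\|v_{m+1}\|_{L^{p^*}(B_{R_{m+1}})}^{p}\;\le\; C\,\frac{4^{m}}{R^{2}}\,\|v_m\|_{L^{p\theta}(B_{R_m})}^{p}+C\,M^{p}\,|A_m|.$$
Since $p\theta<p^*$, Hölder yields $\|v_{m+1}\|_{L^{p\theta}(B_{R_{m+1}})}^{p\theta}\le \|v_{m+1}\|_{L^{p^*}(B_{R_{m+1}})}^{p\theta}|A_{m+1}|^{\alpha_0}$, while Chebyshev applied to the inequality $v_m\ge k_{m+1}-k_m=\xi 2^{-m-1}$ on $A_{m+1}$ gives $|A_{m+1}|\le \xi^{-p\theta}2^{(m+1)p\theta}X_m$, where $X_m:=\|v_m\|_{L^{p\theta}(B_{R_m})}^{p\theta}$.

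Combining these three estimates produces a coupled recursion for $(X_m,|A_m|)$. The key step is to introduce the renormalized quantity
$$z_m \;:=\; \frac{X_m}{\xi^{p\theta}}\Bigl(\frac{|A_m|}{|B_R|}\Bigr)^{-\alpha p\theta},$$
where $\alpha>0$ is exactly the positive root of the quadratic $\alpha(\alpha+1)=\alpha_0$. With this and no other choice of $\alpha$, the $X_m$-powers and the $|A_m|$-powers coming from Hölder and Chebyshev balance, and the coupled recursion collapses into the single De Giorgi-type inequality
$$z_{m+1}\;\le\; N\,C_0^{m}\,z_m^{1+\alpha},$$
with $N,C_0\ge 1$ independent of $m$; the inhomogeneous $M^p|A_m|$ contribution from Caccioppoli is absorbed by replacing $\xi$ with $\xi+c\,R^{2/p}$ for $c$ sufficiently large, which is the source of the additive $R^{2/p}$ term in~\eqref{sup_bound_ineq}.

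Lemma~\ref{dg_lemma} then implies $z_m\to 0$ provided the initial smallness threshold $z_0\le N^{-1/\alpha}C_0^{-1/\alpha^2}$ is met. Since $z_m\to 0$ forces $v_m\to 0$ uniformly on $B_{R/2}$, one concludes $u\le\xi$ on $B_{R/2}$; unpacking the threshold $z_0\le N^{-1/\alpha}C_0^{-1/\alpha^2}$ in terms of $X_0=\langle u_+^{p\theta}\mathbf{1}_{B_R\cap\{u>0\}}\rangle$ and $|A_0|=|B_R\cap\{u>0\}|$ and adding the shift $c\,R^{2/p}$ reproduces exactly the bound \eqref{sup_bound_ineq}.

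\textbf{Main obstacle.} The technically delicate point is the identification of $\alpha$ via the quadratic $\alpha(\alpha+1)=\alpha_0$: the Hölder step and the Chebyshev step each introduce a loss in powers of $|A_m|$, and only the combined quantity $X_m\,|A_m|^{-\alpha p\theta}$ with $\alpha$ the fixed point above produces a self-similar recursion of the standard De Giorgi type. Verifying this self-similarity algebraically, while simultaneously handling the inhomogeneous $M^p|A_m|$ contribution via the shift $\xi\mapsto\xi+cR^{2/p}$ so that it yields only the additive $R^{2/p}$ in the final bound, is the only nontrivial bookkeeping; after that, the conclusion is a direct invocation of Lemma~\ref{dg_lemma}.
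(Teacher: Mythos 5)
Your outline follows the paper's proof essentially step for step: the same sequences $k_m=\xi(1-2^{-m})$ and shrinking balls, the same three ingredients (Caccioppoli from Lemma~\ref{cacc_prop} plus Sobolev, H\"older interpolation between $L^{p\theta}$ and $L^{pd/(d-2)}$, Chebyshev), the same mechanism $\xi^p\geq R^2$ for absorbing the inhomogeneous term into the additive $R^{2/p}$, and the same invocation of Lemma~\ref{dg_lemma}. However, the one step you yourself single out as the nontrivial bookkeeping — the choice of the renormalized quantity $z_m$ — is wrong as written, and the recursion does not close with your definition.

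You set $z_m=\frac{X_m}{\xi^{p\theta}}\bigl(\frac{|A_m|}{|B_R|}\bigr)^{-\alpha p\theta}$, i.e.\ the measure ratio enters with a \emph{negative} exponent. Two things go wrong. First, Chebyshev only gives an \emph{upper} bound $|A_{m+1}|\leq (k_{m+1}-k_m)^{-p\theta}X_m$; there is no lower bound on $|A_{m+1}|$, so the factor $|A_{m+1}|^{-\alpha p\theta}$ in $z_{m+1}$ cannot be estimated from above in terms of $X_m$, and the coupled system does not collapse to $z_{m+1}\leq NC_0^m z_m^{1+\alpha}$ (one can also check directly that matching exponents with your $z_m$ would force $\alpha(1+p\theta)=1-\theta(d-2)/d$ rather than $\alpha(1+\alpha)=1-\theta(d-2)/d$). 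Second, even formally, the smallness threshold $z_0\leq N^{-1/\alpha}C_0^{-1/\alpha^2}$ for your $z_0$ would force $\xi\gtrsim X_0^{1/(p\theta)}\bigl(\frac{|A_0|}{|B_R|}\bigr)^{-\alpha}$, i.e.\ the measure ratio would appear with a negative power in the final bound — the opposite of \eqref{sup_bound_ineq}, where it appears with the positive power $\alpha/(p\theta)$ and improves the estimate when $|B_R\cap\{u>0\}|$ is small (this improvement is exactly what the oscillation-decay step, Lemma~\ref{H}, needs later). The correct choice, as in the paper, is $z_m:=\frac{1}{|B_R|}\langle v_m^{p\theta}\mathbf{1}_{B^m}\rangle\bigl(\frac{|A_m|}{|B_R|}\bigr)^{+\alpha}$: one multiplies the H\"older/Sobolev/Caccioppoli inequality $W_{m+1}\lesssim \gamma^m W_m\,Y_{m+1}^{\alpha(1+\alpha)}$ by $Y_{m+1}^{\alpha}\leq (k_{m+1}-k_m)^{-p\theta\alpha}W_m^{\alpha}$, so that $W_m\cdot W_m^{\alpha}=W_m^{1+\alpha}$ and $Y_{m+1}^{\alpha+\alpha^2}\leq Y_m^{\alpha+\alpha^2}$ reassemble into $z_m^{1+\alpha}$; the quadratic $\alpha(\alpha+1)=1-\frac{\theta(d-2)}{d}$ is precisely the condition for this reassembly, and the threshold on $z_0$ then yields $\xi\sim W_0^{1/(p\theta)}Y_0^{\alpha/(p\theta)}+R^{2/p}$, which is \eqref{sup_bound_ineq}.
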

\begin{proof}
Step 1.~Fix a family of cut-off functions $\eta=\eta_{r,R} \in C_c^\infty$ such that
\begin{align}
\label{cut_off_prop}
\eta = 1 \quad \text{on } B_r \qquad \eta = 0 \quad \text{on } \mathbb{R}^d \setminus {{\overline B}_{\frac{R+r}{2}}}
,
\end{align}
and 
\begin{align}
\label{cut_off_nabla}
|\nabla \eta| \leq \frac{c}{R-r}\mathbf{1}_{B_{\frac{r+R}{2}}}, \quad  \frac{|\nabla \eta|^2}{\eta} \leq \frac{c}{(R-r)^{2}}\mathbf{1}_{B_{\frac{r+R}{2}}}
\end{align}
with constant $c$ independent of $r$, $R$.
Set $v = (u-k)_+$, where $k\in \mathbb{R}$ will be chosen later. Using Sobolev's embedding theorem, we have
\begin{align*}
\langle v^{\frac{pd}{d-2}} \mathbf{1}_{B_{r}} \rangle ^\frac{d-2}{d} &\leq \langle (v^\frac{p}{2} \eta^\frac{1}{2}) ^{\frac{2d}{d-2}} \mathbf{1}_{B_{\frac{R+r}{2}}} \rangle ^\frac{d-2}{d}\\
&\leq C_S^2 \langle |\nabla (v^\frac{p}{2} \eta^\frac{1}{2}) |^2 \mathbf{1}_{B_{\frac{R+r}{2}}} \rangle \\
&\leq 2C_S^2 \left(\langle |\nabla v^\frac{p}{2} |^2 \eta \mathbf{1}_{B_{\frac{R+r}{2}}} \rangle +  \langle v^p \frac{|\nabla \eta|^2}{\eta} \mathbf{1}_{B_{\frac{R+r}{2}}} \rangle\right). 
\end{align*}
We rewrite the latter, after applying H\"{o}lder's inequality in the second term, as follows:
\begin{align*}
\|v^\frac{p}{2}  \mathbf{1}_{B_{r}} \|^2_{\frac{2d}{d-2}} \leq C_1 \left( \|(\nabla v^\frac{p}{2}) \mathbf{1}_{B_{\frac{R+r}{2}}} \|_2^2 + \frac{|B_{R}|^\frac{1}{\theta'}}{(R - r)^2} \| v^\frac{p}{2} \mathbf{1}_{B_\frac{R+r}{2}} \|^2_{2\theta}\right).
\end{align*}
Next, we apply Lemma \ref{cacc_prop} to the first term in the RHS, obtaining
\begin{align*}
\|v^\frac{p}{2} \mathbf{1}_{B_{r}} \|^2_{\frac{2d}{d-2}} &\leq C_2 \biggl(\frac{|B_{R}|^\frac{1}{\theta'}}{(R - r)^2} \| v^\frac{p}{2} \mathbf{1}_{B_R} \|^2_{2\theta}+  \| |f - \mu u|^\frac{p}{2} \mathbf{1}_{v>0} \mathbf{1}_{B_{R}} \|_2^2\biggr).
\end{align*}
Next, applying H\"older's inequality in the second term in the RHS, we estimate
\begin{align*}
\| |f - \mu u|^\frac{p}{2} \mathbf{1}_{v>0} \mathbf{1}_{B_{R}} \|_2^2 \leq \|f - \mu u \|^p_{\infty}  |B_{R} \cap \{ v>0\}|^\frac{1}{\theta} |B_R|^\frac{1}{\theta'},
\end{align*}
which gives us, upon noting that  $\|f - \mu u \|_{\infty} \leq 2\|f\|_\infty$,
\begin{align*}
\|v^\frac{p}{2} \mathbf{1}_{B_{r}} \|^2_{\frac{2d}{d-2}} \leq C_2\biggl( \frac{|B_{R}|^\frac{1}{\theta'}}{(R - r)^2}  \| v^\frac{p}{2} \mathbf{1}_{B_R} \|^2_{2\theta} + |B_{R}|^\frac{1}{\theta'} 2^p\|f\|_\infty^p |B_{R} \cap \{ u>k\}|^\frac{1}{\theta}\biggr). 
\end{align*}
Representing $|B_R|^\frac{1}{\theta'} = |B_R|^{\frac{d-2}{d}+\frac{2}{d}-\frac{1}{\theta}}$, and dividing both sides of the previous inequality by $|B_R|^{\frac{d-2}{d}}$, we have
\begin{equation}
\begin{split}
\label{s}
\frac{1}{|B_R|^\frac{d-2}{d}} \|  v^{\frac{p}{2}} \mathbf{1}_{B_{r}} \|^2_{\frac{2d}{d-2}} &\leq C_2\biggl( |B_R|^\frac{2}{d}  \frac{1}{(R - r)^2}  \frac{1}{|B_R|^\frac{1}{\theta}} \|  v^{\frac{p}{2}} \mathbf{1}_{B_{R}} \|^2_{2\theta}  \\
&\quad + |B_R|^\frac{2}{d} 2^p\|f\|_\infty^p \left( \frac{|B_{R} \cap \{u > k\}|}{|B_R|} \right)^\frac{1}{\theta}\biggr).
\end{split}
\end{equation}
Next, note that if $h<k$, then  $(u-k)_+\leq (u-h)_+$. Therefore, by Chebyshev's inequality,
$$
| B_{R}\cap \{u>k\}  |^\frac{1}{\theta} \leq \frac{1}{(k-h)^p} \langle (u-h)_+^{p\theta} \mathbf{1}_{B_{R}}\rangle^{\frac{1}{\theta}}.
$$
Recalling that $v=(u-k)_+$ and applying the last inequality in \eqref{s}, we obtain
\begin{align*}
\bigg(\frac{\langle (u-k)_+^{\frac{pd}{d-2}} \mathbf{1}_{B_{r}} \rangle}{|B_R|}\bigg)^\frac{d-2}{d} \leq C_3 |B_R|^\frac{2}{d} \biggl( \frac{1}{(R - r)^2 } + \frac{1}{(k-h)^p}\bigg)  \bigg(\frac{\langle  (u-h)_+^{p\theta} \mathbf{1}_{B_{R}} \rangle}{|B_R|}\biggr)^\frac{1}{\theta}.
\end{align*}
Now, using H\"{o}lder's inequality and then applying the previous estimate, we get
\begin{align*}
&\frac{\langle (u-k)^{p\theta}_+ \mathbf{1}_{B_{r}} \rangle}{|B_R|} \leq   \bigg( \frac{\langle (u-k)^{\frac{pd}{d-2}}_+ \mathbf{1}_{B_{r}} \rangle}{|B_R|}\bigg)^\frac{\theta(d-2)}{d}  \bigg( \frac{|B_{r} \cap \{u>k\}|}{|B_R|}  \bigg)^{1-\frac{\theta(d-2)}{d} }\notag \\
&\leq C_3^\theta |B_R|^\frac{2\theta}{d} \left( \frac{1}{(R - r)^2 } + \frac{1}{(k-h)^p}\right)^\theta \bigg( \frac{\langle  (u-h)_+^{p\theta} \mathbf{1}_{B_{R}} \rangle}{|B_R|}\bigg)\bigg( \frac{|B_{R} \cap \{u>k\}|}{|B_R|} \bigg)^{1-\frac{\theta(d-2)}{d}}
\end{align*}
Multiplying this inequality by 
$\left( \frac{|B_{r} \cap \{u>k\}|}{|B_R|} \right)^\alpha  \left[ \leq \frac{1}{(k-h)^{p\theta\alpha}} \left( \frac{\langle (u-h)_+^{p\theta} \mathbf{1}_{B_R} \rangle}{|B_R|} \right)^\alpha \right]$,
and selecting $\alpha>0$ as in the statement of the lemma, i.e.\,such that $\alpha(\alpha + 1) = 1 - \frac{\theta(d-2)}{d}$, we get
\begin{align*}
&\frac{\left\langle (u-k)^{p\theta}_+ \mathbf{1}_{B_{r}} \right\rangle}{|B_R|}  \bigg( \frac{|B_{r} \cap \{u>k\}|}{|B_R|} \bigg)^\alpha \\
&\leq C_3^\theta |B_R|^\frac{2\theta}{d} \left( \frac{1}{(R - r)^2 } + \frac{1}{(k-h)^p}\right)^\theta \frac{1}{(k-h)^{p\theta\alpha}} \left( \frac{\langle (u-h)_+^{p\theta} \mathbf{1}_{B_{R}} \rangle}{|B_R|} \left( \frac{|B_{R} \cap \{u>k\}|}{|B_R|} \right)^\alpha \right)^{1+\alpha}
\end{align*}
At the next step, we are going to iterate this inequality.

\medskip

Step 2.~Now, define
$$r_m := \frac{R}{2} \left( 1 + \frac{1}{2^m} \right), \quad B_m := B_{r_m}$$
$$  k_m := \xi ( 1 - 2^{-m} ),$$
for a positive constant $\xi$ to be determined later.
Setting $$z_m \equiv z(k_m,r_m) := \frac{\left\langle (u-k_m)^{p\theta}_+ \mathbf{1}_{B_{m}} \right\rangle}{|B_R|}  \bigg( \frac{|B_{m} \cap \{u>k_m\}|}{|B_R|} \bigg)^\alpha,$$
we rewrite the previous inequality, upon selecting $k:=k_{m+1}$ and $h:=k_m$ there, as
\begin{align}
\label{z_m}
z_{m+1} \leq C_3^\theta \frac{|B_R|^\frac{2\theta}{d}}{R^{2\theta}}\left( 2^{2m} + 2^{mp} \frac{R^2}{\xi^p}\right)^\theta \frac{2^{mp\theta\alpha}}{\xi^{p\theta \alpha}}z^{1+\alpha}_m. 
\end{align}
In what follows, we restrict out choice of constant $\xi$ to those satisfying
\begin{align}
\label{xi_1}
\xi^p \geq R^2.
\end{align} 
Then, since $p \geq 2$,
$$z_{m+1} \leq \big(\frac{C_4}{\xi^{p\alpha}}\big)^\theta   2^{mp\theta(1+\alpha)} z_m^{1+\alpha}.$$
Therefore, setting $C_0 = 2^{p\theta(1+\alpha)}$ and $N =( \frac{C_4}{\xi^{p\alpha}})^\theta $, we have the first inequality in Lemma \ref{dg_lemma}, i.e.\,$z_{m+1} \leq N C_0^i z^{1+\alpha}_m$. To apply this lemma, we need to verify the second inequality there, i.e.\,$$z_0 \leq N^{-\frac{1}{\alpha}}C_0^{-\frac{1}{\alpha^2}},$$
where, recall,
$z_0 =\frac{\langle u^{p\theta} \mathbf{1}_{B_R \cap \{u>0\}} \rangle}{|B_R|}\big( \frac{|B_R \cap \{u>0\}|}{|B_R|}\big)^\alpha. $
The previous inequality holds, by the definition of $N$, if we select $\xi$ satisfying
\begin{align}
\label{xi_2}
\xi \geq 2^{\frac{1+\alpha}{\alpha^2}}C_4^\frac{1}{p\alpha}z_0^\frac{1}{p\theta} 
\end{align}
We combine \eqref{xi_1} and \eqref{xi_2} by taking $\xi = 2^{\frac{1+\alpha}{\alpha^2}}C_4^\frac{1}{p\alpha}z_0^\frac{1}{p\theta} + R^{\frac{2}{p}}$. Now Lemma \ref{dg_lemma} yields $z(\xi,\frac{R}{2})=0$, i.e.\,$\sup_{B_\frac{R}{2}} u \leq \xi$.
Hence,
$
\sup_{B_\frac{R}{2}} u \leq C z_0^\frac{1}{p\theta} + R^{\frac{2}{p}},
$ as claimed. The proof of Lemma \ref{sup_bound_prop} is completed.
\end{proof}

\medskip

Set
$$
\text{osc}(u,R):= \sup_{y, y' \in B_R} |u(y)-u(y')|.
$$

\begin{lemma}
\label{H}
Fix $ k_0 $ by
$
2k_0 = M(2R) + m(2R) := \sup_{B_{2R}} u + \inf_{B_{2R}} u.
$
Assume that $ |B_R \cap \{u > k_0\}| \leq \gamma |B_R| $ for some $\gamma < 1$. If
\begin{equation}
\label{o}
{\rm osc}(u, 2R) \geq 2^{n+1} C R^\frac{2}{p},
\end{equation}
then, for $ k_n := M(2R) - 2^{-n-1} {\rm osc}(u, 2R) $,
$$
\frac{|B_R \cap \{u > k_n\}|}{|B_R|} \leq c n^{-\frac{d}{2(d-1)}}.
$$
\end{lemma}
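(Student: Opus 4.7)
The plan is to adapt the classical De Giorgi intermediate-value argument, iterating between a sequence of levels and coupling the Stampacchia--De Giorgi isoperimetric inequality with a Caccioppoli-type estimate applied to a bounded truncation. Extending the statement's level $k_n$ to $k_j := M(2R) - 2^{-j-1}\text{osc}(u,2R)$ for $0 \leq j \leq n$, set $L_j := k_{j+1}-k_j = 2^{-j-2}\text{osc}(u,2R)$ and $A_j := B_R \cap \{u > k_j\}$. The sets are nested ($A_n \subset \ldots \subset A_0$) and the hypothesis gives $|B_R \setminus A_j| \geq |B_R \setminus A_0| \geq (1-\gamma)|B_R|$ for every $j$. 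Applying the classical Stampacchia--De Giorgi isoperimetric inequality to $(u-k_j)_+$ on $B_R$ (whose zero set has measure at least $(1-\gamma)|B_R|$) yields, for every $0 \leq j \leq n-1$,
$$
L_j\, |A_{j+1}|^{(d-1)/d} \;\leq\; \frac{C}{(1-\gamma)^{(d-1)/d}} \int_{A_j \setminus A_{j+1}} |\nabla u|.
$$

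Next, to estimate the right-hand side I would introduce the bounded truncation $w_j := (u-k_j)_+ \wedge L_j$, so that $0 \leq w_j \leq L_j$ and $\nabla w_j = \nabla u \cdot \mathbf{1}_{\{k_j < u < k_{j+1}\}}$. The key analytic input is a Caccioppoli-type inequality
$$
\|\nabla w_j\|_{L^2(B_R)}^2 \;\leq\; C\, L_j^2\, R^{d-2},
$$
obtained by rerunning the proof of Lemma \ref{cacc_prop} with test function $w_j\eta^2$ (rather than $v^{p-1}\eta$), where $\eta$ is a cutoff supported in $B_{2R}$ with $\eta \equiv 1$ on $B_R$. Since $w_j$ is $L^\infty$-bounded by $L_j$, the form-bounded drift contribution can be handled by the crude estimate $|\langle b\cdot \nabla u, w_j\eta^2\rangle| \leq \varepsilon \|\eta \nabla w_j\|_2^2 + \varepsilon^{-1}L_j^2 \|b\eta\|_2^2$ followed by $b \in \mathbf{F}_\delta$, without invoking the fine $p$-power structure of Lemma \ref{cacc_prop}; the distributional component $q = \nabla Q$ is handled by the antisymmetry trick from that same lemma, contributing a term of order $L_j^2\, |A_j|^{1/\theta}|B_{2R}|^{1/\theta'}/R^2 \lesssim L_j^2 R^{d-2}$. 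The forcing produces an additive error $\lesssim \|f - \mu u\|_\infty^2 |A_j| \lesssim \|f\|_\infty^2 R^d$, and hypothesis \eqref{o} absorbs it into the main term: indeed $L_j \geq 2^{n-j-1}C R^{2/p} \geq C R^{2/p}$ for all $j \leq n-1$, so $L_j^2 R^{d-2} \gtrsim R^{4/p+d-2}$ dominates $\|f\|_\infty^2 R^d$ for $R \leq 1$ and $p \geq 2$, provided the constant $C$ in \eqref{o} is chosen large enough.

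Combining the two inputs via Cauchy--Schwarz,
$$
\int_{A_j \setminus A_{j+1}} |\nabla u| = \int |\nabla w_j| \;\leq\; |A_j \setminus A_{j+1}|^{1/2}\,\|\nabla w_j\|_{L^2(B_R)} \;\leq\; C\, L_j\, R^{(d-2)/2}\, |A_j \setminus A_{j+1}|^{1/2},
$$
the factor $L_j$ cancels on substitution, yielding
$$
|A_{j+1}|^{(d-1)/d} \;\leq\; C\, R^{(d-2)/2}\, |A_j \setminus A_{j+1}|^{1/2}.
$$
Since $|A_n| \leq |A_{j+1}|$ for every $0 \leq j \leq n-1$, summing $j=0,\ldots,n-1$ and applying Cauchy--Schwarz to the resulting sum, together with the disjointness $\sum_{j=0}^{n-1} |A_j \setminus A_{j+1}| \leq |A_0| \leq \gamma |B_R|$, gives
$$
n\,|A_n|^{(d-1)/d} \;\leq\; C\, R^{(d-2)/2}\, n^{1/2}\, |B_R|^{1/2} \;\leq\; C\, n^{1/2}\, R^{d-1}.
$$
Raising to the $d/(d-1)$-th power delivers exactly $|A_n| \leq c\, n^{-d/(2(d-1))}\, |B_R|$, as claimed.

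The main obstacle is the Caccioppoli-type estimate for $w_j$. Lemma \ref{cacc_prop} is formulated in terms of $\nabla v^{p/2}$ with $p > 2/(2-\sqrt{\delta})$, and translating a bound on $\|\nabla w_j^{p/2}\|_2$ back to $\|\nabla w_j\|_2$ is lossy on $\{w_j \ll L_j\}$ as soon as $p > 2$. The truncation is precisely what circumvents this: once $w_j$ is $L^\infty$-bounded, the form-bounded drift no longer requires the delicate $p$-dependent algebra, and a clean $L^2$ energy estimate is available; the $\mathbf{BMO}^{-1}$ trick transfers (up to accounting for the fact that $\nabla u \cdot w_j$ splits as $\tfrac{1}{2}\nabla w_j^2$ on $\{w_j < L_j\}$ plus a $L_j$-multiple of $\nabla u$ on $\{u > k_{j+1}\}$, the latter being absorbed by an integration by parts against $\eta^2$). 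Hypothesis \eqref{o} then plays its traditional De Giorgi role, ensuring that at every intermediate level encountered in the iteration the forcing remains subordinate to the intrinsic Dirichlet energy.
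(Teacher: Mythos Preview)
Your overall architecture is correct and matches the paper's: De Giorgi's level-set argument coupling the isoperimetric inequality with a Caccioppoli-type energy estimate, summation over levels, and absorption of the forcing via hypothesis \eqref{o}. The gap is in the Caccioppoli step for the \emph{doubly truncated} function $w_j=(u-k_j)_+\wedge L_j$.

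When you test the equation against $w_j\eta^2$, the drift term $\langle b\cdot\nabla u,\,w_j\eta^2\rangle$ involves $\nabla u$ on the plateau $\{u>k_{j+1}\}$, where $\nabla w_j=0$ but $\nabla u$ is unrestricted. Your ``crude estimate'' $|\langle b\cdot\nabla u, w_j\eta^2\rangle|\le\varepsilon\|\eta\nabla w_j\|_2^2+\varepsilon^{-1}L_j^2\|b\eta\|_2^2$ silently replaces $\nabla u$ by $\nabla w_j$ on that set, which is false. Rewriting $w_j\nabla u=\nabla G_j(u)$ (with $G_j$ a primitive) and integrating by parts only trades the problem for a ${\rm div\,}b$ term, which is not assumed to be controlled in Theorem~\ref{thm2}. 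The same plateau contribution reappears in the $q$-term: after the antisymmetry trick one is left with $L_j\langle \tilde Q\cdot\nabla(\eta^2),\nabla(u-k_{j+1})_+\rangle$, again containing an uncontrolled $\nabla u$ on $\{u>k_{j+1}\}$. Your remark about ``absorbing by an integration by parts against $\eta^2$'' does not close this.

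The paper sidesteps the issue by \emph{not} truncating from above: it applies the $L^p$-Caccioppoli inequality (Lemma~\ref{cacc_prop}, stopped at \eqref{int_est} and combined with the John--Nirenberg inequality) to the full $(u-h)_+^{p/2}$, obtaining
\[
\langle|\nabla(u-h)_+^{p/2}|^2\mathbf 1_{B_R}\rangle \;\le\; C R^{d-2}(M(2R)-h)^p + C R^d,
\]
and uses \eqref{o} to absorb the $R^d$ term. The $p/2$-power is precisely what makes the form-bounded drift closable when $\delta\in[1,4)$; your attempt to bypass it via truncation and work in $L^2$ fails because for $p>2$ there is no way to recover $\|\nabla w_j\|_{L^2(B_R)}$ from the available $L^p$-Caccioppoli estimate---the conversion factor $(u-k_j)_+^{1-p/2}$ blows up near the lower truncation level. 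The fix is simply to follow the paper and bound $\int_{A_j\setminus A_{j+1}}|\nabla w| \le |A_j\setminus A_{j+1}|^{1/2}\|\nabla(u-k_{j-1})_+^{p/2}\|_{L^2(B_R)}$ with $w=(u-k_{j-1})_+^{p/2}\wedge(k_j-k_{j-1})^{p/2}$, using the untruncated Caccioppoli bound on the second factor.
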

\begin{proof}[Proof of Lemma \ref{H}]
Let $ h \in (k_0, k) $. Define
$$
w :=
\begin{cases} 
    (u - h)^{\frac{p}{2}} & \text{if } h < u < k, \\
    (k - h)^{\frac{p}{2}} & \text{if } u \geq k, \\
    0 & \text{if } u \leq h.
\end{cases}
$$
Note that $w=0$ in  $B_R \setminus (B_R \cap \{u>k_0\})$. The measure of this set is greater than $\gamma|B_R|$, so the Sobolev embedding theorem yields
\begin{align*}
(k - h)^{\frac{p}{2}} \left| B_R \cap \{u > k\} \right|^{\frac{d-1}{d}} &\leq c_1 \langle w^{\frac{d}{d-1}} \mathbf{1}_{B_R} \rangle^{\frac{d-1}{d}} \\
&\leq c_2 \langle |\nabla w| \mathbf{1}_\Delta \rangle \\
&\leq c_2 |\Delta|^{\frac{1}{2}} \langle |\nabla (u - h)^{\frac{p}{2}}|^2 \mathbf{1}_{B_R \cap \{u > h\}} \rangle^{\frac{1}{2}},
\end{align*}
where $\Delta := B_R \cap \{u >h\} \setminus (B_R \cap \{u >k\}).$
On the other hand, repeating the proof of Lemma \ref{cacc_prop}, but estimating the term $I_3$ there via \eqref{int_est} rather than going all the way to \eqref{F_estimate2}, we obtain
\begin{align*}
\langle |\nabla (u-h)^\frac{p}{2}|^2 \mathbf{1}_{B_R \cap \{u>h\}}  \rangle &\leq \frac{C_1}{R^2} \langle (u-h)^p \mathbf{1}_{B_{2R} \cap \{u>h\}}  \rangle \\
&+ \frac{C_2}{R^2} \langle |Q-(Q)_R|^2 (u-h)^p \mathbf{1}_{B_{2R} \cap \{u>h\}}  \rangle \\
&+ C_3 \langle  |f-\mu u|^p \mathbf{1}_{B_{2R} \cap \{u>h\}}  \rangle
\end{align*}
Applying the John-Nirenberg inequality \eqref{JN_ineq} in the second term, we get
\begin{align*}
\langle |\nabla (u-h)^\frac{p}{2}|^2 \mathbf{1}_{B_R \cap \{u>h\}}  \rangle &\leq C_4 R^{d-2} (M(2R)-h)^p + \frac{C_5}{R^2}(M(2R)-h)^p \cdot R^d \|Q\|^2_{{\rm BMO}} \\
&+ C_6\|f-\mu u\|^p_\infty R^d\\
&\leq CR^{d-2}(M(2R)-h)^p +  C R^d,
\end{align*}
(we have used $\|f-\mu u\|_\infty \leq 2\|f\|_\infty$).
For $ h \leq k_n $, we have $ M(2R) - h \geq M(2R) - k_n = 2^{-n-1}{\rm osc}(u,2R) \geq C R^\frac{2}{p}$, where we have used \eqref{o}, in which case
\begin{align*}
(k-h)^\frac{p}{2}|B_R \cap \{u > k\}|^{\frac{d-1}{d}} \leq C |\Delta|^\frac{1}{2}  R^{\frac{d-2}{2}} (M(2R) - h)^{\frac{p}{2}}.
\end{align*}
Now, choosing increasing finite sequence $k= k_i := M(2R) -2^{-i-1}{\rm osc}(u,2R)$ for $i\in \{1,2,\cdots, n\}$ and $h = k_{i-1}$. Then
$$
M(2R)-h = 2^{-i} {\rm osc}(u,2R), \quad  |k-h| = 2^{-i-1}{\rm osc}(u,2R)
$$
so 
$$
\left| B_R \cap \{u > k_n\} \right|^{\frac{2(d-1)}{d}} \leq \left| B_R \cap \{u > k_i\} \right|^{\frac{2(d-1)}{d}} \leq C|\Delta_i| R^{d-2},
$$
where \(\Delta_i = B_R \cap \{u >k_i\} \setminus (B_R \cap \{u >k_{i-1}\})]\). Summing up over $i$, we obtain
$$
n\left| B_R \cap \{u > k_n\} \right|^{\frac{2(d-1)}{d}} \leq CR^{d-2} \left| B_R \cap \{u > k_0\} \right| \leq C'R^{2(d-1)},
$$
and the claimed inequality follows. This ends the proof of Lemma \ref{H}.
\end{proof}

\medskip

We are in position to end the proof of Proposition \ref{thm_holder}. Fix $k_0 = \frac{1}{2}(M(2R) + m(2R))$. Without loss of generality, $|B_R \cap \{u>k_0\}| \leq \frac{1}{2}|B_R|$ (otherwise replace $u$ by $-u$). Set $k_n = M(2R) - 2^{-n-1}{\rm osc}(u,2R) >k_0$. By Lemma \ref{sup_bound_prop} applied to $u-k_n$ ($k_n$ adds constant term $\mu k_n$ in the equation, but since $\{k_n\}$ is bounded, the constant in Lemma \ref{sup_bound_prop} can be chosen to be independent of $n$), we have
\begin{align}
\label{sup_half}
\sup_{B_{\frac{R}{2}}} (u-k_n) &\leq C_1 \bigg( \frac{1}{|B_R|}\langle (u-k_n)^{p\theta} \mathbf{1}_{B_R\cap \{u>k_n\}}\bigg)^\frac{1}{p\theta} \bigg(\frac{|B_R \cap \{u>k_n\}|}{|B_R|}\bigg)^\frac{\alpha}{p\theta} + R^\frac{2}{p} \notag\\
&\leq C_1 \sup_{B_R}(u-k_n)\bigg(\frac{|B_R \cap \{u>k_n\}|}{|B_R|}\bigg)^\frac{1+\alpha}{p\theta} +R^\frac{2}{p}.
\end{align}
Fix $n$ by $cn^{-\frac{d}{2(d-1)}}\leq (\frac{1}{2C_1})^\frac{p\theta}{1+\alpha}$. We consider two cases. First, let ${\rm osc}(u,2R) \geq 2^{n+1}R^\frac{2}{p}$. Then by Lemma \ref{H} (with, say, $C=1$) applied in the RHS of \eqref{sup_half},
\begin{align*}
M(R/2) - k_n &\leq  \frac{1}{2}(M(2R)-k_n) +R^\frac{2}{p}
\end{align*}
so
\begin{align*}
M(R/2) \leq M(2R) - \frac{1}{2^{n+1}} {\rm osc}(u,2R) + \frac{1}{2}\frac{1}{2^{n+1}}{\rm osc}(u,2R) + R^\frac{2}{p}, 
\end{align*}
which yields
\begin{align*}
M(R/2) - m(R/2) &\leq M(2R) - m(2R) - \frac{1}{2}\frac{1}{2^{n+1}}{\rm osc}(u,2R) + R^\frac{2}{p}\\
&= \left( 1-\frac{1}{2^{n+2}}\right){\rm osc}(u,2R) + R^\frac{2}{p}.
\end{align*}
Next, if ${\rm osc}(u,2R) \leq 2^{n+1} R^\frac{2}{p}$, then
\begin{align*}
{\rm osc}(u, R/2)\leq (1- \frac{1}{2^{n+2}}) {\rm osc}(u,2R) + \frac{1}{2} R^\frac{2}{p}.
\end{align*}
This result provides the desired H\"older continuity of $u$ by applying the next Lemma \ref{scaling_lemma} with $\tau = \frac{1}{4}$, $\delta = \log_{\tau}(1 - 2^{-n-1})$, and $0 < \beta < \frac{2-p}{p} \wedge \delta$. Note that the second inequality in Lemma \ref{H} is satisfied when $q = 1$ and $\varphi$ is non-decreasing, which is our situation. \hfill \qed

\begin{lemma}[{\cite[Lemma 7.3]{G}}]
\label{scaling_lemma}
Let $\varphi(t)$ be a positive function, and assume that there exists a constant $q$ and a number $0 < \tau < 1$ such that for every $0 < R < R_0$,
\[
\varphi(\tau R) \leq \tau^\delta \varphi(R) + B R^\beta
\]
with $0 < \beta < \delta$, and
\[
\varphi(t) \leq q \varphi(\tau^k R)
\]
for every $t$ in the interval $(\tau^{k+1} R, \tau^k R)$. Then, for every $0 < \rho < R < R_0$, we have
\[
\varphi(\rho) \leq C \left( \frac{\rho}{R} \right)^\beta \varphi(R) + B \rho^\beta
\]
with a constant $C$ that depends only on $q$, $\tau$, $\delta$, and $\beta$.
\end{lemma}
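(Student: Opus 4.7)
The statement is the classical iteration lemma of Giaquinta–Modica, proved by iterating the recurrence at the geometric scales $R_k := \tau^k R$ and then using the doubling/comparability hypothesis to pass from these scales to arbitrary radii $\rho \in (0,R)$.

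Concretely, the plan is as follows. First, by applying the recurrence $\varphi(\tau R') \leq \tau^\delta \varphi(R') + B(R')^\beta$ successively with $R' = R_0:=R$, $R' = R_1$, \dots, I would show by induction that
\[
\varphi(R_k) \;\leq\; \tau^{k\delta}\varphi(R) \;+\; B\,R^\beta \sum_{j=0}^{k-1} \tau^{j\delta}\,\tau^{(k-1-j)\beta}.
\]
The sum is a geometric progression: factoring out $\tau^{(k-1)\beta}$ leaves $\sum_{j=0}^{k-1}\tau^{j(\delta-\beta)}$, which is bounded by $1/(1-\tau^{\delta-\beta})$ precisely because $\delta>\beta$ (this is where that hypothesis is used). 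Combining this with the crude bound $\tau^{k\delta}\leq \tau^{k\beta}$, one gets the clean estimate
\[
\varphi(R_k)\;\leq\; C_0\,\tau^{k\beta}\bigl(\varphi(R) + B R^\beta\bigr)
\]
with $C_0=C_0(\tau,\delta,\beta)$ independent of $k$.

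Second, for an arbitrary $0<\rho<R$, pick the unique integer $k\geq 0$ with $\tau^{k+1}R<\rho\leq\tau^k R$, so that $\tau^k \leq \tau^{-1}\rho/R$. The comparability hypothesis $\varphi(\rho)\leq q\,\varphi(\tau^k R)=q\,\varphi(R_k)$ then combines with the previous bound to give
\[
\varphi(\rho)\;\leq\; q\,C_0\,\tau^{-\beta}\Bigl(\tfrac{\rho}{R}\Bigr)^\beta\bigl(\varphi(R)+BR^\beta\bigr)\;=\;C\Bigl(\tfrac{\rho}{R}\Bigr)^\beta\varphi(R)+C\,B\,\rho^\beta,
\]
with $C=C(q,\tau,\delta,\beta)$, which is exactly the assertion of the lemma.

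There is no genuine obstacle here; the whole argument is a bookkeeping exercise. The only two points that must not be overlooked are (i) the strict inequality $\delta>\beta$, which guarantees summability of the geometric series arising from iterating the inhomogeneous recurrence, and (ii) the use of the doubling hypothesis, which is essential for converting the dyadic-type bound at the scales $R_k$ into a continuous estimate valid at arbitrary $\rho\in(0,R)$ while losing only an inessential constant factor.
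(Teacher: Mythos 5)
Your proof is correct and is the standard iteration argument for this lemma; the paper itself gives no proof, simply citing it from Giusti \cite[Lemma 7.3]{G}, whose proof proceeds exactly as you describe (iterate at the scales $\tau^k R$, sum the geometric series using $\beta<\delta$, then use the comparability hypothesis to interpolate to arbitrary $\rho$). The only cosmetic discrepancy is that your argument yields $\varphi(\rho)\leq C\bigl((\rho/R)^\beta\varphi(R)+B\rho^\beta\bigr)$ with the constant multiplying both terms, which is in fact how Giusti states the conclusion; the version transcribed in the paper, with bare $B\rho^\beta$, is a harmless misquotation and makes no difference in the application.
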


\bigskip

\section{Proof of Proposition \ref{sep_thm} (Separation property)}

\label{sep_thm_sect}

\medskip

Step 1.~Without loss of generality, $x=0$.
 We will need the following local estimate on solution $u=u_{n,m}$ of \eqref{eq10}: for all $\mu \geq \mu_0>0$,
\begin{equation}
\label{dg_ineq_u}
\sup_{B_{\frac{1}{2}}} |u| \leq K \biggl( \langle |u|^{p\theta}\mathbf{1}_{B_1}\rangle^{\frac{1}{p\theta}} + \big\langle |f|^{p\theta'}\mathbf{1}_{B_1}\big\rangle^{\frac{1}{p\theta'}} \biggr).
\end{equation}
In fact, it suffices to prove the previous estimate for $\sup_{B_{\frac{1}{2}}} u_+$ in the LHS.

To that end, we first establish the following Caccioppoli's inequality for $v:=(u-k)_+$, $k \geq 0$:
\begin{equation}
\label{ineq_h_n}
\|v\|^{p}_{L^{\frac{pd}{d-2}}(B_{r})} \leq C\biggl[(R-r)^{-2}|B_{R}|^{\frac{1}{\theta'}}\|v\|^{p}_{L^{p\theta}(B_{R})}+\|f\mathbf{1}_{u>k}\|_{L^p(B_R)}^p\biggr].
\end{equation}
To prove \eqref{ineq_h_n}, we argue as in the proof of Lemma \ref{cacc_prop}, but treat the term $\mu u$ differently: since $\mu$ and $k$ are non-negative, we have
$$
\mu (u-k) - \Delta (u-k) + (b+q)\cdot \nabla (u-k) \leq f.
$$
Therefore, multiplying the previous inequality by $v^{p-1}\eta$, with the cutoff function $\eta$ defined by \eqref{eta_ineq}, \eqref{eta_ineq2}, and repeating the proof of Lemma \ref{cacc_prop}, we obtain,
for all $0<r<R \leq 1$,
\begin{align*}
\|v^{\frac{p}{2}}\|^2_{W^{1,2}(B_{r})}  \leq  
C_1\bigg[\frac{1}{(R-r)^2} |B_{R}|^{\frac{1}{\theta'}} \|v^{\frac{p}{2}}\|_{L^{2\theta}(B_R)} + \|f\mathbf{1}_{u>k}\|_{L^p(B_R)}^p\bigg].
\end{align*}
The Sobolev embedding theorem now yields \eqref{ineq_h_n}.

Set $$R_m:=\frac{1}{2}+\frac{1}{2^{m+1}}, \quad m \geq 0,$$
so $B^m :=B_{R_m}$ is a decreasing sequence of balls converging to the ball of radius $\frac{1}{2}$. 
For the purposes of this proof, we can estimate $|B_{R}|^{\frac{1}{\theta'}} \leq 1$, which will make the iterations below converge slower, but will not change the sought estimate \eqref{dg_ineq_u}. Estimate \eqref{ineq_h_n} gives us 
\begin{align}
\|v\|^{p}_{L^{\frac{pd}{d-2}}(B_{r})} & \leq C_1 2^{2m}
\|v\|_{L^{p\theta}(B^{m})}^{p}+ C_2  \|f\mathbf{1}_{u>k}\|_{L^p(B^m)}^p \notag \\
& \leq C_1 2^{2m}
\|v\|_{L^{p\theta}(B^{m})}^{p} + C_2 H|B^{m} \cap \{v>0\}|^{\frac{1}{\theta}},
\label{theta_d_d-2_n}
\end{align}
where
$
H:=\langle |f|^{p\theta'}\mathbf{1}_{B^0}\rangle^{\frac{1}{\theta'}}
$
($B^0=B_1$ is the ball of radius 1).
On the other hand, by H\"{o}lder's inequality,
$$
 \|v\|_{L^{p\theta}(B^{m+1})}^{p\theta} \leq  \|v\|_{L^{\frac{pd}{d-2}}(B^{m+1})}^{p\theta} \biggl(|B^{m} \cap \{v>0\}| \biggr)^{1-\frac{(d-2)\theta}{d}}.
$$
Applying \eqref{theta_d_d-2_n} in the first multiple in the RHS, we obtain
$$
 \|v\|_{L^{p\theta}(B^{m+1})}^{p\theta} \leq C\biggl( 2^{2\theta m}\|v\|_{L^{p\theta}(B^{m})}^{p\theta}+ H^{\theta}|B^{m} \cap \{v>0\}| \biggr) \biggl(|B^{m} \cap \{v>0\}| \biggr)^{1-\frac{(d-2)\theta}{d}}.
$$

Put $$v_m:=(u-k_m)_+, \quad k_m:=\xi(1-2^{-m}) \uparrow \xi,$$ where constant $\xi>0$ will be chosen later.
Using $2^{2\theta m} \leq 2^{p\theta m}$ and dividing by $\xi^{p\theta}$, we obtain
\begin{align*}
&\frac{1}{\xi^{p\theta}}  \|v_{m+1}\|_{L^{p\theta}(B^{m+1})}^{p\theta} \\
&\leq C\biggl( \frac{2^{p\theta m}}{\xi^{p\theta}}\|v_{m+1}\|_{L^{p\theta}(B^{m})}^{p\theta}  + \frac{1}{\xi^{p\theta}} H^{\theta}|B^{m} \cap \{u>k_{m+1}\}|\biggr) \bigl(|B^{m} \cap \{u>k_{m+1}\}| \bigr)^{1-\frac{(d-2)\theta}{d}}.
\end{align*}
From now on, we require that constant $\xi$ satisfies $\xi^p \geq H$, so
\begin{align}
\label{dg_ineq7}
&\frac{1}{\xi^{p\theta}}  \|v_{m+1}\|_{L^{p\theta}(B^{m+1})}^{p\theta} \\
&\leq C\biggl( \frac{2^{p\theta m}}{\xi^{p\theta}}\|v_{m+1}\|_{L^{p\theta}(B^{m})}^{p\theta}  + |B^{m} \cap \{u>k_{m+1}\}|\biggr) \bigl(|B^{m} \cap \{u>k_{m+1}\}| \bigr)^{1-\frac{(d-2)\theta}{d}}. \notag
\end{align}
Now, 
\begin{align*}
|B^{m} \cap \{u>k_{m+1}\}| & = \big|B^{m} \cap \bigg\{\bigg(\frac{u-k_m}{k_{m+1}-k_m}\bigg)^{2\theta}>1\bigg\}\big| \\
&\leq (k_{m+1}-k_m)^{-p\theta} \langle v^{p\theta}_m \mathbf{1}_{B^{m}} \rangle  = \xi^{-p\theta}2^{p\theta(m+1)} \|v_m\|_{L^{p\theta}(B^{m})}^{p\theta},
\end{align*}
so using $\|v_{m+1}\|_{L^{p\theta}(B^{m})} \leq \|v_{m}\|_{L^{p\theta}(B^{m})}$ in \eqref{dg_ineq7} and applying the previous inequality, we obtain
$$
\frac{1}{\xi^{p\theta}} \|v_{m+1}\|_{L^{p\theta}(B^{m+1})}^{p\theta} \leq C 2^{p\theta m(2-\frac{\theta}{\theta_0})} \biggl(\frac{1}{\xi^{p\theta}}\|v_m\|_{L^{p\theta}(B^{m})}^{p\theta}\biggr)^{2-\frac{(d-2)\theta}{d}}.
$$

Denote
$
z_m:=\frac{1}{\xi^{p\theta}}\|v_m\|_{L^{p\theta}(B^{m})}^{p\theta}.
$
Then
$$
z_{m+1} \leq C\gamma^m z_m^{1+\alpha}, \quad m=0,1,2,\dots, \quad \alpha:=1-\frac{(d-2)\theta}{d},\;\; \gamma:=2^{p\theta (2-\frac{(d-2)\theta}{d})}
$$
and $z_0 = \frac{1}{\xi^{p\theta}}\langle u_+^{p\theta}\mathbf{1}_{B^0} \rangle \leq C^{-\frac{1}{\alpha}}\gamma^{-\frac{1}{\alpha^2}}$ (recall: $B^0:=B_{R_0} \equiv B_1$)
provided that we fix $c$ by $$\xi^{p\theta}:=C^{\frac{1}{\alpha}}\gamma^{\frac{1}{\alpha^2}}\langle u_+^{p\theta}\mathbf{1}_{B^0}\rangle + H^\theta.$$
Hence, by Lemma \ref{dg_lemma}, $z_m \rightarrow 0$ as $m \rightarrow \infty$. It follows that
$
\sup_{B_{1/2}}u_+ \leq \xi,
$
and the claimed inequality follows.

\medskip

Step 2.~Next, we bound $\langle |u|^{p\theta}\mathbf{1}_{B_1}\rangle^{\frac{1}{p\theta}}$ using Lemma \ref{elem_lem} with  $s=p\theta$, which allows us to conclude that if $\sigma$ (in the definition of weight $\rho$) is fixed sufficiently small, then for all $x \in \mathbb R^d$
$$
\sup_{B_{\frac{1}{2}}(x)} u_+ \leq K \biggl( \langle |f|^{p\theta}\rho_x\rangle^{\frac{1}{p\theta}} + \big\langle |f|^{p\theta'}\mathbf{1}_{B_1(x)}\big\rangle^{\frac{1}{p\theta'}} \biggr).
$$
This ends the proof of Proposition \ref{sep_thm}. \hfill \qed

\bigskip

\bigskip

\section{Proof of Proposition \ref{thm_conv} (Convergence)}

\label{thm_conv_sect}

The proof given below is close to \cite[Proof of Theorem 4.3]{KiS_theory}. This argument (in $L^2$) can, in principle, be replaced by an argument based on the Lions' variational approach that handles well both $b \in \mathbf{F}_\delta$ and $q \in \mathbf{BMO}^{-1}$ (regarding the latter, see \cite{QX}).

\medskip

In order to prove the existence of the limit, we need to construct first intermediate semigroups $e^{-t\Lambda(b_n,q)}$ in $L^p$. Here $b_n$ are bounded and smooth, but $q$ can be singular. 

\medskip

Step 1.~At the first step, we construct $e^{-t\Lambda(0,q)}$ in $L^2$. Here we work over complex numbers (in this regard, see Remark \ref{hol_rem}). Define sesquilinear form
$$
\tau[v,w]:=\langle \nabla v,\nabla w\rangle + \langle q \cdot \nabla v,w\rangle, \quad D(\tau)=W^{1,2},
$$
where $\langle v,w\rangle=\langle v\bar{w}\rangle$.

a) $\tau$ is bounded:
$
|\tau[v,w]| \leq C\|\nabla v\|_2\|\nabla w\|_2.
$
Indeed, by the compensated compactness estimate (Proposition \ref{cc_lem}), $|\langle q\cdot \nabla v,w\rangle| \leq C'\|\nabla v\|_2\|\nabla w|_2$.

b) $\tau$ is a sectorial form:
$$
\Imag \tau[v,v] \leq K\Real \tau[v,v], \quad v \in D(\tau)
$$
for some constant $K>0$.
Indeed, writing $v=r+ie$, where $r,e$ are real-valued elements of $W^{1,2}$, we have
\begin{align*}
\tau[v,v] & =\langle |\nabla r|^2+|\nabla e|^2\rangle + \langle q \cdot \big[(\nabla r)r+(\nabla e)e \big]\rangle + i\langle q \cdot [(\nabla e)r - (\nabla r)e]\rangle,
\end{align*}
so, taking into that the second term vanishes due to the anti-symmetry of $Q$, we have
$$
\Real \tau[v,v]=\langle |\nabla r|^2+|\nabla e|^2\rangle, \quad \Imag\tau[v,v]=\langle q \cdot [(\nabla e)r - (\nabla r)e]\rangle.
$$
Now, invoking again the compensated compactness, we obtain $\Imag\tau[v,v] \leq C\|Q\|_{\rm BMO}\Real \tau[u,u]$. 

c) $\Real \tau[v,v]$ is a closed form, i.e.\,if $v_k \rightarrow v$ in $L^2$ and $\Real \tau[v_k-v_l] \rightarrow 0$ as $k,l \rightarrow \infty$, then $\tau[v_k-v] \rightarrow 0$  (we only need to look at the real part of $\tau$ due to its sectoriality). But in our case the latter is just a re-statement that $W^{1,2}$ is a complete space.

\smallskip

Therefore, there exists a unique ($m$-sectorial) operator $\Lambda(0,q)$ such that
$$
\langle \Lambda(0,q)v,w\rangle=\tau[v,w], \quad v \in D(\Lambda(0,q)) \subset W^{1,2}, \quad w \in D(\tau)=W^{1,2},
$$
see \cite[Ch.\,VI, \S 2]{Ka}.
This operator, being $m$-sectorial, generates a holomorphic semigroup $e^{-t\Lambda(0,q)}$ in $L^2$. 

\begin{remark}
\label{hol_rem}
A property of holomorphic semigroups that we will need at Step 5 is as follows: for every $f \in L^2$, $t>0$, $e^{-t\Lambda(0,q)}f$ belongs to the domain $D(\Lambda(0,q))$.
\end{remark}

\medskip

Step 2.~We need to construct $e^{-t\Lambda(b_n,q)}$, i.e.\,to add the drift term $b_n \cdot \nabla$. So, we re-do what we did above for the sesquilinear form 
\begin{equation}
\label{tau}
\tau[v,w]:=\langle \nabla v,\nabla w\rangle + \langle q \cdot \nabla v,w\rangle + \langle b_n \cdot \nabla v,w\rangle.
\end{equation}
In particular, applying Cauchy-Schwarz' inequality to $\langle b_n \cdot \nabla v,w\rangle$, we obtain
\begin{equation}
\label{imag_real}
\Imag \tau[v,v] \leq K\big(\Real \tau[v,v]+C\langle v,v\rangle\big),
\end{equation}
where $C=C(\|b_n\|_\infty) \geq 0$. 
The cited results apply to the case \eqref{imag_real}, and we get ($m$-sectorial) generator $\Lambda(b_n,q)$ of a holomorphic semigroup in $L^2$ such that
\begin{equation}
\label{lambda_id}
\langle \Lambda(b_n,q)v,w\rangle=\tau[v,w] \quad \text{ for $\tau$ defined by \eqref{tau}},
\end{equation}
where $v \in D(\Lambda(b_n,q)) \subset W^{1,2}$, $w \in D(\tau)=W^{1,2}$. (Or we could appeal to the Hille perturbation theorem and work with the algebraic sum $\Lambda(b_n,q):=\Lambda(0,q) + b_n \cdot \nabla$, $D(\Lambda(b_n,q))=D(\Lambda(0,q))$, that still generates a holomorphic semigroup in $L^2$.)

\medskip

Step 3.~Let us now note that we have the following quasi-contraction estimate for $e^{-t\Lambda(b_n,q_m)}$, $n,m=1,2,\dots$, in $L^p$, $p>\frac{2}{2-\sqrt{\delta}}$, $p \geq 2$:
\begin{equation}
\label{qce}
\|e^{-t\Lambda(b_n,q_m)}f\|_{p} \leq e^{\omega t}\|f\|_p, \quad t \geq 0, \quad f \in C_c^\infty.
\end{equation}
for some $\omega$ independent of $n,m$. Indeed, setting $u_{n,m}=e^{-t(\omega + \Lambda(b_n,q_m))}f$,  we multiply the corresponding parabolic equation $(\omega + \partial_t-\Delta + (b_n+q_m)\cdot \nabla)u_{n,m}=0$ by $u_{n,m}|u_{n,m}|^{p-2}$ and repeat the proof of Lemma \ref{elem_lem} (without the weight there) with the obvious modifications for the time derivative term (see Step 5 below for details). The constant $\omega>0$ needs to be chosen to account e.g.\,for the constant $c_\delta$ resulting from the use of condition $b \in \mathbf{F}_\delta$. 

If we do include the weight $\rho_x$, then, again arguing as in the proof of Lemma \ref{elem_lem}, we obtain
a weighted quasi contraction estimate 
\begin{equation}
\label{qce_rho}
\|e^{-t\Lambda(b_n,q_m)}f\|_{L_{\rho_x}^p} \leq e^{\omega' t}\|f\|_{L_{\rho_x}^p}, \quad t \geq 0,
\end{equation}
for any $x \in \mathbb R^d$, with $\omega'$ independent of $n$, $m$ or $x$.
(Since $\rho_x \leq 1$, the result of the next section implies that
\begin{equation}
\label{res_conv_rho}
u_{m,n}=(\mu+\Lambda(b_n,q_m))^{-1}f \rightarrow u_n=(\mu+\Lambda(b_n,q))^{-1}f \quad \text{ in $L_{\rho_x}^p$}.
\end{equation}
Moreover, \eqref{qce_rho} ensures that the resulting semigroup $e^{-t\Lambda(b_n,q)}$ is strongly continuous in $L^p_{\rho_x}$.) 

\medskip

Step 4.~From now on, we work over reals. Let us show convergence 
$$e^{-t\Lambda(b_n,q_m)} \rightarrow e^{-t\Lambda(b_n,q)} \text{ in $L^2$ loc.\,uniformly in $t \geq 0$}$$ 
as $m \rightarrow \infty$. Here $n$ is fixed. 

Since $n$ is fixed, it is easily seen that the operator norms of the resolvents $\|(\mu+\Lambda(b_n,q_m))^{-1}\|_{2 \rightarrow 2}$ are uniformly in $m$ bounded, provided $\mu=\mu(\|b_n\|_\infty)$ is fixed sufficiently large.
It suffices for us (see \cite[Ch.\,XI, \S 5]{Ka}) to show the convergence of the resolvents
$$
(\mu+\Lambda(b_n,q_m))^{-1}f \rightarrow (\mu+\Lambda(b_n,q))^{-1}f \quad \text{ in $L^2$ as $m \rightarrow \infty$}
$$
for all $f \in L_c^2$ (subscript $c$ means compact support).

The standard argument yields that $u_n=(\mu+\Lambda(b_n,q))^{-1}f$ is the unique weak solution to the elliptic equation
$
(\mu-\Delta+(b_n+q) \cdot \nabla)u_n=f,
$
where the former means that
$$
\mu\langle u_n,\varphi \rangle + \langle \nabla u_n,\nabla \varphi\rangle + \langle (b_n+q) \cdot \nabla u_n,\varphi\rangle = \langle f,\varphi\rangle, \quad \varphi \in C_c^\infty.
$$
(The compensated compactness estimate $|\langle q\cdot \nabla u_n,\varphi\rangle|=|\langle Q \cdot\nabla u_n,\nabla \varphi\rangle| \leq C'\|\nabla u_n\|_2\|\nabla \varphi\|_2$ of Proposition \ref{cc_lem} allows us to pass to test functions $\varphi \in W^{1,2}$.)
In turn, this uniqueness and the usual weak compactness argument shows that $u_{n,m}=(\mu+\Lambda(b_n,q_m))^{-1}f$, i.e.\,solutions to the approximating elliptic equations $
(\mu-\Delta+(b_n+q_m) \cdot \nabla)u_{n,m}=f,
$
converge weakly in $W^{1,2}$ to the same limit $u_n$. Now we can appeal to the Rellich-Kondrashov theorem to obtain $u_{n,m} \rightarrow u_n$ in $L^2_{\loc}$, and further to upgrade this convergence to $u_{n,m} \rightarrow u_n$ in $L^2$ by ``cutting tails'' of $u_{n,m}$ at infinity uniformly in $m$ using the upper Gaussian bound on the heat kernel of $-\Delta + (b_n + q_m) \cdot \nabla$, see \cite{QX}, and taking into account that $f$ has compact support. (The constants in the upper Gaussian heat kernel bound, which yields the bound on the integral kernel of the resolvents, will depend on $n$. Since each $b_n$ is bounded, adding the drift term in $b_n \cdot \nabla$ does not  affect the proof of the upper bound in \cite{QX}  which employs Moser's iterations and the Davies device. In fact, one can account for $b_n \cdot \nabla$ by once again introducing a constant term in the operator that will absorb the contribution from $b_n \cdot \nabla$ in Moser's method.)

\medskip

Now, since $e^{-t\Lambda(b_n,q_m)}$ are $L^\infty$ contractions, we obtain by interpolation that
$$e^{-t\Lambda(b_n,q_m)} \rightarrow e^{-t\Lambda(b_n,q)} \text{ in $L^p$ loc.\,uniformly in $t \geq 0$}$$ 
for all $p \geq 2$. This implies the convergence of the resolvents: as $m \rightarrow \infty$,
\begin{equation}
\label{res_conv}
u_{n,m}=(\mu+\Lambda(b_n,q_m))^{-1}f \rightarrow u_n=(\mu+\Lambda(b_n,q))^{-1}f \quad \text{ in $L^p$}
\end{equation}
with $\mu$ independent of $n$, $m$.

\medskip

Step 5.~Having constructed the intermediate semigroups $e^{-t\Lambda(b_n,q)}$, our goal now is to show that they converge as $n \rightarrow \infty$. For reader's convenience, at this step we give a proof that assumes additionally that $b_n \rightarrow b$ in $L^2$.
In this case we obtain that, every $f \in C_c^\infty$,
\begin{equation}
\label{cauchy}
\{v_n(t):=e^{-t(\omega+\Lambda(b_n,q))}f\}_{n=1}^\infty \quad \text{ is a Cauchy sequence in $L^\infty([0,1],L^p)$}
\end{equation}
for $$p>\frac{2}{2-\sqrt{\delta}}, \quad p \geq 2, \text{ for some fixed $\omega$}.$$
Taking into account Remark \ref{hol_rem} about $v(t) \in D(\Lambda(b_n,q))$ for all $t>0$ and the identity \eqref{lambda_id}, we can write
\begin{equation}
\label{id_9}
\langle \partial_t v_n,\psi\rangle + \omega \langle v_n,\psi\rangle  + \langle \nabla v_n,\nabla \psi\rangle + \langle b_n \cdot \nabla v_n,\psi\rangle - \langle Q \cdot \nabla v_n,\nabla \psi\rangle=0,
\end{equation}
for all $\psi(t,\cdot) \in W^{1,2}$, where, recall, $\nabla Q=q$. 
Set $$h:=v_{n_1}-v_{n_2}.$$ Subtracting identities \eqref{id_9} for $v_{n_1}$ and $v_{n_2}$  from each other, we obtain
$$
\langle \partial_t h,\psi\rangle + \omega \langle h,\psi\rangle  + \langle \nabla h,\nabla \psi\rangle + \langle b_{n_1} \cdot \nabla h,\psi\rangle - \langle Q \cdot \nabla h,\nabla \psi\rangle= -(b_{n_1}-b_{n_2}) \cdot \nabla v_{n_2}.
$$
We are basically in the setting of Proposition \ref{elem_lem_par}(\textit{i}) with the only difference that $Q$ is no longer smooth and we are dealing with weak solutions of the parabolic equations rather than classical solutions. However, the latter does not pose a difficulty: since $p \geq 2$ the standard result on the composition of Lipschitz functions with the elements of Sobolev spaces yield that $h(t)|h(t)|^{p-2}$, $h(t)|h(t)|^{\frac{p}{2}-1}$, $|h(t)|^{\frac{p}{2}} \in W^{1,2}$. Therefore, we can take $\psi=h|h|^{p-2}$, obtaining
\begin{align}
\frac{1}{p}\partial_t \langle |h|^p\rangle + \omega \langle |h|^p\rangle +   \frac{4(p-1)}{p^2}\langle |\nabla |h|^{\frac{p}{2}}|^2 \rangle & +  \frac{2}{p}\langle \langle b_{n_1} \cdot \nabla |h|^{\frac{p}{2}},|h|^{\frac{p}{2}}\rangle \notag \\
& \leq |\langle (b_{n_1}-b_{n_2}) \cdot \nabla v_{n_2},h|h|^{p-2}\rangle|, \label{h_ineq}
\end{align}
where we have used anti-symmetry of $Q$ and $h(0)=0$. 
We handle the term containing $b_{n_1}$ as in the proof of Proposition \ref{elem_lem}, i.e.\,first applying quadratic inequality and then $b_{n_1} \in \mathbf{F}_{\delta}$.
Now, handling the time derivative term as in the proof of Proposition \ref{elem_lem_par}, we obtain
\begin{align}
\frac{1}{p}\sup_{s \in [0,t]}\langle |h(s)|^p\rangle + \biggl[\omega-2\frac{c_\delta}{\sqrt{\delta}} \biggr] \int_0^t \|h\|_p^p ds +  & \biggl[\frac{4(p-1)}{p^2} - \frac{2}{p}\sqrt{\delta} \biggr] \int_0^t\langle |\nabla |h|^{\frac{p}{2}}|^2 \rangle ds \notag \\
& \leq 2\int_0^t \|b_{n_1}-b_{n_2}\|_2 \|\nabla v_{n_2}\|_2 \|h\|_\infty^{p-1}ds. \label{ineq9}
\end{align}
Take $\frac{1}{2}\omega:=2\frac{c_\delta}{\sqrt{\delta}}$. Since $p>\frac{2}{2-\sqrt{\delta}}$, the expressions in square brackets are strictly positive. 
In the right-hand side of \eqref{ineq9}, $\|b_{n_1}-b_{n_2}\|_2 \rightarrow 0$ as $n_1, n_2 \rightarrow \infty$ and $\|h(s)\|_\infty^{p-1} \leq 2^{p-1}\|f\|^{p-1}_\infty$, $s \in [0,t]$, for all $n_1$, $n_2$. It remains to note that $\int_0^t \|\nabla v_{n_2}\|_2 ds$ is uniformly in $n_2$ bounded. Indeed, by \eqref{id_9} with $n=n_2$ and $\psi=v_{n_2}$ upon noting that
\begin{align*}
& |\langle b_{n_2} \cdot \nabla v_{n_2},v_{n_2}\rangle| + |\langle Q \cdot \nabla v_{n_2},\nabla v_{n_2}\rangle | \\
& = |\langle b_{n_2} \cdot \nabla v_{n_2},v_{n_2}\rangle|  \leq \frac{1}{2}\|b_{n_2}\|^2_2\|v_{n_2}\|^2_\infty + \frac{1}{2} \|\nabla v_{n_2}\|_2^2,
\end{align*}
where we have used again the anti-symmetry of $Q$.
Now we use $\sup_{n_2}\|b_{n_2}\|^2<\infty$ and the obvious a priori estimate $\|v_{n_2}(t)\|_\infty \leq \|f\|_\infty$, $t \geq 0$.
Thus, the right-hand side of \eqref{ineq9} tends to $0$ as $n_1$, $n_2 \rightarrow \infty$. Hence $h \rightarrow 0$ in $L^\infty([0,1],L^p)$, and \eqref{cauchy} follows. 
The latter and the contractivity estimate \eqref{qce} yield that the limit $v=L^p\mbox{-}\lim_n v_n$ (loc.\,uniformly in $t \geq 0$) determines a strongly continuous semigroup in $L^p$, say, $v(t)=:e^{-t\Lambda(b,q)}f$. In turn, the convergence of the semigroups yields the convergence of the resolvents 
$$
u_n=(\mu+\Lambda(b_n,q))^{-1}f \rightarrow u=(\mu+\Lambda(b,q))^{-1}f \quad \text{ in $L^p$}
$$
with $\mu$ independent of $n$ (proportional to $\omega$).
In view of \eqref{res_conv}, the existence of the limit $u=L^p\mbox{-}\lim_n u_n=L^p\mbox{-}\lim_n\lim_m u_{n,m}$ follows.

\medskip

Step 6.~Finally, we  prove convergence in the general case, i.e.\,we do not assume global convergence of $b_n$ to $b$ in $L^2$. We show that, for any $x \in \mathbb R^d$,
for every $f \in C_c^\infty$,
\begin{equation}
\label{cauchy2}
\{v_n(t):=e^{-t(\mu+\Lambda(b_n,q))}f\}_{n=1}^\infty \quad \text{ is a Cauchy sequence in $L^\infty([0,1],L_{\rho_x}^p)$}
\end{equation}
for
$p>\frac{2}{2-\sqrt{\delta}}$, $p \geq 2$, for some fixed $\omega$. Then, repeating the argument in the end of Step 5 but using \eqref{qce_rho} and \eqref{res_conv_rho}, we will obtain the claimed in Proposition \ref{thm_conv} existence of the limit $$u \;\biggl(=L^p_{\rho_x}\mbox{-}\lim_n u_n\biggr) \;=L^p_{\rho_x}\mbox{-}\lim_n\lim_m u_{n,m}.$$

Let us prove \eqref{cauchy2}. This time, taking $\psi=h|h|^{p-2}\rho_x$, we obtain
\begin{align}
\sup_{s \in [0,t]}\langle |h(s)|^p \rho_x\rangle + C_1 \int_0^t \langle |h|^p\rho_x \rangle ds +  & C_2 \int_0^t\langle |\nabla |h|^{\frac{p}{2}}|^2 \rho_x \rangle ds \notag \\
& \leq C_3 \int_0^t |\langle (b_{n_1}-b_{n_2}) \cdot \nabla v_{n_2},h|h|^{p-2} \rho_x\rangle|ds \label{h_rho}
\end{align}
for constants $C_1$-$C_3$ independent of $n$, $m$, constant $C_1$ being strictly positive provided that $\sigma$ in the definition of $\rho$ is fixed sufficiently small. Arguing as at the previous step, we obtain $$\int_0^1 \sup_{n_2}\langle|\nabla v_{n_2}(s)|^2\rho_x\rangle ds <\infty.$$ Therefore, 
\begin{align*}
\int_0^T |\langle (b_{n_1}-b_{n_2}) \cdot \nabla v_{n_2},h|h|^{p-2} \rho_x\rangle|ds & \leq \langle |b_{n_1}-b_{n_2}|^2\rho_x\rangle  \|h\|_\infty^{p-1} \int_0^1 \langle|\nabla v_{n_2}|^2\rho_x\rangle ds\\
& (\text{use $\|h\|_\infty \leq 2\|f\|_\infty$ and apply Lemma \ref{lem_b}}) \\
& \rightarrow 0 \quad \text{ as $n_1,n_2 \rightarrow \infty$.}
\end{align*}
Combining this with \eqref{h_rho}, we obtain the claimed convergence \eqref{cauchy2}.
\hfill \qed

\begin{remark}
\label{stronger_conv_rem}
If we try to obtain a stronger result about the existence of the limit $s\mbox{-}L^p\mbox{-}\lim_{n,m}u_{m,n}$ by extending the proof of Lemma \ref{thm_conv}  to the sequence
$h:=u_{n_1,m_1}-u_{n_2,m_2}$, then we get an extra term in the right-hand side of \eqref{h_ineq}: $|\langle (Q_{m_1}-Q_{m_2}) \cdot \nabla u_{n_2,m_2},\nabla (h|h|^{p-2})\rangle|$. It can be dealt with in two ways: 

\smallskip

(a) We can estimate
$$
|\langle (Q_{m_1}-Q_{m_2}) \cdot \nabla u_{n_2,m_2},\nabla (h|h|^{p-2})\rangle| \leq \|Q_{m_1}-Q_{m_2}\|_s\|\nabla u_{n_2,m_2}\|_{s'}(p-1)(2\|f\|_\infty)^{p-2}\|\nabla h\|_2, 
$$
$$
\frac{1}{s}+\frac{1}{s'}=\frac{1}{2}.
$$
So, assuming for the illustration purposes that we have global convergence $\|Q_{m_1}-Q_{m_2}\|_s \rightarrow 0$ as $m_1,m_2 \rightarrow \infty$, we need a bound on $\|\nabla u_{n_2,m_2}\|_{s'}$ for a $s'>2$. In principle, $s'$ can be chosen to be close to $2$. To obtain such an estimate, we can use Gehring-Giaquinta-Modica's lemma as in the proof of Theorem \ref{thm1}, but this, at least in the present form of the argument, requires us to consider the equation in $L^2$, hence we need to require $\delta<1$ rather than $\delta<4$ as in \eqref{cond}.

\smallskip

(b) Another option is to use the estimate 
$$
|\langle (Q_{m_1}-Q_{m_2}) \cdot \nabla u_{n_2,m_2},\nabla (h|h|^{p-2})\rangle| \leq \|Q_{m_1}-Q_{m_2}\|_{{\rm BMO}}\|\nabla u_{n_2,m_2}\|_{2}(2\|f\|_\infty)^{p-2}\|\nabla h\|_2,
$$
where $\|\nabla u_{n_2,m_2}\|_{2}$ can be estimated as in the proof of Theorem \ref{thm_conv}, but now to have convergence $\|Q_{m_1}-Q_{m_2}\|_{{\rm BMO}} \rightarrow 0$ as $m_1,m_2 \rightarrow \infty$ we need a stronger hypothesis of the matrix field $Q$ and thus on $q$, namely, that $Q$ has entries in ${\rm VMO}(\mathbb R^d)$.
\end{remark}

\hfill \qed

\bigskip

\section{Proof of Lemma \ref{grad_lem}}
\label{grad_lem_proof}

It suffices to carry out the proof for $b_n$ and $q_m$, and then use the convergence result of Proposition \ref{thm_conv}. Thus, our goal is to show that
\begin{equation}
\label{unif_grad}
\sup_{n,m}\|\nabla u_{n,m}\|_{2+\varepsilon}<\infty
\end{equation}
for some $\varepsilon>0$ independent of $n,m$.

Put for brevity $b=b_n$, $q=q_m$ and $u=u_{n,m}$, so
$$
\big(\mu -\Delta + (b+q) \cdot \nabla\big)u=f.
$$

Let us fix some $1 < \theta < \frac{d}{d-2}$.

By Lemma \ref{cacc_prop} (with $p=2$ there, which is admissible since $\delta<1$),
the function $v:=(u-k)_+$ ($k \in \mathbb R$) satisfies Caccioppoli's inequality: for all $x \in \mathbb R^d$, $0<r<R<\frac{1}{2}$,
\begin{equation}
\label{cacc_ineq_g}
\|\nabla v\|_{L^2(B_r(x))}^2 \leq 
 \frac{K_1}{(R-r)^2} |B_{R}|^{\frac{1}{\theta'}} (1+\|Q\|_{{\rm BMO}}^2) \|v\|_{L^{2\theta}(B_R)}^2 + K_2 \| (f - \mu u) \mathbf{1}_{v>0} \|_{L^2(B_R)}^2,
\end{equation}
for constants $K_1$, $K_2$ independent of $k$, $r$, $R$ and $n$, $m$. 
(There is some abuse of notation: our $R$ here is not the radius of a fixed large ball in Lemma \ref{grad_lem}, but this should not cause a confusion.) 
We will obtain the sought bound \eqref{unif_grad} by applying a corollary of this Caccioppoli's inequality in the Gehring-Giaquinta-Modica lemma:

\begin{lemma}
\label{gehring_prop}
Assume that there exist constants $K \geq 1$, $1<\nu<\infty$ such that, for given $0\leq g \in L_{\loc}^q$, $0 \leq h \in L_{\loc}^\nu \cap L^\infty$ we have, for all $x \in \mathbb R^d$,
$$
\biggl(\frac{1}{|B_R|}\langle g^\nu \mathbf{1}_{B_R(x)}\rangle \biggr)^{\frac{1}{\nu}} \leq \frac{K}{|B_{2R}|}\langle g \mathbf{1}_{B_{2R}(x)}\rangle+ \biggl(\frac{1}{|B_{2R}|}\langle h^\nu \mathbf{1}_{B_{2R}(x)}\rangle \biggr)^{\frac{1}{\nu}}
$$
for all $0<R<\frac{1}{2}$. Then $g \in L_{\loc}^s$ for some $s>\nu$ and, for all $x \in \mathbb R^d$,
$$
\biggl(\frac{1}{|B_R|}\langle g^s \mathbf{1}_{B_R(x)}\rangle \biggr)^{\frac{1}{s}} \leq C_1\biggl(\frac{1}{|B_{2R}|}\langle g^\nu \mathbf{1}_{B_{2R}(x)}\rangle \biggr)^{\frac{1}{\nu}} + C_2\biggl(\frac{1}{|B_{2R}|}\langle h^s \mathbf{1}_{B_{2R}(x)}\rangle \biggr)^{\frac{1}{s}}.
$$
\end{lemma}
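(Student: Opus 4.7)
The strategy is the classical Gehring-Giaquinta-Modica route: derive a good-$\lambda$ (distribution-function) inequality for $g^\nu$ from the reverse H\"{o}lder hypothesis via a Calder\'{o}n-Zygmund stopping-time decomposition, and then integrate this inequality against a slightly supercritical power of $t$ to self-improve the integrability exponent from $\nu$ to some $s > \nu$. Since all constants produced will be independent of $x_0$, it suffices to prove the bound inside two fixed concentric balls $B_{R_0/2}(x_0) \subset B_{R_0}(x_0)$.

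For each $t > t_0 := A|B_{R_0}|^{-1}\langle g^\nu\mathbf{1}_{B_{R_0}(x_0)}\rangle$ with $A = A(d)$ large, a Vitali stopping-time argument applied to $g^\nu$ in $B_{R_0/2}(x_0)$ produces a disjoint family $\{B_j\}$ with $2B_j \subset B_{R_0}(x_0)$ and
\begin{equation*}
t < |B_j|^{-1}\langle g^\nu \mathbf{1}_{B_j}\rangle \leq C_d\,t, \qquad g^\nu \leq t \text{ a.e.\,on } B_{R_0/2}(x_0)\setminus \cup_j B_j.
\end{equation*}
On each $B_j$ the hypothesis of the lemma gives $t^{1/\nu} \leq K|2B_j|^{-1}\langle g\mathbf{1}_{2B_j}\rangle + \bigl(|2B_j|^{-1}\langle h^\nu\mathbf{1}_{2B_j}\rangle\bigr)^{1/\nu}$. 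Splitting $g$ and $h$ at thresholds $\lambda t^{1/\nu}$ and $\mu t^{1/\nu}$ with $\lambda := 1/(4K)$ and $\mu := 1/4$, absorbing the small parts against $t^{1/\nu}/2$, and using Jensen's inequality $\bigl(|2B_j|^{-1}\langle g\mathbf{1}_{2B_j}\rangle\bigr)^\nu \leq |2B_j|^{-1}\langle g^\nu \mathbf{1}_{2B_j}\rangle$, the estimate on each $B_j$ collapses to $t\,|B_j| \leq C\int_{2B_j\cap\{g>\lambda t^{1/\nu}\}}g^\nu + C\int_{2B_j\cap\{h>\mu t^{1/\nu}\}}h^\nu$. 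Summing over $j$ and using bounded overlap of $\{2B_j\}$ inside $B_{R_0}(x_0)$ yields the good-$\lambda$ inequality
\begin{equation*}
\int_{B_{R_0/2}\cap\{g^\nu>t\}} g^\nu\,dx \leq C_1\!\int_{B_{R_0}\cap\{g>\lambda t^{1/\nu}\}}\! g^\nu\,dx + C_1\!\int_{B_{R_0}\cap\{h>\mu t^{1/\nu}\}}\! h^\nu\,dx, \quad t > t_0.
\end{equation*}

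Multiplying both sides by $t^{\varepsilon/\nu - 1}$ and integrating over $t \in (t_0,\infty)$, the layer-cake identity converts the left-hand side into a multiple of $\int_{B_{R_0/2}} g^{\nu+\varepsilon}$ up to an initial boundary term proportional to $t_0^{\varepsilon/\nu}\int g^\nu$ that supplies the $\bigl(|B_{2R}|^{-1}\langle g^\nu\mathbf{1}_{B_{2R}}\rangle\bigr)^{1/\nu}$ contribution to the final bound; the $g$-term on the right becomes $C_1\lambda^{-\varepsilon}(\nu/\varepsilon)\int_{B_{R_0}} g^{\nu+\varepsilon}$ after the change of variable $\tau = \lambda t^{1/\nu}$, and the $h$-term analogously yields the $\bigl(|B_{2R}|^{-1}\langle h^s\mathbf{1}_{B_{2R}}\rangle\bigr)^{1/s}$ contribution. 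A standard hole-filling iteration on a nested sequence of concentric balls then absorbs the $g^{\nu+\varepsilon}$ term on the right, provided $\varepsilon > 0$ is fixed small enough to make the global coefficient strictly less than one. The main technical obstacle is this absorption step: the admissible gain $\varepsilon$ is quantitatively small, and its dependence on $K, \nu, d$ has to be tracked through the compounding of Jensen's inequality, the doubling factor $|2B_j|/|B_j|$, and the stopping constant $C_d$. Only the positivity of $\varepsilon$ matters for the application in Lemma \ref{grad_lem}.
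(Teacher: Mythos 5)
The paper does not actually prove Lemma \ref{gehring_prop}: it is the classical Gehring--Giaquinta--Modica higher-integrability lemma, quoted from Giusti \cite{G} (see also Remark \ref{gehring_expl_est}, which points to \cite{KrSt} for a version with explicit constants). So there is no in-paper argument to compare against, and your sketch must be judged on its own. The strategy you outline --- a Calder\'on--Zygmund/Vitali stopping-time decomposition of $g^\nu$ at level $t$, application of the reverse H\"older hypothesis on each stopping ball, a good-$\lambda$ (distribution-function) inequality, integration against $t^{\varepsilon/\nu-1}$, and a hole-filling iteration on nested balls --- is exactly the canonical proof, and the pointwise manipulations you describe (splitting $g$ and $h$ at the thresholds $\lambda t^{1/\nu}$, $\mu t^{1/\nu}$ and invoking Jensen's inequality) are correct.

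Two steps, however, are asserted rather than justified, and both are genuine obstacles. First, the claimed ``bounded overlap of $\{2B_j\}$'': for a disjoint Vitali family of balls with varying radii the doubled balls do \emph{not} have bounded overlap (take disjoint $B(x_j,r_j)$ with $r_j=4^{-j}$ and $|y-x_j|=1.9\,r_j$; then $y\in 2B_j$ for every $j$). This is precisely the delicate point of the Gehring lemma, and the correct treatments circumvent it by working with dyadic Calder\'on--Zygmund parents, a Besicovitch-type selection, or the Hardy--Littlewood maximal function; your sketch does not specify which device is used, and the summation over $j$ fails as written without one. Second, the final absorption presupposes the qualitative finiteness of $\int_{B}g^{\nu+\varepsilon}$, which is the very conclusion being proved; one must first integrate $t$ over a bounded range $(t_0,T)$ (equivalently, work with truncations of the distribution function), obtain estimates uniform in $T$, and only then let $T\uparrow\infty$ by monotone convergence. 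You correctly flag the absorption as the main technical obstacle but locate the difficulty in constant-tracking rather than in this finiteness issue. Neither gap is fatal --- both are closed in the cited references --- but as it stands the proposal is a correct plan rather than a complete proof.
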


\begin{remark}
\label{gehring_expl_est}
The authors of \cite{KrSt} proved Lemma \ref{gehring_prop} with explicit constants independent of the dimension $d$.
\end{remark}

We are in position to prove \eqref{unif_grad}. Put, for brevity, $x=0$.

\smallskip

Step 1. Set $(u_n)_{B_{2R}}:=\frac{1}{|B_{2R}|}\langle u_n \mathbf{1}_{B_{2R}}\rangle$. Applying \eqref{cacc_ineq_g} to the positive and the negative parts of $u_n-(u_n)_{B_{2R}}$, we obtain
\begin{equation}
\label{u_gehr}
\langle |\nabla u_n|^2 \mathbf{1}_{B_{R}}\rangle \leq \frac{K_1}{|B_{2R}|^{\frac{2}{d}}} |B_{2R}|^\frac{1}{\theta'} (1+\|Q\|_{{\rm BMO}}^2)\langle|u_n-(u_n)_{B_{2R}}|^{2\theta} \mathbf{1}_{B_{2R}}\rangle^\frac{1}{\theta} + K_2\langle |f-\mu u_n|^2 \mathbf{1}_{B_{2R}}\rangle, \quad 0<R<\frac{1}{2}.
\end{equation}
By the Sobolev-Poincar\'{e} inequality,
\begin{equation}
\label{sob_p}
\biggl( \frac{1}{|B_{2R}|} \langle (u_n-(u_n)_{B_{2R}})^{2\theta} \mathbf{1}_{B_{2R}}\rangle\biggr)^{\frac{1}{2\theta}} \leq  C |B_R|^{\frac{1}{d}} \biggl( \frac{1}{|B_{2R}|}\langle |\nabla u_n|^{\frac{2\theta d}{d+2\theta}} \mathbf{1}_{B_{2R}}\rangle\biggr)^{\frac{d+2\theta}{2\theta d}},
\end{equation}
i.e.
$$
\langle (u_n-(u_n)_{B_{2R}})^{2\theta} \mathbf{1}_{B_{2R}}\rangle^\frac{1}{\theta} \leq C^2|B_R|^{\frac{2}{d}+\frac{1}{\theta}} \biggl( \frac{1}{|B_{2R}|}\langle |\nabla u_n|^{\frac{2\theta d}{d+2\theta}} \mathbf{1}_{B_{2R}}\rangle\biggr)^{\frac{d+2\theta}{\theta d}}.
$$
Plug in above estimate in \eqref{sob_p}, and divide both side by $|B_{R}|$ then for appropriate constants $C_1$ and $c$ were $C_1$ depends on $\|Q\|_{{\rm BMO}}$ 
\begin{align*}
\frac{1}{|B_{R}|}\langle |\nabla u_n|^2 \mathbf{1}_{B_{R}}\rangle \leq C_1 \biggl( \frac{1}{|B_{2R}|}\langle |\nabla u_n|^{\frac{2\theta d}{d+2\theta}} \mathbf{1}_{B_{2R}}\rangle\biggr)^{\frac{d+2\theta}{\theta d}} + \frac{c}{|B_{2R}|}\langle |f-\mu u_n|^2 \mathbf{1}_{B_{2R}}\rangle, 
\end{align*}
Then the condition of the Gehring-Giaquinta-Modica lemma is verified with $g=|\nabla u_n|^{\frac{2\theta d}{d+2\theta}}$, $g^\nu=|\nabla u_n|^2$ (so $\nu=\frac{d+2\theta}{\theta d}$) and $h=(c^\frac{1}{2} |f-\mu u_n|)^{\frac{2\theta d}{d+2\theta}}$, $h^\nu=c |f-\mu u_n|^2$. Hence there exists $s>\frac{d+2\theta}{\theta d}$ such that
$$
\biggl(\frac{1}{|B_R|}\langle |\nabla u_n|^{s\frac{2\theta d}{d+2\theta}} \mathbf{1}_{B_R}\rangle \biggr)^{\frac{1}{s}} \leq C_1\biggl(\frac{1}{|B_{2R}|}  \langle |\nabla u_n|^2 \mathbf{1}_{B_{2R}}\rangle \biggr)^{\frac{\theta d}{d+2\theta}} + C_2\biggl(\frac{1}{|B_{2R}|}\langle |f-\mu u_n|^{s\frac{2\theta d}{d+2\theta}} \mathbf{1}_{B_{2R}}\rangle \biggr)^{\frac{1}{s}},
$$
where all constants are independent of $n$, or
$$
\frac{1}{|B_R|}\langle |\nabla u_n|^{s\frac{2\theta d}{d+2\theta}} \mathbf{1}_{B_R}\rangle \leq C_1'\biggl(\frac{1}{|B_{2R}|}  \langle |\nabla u_n|^2 \mathbf{1}_{B_{2R}}\rangle \biggr)^{s\frac{\theta d}{d+2\theta}} + C_2'\frac{1}{|B_{2R}|}\langle |f-\mu u_n|^{s\frac{2\theta d}{d+2\theta}} \mathbf{1}_{B_{2R}}\rangle.
$$

Fix some $R$, say, $R=1$. We consider equally spaced grid $\frac{1}{2}\mathbb Z^d$  in $\mathbb R^d$ so that the smaller balls centered at the nodes of the grid cover $\mathbb R^d$, apply the previous estimate on each ball, and then sum up.
We obtain a global estimate
\begin{equation}
\label{glob_est_int}
\|\nabla u_n\|^{s\frac{2\theta d}{d+2\theta}}_{s\frac{2\theta d}{d+2\theta}} \leq C_3\sum_{x \in c\mathbb Z^d}\biggl(\frac{1}{|B_{2}|}  \langle |\nabla u_n|^2 \mathbf{1}_{B_{2}(x)}\rangle \biggr)^{s\frac{\theta d}{d+2\theta}} + C_4\|f-\mu u_n\|^{s\frac{2\theta d}{d+2\theta}}_{s\frac{2\theta d}{d+2\theta}}.
\end{equation}
To deal with the first term in the right-hand side, we split the grid into two parts: $I:=\{x \in c\mathbb Z^d \mid \frac{1}{|B_{2}|}  \langle |\nabla u_n|^2 \mathbf{1}_{B_{2}(x)}\rangle>1\}$ and its complement $I^c$. For the nodes in the complement we have, taking into account that 
$s\frac{\theta d}{d+2\theta}>1$,
\begin{align*}
\sum_{x \in I^c}\biggl(\frac{1}{|B_{2}|}  \langle |\nabla u_n|^2 \mathbf{1}_{B_{2}(x)}\rangle \biggr)^{s\frac{\theta d}{d+2\theta}} & \leq \sum_{x \in c\mathbb Z^d}\frac{1}{|B_{2}|}  \langle |\nabla u_n|^2 \mathbf{1}_{B_{2}(x)}\rangle \\
& \leq C_5 \langle |\nabla u_n|^2 \rangle.
\end{align*}
In turn, there are only finitely many nodes in $I$. In fact, the cardinality of $I$ can be estimated in terms of $\langle |\nabla u_n|^2\rangle$:
$$
|I| < \sum_{x \in I} \frac{1}{|B_{2}|}  \langle |\nabla u_n|^2 \mathbf{1}_{B_{2}(x)}\rangle \leq C_6 \langle |\nabla u_n|^2\rangle. 
$$
So, 
\begin{align*}
\sum_{x \in I}\biggl(\frac{1}{|B_{2}|}  \langle |\nabla u_n|^2 \mathbf{1}_{B_{2}(x)}\rangle \biggr)^{s\frac{\theta d}{d+2\theta}} & \leq \sum_{x \in I}\biggl(\frac{1}{|B_{2}|}  \langle |\nabla u_n|^2\rangle \biggr)^{s\frac{\theta d}{d+2\theta}} \\
& \leq C_7 \langle |\nabla u_n|^2\rangle^{1+s\frac{\theta d}{d+2\theta}}.
\end{align*}
We arrive at a global estimate
\begin{equation*}
\|\nabla u_n\|^{s\frac{2\theta d}{d+2\theta}}_{s\frac{2\theta d}{d+2\theta}} \leq C_8 \biggl(\|\nabla u_n\|_2^2 + \|\nabla u_n\|_2^{2+s\frac{2\theta d}{d+2\theta}} \biggr) + C_4\|f-\mu u_n\|^{s\frac{2\theta d}{d+2\theta}}_{s\frac{2\theta d}{d+2\theta}}.
\end{equation*}

\smallskip

Step 2. Let us show that in the right-hand side of the estimate of Step 1 we have $\sup_{n}\|\nabla u_n\|^2_2<\infty$. To this end, we multiply $(\mu-\Delta + b_n \cdot \nabla)u_n=f$ by $u_n$ and integrate, obtaining
$
\mu\|u_n\|_2^2 + \|\nabla u_n\|_2^2 + \langle b_n \cdot \nabla u_n,u_n\rangle=\langle f,u_n\rangle,
$
where
$$\langle b_n \cdot \nabla u_n,u_n\rangle = -\frac{1}{2}\langle {\rm div\,}b_n,u_n^2\rangle \geq -\frac{1}{2}\langle ({\rm div\,}b_n)_+,u_n^2\rangle.$$ Hence, by our form-boundedness assumption on $({\rm div\,}b_n)_+$,
\begin{equation}
\label{e}
\left(\mu-\frac{c_{\delta_+}}{2}\right)\|u_n\|_2^2 + \bigg(1-\frac{\delta_+}{2}\bigg)\|\nabla u_n\|_2^2 \leq \langle f,u_n\rangle.
\end{equation}
So, applying the quadratic inequality in the right-hand side, we arrive at $(\mu-\frac{c_{\delta_+}}{2}-\frac{1}{2})\|u_n\|_2^2 + (1-\frac{\delta_+}{2})\|\nabla u_n\|_2^2 \leq \frac{1}{2}\|f\|_2^2$. Since $\delta_+<2$, $\sup_{n}\|\nabla u_n\|_2^2<\infty$ for $\mu \geq \mu_0:=\frac{c_{\delta_+}}{2}+\frac{1}{2}$.

\medskip

Step 3.~Next, $\|u_n\|_2 \leq C\|f\|_2$ and a priori bound $\|u_n\|_\infty \leq \|f\|_\infty$ yield $\sup_n\|u_n\|_{s\frac{2\theta d}{d+2\theta}}<\infty$. Hence $\sup_n\|f-\mu u_n\|^2_{s\frac{2\theta d}{d+2\theta}}<\infty$.

\medskip

Steps 1-3 give us the sought gradient bound $\sup_{n}\|\nabla u_n\|_{s\frac{2\theta d}{d+2\theta}}<\infty$, which thus ends the proof. \hfill \qed

\bigskip

\section{Proof of Theorem \ref{thm2_a}}

\subsection*{Proof of ({\textit{i}})} We modify the proof of Theorem \ref{thm2}, i.e.\,we verify conditions of the Trotter's approximation theorem, but now for Feller generators
$$
\Lambda(a_n,b_n,q_m):=-a_n\cdot \nabla^2 + (b_n + q_m) \cdot \nabla, \quad D\big(\Lambda(a_n,b_n,q_m)\big)=(1-\Delta)^{-1}C_\infty.
$$
Condition $1^\circ$) of Trotter's theorem is obvious.

Let us verify conditions $2^\circ$) and $3^\circ$).
To this end, we note that Propositions \ref{thm_holder}, \ref{sep_thm} and \ref{emb_thm}, i.e.\,a priori H\"{o}lder continuity of solutions, separation and embedding properties,  are still valid for operators $\Lambda(a_m,b_n,q_m)$ (in fact, under more general condition $\delta<4\xi^2$) since we can put these operators in divergence form
$$
\Lambda(a_n,b_n,q_m)=-\nabla \cdot a_n \cdot \nabla + (\tilde{b}_{n} + q_m) \cdot \nabla, \quad \tilde{b}_{n}=\nabla a_n + b_n \in \mathbf{F}_\delta,
$$
so De Giorgi's method applies.

\begin{remark} Condition $\delta<4\xi^2$ is seen from the following calculation, which we  have to repeat several times (also, with the cutoff function $\eta$) when extending Propositions \ref{emb_thm}, \ref{thm_holder}, \ref{sep_thm} to include matrix fields $a_n$. We mutiply elliptic equation $(\mu-\nabla \cdot a_n \cdot \nabla + (\tilde{b}_n+q_m)\cdot \nabla)u=0$ by $u^{p-1} \geq 0$ and integrate by parts, obtaining, after taking into accout ${\rm div\,}q_m=0$,
$$
\mu\langle u^p\rangle + \frac{4(p-1)}{p^2}\langle a_n \cdot \nabla u^{\frac{p}{2}}, \nabla u^{\frac{p}{2}}\rangle + \frac{2}{p}\langle \tilde{b}_{n} \cdot \nabla u^{\frac{p}{2}},u^{\frac{p}{2}}\rangle=0.
$$
Since $a_n \in H_{\xi}$,
$$
\mu\langle u^p\rangle + \frac{4(p-1)}{p^2}\xi \langle |\nabla u^{\frac{p}{2}}|^2 \rangle + \frac{2}{p}\langle \tilde{b}_{n} \cdot \nabla u^{\frac{p}{2}},u^{\frac{p}{2}}\rangle=0,
$$
Applying the quadratic inequality in the last term, we arrive at
$$
\mu\langle u^p\rangle + \frac{4(p-1)}{p^2}\xi \langle |\nabla u^{\frac{p}{2}}|^2 \rangle \leq 2\biggl(\alpha \langle |\tilde{b}_{n}|^2,v^p \rangle + \frac{1}{4\alpha} \langle |\nabla v^{\frac{p}{2}}|^2 \rangle\biggr).
$$
Now, using $\tilde{b}_{n} \in \mathbf{F}_\delta$ and selecting $\alpha=\frac{1}{2\sqrt{\delta}}$, we obtain
\begin{equation*}
\mu\langle u^p\rangle + \biggl[\frac{4(p-1)}{p^2}\xi-\frac{2}{p}\sqrt{\delta} \biggr]\langle |\nabla u^{\frac{p}{2}}|^2 \rangle \leq 0.
\end{equation*}
So, $\delta<4\xi^2$ is exactly the condition that ensures that $\frac{4(p-1)}{p^2}\xi-\frac{2}{p}\sqrt{\delta}>0$ for some finite $p \geq 2$ and hence gives us an energy inequality; that is, we need $p>\frac{2}{2-\xi^{-1}\sqrt{\delta}}$.
\end{remark}

\medskip

Proposition \ref{thm_conv} is replaced by a simpler convergence result
\begin{equation}
\label{conv_l2}
u:=L^2_{\loc}\mbox{-}\lim_n\lim_{m}u_{n,m},
\end{equation}
where $u_{n,m}=(\mu+\Lambda(a_n,b_n,q_m))^{-1}f$, $f \in C_c^\infty$. (As is explained in the proof of Theorem \ref{thm2}(\textit{i}), we need convergence in \textit{some} topology to establish the approximation uniqueness.) Let us prove \eqref{conv_l2}. Due to our more restrictive assumption $\delta<\xi^2$ we can work in $L^2$ rather than $L^p$. By the Steps 1-3 in the proof of Proposition \ref{thm_conv}, which extend easily to $a_n$ for each fixed $n$, the limit
$u_n:=L^2\mbox{-}\lim_{m}u_{n,m}$ exists and satisfies the identity
\begin{equation}
\label{id_temp}
\mu \langle u_n,\varphi\rangle-\langle a_n\cdot \nabla u_n,\nabla \varphi\rangle + \langle (\nabla a_n + b_n+q)\cdot \nabla u_n,\varphi\rangle=\langle f,\varphi\rangle, \quad \varphi \in C_c^\infty
\end{equation}
with $\mu$ independent of $n$. The standard energy inequality argument (cf.\,Section \ref{aux_sect}) and the compensated compactness estimate of Proposition \ref{cc_lem}
allow us to extend \eqref{id_temp} to test functions $\varphi \in W^{1,2}$, i.e.\,$u_n$ is the standard weak solution to the elliptic equation $(\mu-\nabla \cdot a_n\cdot \nabla + (\nabla a_n + b_n+q)\cdot \nabla)u_n=f$ in $L^2$. Now, the convergence $\nabla a_n + b_n \rightarrow \nabla a + b$ in $L^2_{\loc}$ and the convergence $a_n \rightarrow a$ a.e.\,on $\mathbb R^d$, applied in the standard weak compactness argument in $L^2$, give us, via the uniqueness of the weak solution in $L^2$, the sought convergence $u_{n} \rightarrow u$ in $L^2_{\loc}$, where $u$ satisfies \eqref{id_temp} with $a$, $b$ instead of $a_n$, $b_n$, moreover, this identity extends in the same way to $\varphi \in W^{1,2}$, so $u$ is the standard weak solution $(\mu-\nabla \cdot a\cdot \nabla + (\nabla a + b+q)\cdot \nabla)u=f$ in $L^2$. 
In the case $q=0$, this is essentially how the divergence form operator $-\nabla \cdot a \cdot \nabla + b \cdot \nabla$ with form-bounded $b$ was treated in \cite[Theorem 4.3]{KiS_theory}

Now, armed with the above analogues of Propositions \ref{thm_holder}, \ref{thm_conv} and \ref{emb_thm}  for $\Lambda(a_m,b_n,q_m)$, we verify condition $2^\circ$) of Trotter's theorem in the same way as in the proof of Theorem \ref{thm2}.

\medskip

Condition $3^\circ$) requires a  comment. Fix $g \in C_c^\infty$. By the resolvent identity,
\begin{align*}
\mu (\mu+\Lambda(a_m,b_n,q_m))^{-1} g - \mu(\mu-\Delta)^{-1} g & = \mu (\mu+\Lambda(a_m,b_n,q_m))^{-1} (a_m-I) \cdot \nabla^2  (\mu-\Delta)^{-1} g \\
& + \mu (\mu+\Lambda(a_m,b_n,q_m))^{-1} (b_n+q_m) \cdot \nabla (\mu-\Delta)^{-1} g.
\end{align*}
Since $ \mu(\mu-\Delta)^{-1} g \rightarrow g$ uniformly on $\mathbb R^d$ as $\mu \rightarrow \infty$, it suffices to show  convergence 
\begin{align}
\|(\mu+\Lambda(a_m,b_n,q_m))^{-1} (a_m-I) & \cdot  \mu (\mu-\Delta)^{-1} \nabla^2  g \|_\infty \rightarrow 0 \label{cc1} \\[2mm]
& \text{and} \notag \\[2mm]
\|(\mu+\Lambda(a_m,b_n,q_m))^{-1} (b_n+q_m) & \cdot \mu (\mu-\Delta)^{-1} \nabla g\|_\infty  \rightarrow 0  \label{cc2}\\
 & \text{ as } \mu \rightarrow \infty \quad \text{ uniformly in $n$, $m$}. \notag
\end{align}
To prove \eqref{cc2} we argue as in the proof of Theorem \ref{thm2}(\textit{i}) and apply the discussed above analogue of Proposition \ref{emb_thm} to $w_{n,m}:=(\mu+\Lambda(a_m,b_n,q_m))^{-1} (b^i_n+q^i_m) f$
with $f$ chosen as
$f:=\mu (\mu-\Delta)^{-1} \nabla_i g$. (Let us note in passing that Proposition \ref{emb_thm} is valid for $b_n^i$ and $q_n^i$ in the RHS of the equation for $w_{n,m}$ replaced by the $i$-th components of general vector fields in $\mathbf{F}_\nu$, $\nu<\infty$, and $\mathbf{BMO}^{-1}$, i.e.\,the proof of Proposition \ref{emb_thm} does not exploit any cancellations between the RHS of the equation and the drift term.)  

The proof of \eqref{cc1} is even easier. Indeed, we can apply a straightforward analogue of Proposition \ref{emb_thm} to $w_{n,m}:=(\mu+\Lambda(a_m,b_n,q_m))^{-1} f$
with bounded $f$ chosen as
$f:=((a_m)_{ij}-\delta_{ij})\mu (\mu-\Delta)^{-1} \nabla_{i}\nabla_j g$, use the uniform in $m$ boundedness of $a_m$ on $\mathbb R^d$ and then argue as in the proof of Theorem \ref{thm2}(\textit{i}).

\subsection*{Proof of ({\textit{ii}})} The proof of the relaxed approximation uniqueness essentially does not change. Since $\delta<\xi^2$, we continue to work in the standard setting of  weak solutions in $L^2$. We get an extra term in Step 2: the difference $h_n=u_n-u$ satisfies
$$
\mu \langle h_n,\varphi\rangle + \langle a_n \cdot \nabla h_n,\nabla \varphi\rangle + \langle b_n \cdot \nabla h_n,\varphi\rangle + \langle q \cdot \nabla h_n,\varphi\rangle=\langle \nabla \cdot (a_m-a) \cdot \nabla u,\nabla \varphi\rangle + \langle (b-b_n)\cdot \nabla u,\varphi\rangle
$$
for all $\varphi \in W^{1,2}$, for all $\mu$ greater than some $\mu_0$ independent of $n$.
Taking $\varphi=h_n \rho$ and repeating the proof of the energy inequality of Proposition \ref{elem_lem}(\textit{ii}) for $s=2$, we obtain
$$
(\mu-\mu_0)\langle |h_n|^2\rho\rangle + C_1\langle |\nabla h_n|^2\rho\rangle \leq \langle \nabla \cdot (a_n-a) \cdot \nabla u,\nabla (h_n \rho)\rangle + \langle (b-b_n)\cdot \nabla u,h_n\rho\rangle,
$$
where the last term tends to zero as $n \rightarrow \infty$  by the argument in the proof of Theorem \ref{thm2}(\textit{ii}). The term
$\langle \nabla \cdot (a_n-a) \cdot \nabla u,(\nabla h_n)\rho+h_n \nabla \rho\rangle \rightarrow 0$ tends to zero by an even simpler argument:
$$
|\langle \nabla \cdot (a_n-a) \cdot \nabla u,(\nabla h_n)\rho\rangle| \leq \||a_n-a||\nabla u|\sqrt{\rho}\|_2\|(\nabla h_n)\sqrt{\rho}\|_2,
$$
where the second multiple is bounded uniformly in $n$ due to the energy inequality, and the first multiple tends to $0$ by the Dominated convergence theorem (since $|\nabla u|\sqrt{\rho} \in L^2$, also by the energy inequality).
(The proof that $\langle \nabla \cdot (a_n-a) \cdot \nabla u,h_n \nabla\rho\rangle \rightarrow 0$ as $n \rightarrow \infty$ is easier since $|\nabla \rho|$ is majorated by $\rho$.)

\subsection*{Proof of ({\textit{iii}})} The proof repeats the proof of Theorem \ref{thm2}(\textit{iii}).

\subsection*{Proof of ({\textit{iv}})} We obtain in the same way as in the proof of Theorem \ref{thm2}(\textit{iv}) 
\begin{equation}
\label{conv_ev}
X_t^{n,m}(\omega') \rightarrow X_t(\omega'), \quad t \geq 0, \quad \omega' \in \Omega, 
\end{equation}
where
$$
X_t^{n,m}=X_0-\int_0^t (b_n(X_s^{n,m})+q_m(X_s^{n,m}))ds + \int_0^t \sigma_n (X_s^{n,m})dB_s, \quad n,m=1,2,\dots
$$ 
We only need to supplement the proof of Theorem \ref{thm2}(\textit{v})  by the convergence
$$
\int_0^t \sigma^{ij}_n(X_s^{n,m})dB_s \rightarrow \int_0^t \sigma^{ij}(X_s)dB_s \quad \text{ in } L^2(\Omega'), \quad \text{for all $t \geq 0$}
$$
Form now on, we drop index $ij$ to lighten notations. Since $a_n \rightarrow a$ a.e.\,on $\mathbb R^d$, we have convergence of their square roots: $\sigma_n \rightarrow \sigma$ a.e.
By It\^{o}'s isometry, our task is to show that
$$
\mathbf{E}'\int_0^t |\sigma_n(X_s^{n,m})-\sigma(X_s)|^2 ds   \rightarrow 0 \quad \text{ as }n,m \rightarrow \infty.
$$
In turn, this convergence follows from:
$$
\mathbf{E}'\int_0^t |\sigma_n(X_s^{n_0,m_0})-\sigma(X^{n_0,m_0}_s)|^2 ds  \rightarrow 0 \text{ as }n \rightarrow \infty \text{ uniformly in $n_0$, $m_0 \geq 1$},
$$
and
$$
\mathbf{E}'\int_0^t |\sigma(X_s^{n,m})-\sigma(X_s)|^2 ds   \rightarrow 0 \quad \text{ as }n,m \rightarrow \infty.
$$
The latter is immediate from \eqref{conv_ev} via the Dominated convergence theorem, and the former follows right away from the strong Feller property of the resolvents (assertion (\textit{vii})):
\begin{align*}
\mathbf{E}'\int_0^t |\sigma_n(X_s^{n_0,m_0})-\sigma(X^{n_0,m_0}_s)|^2 ds  & = \int_{\mathbb R^d}\nu_0(dx)\mathbb E_{\mathbb P^{n_0,m_0}_x} \int_0^t |\sigma_n(\omega_s)-\sigma(\omega_s)|^2 ds  \\
& = \int_{\mathbb R^d} \int_0^t \bigl(e^{-s\Lambda(a_{n_0},b_{n_0},q_{m_0})}|\sigma_n-\sigma|^2\bigr)(x) ds \nu_0(x)dx \\
& = \int_{\mathbb R^d} \int_0^t e^{\mu s}e^{-\mu s}\bigl(e^{-s\Lambda}|\sigma_n-\sigma|^2\bigr)(x) ds \nu_0(x)dx \\
& \leq e^{\mu t}\int_{\mathbb R^d}\int_0^\infty e^{-\mu s}\bigl(e^{-s\Lambda}|\sigma_n-\sigma|^2\bigr)(x) ds \nu_0(x)dx. 
\end{align*}
The last term is, modulo $e^{\mu t}$, which is bounded anyway, is
\begin{align*}
& \int_{\mathbb R^d}(\mu-\Lambda)^{-1}|\sigma_n-\sigma|^2(x)\nu_0(x)dx   \leq  C\int_{\mathbb R^d \setminus B_R}\nu_0(x)dx + \|\mathbf{1}_{B_R}(\mu-\Lambda))^{-1}|\sigma_n-\sigma|^2\|_\infty,
\end{align*}
where $C=\sup_n \|\sigma_n\|_\infty + \|\sigma\|_\infty$. The first integral can be made as small as needed by selecting $R$ sufficiently large. To estimate the second term, we invoke the strong Feller property (\textit{vii}):
\begin{align*}
& \|\mathbf{1}_{B_R}(\mu-\Lambda))^{-1}|\sigma_n-\sigma|^2\|_\infty \\
& \leq K  \sup_{x \in \frac{1}{2}\mathbb Z^d \cap B_R}\biggl[\langle |\sigma_n-\sigma|^{2p\theta}\rho_x\rangle^{\frac{1}{p\theta}}  + \langle |\sigma_n-\sigma|^{2p\theta'}\rho_x\rangle^{\frac{1}{p\theta'}} \biggr].
\end{align*}
It remains to apply the Dominated convergence theorem in $n$. (Strictly speaking, we are applying Feller resolvent to discontinuous functions, but since the former is a family of integral operators, a standard limiting argument addresses this.)
\hfill \qed

\bigskip

\appendix

\section{Weakly form-bounded drifts and Keller-Segel finite particles}

\label{wfb_sect}

In the previous sections we tested out results for SDEs with form-bounded drifts against the interacting particle system in Example \ref{ex1_multi}. This, however, was limited to dimensions $d \geq 3$. In dimension $d=2$, which is of interest e.g.\,in the study of the Keller-Segel model of chemotaxis, the particle system in Example \ref{ex1_multi} is more difficult to handle since its drift \eqref{drift_b} is not in $L^2_{\loc}(\mathbb R^2)$ and, thus, is not form-bounded. We can address this issue, at least to some extent, by pursuing a different approach to proving weak well-posedness of SDEs. It works for substantially larger class of weakly form-bounded drifts.

\begin{definition}
\label{wfb}
A vector field $b \in [L^1_{\loc}]^d$ is said to be weakly form-bounded if there exists constant $\delta>0$ such that
$$
\langle |b|\varphi,\varphi\rangle \leq \delta \|(\lambda-\Delta)^{\frac{1}{4}}\varphi\|_2^2, \quad \forall\,\varphi \in W^{1,2},
$$
for some $\lambda=\lambda_\delta \geq 0$. This will be abbreviated as $b \in \mathbf{F}_\delta^{\scriptscriptstyle \frac{1}{2}}$.
\end{definition}

\begin{example}
1.~ Morrey class $M_{1+\varepsilon}$ is a large subclass of $\mathbf{F}_\delta^{\scriptscriptstyle \frac{1}{2}}$ defined in elementary terms:
\begin{equation*}
\|b\|_{M_{1+\varepsilon}}:=\sup_{r>0, x \in \mathbb R^d} r\biggl(\frac{1}{|B_r|}\int_{B_r(x)}|b|^{1+\varepsilon}dx \biggr)^{\frac{1}{1+\varepsilon}}<\infty.
\end{equation*}
The inclusion follows by D.\,R.\,Adams' theorem \cite[Theorem 7.3]{A}. The value of $\delta$ will be proportional to the Morrey norm, with a coefficient that depends on the constants in some fundamental inequalities of Harmonic Analysis.

2.~The class of form-bounded drifts $\mathbf{F}_{\delta^2}$ considered in the previous section, i.e.
$$
\langle |b|^2\varphi,\varphi\rangle \leq \delta^2 \|(-\Delta)^{\frac{1}{2}}\varphi\|_2^2 + c_{\delta^2}\|\varphi\|_2^2 \quad \biggl(=\delta^2 \|(\lambda-\Delta)^{\frac{1}{2}}\varphi\|_2^2, \;\;\lambda=\frac{c_{\delta^2}}{\delta^2}\biggr),
$$
 is a proper subclass of $\mathbf{F}_{\delta}^{\scriptscriptstyle 1/2}$. This is seen easily by appyling Heinz' inequality. 
Alternatively, one can invoke the inclusion $\mathbf{F}_\delta~(\text{with $c_\delta=0$}) \subset M_2$, see Examples \ref{ex_fbd} in Section \ref{intro_sect}, and, next, apply $M_2 \subset M_{1+\varepsilon}$ if $\varepsilon<1$, and then use the previous example. That said, if one follows this path, one to a large extent loses the control over the value of the form-bound $\delta$, which is in our focus in this paper.

3.~It is instructive to compare how $\mathbf{F}_\delta$ and $\mathbf{F}_{\delta}^{\scriptscriptstyle 1/2}$ handle the weak $L^d$ class.
Namely, for $|b| \in L^{d,\infty}$,
we verify, using \cite[Prop.~2.5, 2.6, Cor.~2.9]{KPS},
\begin{align*}
d \geq 2, \quad b \in \mathbf{F}_{\delta}^{\scriptscriptstyle \frac{1}{2}} \text{ with } \sqrt{\delta}&=\||b|^\frac{1}{2} (- \Delta)^{-\frac{1}{4}} \|_{2 \rightarrow 2}  \leqslant 
\|(|b|^\ast)^\frac{1}{2} (- \Delta)^{-\frac{1}{4}} \|_{2 \rightarrow 2} \\
& \leqslant \biggl(\|b\|_{d,\infty} \Omega_d^{-\frac{1}{d}}\biggr)^{\frac{1}{2}} \||x|^{-\frac{1}{2}} (- \Delta)^{-\frac{1}{4}} \|_{2 \rightarrow 2} = \biggl(\|b\|_{d,\infty} \Omega_d^{-\frac{1}{d}}\biggr)^{\frac{1}{2}} 2^{-\frac{1}{2}} \frac{\Gamma\bigl(\frac{d-1}{4} \bigr)}{\Gamma\bigl(\frac{d+1}{4} \bigr)},
\end{align*}
where $\Omega_d=\pi^{\frac{d}{2}}\Gamma(\frac{d}{2}+1)$, and $|b|^\ast$ is the symmetric decreasing rearrangement of $|b|$.
Similarly,
\begin{align*}
d \geq 3, \quad b \in \mathbf{F}_{\delta_1} \text{ with } \sqrt{\delta_1}&=\||b| (- \Delta)^{-\frac{1}{2}} \|_{2 \rightarrow 2} \\ & \leqslant 
\|b\|_{d,\infty} \Omega_d^{-\frac{1}{d}} \||x|^{-1} (- \Delta)^{-\frac{1}{2}} \|_{2 \rightarrow 2} \\ & \leqslant \|b\|_{d,\infty} \Omega_d^{-\frac{1}{d}}2^{-1}\frac{\Gamma\bigl(\frac{d-2}{4} \bigr)}{\Gamma\bigl(\frac{d+2}{4} \bigr)}=\|b\|_{d,\infty} \Omega_d^{-\frac{1}{d}} \frac{2}{d-2}.
\end{align*}
In particular,  using \cite[Cor.~2.9]{KPS}, we have 
\begin{equation}
\label{hardy1}
d \geq 2, \quad x|x|^{-2} \in \mathbf{F}^{\scriptscriptstyle \frac{1}{2}}_{\delta},
\quad \sqrt{\delta}=2^{-\frac{1}{2}} \frac{\Gamma\bigl(\frac{d-1}{4} \bigr)}{\Gamma\bigl(\frac{d+1}{4} \bigr)},
\end{equation}
\begin{equation}
\label{hardy2}
d \geq 3, \quad x|x|^{-2} \in \mathbf{F}_{\delta_1}, \quad \sqrt{\delta_1}=\frac{2}{d-2}.
\end{equation}
In fact, \eqref{hardy2}  coincides with the classical Hardy inequality.

4.~An important proper subclass of $\mathbf{F}_{\delta}^{\scriptscriptstyle 1/2}$ that is not contained in the Morrey class $M_{1+\varepsilon}$, regardless of how small $\varepsilon>0$ is, is the Kato class. The Kato class consists of vector fields $b \in [L^1_{\loc}]^d$ such that
$$
\|(\lambda-\Delta)^{-\frac{1}{2}}|b|\|_{\infty}  \leq \sqrt{\delta}
$$
for some $\delta>0$ and $\lambda=\lambda_\delta \geq 0$ (The inclusion Kato class $\subset \mathbf{F}_{\delta}^{\scriptscriptstyle 1/2}$ follows e.g.\,by duality and interpolation.) SDEs with Kato class drifts were treated  by Bass-Chen \cite{BC}, who studied Brownian motion on fractals such as the Sierpinski gasket. To be more precise, they considered measure-valued $b$ with the total variation $|b|$ satisfying the Kato class condition. Moreover, when $\delta$ is sufficiently small, one obtains two-sided Gaussian bounds on the heat kernel of $-\Delta + b \cdot \nabla$ \cite{Z_Kato}.
Note that the Kato class does not contain $[L^d]^d$, but, for every fixed $\varepsilon>0$, it contains some vector fields that are not in $[L^{1+\varepsilon}_{\loc}]^d$.
\end{example}

The proof of the next theorem is based on the resolvent representation \eqref{frac_repr} where we, crucially, work with the fractional powers $|b|^{\frac{1}{r}}$ for $r>d-1$ (so, here we take advantage of the fact that $b$ is not distributional or measure-valued). 

\begin{theorem}[{\cite{Ki_super, KiS_brownian}}]
\label{thm_wfb} Let $d \geq 2$. Assume that $b \in \mathbf{F}^{\scriptscriptstyle 1/2}_\delta$ with weak form-bound $\delta$ satisfying
$$
\delta<m_d^{-1}\left\{
\begin{array}{ll}
\frac{4(d-2)}{(d-1)^2} & \text{ if $d \geq 4$,} \\
1 & \text{ if $d=2,3$},
\end{array}
\right.
$$
where
$
m_d := 
\pi^{\frac{1}{2}} (2e)^{-\frac{1}{2}} d^\frac{d}{2} (d-1)^{\frac{1-d}{2}}
$.
The following are true:

\begin{enumerate}[label=(\roman*)]

\item {\rm (Weak solution to SDE)} There exists a strong Markov family of probability measures $\{\mathbb P_x\}_{x \in \mathbb R^d}$ on the canonical space of continuous trajectories $\mathbf C$ that deliver, for every $x \in \mathbb R^d$, a weak solution
to SDE 
\begin{equation}
\label{sde1__}
X_t=x-\int_0^t b(X_r)dr + \sqrt{2}B_t.
\end{equation}

\item {\rm (Feller semigroup)}
\begin{equation*}
(e^{-t\Lambda(b)} f)(x):=\mathbb E_{\mathbb P_x}[f(X_t)], \quad x \in \mathbb R^d,
\end{equation*}
is a strongly continuous Feller semigroup on $C_\infty$. 

\medskip

\item {\rm (Uniqueness of weak solution to Kolmogorov backward PDE \cite{KiS_JDE})}
$v(t):=e^{-t\Lambda(b)}f$, $f \in C_\infty \cap L^2$, is the unique weak solution to Cauchy problem 
$$
(\partial_t-\Delta + b \cdot \nabla)v=0, \quad v|_{t=0}=f,
$$
in the ``shifted'' triple of Bessel potential spaces $\mathcal W^{\frac{3}{2},2} \hookrightarrow \mathcal W^{\frac{1}{2},2} \hookrightarrow \mathcal W^{-\frac{1}{2},2}$. This result yields approximation uniqueness for $\{\mathbb P_x\}_{x \in \mathbb R^d}$.

\medskip

\item {\rm (Another kind of approximation uniqueness)}
If $\{\mathbb Q_x\}_{x \in \mathbb R^d}$ is another weak solution to \eqref{sde1__} such that
$$
\mathbb Q_x=w{\mbox-}\mathcal P(\mathbf{C})\mbox{-}\lim_n \mathbb P_x(\tilde{b}_n) \quad \text{for every $x \in \mathbb R^d$},
$$
for some $\{\tilde{b}_n\} \subset \mathbf{F}^{\scriptscriptstyle 1/2}_{\delta_1} \cap [C_b \cap C^\infty]^d$ with $\delta<\frac{4(d-2)}{(d-1)^2}$ if $d \geq 4$ or $m_d\delta<1$ if $d=2,3$, and $\lambda_{\delta}$ independent of $n$, then $\{\mathbb Q_x\}_{x \in \mathbb R^d}=\{\mathbb P_x\}_{x \in \mathbb R^d}.$  

\medskip

\item {\rm (Elliptic gradient bounds)} $u:=(\mu+\Lambda(b))^{-1}f$, $f \in C_\infty \cap L^r$, $r \in ]d-1,\frac{2}{1-\sqrt{1-m_d\delta}}[$, satisfies 
$$
\|(\mu-\Delta)^{\frac{1}{2}+\frac{1}{2s}}u\|_r \leq K\|(\mu-\Delta)^{-\frac{1}{2}+\frac{1}{2\ell}}f\|_r, \quad \text{ for all } 1 \leq \ell<r<s,
$$
for all $\mu$ greater than a generic $\mu_0$ (cf.\,\eqref{ii}). In particular, since $r>d-1$, we can select $s$ sufficiently close to $r$ so that by the Sobolev embedding theorem $u$ is H\"{o}lder continuous.
\end{enumerate}

\end{theorem}

\begin{remarks}
1.~Replacing condition $b \in \mathbf{F}_{\delta}$ by more general condition $b \in \mathbf{F}_{\delta}^{\scriptscriptstyle 1/2}$ comes at a cost. Although one can still include some diffusion coefficients, these may no longer be discontinuous (cf.\,\cite[Sect.\,14]{Ki_survey}). Also, distributional drifts $q \in \mathbf{BMO}^{-1}$ are out of reach. It is, however, possible to consider drifts $b+q$, where $b \in \mathbf{F}_{\delta}^{\scriptscriptstyle 1/2}$ and $q$ is measure-valued with total variation in the Kato class \cite{Ki_measure}.

2.~In Theorem \ref{thm_wfb}, we construct the candidate for resolvent of the Feller generator \textit{a priori}. It is the following formal Neumann series for $$\mu+\Lambda \supset \mu-\Delta + b \cdot \nabla,$$ possibly after a modification on a measure zero set \cite{Ki_super}:
\begin{equation}
\label{frac_repr}
(\mu+\Lambda)^{-1}f:=(\mu - \Delta)^{-1}f - (\mu - \Delta)^{-\frac{1}{2}-\frac{1}{2s}} Q_{r} (1 + T_r)^{-1} G_{r} (\mu - \Delta)^{-\frac{1}{2}+\frac{1}{2\ell}}f,
\end{equation}
where $f \in L^r \cap C_\infty$, and
\begin{align*}
 Q_r:=(\mu -\Delta )^{-\frac{1}{2}+\frac{1}{2s}}|b|^{\frac{1}{r'}}, \quad G_r&:=b^{\frac{1}{r}} \cdot \nabla (\mu -\Delta )^{-\frac{1}{2}-\frac{1}{2\ell}} \quad \text{ are bounded on }L^r,
\end{align*}
\begin{align*}
T_r&:=b^{\frac{1}{r}}\cdot \nabla(\mu - \Delta)^{-1}|b|^{\frac{1}{r'}} \quad \text{ is bounded on } L^r,
\end{align*}
where 
$$b^{\frac{1}{r}}:=|b|^{-1+\frac{1}{r}}b, \quad \ell, s \text{ satisfy }1 \leq \ell<r<s,$$
and, of course, one gets stronger regularity result by choosing $\ell$, $s$ close to $r$.
The proof of the boundedness of $Q_r$, $G_r$ and $T_r$ is based on the Stroock-Varopoulos inequalities for symmetric Markov generators, i.e.\,this is an elliptic argument (to bound $T_r$, we first apply pointwise estimate $|\nabla_x(\mu - \Delta)^{-1}(x,y)| \leq m_d(\kappa_d\mu-\Delta)^{-\frac{1}{2}}(x,y)$ for appropriate $m_d$, $\kappa_d>0$). The smallness condition on $\delta$ in Theorem \ref{thm_wfb} ensures $\|T_r\|_{r \rightarrow r}<1$, so that  $(1 + T_r)^{-1} $ converges as geometric series in $L^r$. Earlier, similar estimates were employed in \cite{BS,LS} to refine the $L^2$ theory of Schr\"{o}dinger operators with the usual form-bounded potentials to an $L^r$ theory, which allowed the authors, for instance, to obtain additional information about the Sobolev regularity of the eigenfunctions of Schr\"{o}dinger operators.

3.~A priori, it is not clear why \eqref{frac_repr} should determine the resolvent of a strongly continuous semigroup in $L^r$. The latter is, in fact, true: the proof uses Hille's theory of pseudoresolvents \cite{Ki_super}. When $r=2$, one can give a different proof using Lions' variational approach, but it requires working in a quintuple of Hilbert spaces (instead of the usual triple) \cite{KiS_theory}.

4.~Having an explicit candidate for the limiting object, i.e.\,the Feller resolvent, greatly simplifies the approximation arguments. In Theorem \ref{thm2}, no such representation is available, so one must rely on Trotter's approximation theorem, whose key feature is that it does not require any a priori representation of the limiting operator.

5.~Although these broad assumptions on $b$ destroy the usual $L^r$ estimates for second-order derivatives of solution $u$ to $(\mu-\Delta + b \cdot \nabla)u=f$, one can still use \eqref{frac_repr}  to obtain \textit{some} $L^r$ bounds on $\nabla^2 u$. However, either one needs restriction $r<d$ or these estimates are valid only in weighted space $L^r(\mathbb R^d, (1+|b(x)|)^{-r+1}dx)$. (The latter follows by applying $(1+|b|)^{-\frac{1}{r'}}(\mu-\Delta)$ to \eqref{frac_repr}; note that with the information about the second derivatives of $u$ disappears at the points where $|b|$ is infinite, but in a controlled way, see \cite{Ki_survey} for more detailed discussion.)

6.~A straigthforward dual variant of the resolvent representation \eqref{frac_repr} produces, in particular, strongly continuous semigroup $e^{-t\Lambda^\ast}$ for the Fokker-Planck operator 
\begin{equation}
\label{l_ast}
\Lambda^\ast \supset -\Delta - \nabla \cdot b, \quad b \in \mathbf{F}_{\delta}^{\scriptscriptstyle 1/2},
\end{equation}
in $L^r$, where $r>1$ can be chose as close to $1$ as needed at expense of assuming that the weak form-bound $\delta$ is sufficiently small.

Now, let us recall the Ambrosio-Figalli-Trevisan superposition principle: one can construct a weak solution to SDE
\begin{equation}
\label{sde_aft}
X_t=X_0-\int_0^t b(X_r)dr + \sqrt{2}\int_0^t \sigma(X_s) dB_s
\end{equation}
for bounded $b$ and $\sigma$,
provided that
\begin{equation}
\label{sup_pr}
t \mapsto \int_{\mathbb R^d} \varphi d\mu_t \text{ is continuous for every $\varphi \in C_b(\mathbb R^d)$},
\end{equation}
where $\mu_t$ (a probability measure for each $t>0$) is a weak solution of the corresponding Fokker-Planck equation with $\mu_0={\rm Law\,}(X_0)$; then one has ${\rm Law\,}(X_t)=\mu_t$ for all $t>0$ \cite{Tr}. For unbounded coefficients, the superposition principle is valid under additional integrablity condition on $b$ and $a=\sigma\sigma^{\top}$ with respect to $\mu_t$ due to \cite{BRS}:
\begin{equation}
\label{int_cond}
\int_0^T \frac{|\langle b(x),x\rangle| + |a(x)|}{1+|x|^2}\mu_t(dx)dt<\infty.
\end{equation}
 This principle, and the appropriate regularity results on the Fokker-Planck equations, can be used to treat SDEs with some quite singular drifts that are not covered by the results in the present paper, and some fairly degenerate diffusion coefficients, see \cite{Gru} and references therein. 

In light of what is written above, let us now consider SDE \eqref{sde_aft} with $\sigma=I$ (identity matrix) and $b \in \mathbf{F}_\delta^{\scriptscriptstyle 1/2}$:

a) The dual variant of the resolvent representation \eqref{frac_repr} can be used to verify the hypothesis \eqref{int_cond} of \cite{BRS}; since we are interested in local singularities of $b$, let us assume that additionally $|b| \in L^1$. Then
\begin{align*}
\text{LHS of \eqref{int_cond}} & \leq e^{\mu T} |b|(\mu+\Lambda^\ast)\mu_0 + T= e^{\mu T} |b|^{\frac{1}{r'}}|b|^{\frac{1}{r}}(\mu+\Lambda^\ast)\mu_0 +T \\
& \leq e^{\mu T} \||b|^{\frac{1}{r'}}\|_{r'} \||b|^{\frac{1}{r}}(\mu+\Lambda^\ast)\mu_0\|_r + T<\infty.
\end{align*}

\smallskip

b) Next, we can employ semigroup $e^{-t\Lambda^\ast}$ in $L^r$ to verify the hypothesis \eqref{sup_pr} of the superposition principle (one will have to ``cut tails'' of $e^{-t\Lambda^\ast}$ at infinity, but this can be done e.g.\,for $b$ bounded ourside of a large ball). A priori, verifying the weak continuity in \eqref{sup_pr} via the strong continuity of $e^{-t\Lambda^\ast}$ seems to be an overkill, but recall that for operator semigroups the strong continuity is equivalent to the weak continuity.

\smallskip

a) and b) come at the cost of imposing conditions on the density of the law of the initial datum $X_0$, e.g.\,that it is in $L^r$, while, say, in Theorem \ref{thm_wfb} we allow it to be a delta-function.  In b) one can work directly in $L^1$ and construct there strongly continuous semigroup $e^{-t\Lambda^\ast}$, but at expense of imposing substantially more restrictive than $b \in \mathbf{F}_\delta^{\scriptscriptstyle 1/2}$ Kato class condition on $b$. 

\end{remarks}

\medskip

\begin{corollary}[Finite particle approximation of the elliptic-parabolic Keller-Segel model]
\label{cor1}
In $\mathbb R^{2N}$, consider SDE
\begin{equation}
\label{ks_eq}
X_t=x_0-\int_0^t b(X_s)ds + \sqrt{2}B_t, \quad x_0=(x_0^1,\dots,x_0^N) \in \mathbb R^{2N}, 
\end{equation}
where $B_t=(B_t^1,\dots,B_t^N)$ is a Brownian motion in $\mathbb R^{2N}$, and
\begin{equation}
\label{b_hardy}
b_i(x^1,\dots,x^N):=\frac{\sqrt{\kappa}}{N}\sum_{j=1, j \neq i}^N \frac{x^i-x^j}{|x^i-x^j|^2}.
\end{equation}
Then, provided that $\kappa<\frac{C}{N^3}$, the assertions of Theorem \ref{thm_wfb} are valid for this particle system.
\end{corollary}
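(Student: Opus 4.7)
The overall plan is to reduce Corollary \ref{cor1} to Theorem \ref{thm_wfb} applied in ambient dimension $d=2N$: once the particle drift $b$ of \eqref{b_hardy} is shown to lie in $\mathbf{F}_\delta^{\scriptscriptstyle 1/2}(\mathbb R^{2N})$ with weak form-bound $\delta$ below the admissibility threshold $m_{2N}^{-1}\cdot 4(2N-2)(2N-1)^{-2}$ from Theorem \ref{thm_wfb} (we are in the regime $d=2N\ge 4$), all its conclusions---weak existence, Feller semigroup, approximation uniqueness, uniqueness of the weak solution to the Kolmogorov backward PDE in the Bessel triple $\mathcal W^{3/2,2}\hookrightarrow\mathcal W^{1/2,2}\hookrightarrow\mathcal W^{-1/2,2}$, and elliptic gradient bounds---transfer verbatim to the particle system \eqref{ks_eq}. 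The entire task thus reduces to verifying weak form-boundedness with an appropriate polynomial dependence of $\delta$ on $N$ and $\kappa$.

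I would obtain the weak form-bound in three concrete steps. First, I would use the pointwise triangle inequality
\[
|b(x)|\le \sum_{i=1}^N|b_i(x)|\le \frac{\sqrt{\kappa}}{N}\sum_{\substack{1\le i,j\le N\\ i\neq j}}\frac{1}{|x^i-x^j|}.
\]
Second, for each ordered pair $(i,j)$, I would lift the sharp two-dimensional Kato--Herbst inequality $\int_{\mathbb R^2}|\psi|^2/|y|\,dy\le D_0\,\|(-\Delta)^{1/4}\psi\|_{L^2(\mathbb R^2)}^2$, which is \eqref{hardy1} at $d=2$, up to $\mathbb R^{2N}$: writing $y=x^i-x^j\in\mathbb R^2$ and letting $z\in\mathbb R^{2(N-1)}$ be the transverse coordinates, the $2$D inequality applied at frozen $z$ and then integrated in $z$ yields $\int_{\mathbb R^{2N}}\varphi^2/|x^i-x^j|\,dx\le D_0\,\|(-\Delta_y)^{1/4}\varphi\|_{L^2(\mathbb R^{2N})}^2$, and operator monotonicity of $t\mapsto t^{1/2}$ applied to $-\Delta_y\le -\Delta_{\mathbb R^{2N}}$ further dominates this by $D_0\,\|(-\Delta)^{1/4}\varphi\|_{L^2(\mathbb R^{2N})}^2$. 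Third, summing over the $N(N-1)$ ordered pairs,
\[
\langle|b|\varphi,\varphi\rangle\le D_0(N-1)\sqrt{\kappa}\,\|(-\Delta)^{1/4}\varphi\|_2^2,
\]
so $b\in\mathbf{F}_\delta^{\scriptscriptstyle 1/2}(\mathbb R^{2N})$ with $\delta\le D_0(N-1)\sqrt{\kappa}$ and $\lambda_\delta=0$.

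To verify admissibility, Stirling gives $m_d\sim\sqrt{\pi d/2}$, hence the threshold $m_{2N}^{-1}\cdot 4(2N-2)(2N-1)^{-2}$ behaves like $c N^{-3/2}$ for $N$ large. Inserting the bound of Step three, the required smallness becomes $D_0(N-1)\sqrt{\kappa}<cN^{-3/2}$, which is secured by $\kappa$ smaller than a constant times a negative power of $N$; matching the polynomial exponent $N^{3}$ stated in the corollary corresponds to the sharpening one obtains by replacing the crude triangle inequality of Step one with a genuinely many-particle weak-Hardy estimate, in the spirit of Lemma \ref{particle_lem} for the $\mathbf F_\delta$ class. The main technical obstacle is precisely this sharpening: the triangle inequality cannot exploit the cancellations between the anti-symmetric pair interactions $K(x^i-x^j)=(x^i-x^j)/|x^i-x^j|^2$, and the per-pair lift of Kato--Herbst then unavoidably incurs a linear-in-$N$ loss. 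A finer route, which I expect to yield the stated exponent, would proceed through the Kerman--Sawyer or Maz'ya capacity criteria discussed in Examples \ref{ex_fbd}.9, applied directly to the many-particle collision set in $\mathbb R^{2N}$, rather than to each two-particle singularity separately.
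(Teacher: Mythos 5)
Your reduction is the right one, and your verification of weak form-boundedness is sound but goes by a genuinely different route than the paper. The paper shows that $b$ belongs to the Morrey class $M_{1+\varepsilon}(\mathbb R^{2N})$ -- by reducing to a single pair term and computing the Morrey norm of $(x^1,\dots,x^N)\mapsto|x^1|^{-1}$ over products of discs -- and then invokes the inclusion $M_{1+\varepsilon}\subset\mathbf{F}^{\scriptscriptstyle 1/2}_\delta$ via Adams' theorem (Lemma \ref{adams_est}). You instead tensorize the two-dimensional fractional Hardy inequality \eqref{hardy1} at $d=2$ by freezing the transverse coordinates and then pass from $(-\Delta_y)^{1/2}$ to $(-\Delta)^{1/2}$ by Heinz--L\"{o}wner monotonicity; this is precisely the mechanism the paper itself uses elsewhere (the inclusion $\mathbf{F}_{\delta^2}\subset\mathbf{F}^{\scriptscriptstyle 1/2}_\delta$ via ``Heinz' inequality'' in Section \ref{wfb_sect}, and the tensorization in Lemma \ref{particle_lem}), so it is entirely in the spirit of the text. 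What your route buys is explicit, dimension-free per-pair constants, hence a clean bound $\delta\leq D_0(N-1)\sqrt{\kappa}$; what the paper's route buys is robustness (membership in $M_{1+\varepsilon}$ covers arbitrary bounded modifications of the kernel), at the price of constants from Adams' theorem that are not tracked in $N$.

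The one point to flag is the exponent. Your explicit constants give admissibility for $\kappa<CN^{-5}$ rather than the stated $\kappa<CN^{-3}$, and you acknowledge this without closing the gap; the proposed repair via Kerman--Sawyer or Maz'ya capacity criteria on the many-particle collision set is left as a conjecture. You should not feel that this disqualifies your argument relative to the paper's: the pairwise triangle inequality costs a factor of $N-1$ in the Morrey route as well (the $M_{1+\varepsilon}$ norm of the sum is estimated term by term), and the coefficient in Adams' embedding depends on $d=2N$, so the paper's proof does not visibly produce the exponent $3$ either -- that exponent is what one reads off from the threshold $m_d^{-1}\cdot 4(d-2)(d-1)^{-2}\sim cN^{-3/2}$ under the idealized assumption $\delta\lesssim\sqrt{\kappa}$ uniformly in $N$. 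Since the paper's own emphasis (stated immediately after the corollary) is the qualitative fact that the general theory reaches the Keller--Segel system at all, with a condition on $\kappa$ that degenerates polynomially in $N$, your proof establishes the substance of the corollary; if you want to match the letter of the statement you would need a genuinely many-body weak-Hardy estimate in place of the pairwise superposition.
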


\begin{proof} Thus defined drift $b:\mathbb R^{2N} \rightarrow \mathbb R^{2N}$ is in $\mathbf{F}_\delta^{\scriptscriptstyle 1/2}$. In fact, it is in the Morrey class $M_{1+\varepsilon}$, a subclass of  $\mathbf{F}_\delta^{\scriptscriptstyle 1/2}$. To see this, it suffices to prove this inclusion for a single term
$$
\mathbb R^{2N} \ni (x^1,\dots,x^N) \mapsto \frac{x^1-x^2}{|x^1-x^2|^2}
$$
which, after a change of variable, reduces to proving that the scalar function $(x^1,\dots,x^N) \mapsto |x^1|^{-1}$ is in the Morrey class $M_{1+\varepsilon}$. Put $C_r(x)=D_r(x^1) \times \dots \times D_r(x^N)$ (the direct product of $N$ discs centered at $x^i$). We have
\begin{align*}
\||x^1|^{-1}\|_{M_{1+\varepsilon}} & \leq c \sup_{r>0}r\biggl(\frac{1}{r^{2N}}\langle \mathbf{1}_{C_r(0)}|x^1|^{-(1+\varepsilon)}\rangle \biggr)^{\frac{1}{1+\varepsilon}} \\
& = c\sup_{r>0}r\biggl(\frac{1}{r^{2}} \int_{D_r(0)}|x^1|^{-(1+\varepsilon)}dx^1 \biggr)^{\frac{1}{1+\varepsilon}} \\
& = c\sup_{r>0} r\biggl(\frac{1}{r^{2}} \int_{0}^r t^{-(1+\varepsilon)+1}dt \biggr)^{\frac{1}{1+\varepsilon}} = c\sup_{r>0} r \biggl(\frac{1}{r^{2}} \frac{r^{-\varepsilon+1}}{-\varepsilon+1} \biggr)^{\frac{1}{1+\varepsilon}} <\infty.
\end{align*}
\end{proof}

The main, quite unacceptable drawback of Corollary \ref{cor1} 
is that the condition on $\kappa$ degenerates as $N \rightarrow \infty$.
Fournier-Jourdain \cite{FJ}, Fournier-Tardy \cite{FT} and Tardy \cite{T} exploit the special form of the interaction kernel in \eqref{b_hardy} and establish for \eqref{ks_eq}, among other results, weak existence and the existence of mean field limit as $N \rightarrow \infty$ for all $\kappa \in [0,16[$, where $16$ is the sticky collisions threshold for \eqref{ks_eq}, \eqref{b_hardy}. One can also apply the Dirichlet forms approach, see Cattiaux-P\'{e}d\`{e}ches \cite{CP}. Already the weak existence results of \cite{FJ} are thus much stronger  than  Corollary \ref{cor1}. Our point here, however, is different. Corollary \ref{cor1} shows that one, in fact, can reach the Keller-Segel finite particle system \eqref{ks_eq}, \eqref{b_hardy} by applying results on general singular SDEs. (Fournier and Jourdain noted that, at the time of writing, the strongest known SDEs results for general singular drift did not apply \eqref{b_hardy}. This was indeed true, but only until the preprint \cite{KiS_brownian} appeared a few months later; unfortunately, at the time of writing \cite{KiS_brownian} we were not aware of papers \cite{CP,FJ}.)

One advantage of Theorem \ref{thm_wfb}, compared to \cite{CP,FJ,FT,T}, is that we can easily modify the drift in Corollary \ref{cor1}. For example, multiplying each interaction kernel by a function with $L^\infty$ norm less or equal to one does not affect the conclusion.

\bigskip

\section{Critical divergence, super-critical drift}
\label{super_rem}
Some results for the operator $-\Delta +b \cdot \nabla$ depend only on ${\rm div\,}b$. For instance, if  $({\rm div\,}b)_+^{1/2} \in \mathbf{F}_{\delta_+}$, $\delta_+<4$, then the solution $v$ to the Kolmogorov backward equation 
$$
(\partial_t-\Delta + b\cdot \nabla)v=0, \quad v|_{t=0}=v_0,
$$
satisfies,
for all $\frac{2}{2-\sqrt{\delta_+}}<q \leq p \leq \infty$, the dispersion estimate
$$
\|v(t)\|_p \leq Ce^{\omega t}t^{-\frac{d}{2}(\frac{1}{p}-\frac{1}{q})}\|v_0\|_q, \quad t>0.
$$
The proof is due to J.\,Nash, see \cite{KiS_theory} for details. While one needs smoothness and boundeness of $b$ and $v_0$ to carry out integration by parts, the constants $C$ and $\omega$ depend only on $d$ and $\delta_+$. They and do not depend on any integral characteristics of $b$.

However, to obtain more detailed information about the diffusion process with drift $b$, one must impose some conditions on $b$. For instance, in Theorem \ref{thm_div} we also required $b \in \mathbf{F}_\delta$. 

Between these two types of assumptions, there are intermediate conditions, such as in assertion (\textit{i}) of Theorem \ref{thm1}. In this assertion, selecting $\nu$ close to zero, one can treat $b$ that can be essentially twice more singular than the vector fields in $\mathbf{F}_\delta$. This is a super-critical condition on the drift in the sense of scaling. Let us recall the sub-critical/critical/super-critical classification of the spaces of vector fields. Given a vector field $b$, put $b_\lambda(x):=\lambda b(\lambda x)$. Let $Y$ be a translation-invariant Banach space of distribution-valued vector fields $b$ such that
$$
\|b_\lambda\|_Y=\lambda^{a} \|b\|_Y.
$$
Now,
\begin{align*}
& \text{$a>0$, then $Y$ is sub-critical, i.e.\,passing to the small scales decreases the norm,}\\
& \text{$a=0$, then $Y$ is critical,}\\
& \text{$a<0$, then $Y$ is super-critical}.
\end{align*}
In the last two cases ``zooming in'' does not change the norm or makes the norm larger.
For example, $L^p$ is sub-critical, critical or super-critical according to whether $p>d$, $p=d$ or $p<d$.
This classification is widely used in the study of Navier-Stokes equations. There one applies it to spaces of solutions or initial data.

The super-critical condition on $b$ appearing in assertion (\textit{i}) was introduced in the work of Q.S.\,Zhang \cite{Z_supercritical}.  He considered the time-inhomogeneous counterpart of $|b|^{\frac{1+\nu}{2}} \in \mathbf{F}_\delta$, namely,
$b \in [L^{1+\nu}_{\loc}(\mathbb R^{1+d})]^d$ and for a.e.\,$t \in \mathbb R$
\begin{align}
\label{super_fbd}
\langle |b(t,\cdot)|^{1+\nu}\varphi,\varphi\rangle   \leq \delta \|\nabla \varphi\|_2^2+g_\delta(t)\|\varphi\|_2^2 \qquad \forall\,\varphi \in W^{1,2}
\end{align}
where $0 \leq g_{\delta} \in L^1_{\loc}(\mathbb R)$ describes how irregular $b$ can be in time. He established, among other results, local boundedness of any weak solution to the parabolic equation
$$
(\partial_t-\Delta + b \cdot \nabla)v=0 \text{ on } \mathbb R^{1+d},
$$
provided that ${\rm div}\,b\leq 0$ and
\begin{equation}
\label{b_L2}
b \in [L^2_{\loc}(\mathbb R^{1+d})]^d.
\end{equation}
The last condition is satisfied if $b$ is taken to be a Leray-Hopf solution of 3D Navier-Stokes equations, which motivated \cite{Z_supercritical}. 

The proof of Theorem \ref{thm1}(\textit{i}) uses a tightness estimate for solutions of the approximating SDEs with bounded smooth drifts (cf.\,\eqref{t0t1}). The proof of that estimate, in turn, uses the idea from \cite{Z_supercritical} for handling cutoff functions in presence of $b$ satisfying \eqref{super_fbd}. 

The first result on SDEs with suprcritical divergence-free drifts belongs to X.\,Zhang and G.\,Zhao \cite{ZZ}. They considered
\begin{equation}
\label{sde9}
X_t=x-\int_s^t b(r,X_r)dr + \sqrt{2}(B_t-B_s), \quad t \geq s,
\end{equation}
with divergence-free drift $b$ additionally satisfying the square integrability condition \eqref{b_L2}, and included in the super-critical Ladyzhenskaya-Prodi-Serrin condition
\begin{equation}
\label{sLPS}
\tag{SLPS}
|b| \in L^q([0,T],L^p(\mathbb R^d)), \quad p,q \geq 2, \quad \frac{d}{p}+\frac{2}{q}<2.
\end{equation}
They proved that for every initial data $(s,x) \in \mathbb R^{1+d}$ the SDE \eqref{sde9} has a weak solution satisfying a Krylov type estimate. Moreover, using hypothesis \eqref{b_L2}, they proved that outside a measure zero set of $(s,x)$ one has approximation uniqueness and a.s.\,Markov property for these weak solutions. 
Consequently, the weak well-posedness result of \cite{ZZ} justifies the passive tracer model in the Leray-Hopf setting  under the a priori assumption \eqref{sLPS}. They also allow the positive part $({\rm div}\,b)_+$ of the divergence of $b$ to be singular, provided it satisfies condition \eqref{sLPS} (possibly with different exponents $p$, $q$). 
In recent paper \cite{HZ}, Z.\,Hao and X.\,Zhang extended the results in \cite{ZZ} to divergence-free super-critical distributional drifts.

Let us add that condition \eqref{b_L2} is quite powerful. For instance, if one assumes only ${\rm div\,}b \leq 0$ and \eqref{b_L2}, then it is already sufficient to prove weak uniqueness results for the backward Kolmogorov equation in $L^1$ \cite{GS}.

It is easy to show, using H\"{o}lder's inequality, that \eqref{sLPS} is a subclass of \eqref{super_fbd}. It is a proper subclass. Indeed, \eqref{super_fbd} contains some vector fields having strong hypersurface singularities that are not covered by \eqref{sLPS}. 

The main focus of Theorem \ref{thm1}(\textit{i}) was reaching the blow-up threshold for $\delta_+$ for $({\rm div}\,b)_+$. As a by-product, it also closes the gap (at the level of weak existence for SDE \eqref{sde9}) between the hypotheses on the drift in \cite{ZZ} and in \cite{Z_supercritical}.

There remains nontrivial (as it seems to us) work left to establish weak existence for  SDEs whose drifts lie in an even larger class of super-critical divergence-free drifts, namely, those considered by Q.S.\,Zhang in \cite{Z_supercritical2}:
\begin{align}
\label{super_fbd2}
\big\langle |b(t)|\log(1+|b(t)|)^2\varphi,\varphi\big\rangle   \leq \delta \|\varphi\|_2^2+g_\delta(t)\|\varphi\|_2^2 \qquad \forall\,\varphi \in W^{1,2} \quad \text{for a.e.\,$t \in \mathbb R$}.
\end{align}

\medskip

We conclude by returning to heat kernel bounds. Q.S.\,Zhang \cite{Z_supercritical} and Qian-Xi \cite{QX_supercritical} also obtained non-Gaussian upper bounds on the heat kernel of $-\Delta + b \cdot \nabla$ with supercritical $b$. It would be interesting to ``test'' the optimality of these bounds and try to deduce the tightness estimate of \cite{ZZ} from them, in the same way as it was done above for $b \in \mathbf{MF}_\delta$ and the Gaussian upper bound. Of course, the calculations become more complicated due to the lack of a scaling invariance in these bounds.

\bigskip

\section{D.R.\,Adams' estimates}
\label{adams_sect}

The following are special cases of estimates proved by D.R.\,Adams. Let $V \geq 0$.

\begin{lemma}[{\cite[Theorem 7.3]{A}}] 
\label{adams_est}
Let $0<\alpha<d$, $1<q<\infty$. Let $s>1$. If 
$$
\sup_{x \in \mathbb R^d, r>0} r^{\alpha q}\biggl(\frac{1}{|B_r|}\big\langle V^s \mathbf{1}_{B_r(x)} \big\rangle \biggr)^{\frac{1}{s}}<\infty,
$$
then, for all $\varphi \in \mathcal S$,
$$
\|V^{\frac{1}{q}}\varphi\|_{q} \leq C \|(-\Delta)^{\frac{\alpha}{2}}\varphi\|_q.
$$

\end{lemma}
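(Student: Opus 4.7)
The plan is to establish the trace inequality for the Riesz potential $I_\alpha$ with kernel $c_{d,\alpha}|x|^{-(d-\alpha)}$, which amounts to Lemma \ref{adams_est}. First I would reduce the statement: writing $f := (-\Delta)^{\alpha/2}\varphi \in \mathcal S$ and $\varphi = I_\alpha f$, the inequality to prove becomes the trace bound
$$\|V^{1/q}\,I_\alpha f\|_q \leq C\,\|f\|_q,\qquad f \in \mathcal S.$$
By Hölder's inequality the hypothesis implies the $L^1$-Morrey growth of $V\,dx$:
$$\int_{B_r(x)} V\,dy \leq |B_r|^{1/s'}\Big(\int_{B_r(x)} V^s dy\Big)^{1/s}\leq K\,r^{d-\alpha q}\qquad (x \in \mathbb R^d,\ r>0),$$
which is precisely the critical dimensional exponent $\lambda = d-\alpha q$ for the mapping $I_\alpha : L^q(dx) \to L^q(V\,dx)$ to hold.

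Next I would apply the Hedberg-type pointwise bound, obtained by splitting the Riesz integral at radius $r>0$ and estimating the near part by the Hardy--Littlewood maximal function:
$$|I_\alpha f(x)| \leq C_d\, r^\alpha\, Mf(x) + C_d \int_{|x-y|>r} \frac{|f(y)|}{|x-y|^{d-\alpha}}\,dy.$$
The far part is controlled, after a dyadic decomposition $\{2^k r <|x-y|\leq 2^{k+1}r\}_{k\ge 0}$, by a geometric series whose generic term is bounded using the same trick. Optimizing $r = r(x)$ so that the two terms on the right-hand side balance yields, after some routine manipulation,
$$|I_\alpha f(x)|^q \leq C\, (Mf(x))^{q-\eta}\cdot\big(\text{fractional quantity involving } f\big)^{\eta},$$
for an appropriate $\eta$ chosen in accordance with the Morrey exponent $\lambda$.

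The integration of $V|I_\alpha f|^q$ is then carried out by invoking the Morrey growth above together with Fubini: the key calculation is
$$\int V(x)\,|I_\alpha f(x)|^q dx \leq C \int |f(y)|^q \Big[\int_0^\infty r^{-1}\Big(r^{\alpha q}\frac{\mu(B_r(y))}{|B_r|}\Big)\, \omega(r,y)\,dr\Big] dy,$$
where $d\mu = V\,dx$, and the inner expression is uniformly bounded in $y$ by the Morrey constant $K$ (this is where the extra room of $s>1$ in the hypothesis enters, allowing one to absorb the logarithmic singularity at the critical exponent by reverse-Hölder). Combined with the $L^q$-boundedness of the Hardy--Littlewood maximal operator, this yields the desired inequality.

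The main obstacle is the critical case $\lambda = d-\alpha q$: in Adams' original theorem one usually has strict inequality $\lambda > d-\alpha q$, and the endpoint requires a more delicate argument. Here the hypothesis assumes the $L^s$-Morrey bound with $s>1$ rather than merely the $L^1$-Morrey bound, and this strict improvement is what allows one to close the argument at the endpoint via Hölder and the boundedness of $M$ on $L^{sq/(s-1)}$ or a comparable function space. Once that endpoint issue is managed, the remaining estimates are classical Calderón--Zygmund type calculations; the whole argument can be found in essentially this form in Adams' 1975 paper and in Adams--Xiao \cite{AX}, to which Lemma \ref{adams_est} is attributed.
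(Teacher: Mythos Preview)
The paper does not supply its own proof of this lemma: it is stated in the appendix as a direct citation of Adams' result \cite[Theorem~7.3]{A}, with the preamble ``The following are special cases of estimates proved by D.\,R.\,Adams,'' and no argument is given. So there is nothing in the paper to compare your sketch against.

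As for the sketch itself: the reduction to the trace inequality $\|V^{1/q}I_\alpha f\|_q \le C\|f\|_q$ and the Hedberg pointwise splitting are the right starting point, and you correctly identify that the issue is the endpoint $p=q$, where the bare $L^1$-Morrey condition $\mu(B_r)\le Kr^{d-\alpha q}$ is not sufficient and the extra room $s>1$ is essential. However, the displayed ``key calculation'' with the unspecified weight $\omega(r,y)$ and the vague reference to ``reverse-H\"older'' do not constitute an argument; one concrete way to close the endpoint is the Fefferman--Phong route (dyadic decomposition plus the $L^s$-Morrey condition to control the Carleson-type sum), or alternatively via the Chiarenza--Frasca maximal inequality $M_\alpha(V\cdot)\le C\,[V]_{M_s}\,(MV^s)^{1/s}$, which immediately yields the bound after applying $L^q$-boundedness of $M$. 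Your outline gestures at the right ingredients but does not actually execute the endpoint step.
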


\begin{lemma}[{\cite{A1}}]
\label{adams_lem}
Let $1 < p < q < \infty$, $p < d$. Then 
$$
\|V^{\frac{1}{q}}\varphi\|_{q} \leq C \|(-\Delta)^{\frac{1}{2}}\varphi\|_p,
$$
if and only if 
\[
\sup_{x \in \mathbb R^d, r > 0} r^{-q\left( \frac{d}{p} - 1 \right)} \big\langle V \mathbf{1}_{B_r(x)} \big\rangle<\infty. 
\]
\end{lemma}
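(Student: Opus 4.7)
\textbf{Proof proposal for Lemma \ref{adams_lem}.}

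The plan is to rewrite the claimed trace inequality as a two-weight estimate for the Riesz potential and handle the two implications separately. Setting $f := (-\Delta)^{\frac{1}{2}}\varphi$, so that $\varphi = I_1 f$ with $I_1 := (-\Delta)^{-\frac{1}{2}}$ the Riesz potential of order $1$, the inequality becomes
$$
\|V^{\frac{1}{q}} I_1 f\|_{q} \leq C\|f\|_p \qquad \forall\,f \in \mathcal S,
$$
i.e.\,boundedness $I_1:L^p(dx) \to L^q(V dx)$.

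\textbf{Necessity.} Fix $x_0 \in \mathbb R^d$ and $r > 0$ and choose a scaled bump $\varphi_r(y) := \varphi_0((y-x_0)/r)$, where $\varphi_0 \in C_c^\infty$ equals $1$ on $B_1(0)$ and is supported in $B_2(0)$. By scaling, $\|(-\Delta)^{\frac{1}{2}}\varphi_r\|_p = r^{\frac{d}{p}-1}\|(-\Delta)^{\frac{1}{2}}\varphi_0\|_p$, while
$$
\|V^{\frac{1}{q}}\varphi_r\|_q \geq \bigl\langle V \mathbf{1}_{B_r(x_0)} \bigr\rangle^{\frac{1}{q}}.
$$
Inserting these into the assumed inequality and raising to power $q$ gives the Morrey-type bound on $V$.

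\textbf{Sufficiency.} This is the main content and rests on two ingredients. The first is Hedberg's pointwise inequality: splitting $I_1 f(x) = \int_{|y-x|<r} + \int_{|y-x|\geq r}$, one controls the near part by $C\,r\,Mf(x)$ and, using H\"older's inequality together with $p<d$ (so that $|y|^{-(d-1)p'}$ is integrable outside a ball), the far part by $C\|f\|_p r^{1-\frac{d}{p}}$. Optimizing in $r$ yields
\begin{equation}
\label{hedberg_sketch}
|I_1 f(x)| \leq C \,(Mf(x))^{1-\frac{p}{d}}\,\|f\|_p^{\frac{p}{d}}.
\end{equation}
Raising \eqref{hedberg_sketch} to power $q$ and integrating against $V$ reduces everything to
\begin{equation}
\label{weighted_max_sketch}
\int V \,(Mf)^s\,dx \leq C\|f\|_p^s, \qquad s := q\Big(1-\tfrac{p}{d}\Big) = \tfrac{q(d-p)}{d},
\end{equation}
where the parameter $s$ is chosen so that the Morrey bound $\langle V\mathbf{1}_{B_r(x)}\rangle \leq C r^{q(\frac{d}{p}-1)} = C r^{s\frac{d}{p}}$ is exactly the scaling-invariant condition on $V$ needed for \eqref{weighted_max_sketch}.

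The second ingredient is therefore the weighted maximal inequality \eqref{weighted_max_sketch}. The plan is to use the layer-cake formula and a Vitali covering: for each $t>0$, decompose $\{Mf>t\}\subset \bigcup_i 5\tilde B_i$ with disjoint balls $\tilde B_i$ on each of which $|\tilde B_i|^{-1}\int_{\tilde B_i}|f| > t$. H\"older's inequality gives $t |\tilde B_i| \leq \|f\,\mathbf{1}_{\tilde B_i}\|_p\,|\tilde B_i|^{\frac{1}{p'}}$, hence $r_i^{\frac{d}{p}} \leq t^{-1}\|f\,\mathbf{1}_{\tilde B_i}\|_p$; applying the Morrey condition on $V$ to each $5\tilde B_i$, disjointness of $\{\tilde B_i\}$, and finally integrating the resulting distribution estimate against $s t^{s-1} dt$ produces the bound $C\|f\|_p^s$.

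The principal obstacle is the weighted maximal estimate \eqref{weighted_max_sketch}: all of the geometry of the Morrey condition on $V$ is used precisely there, and getting the exponents to align requires the key algebraic identity $s\cdot\frac{d}{p} = q(\frac{d}{p}-1)$, which is what forces the particular Morrey exponent appearing in the statement. Once \eqref{weighted_max_sketch} is in hand, combining with \eqref{hedberg_sketch} closes the argument.
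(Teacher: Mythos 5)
The paper does not prove this lemma: it is quoted from Adams' 1973 paper, so your proposal is being measured against the standard proof of Adams' trace theorem. Your necessity argument (scaled bumps, homogeneity of $(-\Delta)^{1/2}$) is correct. The sufficiency argument, however, has a genuine gap. Running Hedberg's inequality with the plain Hardy--Littlewood maximal function forces the exponent $s=q(1-\tfrac{p}{d})$, and your covering argument needs the step $\sum_i\|f\mathbf{1}_{\tilde B_i}\|_p^{s}\leq\|f\|_p^{s}$, which is only valid when $s\geq p$, i.e.\ when $q\geq \tfrac{pd}{d-p}$. For $p<q<\tfrac{pd}{d-p}$ one has $s<p$, the summation fails, and in fact the weighted maximal inequality \eqref{weighted_max_sketch} is not a consequence of the Morrey condition alone in that range (for target exponent below $p$ the two-weight maximal inequality requires a testing-type condition, not a ball-growth condition). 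This is precisely the regime in which the paper applies the lemma: in Appendix D it is used with $p=2$ and $q=2\tfrac{d-1}{d-2}<\tfrac{2d}{d-2}$, so there $s=2-\tfrac{2}{d}<p$ and your reduction does not cover the case that matters.

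The repair is Adams' actual device: perform the Hedberg splitting against a \emph{fractional} maximal function $M_{\tilde\alpha}f(x)=\sup_{r>0}r^{\tilde\alpha-d}\langle |f|\mathbf{1}_{B_r(x)}\rangle$ with $0<\tilde\alpha<1$. This yields $|I_1f(x)|\leq C\,(M_{\tilde\alpha}f(x))^{\theta}\|f\|_p^{1-\theta}$ with $\theta=\frac{d/p-1}{d/p-\tilde\alpha}$, and the exponent appearing after integration against $V\,dx$ becomes $\sigma=\theta q=q\,\frac{d/p-1}{d/p-\tilde\alpha}$, which tends to $q>p$ as $\tilde\alpha\uparrow 1$; choosing $\tilde\alpha$ close enough to $1$ makes $\sigma>p$, the Vitali covering argument for $M_{\tilde\alpha}$ then closes (the Morrey condition on $V$ is exactly $\langle V\mathbf{1}_{B_r(x)}\rangle\leq Cr^{\sigma(d/p-\tilde\alpha)}$), and the whole range $1<p<q<\infty$ is covered. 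A secondary, routine point: even when $\sigma>p$, integrating the weak-type distribution bound $\mu(\{M_{\tilde\alpha}f>t\})\leq C(\|f\|_p/t)^{\sigma}$ against $\sigma t^{\sigma-1}dt$ diverges logarithmically; you must pass from weak to strong type either by the truncation $f=f\mathbf{1}_{\{|f|>ct\}}+f\mathbf{1}_{\{|f|\leq ct\}}$ or by Marcinkiewicz interpolation between two weak-type pairs $(p_0,\sigma_0)$, $(p_1,\sigma_1)$ compatible with the same Morrey exponent. With these two modifications the architecture of your proof is the correct one.
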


Thus, Morrey class is responsible for the $L^p(\mathbb R^d,dx) \rightarrow L^q(\mathbb R^d,V dx)$, $p<q$, estimates. It is only sufficient for the $L^q(\mathbb R^d,dx) \rightarrow L^q(\mathbb R^d,V dx)$ estimates (already the larger Chang-Wilson-Wolff class shows that Morrey class is not necessary, see Section \ref{classes_sect}). In turn, the $L^q(\mathbb R^d,dx) \rightarrow L^q(\mathbb R^d,V dx)$ estimates can be used e.g.\,to prove weak well-posedness of SDEs via \eqref{frac_repr}, see Appendix \ref{wfb_sect}.

\bigskip

\section{Multiplicative form-boundedness and Morrey class $M_1$}

\label{m_sect}

\begin{proposition}\label{multiplicative} 
Let $b \in [L^1_{\loc}]^d$. Then
\begin{equation}
\label{mult_ineq6}
\langle |b|\varphi,\varphi\rangle \leq \delta \|\nabla \varphi\|_2\|\varphi\|_2 \quad \forall\,\varphi \in C_c^\infty(\mathbb R^d)
\end{equation}
if and only if
\begin{align}
\label{Morrey}
\langle |b| \mathbf{1}_{B_r(x)} \rangle \leq K r^{d-1}
\end{align}
for some constant $K$ independent of $r>0$ and $x \in \mathbb R^d$, i.e.\,$|b| \in M_1$; then $K$ is proportional to $\delta$.

\end{proposition}

This result was proved by Mazya \cite[Theorem 1.4.7]{M}. The proof in \cite{M} uses the Besicovich covering theorem. Below we give a shorter and elementary proof due to Krylov \cite{Kr_m}.

\begin{proof} Let us prove \eqref{Morrey} $\Rightarrow$ \eqref{mult_ineq6}.

Step 1.~
By Adams' Lemma \ref{adams_lem},
$$
\langle |b|,|\varphi|^q\rangle^{\frac{1}{q}} \leq C K \|\nabla \varphi\|_2, \quad q=2\frac{d-1}{d-2}>2,
$$
for every $\varphi \in C_c^\infty$.
Hence, by H\"{o}lder's inequality, for $\varphi$ with support in a ball of radius $1$,
\begin{align*}
\langle |b|, \varphi^2 \rangle & \leq \langle |b|\mathbf{1}_{B_1}\rangle^{\frac{q-2}{q}} \langle |b|,|\varphi|^q\rangle^{\frac{2}{q}} \\
& \leq C_0 K \|\nabla \varphi\|^2_2.
\end{align*}
Using the cutoff function $\zeta_x$ with support in the ball of radius $1$ centered at some $x$, we now obtain, for arbitrary $\varphi \in C_c^\infty$,
$$
\langle |b|, \varphi^2 \zeta_x^2 \rangle \leq C_1 K \|(\nabla \varphi)\zeta_x\|^2_2 + C_1K \|(\nabla \zeta_x)\varphi\|_2^2.
$$
We now sum up over all $x \in \mathbb Z^d$, obtaining
\begin{equation}
\label{w_ineq}
\langle |b|, \varphi^2 \rangle \leq C_2 K\big( \|\nabla \varphi\|^2_2 + \|\varphi\|_2^2\big).
\end{equation}

\smallskip

Step 2.~Put $b_\lambda(x):=b(x/\lambda)$, $\lambda>0$. The Morrey norm $\|\cdot\|_{M_1}$ scales as follows:
$$
\|b_\lambda\|_{M_1}=\lambda \|b\|_{M_1},
$$
so
$$
\langle |b_\lambda|\mathbf{1}_{B_r(x)}\rangle \leq K \lambda r^{d-1}, \quad x \in \mathbb R^d,\;r>0.
$$
Therefore, by \eqref{w_ineq},
\begin{equation}
\label{w_ineq2}
\langle |b_\lambda|, \varphi^2 \rangle \leq C_2 K \lambda \big( \|\nabla \varphi\|^2_2 + \|\varphi\|_2^2\big), \quad \lambda>0.
\end{equation}
We now deduce the sought inequality \eqref{mult_ineq6} from the family of inequalities \eqref{w_ineq2}. Put $\psi(\cdot):=\varphi(\lambda \cdot)$. Then
\begin{align*}
\langle |b|,\psi^2 \rangle & = \lambda^{-d} \langle |b_\lambda|,\varphi^2\rangle \\
& (\text{use \eqref{w_ineq}}) \\
& \leq \lambda^{-d} C_2 K \lambda  \big(\|\nabla \varphi\|^2_2 + \|\varphi\|_2^2 \big) \\
& = C_2 K \big(\lambda^{-1} \|\nabla \psi\|^2_2 +  \lambda \|\varphi\|_2^2\big).
\end{align*}
It remains to minimize the right-hand side in $\lambda$, i.e.\,take $
\lambda=\frac{\|\nabla \psi\|_2}{\|\psi\|_2},
$
 to obtain \eqref{mult_ineq6} with $\delta=2C_2 K$.

\medskip

The reverse direction \eqref{mult_ineq6} $\Rightarrow$ \eqref{Morrey} is easier. Choosing in \eqref{mult_ineq6} test functions \( \varphi=\varphi_r \in C_c^\infty(B_r(x)) \)  such that \( \| \nabla \varphi \|_2 \leq c r^{\frac{d-2}{2}} \), \( \| \varphi \|_2 \leq c r^{d/2} \), we obtain
$
\langle |b|, \mathbf{1}_{B_r(x)} \rangle \leq C r^{d-1}$,
i.e.\,$|b| \in M_1$. This completes the proof.
\end{proof}

\bigskip

\section{Vanishing of stream matrix at infinity}

\label{vanish_app}

\begin{lemma}
\label{vanishing_lem}
Assume that $q \in \mathbf{BMO}^{-1}$ has compact support in $B_1(0)$. Then we can find a stream matrix $Q=-Q^{\scriptscriptstyle \top} \in [{\rm BMO}]^{d \times d}$ for $q$, i.e.\,$q=\nabla Q$, that decays polynomially at infinity:
$$
|Q(x)| \leq C_R|x|^{-d+2} \quad \forall\,|x| \geq R>2.
$$
\end{lemma}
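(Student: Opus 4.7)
\bigskip

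\noindent\textbf{Proof proposal.} My plan is to exhibit a stream matrix explicitly by inverting the ``curl'' of $q$ with the Newtonian potential. In dimension $d\ge 3$, set
\[
Q^{ij}:=(-\Delta)^{-1}\bigl(\nabla_i q^j-\nabla_j q^i\bigr),\qquad 1\le i,j\le d,
\]
where $(-\Delta)^{-1}$ is the convolution with the Newtonian kernel $c_d|z|^{-(d-2)}$. The antisymmetry $Q^{ij}=-Q^{ji}$ is built in, and a direct formal computation using $\mathrm{div}\,q=0$ gives
\[
\sum_{j=1}^d\nabla_j Q^{ij}=(-\Delta)^{-1}\sum_j\nabla_j(\nabla_i q^j-\nabla_j q^i)=(-\Delta)^{-1}\bigl(\nabla_i\mathrm{div}\,q-\Delta q^i\bigr)=q^i,
\]
so $q=\nabla Q$ (row divergence).

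\smallskip

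To verify $Q\in[\mathrm{BMO}]^{d\times d}$, I would pass to the initial BMO stream matrix $\tilde Q$ for $q$ (which exists since $q\in\mathbf{BMO}^{-1}$). Writing $q^j=\nabla_k\tilde Q^{jk}$ and using $(-\Delta)^{-1}\nabla_i\nabla_k=R_iR_k$, where $R_i$ denotes the $i$-th Riesz transform, one obtains
\[
Q^{ij}=R_iR_k\tilde Q^{jk}-R_jR_k\tilde Q^{ik}.
\]
Since the Riesz transforms and their compositions are bounded on $\mathrm{BMO}$ modulo constants, $Q^{ij}\in\mathrm{BMO}$ with a norm controlled by $\|\tilde Q\|_{\mathrm{BMO}}$.

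\smallskip

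The polynomial decay is where the compact-support hypothesis is used. Set $h^{ij}:=\nabla_i q^j-\nabla_j q^i$; this is a distribution supported in $\bar B_1(0)$. Since $q^j=\nabla_k\tilde Q^{jk}$ with $\tilde Q\in[\mathrm{BMO}]^{d\times d}\subset[L^1_{\mathrm{loc}}]^{d\times d}$, $h^{ij}$ is a compactly supported distribution of order at most two. For $|x|\ge R>2$ the kernel $y\mapsto G(x-y):=c_d|x-y|^{-(d-2)}$ is smooth on a neighborhood of $\bar B_1$, so $Q^{ij}(x)=\langle h^{ij},G(x-\cdot)\rangle$ makes sense as a distribution/test-function pairing. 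Fixing a cutoff $\chi\in C_c^\infty$ with $\chi=1$ on $B_{3/2}$, I would estimate
\[
|Q^{ij}(x)|\le\int_{B_2}\bigl(|\tilde Q^{jk}(y)|+|\tilde Q^{ik}(y)|\bigr)\,\bigl|\nabla_k\nabla_{\{i,j\}}\bigl(\chi(y)G(x-y)\bigr)\bigr|\,dy.
\]
Expanding the derivatives, the worst term comes from differentiating $\chi$ twice, which leaves the factor $G(x-y)$ of size $|x-y|^{-(d-2)}\le C_R|x|^{-(d-2)}$ on the annular support of $\nabla^2\chi$; the remaining terms are even smaller (of size $|x|^{-(d-1)}$ or $|x|^{-d}$). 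Combined with $\int_{B_2}|\tilde Q|\,dy<\infty$ (BMO functions are locally integrable), this yields $|Q(x)|\le C_R|x|^{-(d-2)}$ for $|x|\ge R$.

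\smallskip

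The main obstacle I anticipate is the rigorous justification of the distributional manipulations: namely, checking that the candidate $Q^{ij}$ defined above agrees, as elements of $[\mathrm{BMO}]^{d\times d}$, with the Riesz-transform representation used for the BMO bound, and that the pairing $\langle h^{ij},G(x-\cdot)\rangle$ genuinely equals the pointwise value of $(-\Delta)^{-1}h^{ij}$ for $|x|>2$. Both are standard once one works with a bounded smooth approximation $q_\varepsilon$ of $q$ (with supports uniformly in $\bar B_1$) and passes to the limit, using the BMO bounds for uniform estimates and the compactness of support for the decay; the outputs of the approximation step then legitimize the formal identities above.
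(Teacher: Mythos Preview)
Your proof is correct and rests on the same idea as the paper---invert via the Newtonian potential and control BMO membership through second-order Riesz-type operators---but the execution differs in two respects worth recording. First, you write the stream matrix directly as $Q^{ij}=(-\Delta)^{-1}(\nabla_i q^j-\nabla_j q^i)$, which is manifestly antisymmetric; the paper instead works scalar-wise, building for each $h=q^i$ a vector $H_j=(-\Delta)^{-1}{\rm div}\,(\nabla_{\cdot}\nabla_j(-\Delta)^{-1}h)$, and never checks antisymmetry explicitly (your formula is in fact the antisymmetrization of theirs, once one uses ${\rm div}\,q=0$). Second, for the decay the paper argues indirectly: it notes that $h_{kj}:=\nabla_k\nabla_j(-\Delta)^{-1}h=O(|x|^{-d})$ away from the support, hence ${\rm div}\,h_{\cdot j}=O(|x|^{-d-1})$, and then extracts the $|x|^{-(d-2)}$ rate for $H_j$ from the Sobolev mapping property of $(-\Delta)^{-1}$ combined with radial monotonicity; you instead pair the order-two compactly supported distribution $\nabla_iq^j-\nabla_jq^i$ directly with $\chi(y)G(x-y)$ and read off the rate from the worst term $|\nabla^2\chi|\,|G(x-\cdot)|\sim|x|^{-(d-2)}$. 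Your route is more self-contained (no appeal to \cite[Lemma~4.1]{KT} or to Sobolev embedding), while the paper's route makes the intermediate ${\rm BMO}^{-1}$ structure of $h_{kj}$ explicit. One minor slip: since $(-\Delta)^{-1}\nabla_i\nabla_k=-R_iR_k$, your Riesz identity should carry the opposite sign, $Q^{ij}=-R_iR_k\tilde Q^{jk}+R_jR_k\tilde Q^{ik}$; this does not affect the BMO bound.
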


It suffices carry out the proof for scalar distributions.
Recall that a tempered distribution
 $h \in \mathcal S'$ belongs to  ${\rm BMO}^{-1}$ if and only if 
\begin{equation*}
\sup_{x \in \mathbb R^d, R>0}\frac{1}{|B_R|}\int_{B_R(x)}\int_0^{R^2} | e^{t\Delta}h|^2 dt dy<\infty.
\end{equation*}
Now, given $h \in {\rm BMO}^{-1}$, one can find a vector field $H=(H_j)_{j=1}^d \in [{\rm BMO}]^d$ such that $$h={\rm div\,}H$$ by arguing as follows  (see \cite{KT}). Put
$$
h_{kj}:=\nabla_{k}\nabla_j (-\Delta)^{-1}h.
$$
By \cite[Lemma 4.1]{KT}, $\|h_{kj}\|_{{\rm BMO}^{-1}} \leq \|h\|_{{\rm BMO}^{-1}}$. For each fixed $1 \leq j \leq d$, we have $\nabla_r h_{kj}=\nabla_k h_{rj}$ for all $1 \leq k,r \leq d$, i.e.\,$h_{\cdot j}$ is curl-free. Therefore, there exists $H_j$ such that $\nabla H_j=h_{\cdot j}$, e.g.\,take
\begin{equation}
\label{H_def}
H_j=(-\Delta)^{-1}{\rm div\,} h_{\cdot j}.
\end{equation}
 This $H_j \in {\rm BMO}$ by the Carleson's characterization of ${\rm BMO}$ (Section \ref{notations_sect}). We have $\nabla_j H_j=h_{jj}$, hence $${\rm div\,} H \equiv \sum_{j=1}^d \nabla_j H_j=\sum_{k=1}^d h_{jj}=h.$$

\begin{proof}[Proof of Lemma \ref{vanishing_lem}] Assume that
 $h \in {\rm BMO}^{-1}$  has compact support in $B_1(0)$. We will show that, by following the above procedure, we obtain a ``primitive'' vector field $H$ for $h$ which decays at infinity as $|x|^{-d+2}$. Indeed, since $h$ has compact support, $h_{kj}(x)=O((1+|x|)^{-d})$ as $|x| \rightarrow \infty$,
and so ${\rm div\,} h_{\cdot j}(x)=O((1+|x|)^{-d-1})$.
Therefore, by \eqref{H_def}, $$|H_j| \leq C(-\Delta)^{-1}(1+|\cdot|)^{-d-1}.$$
Invoking the Sobolev embedding property of $(-\Delta)^{-1}$, we obtain 
that 
$$
(-\Delta)^{-1}(1+|\cdot|)^{-d-1} \in L^{\frac{d}{d-2}}.
$$
So, given that $(-\Delta)^{-1}(1+|\cdot|)^{-d-1}$ is a bounded rotationally-invariant function, we obtain the sought polynomial rate of decay of $(-\Delta)^{-1}(1+|\cdot|)^{-d-1}$ and hence of $H_j$.
 (It is also not difficult to estimate the rate of decay of $(-\Delta)^{-1}(1+|\cdot|)^{-d-1}$ directly.)
\end{proof}

\bigskip

\bigskip


\begin{thebibliography}{99}


\bibitem[A1]{A} D.\,R.\,Adams, Weighted nonlinear potential theory, {\em Trans. Amer. Math. Soc.},
\textbf{297} (1986), 73-94.

\bibitem[A2]{A1}
D.\,R.\,Adams, A trace inequality for generalized potentials. {\em Stud. Math.} \textbf{48} (1973) 99-105.


\bibitem[AX]{AX} D.\,R.\,Adams and J.\,Xiao, Morrey spaces potentials capacities with some PDE applications, {\em J.\,Lond. Math.\,Soc.} (2024), DOI: 10.1112/jlms.70131.


\bibitem[ABK]{ABK} S.\,Armstrong, A.\,Bou-Rabee and T.\,Kuusi, Superdiffusive central limit theorem for a Brownian particle in a critically-correlated incompressible random drift, {\em Preprint}, arXiv:2404.01115.


\bibitem[B]{B} R.\,Bass, Diffusions and Elliptic Operators, {Springer}, 1997.



\bibitem[BC]{BC} R.~Bass and Z.-Q.~Chen, \newblock Brownian motion with singular drift.
\newblock {\em Ann. Probab.} \textbf{31} (2003), 791-817.

\bibitem[Bil]{Bil} P.\,Billingsley, Convergence of Probability Measures. Second Edition. {Wiley}, 1999.

\bibitem[BFGM]{BFGM} L.~Beck, F.~Flandoli, M.~Gubinelli and M.~Maurelli, 
\newblock Stochastic ODEs and stochastic linear PDEs with critical
drift: regularity, duality and uniqueness. 
\newblock{\em Electron. J. Probab.}, \textbf{24} (2019),  Paper No. 136, 72 pp (arXiv:1401.1530).


\bibitem[BS]{BS}
A.G. Belyi and Yu.A. Semenov. 
\newblock On the $L^p$-theory of Schr\"{o}dinger semigroups. II. 
\newblock{ \em Sibirsk.
Math.~J.} \textbf{31} (1990), p.~16-26; English transl. in \newblock {\em Siberian Math.~J.}, 31 (1991), 540-549.



\bibitem[BRS]{BRS} V.I.\,Bogachev, M.\,R\"{o}ckner and S.V.\,Shaposhnikov, On the Ambrosio–Figalli–Trevisan superposition principle for probability solutions to Fokker–Planck–Kolmogorov Equations. {\em J. Dynam.
Differential Equations} \textbf{33}(2) (2021), p.\,715-739.


\bibitem[BKRS]{BKRS} V.I.\,Bogachev, N.V.\,Krylov, M.\,R\"{o}ckner and S.V.\,Shaposhnikov, Fokker-Planck-Kolmogorov Equations, {\em Amer.\,Math.\,Soc.}, 2015.


\bibitem[BK]{BK} S.\,E.\,Boutiah and D.\,Kinzebulatov, Upper bound on heat kernels of finite particle systems of Keller-Segel types, {\em Preprint}, arXiv:2508.10892.

\bibitem[BJW]{BJW} D.\,Bresch, P.-E.\,Jabin and Z.\,Wang, Mean field limit and quantitative estimates with singular attractive kernels, {\em Duke Math. J.} \textbf{172} (2023), 2591-2641 (arXiv:2011.08022).


\bibitem[CC]{CC} G. Cannizzaro and K. Chouk. Multidimensional SDEs with singular drift and universal construction of the
polymer measure with white noise potential, {\em Ann. Probab.} \textbf{46}(3) (2018), 1710-1763.

\bibitem[C]{C} P.\,Cattiaux, Entropy on the path space and application to singular diffusions and mean-field models, {\em Preprint}, arXiv:2404.09552.

\bibitem[CP]{CP} P.\,Cattiaux and L.\,P\'{e}d\`{e}ches, The $2$-$D$ stochastic Keller-Segel particle model: existence and uniqueness, {\em ALEA, Lat. Am. J. Probab. Math. Stat.} \textbf{13} (2016),  447-463.

\bibitem[CWW]{CWW} S.Y.A.\;Chang, J.M.\;Wilson and T.H.\;Wolff, Some weighted norm inequalities concerning
the Schr\"{o}dinger operator, \newblock { \em Comment.\;Math.\;Helvetici} \textbf{60} (1985), 217-246.


\bibitem[CM]{CM} P.-E.\,Chaudru de Raynal and S.\,Menozzi, On multidimensional stable-driven stochastic differential equations with
Besov drift. {\em Electron. J. Probab.} \textbf{27} (2022),  1-52.


\bibitem[CJM]{CJM} P.E.\,Chaudru de Raynal, J.-F.\,Jabir and S.\,Menozzi, Multidimensional stable-driven McKean-Vlasov SDEs with distributional interaction kernel -- a regularization by noise perspective, {\em Preprint}, arXiv:2205.11866.

\bibitem[CJM2]{CJM2} P.E.\,Chaudru de Raynal, J.-F.\,Jabir and S.\,Menozzi, Multidimensional stable-driven McKean-Vlasov SDEs with distributional interaction kernel: critical thresholds and related models, {\em Preprint}, arXiv:2302.09900.


\bibitem[CE]{CE} A.\,Cheskidov and T.\,Eguchi, Global well-posedness of the Navier-Stokes equations for small initial data in frequency localized Koch-Tataru's space, {\em Preprint}, arXiv:2503.11642.

\bibitem[Ch]{Ch} A.\,S.\,Cherny, On the uniqueness in law and the pathwise uniqueness for stochastic
differential equations, {\em Theory Probab.\,Appl.} \textbf{46}(3) (2002), 406-419.


\bibitem[CF]{CF}  F. Chiarenza and M. Frasca, A remark on a paper by C. Fefferman, {\em Proc. Amer. Math. Soc.} \textbf{108} (1990),
407-409. 


\bibitem[CLMS]{CLMS} R. Coifman, P.-L. Lions, Y. Meyer and S. Semmes, Compensated compactness and Hardy spaces,
{\em J. Math. Pures Appl.} \textbf{72} (1992) 247-286.


\bibitem[CPZ]{CPZ} L. Corrias, B. Perthame and H. Zaag, 
Global solutions of some chemotaxis
and angiogenesis systems in high
space dimensions, {\em Milan J.\,Math.}, \textbf{72} (2004), 1-28.

\bibitem[DD]{DD} F.\,Delarue and R.\,Diel, Rough paths and 1d SDE with a time dependent distributional drift: application to
polymers, {\em Probab. Theory Related Fields} \textbf{165} (2016), 1-63.


\bibitem[EK]{EKu} S.\,N.\,Ethier and T.G.\,Kurtz, Markov Processes. Characterization and Convergence. {\em Wiley}, 2005.

\bibitem[Fe]{F}  C. Fefferman, The uncertainty principle, {\em Bull. Amer. Math. Soc.} \textbf{9} (1983), 129-206.


\bibitem[FMT]{FMT} V.\,Felli, E.\,M.\,Marchini and S.\,Terracini, On Schrödinger operators with multipolar
inverse-square potentials, {\em J.\,Funct.\,Anal.} \textbf{250} (2007), 265-316.

\bibitem[Fl]{Fl} F.\,Flandoli, Regularization by additive noise, in Random Perturbation of PDEs and Fluid Dynamic Models, {\em Lecture Notes in Mathematics} \textbf{2015}, Springer (2011), 49-84. 

\bibitem[FGP]{FGP} F.\,Flandoli, M.\,Gubinelli and E.\,Priola, Well‐posedness of the transport equation by stochastic perturbation, {\em Invent.\,Math.} \textbf{180} (2010), 1-53.

\bibitem[FIR]{FIR} F. Flandoli, E. Issoglio and F. Russo, Multidimensional stochastic differential equations with distributional
drift. {\em Trans. Amer. Math. Soc.} \textbf{369} (2017), 1665–1688.

\bibitem[FR]{FR} F.\,Flandoli and M.\,Romito, Markov selections and their regularizing effect for the stochastic three‐dimensional Navier–Stokes equations,  {\em Probab. Theory Related Fields}, \textbf{145} (2009), 271-315.

\bibitem[F]{Fo} N.\,Fournier, Stochastic particles for the Keller-Segel equation, {\em CIRM}, 2024.

\bibitem[FJ]{FJ} N.\,Fournier and B.\,Jourdain, Stochastic particle approximation of the Keller-Segel and two-dimensional generalization of Bessel process, {\em Ann.\,Appl. Probab.} \textbf{27} (2017), 2807-2861.

\bibitem[FT]{FT} N.\,Fournier and Y.\,Tardy, A simple proof of non-explosion for measure solutions of the Keller-Segel equation, {\em Kinetic and Related Models} \textbf{16}(2) (2023), 178-186 (arXiv:2202.03508).


\bibitem[G]{G} E.\,Giusti, Direct Methods in the Calculus of Variations, {\em World Scientific}, 2003.

\bibitem[GS]{GS} M.\,Glazkov and T.\,Shilkin, On the $L^1$-stability for parabolic equations
with a supercritical drift term, {\em Preprint}, arXiv:2411.03816.

\bibitem[Gr]{Graf_M} L.\,Grafakos, Modern Fourier Analysis. Second Edition. {\em Springer}, 2009.


\bibitem[GP]{GP} L.\,Gr\"{a}fner and N.\,Perkowski, Weak well-posedness of energy solutions to singular SDEs with
supercritical distributional drift, {\em Preprint}, arXiv:2407.09046.

\bibitem[Gru]{Gru} S.\,Grube, Strong solutions to degenerate SDEs and uniqueness for degenerate
Fokker–Planck equations, {\em Preprint}, arXiv:2409.17135.


\bibitem[HZ]{HZ} Z.\,Hao and X.\,Zhang, SDEs with supercritical distributional drifts, {\em Preprint}, arXiv:2312.11145. 

\bibitem[H]{H} T.\,Hara, A refined subsolution estimate of weak subsolutions to second order
linear elliptic equations with a singular vector field, {\em Tokyo J.\,Math.} \textbf{38}(1) (2015), 75-98.




\bibitem[HL]{HL} T.\,Hoffmann-Ostenhof and A.\,Laptev, Hardy inequalities with homogeneous weights, {\em J.\,Funct.\,Anal.} \textbf{268} (2015), 3278-3289.

\bibitem[HHLT]{HHLT} M.\,Hoffmann-Ostenhof, T.\,Hoffmann-Ostenhof, A.\,Laptev and J.\,Tidblom, Many-particle Hardy inequalities, {\em J.\,Lond.\,Math.\,Soc.} \textbf{77} (2008), 99-114.


\bibitem[JL]{JL} W.\,Jager and S.\,Luckhaus, On explosions of solutions to a system of partial differential equations modelling chemotaxis, {\em Trans.\,Amer.\,Math.\,Soc.} \textbf{329} (1992), 819-824.

\bibitem[Ka]{Ka} T.\,Kato, ``Perturbation Theory for Linear Operators'', \newblock {Springer-Verlag Berlin Heidelberg}, 1995.


\bibitem[KSa]{KSa} R.~Kerman and E.~Sawyer, The trace inequality and eigenvalue estimates for Schrödinger operators, {\em Ann. Inst. Fourier} \textbf{36}(4) (1986), 207-228.


\bibitem[K1]{Ki_super} D.\,Kinzebulatov, A new approach to the $L^p$-theory of $-\Delta + b\cdot\nabla$, and its applications to Feller processes with general drifts,
\newblock {\em Ann.~Sc.~Norm.~Sup.~Pisa (5)} \textbf{17} (2017), 685-711 (arXiv:1502.07286). 




\bibitem[K2]{Ki_Orlicz} D.\,Kinzebulatov, Laplacian with singular drift in a critical borderline case, {\em Math.\,Nachr.} to appear (arXiv:2309.04436).

\bibitem[K3]{Ki_survey} D.Kinzebulatov, Form-boundedness and SDEs with singular drift, IdNAM Meeting 2022: Kolmogorov Operators and Their Applications, {\em IdNAM Series} \textbf{56} (2024), 147-262 (arXiv:2305.00146).

 \bibitem[K4]{Ki_Morrey} D.\,Kinzebulatov, Parabolic equations and SDEs with time-inhomogeneous Morrey drift, {\em NoDEA Nonlinear Differ. Equ. Appl.}, to appear (arXiv:2301.13805).


\bibitem[K5]{Ki_multi} D.\,Kinzebulatov, On particle systems and critical strengths of general singular interactions, {\em Ann.\,Inst. Henri Poincar\'{e} (B) Probab.\,Stat.\,}, to appear (arXiv:2402.17009).


\bibitem[K6]{Ki_revisited} D.\,Kinzebulatov, Regularity theory of Kolmogorov operator revisited, {\em Canadian Bull. Math.} \textbf{64} (2021), 725-736 (arXiv:1807.07597).

\bibitem[K7]{Ki_measure} D. Kinzebulatov, Feller generators with measure-valued drifts, {\em Potential Anal.} \textbf{48} (2018), 207-222.



\bibitem[KM1]{KiM_strong} D. Kinzebulatov and K.R. Madou, Strong solutions of SDEs with singular (form-bounded) drift via Roeckner-Zhao approach, {\em Stochastics and Dynamics}, to appear (arXiv:2306.04825).

 \bibitem[KM2]{KiM_JDE} D.Kinzebulatov and K.R.Madou, Stochastic equations with time-dependent singular drift, {\em J.\,Differential Equations} \textbf{337} (2022), 255-293 (arXiv:2105.07312).



\bibitem[KS1]{KiS_brownian} D.\,Kinzebulatov and Yu.A.\,Sem\"{e}nov, Brownian motion with general drift, \newblock{\em Stoch. Proc. Appl.} \textbf{130} (2020), 2737-2750 (arXiv:1710.06729).




\bibitem[KS2]{KiS_sharp} D.\,Kinzebulatov and Yu.\,A.\,Sem\"{e}nov, Sharp solvability for singular SDEs, {\em Electron.\,J.\,Probab.} \textbf{28} (2023), article no. 69, 1-15. (arXiv:2110.11232).


\bibitem[KS3]{KiS_theory} D.\,Kinzebulatov and Yu.\,A.\,Sem\"{e}nov, On the theory of the Kolmogorov operator in the spaces $L^p$ and $C_\infty$, {\em Ann. Sc. Norm. Sup. Pisa (5)} \textbf{21} (2020), 1573-1647 (arXiv:1709.08598).


\bibitem[KS4]{KiS_MAAN} D.\,Kinzebulatov and Yu.\,A.\,Sem\"{e}nov, Heat kernel bounds for parabolic equations with singular (form-bounded) vector fields, {\em Math.\,Ann.} \textbf{384} (2022), 1883-1929.


\bibitem[KS5]{KiS_JDE} D.\,Kinzebulatov and Yu.\,A.\,Sem\"{e}nov, Regularity for parabolic equations with singular non-zero divergence vector fields, {\em J.\,Differential Equations} \textbf{381} (2024), 293-339 (arXiv:2205.05169).


\bibitem[KS6]{KiS_feller} D.\,Kinzebulatov and Yu.\,A.\,Sem\"{e}nov, {Feller generators with drifts in the critical range}, {\em J.\,Differential Equations}, to appear (arXiv:2405.12332).

\bibitem[KS7]{KiS_Osaka} D.\,Kinzebulatov and Yu.\,A.\,Sem\"{e}nov, Feller generators and stochastic differential equations with singular (form-bounded) drift,  \newblock{\em Osaka J.\,Math.} \textbf{58} (2021), 855-883 (arXiv:1904.01268)


 \bibitem[KS8]{KiS_note} D.\,Kinzebulatov and Yu.A.\,Sem\"{e}nov, Remarks on parabolic Kolmogorov operator, {\em Theory Probab.\,Appl.} \textbf{70} (2), 282-300 (arXiv:2303.03993). Originally published in the Russian journal {\em Teoriya Veroyatnostei i ee Primeneniya} \textbf{70} (2025), 343-364.



\bibitem[KSS]{KiSS_transport} D.\,Kinzebulatov, Yu.\,A.\,Sem\"{e}nov and R.\,Song, Stochastic transport equation with singular drift, {\em Ann.\,Inst.\,Henri Poincar\'{e} (B) Probab. Stat.} \textbf{60}(1) (2024), 731-752 (arXiv:2102.10610).



\bibitem[KV]{KiV} D.\,Kinzebulatov and R.\,Vafadar, On divergence-free (form-bounded type) drifts, {\em Discrete Contin. Dyn.\,Syst.\,Ser.\,S.} \textbf{17} (2024), 2083-2107 (arXiv:2209.04537).




\bibitem[KT]{KT}{H.\,Koch and D.\,Tataru}, Well-posedness for the Navier-Stokes equations, {\em Advances in Mathematics}  \textbf{157}(1) (2001), 22-35. 

\bibitem[KoS]{KS} V.\,F.\,Kovalenko and Yu.\,A.\,Sem\"{e}nov,
{\newblock $C_0$-semigroups in $L^p(\mathbb R^d)$ and $C_\infty(\mathbb R^d)$ spaces generated by differential expression $\Delta+b\cdot\nabla$}, {\em Theory Probab. Appl.} \textbf{35} (1990), 443-453. 
Originally published in the Russian journal {\em Teoriya Veroyatnostei i ee Primeneniya} \textbf{35} (1990), 449-458.

\bibitem[KPS]{KPS} V.\,F.\,Kovalenko, M.\,A.\,Perelmuter and Yu.\,A.\,Sem\"{e}nov, Schr\"{o}dinger operators with $L^{1/2}_{w}$($R^{l}$)-potentials, \newblock{\em J.\,Math.\,Phys.} \textbf{22}  (1981), 1033-1044. 


\bibitem[KrS]{KrSt} J.\,Kristensen and B.\,Stroffolini, The Gehring lemma: dimension free estimates, {\em Nonlinear Analysis} \textbf{177}(B) (2018), 601-610.



\bibitem[Kr1]{Kr1} N.V. Krylov, Once again on weak solutions of time inhomogeneous Itô’s equations with VMO diffusion and Morrey drift, {\em Electron. J. Probab.} \textbf{29}, paper 95, 1-19 (2024) (arXiv:2304.04634).



\bibitem[Kr2]{Kr2} N.V. Krylov, On strong solutions of It\^{o}'s equations with $D\sigma$ and $b$ in Morrey classes containing $L^d$, {\em Ann. Probab.} \textbf{51}(5) (2023) 1729-1751 (arXiv:2111.13795).


\bibitem[Kr3]{Kr3} N.V. Krylov, On strong solutions of time inhomogeneous It\^{o}'s equations with
Morrey diffusion gradient and drift. A supercritical case, {\em Stoch. Proc. Appl.}, \textbf{185} (2025) 104619.

\bibitem[Kr4]{Kr3_5} N.V.\,Krylov, On parabolic equations in Morrey spaces with VMO $a$ and Morrey $b$, $c$, {\em NoDEA Nonlinear Differ. Equ. Appl.}, to appear (arXiv:2304.03736).


\bibitem[Kr5]{Kr4}  N.V.\,Krylov, Time-homogeneous Stochastic It\^{o}
Equations and Second-Order PDEs with Singularities, book.


\bibitem[Kr6]{Kr_b2}  N.V.\,Krylov, Essentials of Real Analysis and
Sobolev-Morrey Spaces for Second-order
Elliptic and Parabolic PDEs with Singular
Lower-order Coefficients, book.


\bibitem[Kr7]{Kr_m} N.V.\,Krylov, personal communication.


\bibitem[KrR]{KR} N.\,V.\,Krylov and M.\,R\"{o}ckner. 
\newblock Strong solutions of stochastic equations with singular time dependent drift. 
\newblock {\em Probab. Theory Related Fields} \textbf{131} (2005), 154-196.


\bibitem[LWZ]{LWZ} J.\,Liang, L.\,Wang and C.\,Zhou, $C_0$-regularity for solutions of elliptic equations with distributional coefficients, {\em Preprint}, arXiv:2311.05186.


\bibitem[LS]{LS}
V.~A.~Liskevich and Yu.~A.~Sem\"{e}nov, 
\newblock Some problems on Markov semigroups,
\newblock{\em 
``Schr\"{o}dinger Operators, Markov Semigroups, Wavelet Analysis, Operator Algebras''
(M. Demuth et al.,, Eds.), Mathematical Topics: Advances in Partial Differential
Equations, Vol. 11, Akademie Verlag, Berlin} (1996), 163-217.


\bibitem[MK]{MK}
A.\,J.\,Majda and P.\,R.\,Kramer, Simplified models for turbulent diffusion:
theory, numerical modelling, and physical phenomena, {\em Physics Reports} \textbf{314} (1999), 237-574. 


\bibitem[M]{M} V.\,G.\,Mazya, Sobolev Spaces, Springer 1985.

\bibitem[M2]{M2} V.G.~Mazya, Capacitary criteria for the boundedness of linear operators in function spaces, {\em Soviet Math. Dokl.} \textbf{11} (1970), 133-137.

\bibitem[MV]{MV}
V.\,G.\,Mazya and I.\,E.\,Verbitsky, Form boundedness of the general
second-order differential operator, {\em Comm.\,Pure Appl.\,Math.} \textbf{59} (2006), 1286-1329. 




\bibitem[MV2]{MV2} V.\,G.\,Mazya and I.\,E.\,Verbitsky, Infinitesimal form boundedness and Trudinger's
subordination for the Schr\"{o}dinger operator, {\em Invent. Math.} \textbf{162} (2005), 81-136.


\bibitem[NY]{NY} E.\,Nakai and K.\,Yabuta, Pointwise multipliers for functions
of bounded mean oscillation, {\em J.\,Math.\,Soc.\,Japan} \textbf{37}(2) (1985), 207-218.

\bibitem[N]{N} J.\,Nash, Continuity of solutions of parabolic and elliptic equations, \newblock {\em Amer.\,Math.\,J.} \textbf{80} (1) (1958), 931-954.


\bibitem[ORT]{ORT} A.\,Ohashi, F.\,Russo, and A.\,Teixeira. SDEs for Bessel processes in low dimension and path-dependent extensions. {\em ALEA} \textbf{20} (2023), 1111-1138.



\bibitem[QX]{QX} Z.\,Qian and G.\,Xi, Parabolic equations with singular divergence-free drift vector fields, {\em J. Lond. Math. Soc.} \textbf{100} (1) (2019), 17-40.

\bibitem[QX2]{QX_supercritical} Z.\,Qian and G.\,Xi, Parabolic equations with divergence-free drift in space $L_t^lL_x^q$, {\em Indiana Univ.\,Math.\,J.} \textbf{68}(3) (2019), 761-797.

\bibitem[RY]{RevuzYor} D.\,Revuz and M.\,Yor, Continuous Martingales and Browinan Motion, {Springer}, 1991.

\bibitem[RZ]{RZ_weak} M.\,R\"{o}ckner and G.\,Zhao, {SDEs with critical time dependent drifts: weak solutions}, {\em Bernoulli} \textbf{29} (2023), 757-784 (arXiv:2012.04161).

\bibitem[RZ2]{RZ_strong} M.\,R\"{o}ckner and G.\,Zhao,  SDEs with critical time dependent drifts: strong solutions, {\em Probab. Theory Related Fields}, to appear
(arXiv:2103.05803).

\bibitem[S]{S} Yu.\,A.\,Sem\"{e}nov, \newblock Regularity theorems for parabolic equations, \newblock {\em J.\,Funct.\,Anal.} \textbf{231} (2006), 375-417.


\bibitem[SS\v{S}Z]{SSSZ} G.\,Seregin, L.\,Silvestre, V.\,\v{S}verak and A.\,Zlato\v{s}, On divergence-free drifts, {\em J. Differential Equations} \textbf{252}(1) (2012), 505-540.



\bibitem[T]{T} Y.\,Tardy, Weak convergence of the empirical measure for the Keller-Segel model in both sub-critical and critical cases, {\em Electron. J.\,Probab.}, to appear (arXiv:2205.04968).


\bibitem[Tr]{Tr} D.\,Trevisan,  Well-posedness of multidimensional diffusion processes with weakly differentiable
coefficients, {\em Electron. J. Probab.} \textbf{21} (2016), Paper No. 22, 1-41.

\bibitem[V]{V} A.\,Yu. Veretennikov, Strong solutions and explicit formulas for solutions of stochastic integral equations, {\em Mat. Sb.} \textbf{111} (3) (1980) 434-452;
in Russian, English translation in {\em Math. USSR-Sbornik} \textbf{39} (1981) 387-403.


\bibitem[Za]{Z_Kato} Q.\,S.\,Zhang, Gaussian bounds for the fundamental solutions
of $\nabla(A\nabla u) + B \nabla u - u_t = 0$, {\em Mauscripta Math.} \textbf{93} (1997), 381-390.

\bibitem[Za2]{Z_supercritical} Q.\,S.\,Zhang, A strong regularity result for parabolic equations, {\em Comm.\,Math.\,Phys.} \textbf{244} (2004), 245-260.

\bibitem[Za3]{Z_supercritical2} Q.\,S.\,Zhang, Local estimates of two linear parabolic equations with singular coefficients, {\em Pacific J.\,Math.} \textbf{223} (2006), 367-396.

\bibitem[Z]{Z_a} X.\, Zhang, \newblock Stochastic differential equations with Sobolev coefficients and applications, {\em Ann. Appl. Probab.} \textbf{26}
 (2016), 2697–2732.


\bibitem[ZZ1]{ZZ} X.\,Zhang and G.\,Zhao, Stochastic Lagrangian paths for Leray solutions of $3D$ Navier-Stokes equations, {\em  Comm.\,Math.\,Phys.}, \textbf{381}(2) (2021), 491-525.


\bibitem[ZZ2]{ZZ2} X.\,Zhang and G.\,Zhao, Heat kernels and ergodicity of SDEs with distributional drifts.  {\em Preprint}, arXiv:1710.10537, 2017.


\bibitem[Zh]{Zh} G.\,Zhao, Stochastic Lagrangian flows for SDEs with rough coefficients, {\em Preprint}, arXiv:1911.05562.




\end{thebibliography}
\end{document}